\documentclass[reqno]{amsart}
\usepackage[top=1.1in, bottom=1in, left=1in, right=1in]{geometry}
\usepackage{amsfonts}
\usepackage{amssymb}
\usepackage{amsthm}
\usepackage{amsmath}
\usepackage{mathrsfs}
\usepackage{color}
\usepackage{enumerate}
\usepackage[numbers,sort&compress]{natbib}
\usepackage{pdfsync}
\usepackage{esint}
\usepackage{graphicx}
\usepackage{float}
\usepackage{caption}
\usepackage{subfigure}

\allowdisplaybreaks

\usepackage{tikz} 
\usetikzlibrary{calc}
\usetikzlibrary{intersections}
\usetikzlibrary{patterns}

\usepackage[colorlinks,
linkcolor=black,
anchorcolor=blue,
citecolor=blue,
]{hyperref}

\usepackage{tikz} 
\usetikzlibrary{calc}
\usetikzlibrary{intersections}
\usetikzlibrary{patterns}

\makeatother

\numberwithin{equation}{section}

\newtheorem{theorem}{Theorem}[section]
\newtheorem{definition}[theorem]{Definition}

\newtheorem{lemma}[theorem]{Lemma}
\newtheorem{remark}[theorem]{Remark}

\newtheorem{proposition}[theorem]{Proposition}

\numberwithin{equation}{section}

\newcommand{\T}{{\mathbb{T}}}
\newcommand{\Z}{{\mathbb{Z}}}

\newcommand{\ga}{\gamma}

\newcommand{\ru}{\mathring{R}_q}
\newcommand{\run}{\mathring{R}_n}
\newcommand{\runq}{\mathring{R}_{n,q}}
\newcommand{\Ru}{\mathring{R}}

\newcommand{\G}{\mathring{G}^{B}}

\newcommand{\rw}{r_{\perp}}

\newcommand{\p}{\partial}
\newcommand{\Bb}{\widetilde{B}}
\newcommand{\Bu}{\widetilde{u}}
\newcommand{\ui}{v}
\newcommand{\bi}{H}
\renewcommand{\P}{\mathbb{P}}

\renewcommand{\div}{{\mathrm{div}}}
\newcommand{\curl}{{\mathrm{curl}}}

\newcommand{\om}{\Omega}
\newcommand{\omw}{\widetilde{w}}
\newcommand{\dw}{\widetilde{d}}

\newcommand{\la}{\lambda_q}
\newcommand{\laq}{\lambda_{q+1}}

\newcommand{\dq}{\delta_q}
\newcommand{\dqq}{\delta_{q+1}}
\newcommand{\dqqq}{\delta_{q+2}}

\newcommand{\J}{J_{q}}
\newcommand{\JJ}{\widetilde{J}_{q}}

\newcommand{\C}{ C_{t}}
\newcommand{\Lo}{L}

\newcommand{\ft}{\mathcal{F}_{t}}
\newcommand{\F}{\mathcal{F}}

\def\a{{\alpha}}

\def\wt{\widetilde}
\def\9{{\infty}}

\def\bbr{{\mathbb{R}}}
\def\bbn{{\mathbb{N}}}
\def\bbp{{\mathbb{P}}}
\def\bbq{{\mathbb{Q}}}

\def\({\left(}
\def\){\right)}	
\def\[{\left[}	
\def\]{\right]}	

\begin{document}

\title[] {Existence and non-uniqueness of 
probabilistically strong solutions to 
3D stochastic magnetohydrodynamic equations}

\author{Wenping Cao}
\address{School of Mathematical Sciences, Shanghai Jiao Tong University, China.}
\email[Wenping Cao]{caowenping1@sjtu.edu.cn}
\thanks{}

\author{Yachun Li}
\address{School of Mathematical Sciences, CMA-Shanghai, MOE-LSC, and SHL-MAC,  Shanghai Jiao Tong University, China.}
\email[Yachun Li]{ycli@sjtu.edu.cn}
\thanks{}

\author{Deng Zhang}
\address{School of Mathematical Sciences, CMA-Shanghai, Shanghai Jiao Tong University, China.}
\email[Deng Zhang]{dzhang@sjtu.edu.cn}
\thanks{}

\keywords{
Stochastic MHD equations,  
Lady\v{z}enskaja-Prodi-Serrin criteria, 
probabilistically strong solutions,
non-uniqueness,  
convex integration. 
}

\subjclass[2020]{60H15,\ 76W05,\ 35A02,\ 35D30.}

\begin{abstract}
 We are concerned with the 3D stochastic magnetohydrodynamic (MHD) equations
 driven by additive noise on torus.
For arbitrarily prescribed divergence-free initial data in $L^{2}_x$,
we construct infinitely many  
probabilistically strong and analitically
weak solutions in the class $L^{r}_{\om}L_{t}^{\gamma}W_{x}^{s,p}$,
where $r>1$ and $(s, \gamma, p)$ lie in a supercritical regime with respect to the Lady\v{z}henskaya-Prodi-Serrin (LPS) criteria.
In particular,
we get the non-uniqueness of probabilistically strong 
solutions, which is sharp at one LPS endpoint space.
Our proof utilizes intermittent flows 
which are different  from those of Navier-Stokes equations 
and derives the non-uniqueness even in the high 
viscous and resistive regime beyond the Lions exponent $5/4$.
Furthermore, we prove that as the noise intensity tends to zero,
the accumulation points of stochastic MHD solutions contain all deterministic solutions to
MHD solutions,
{which include the recently constructed solutions in \cite{ZZL-MHD,ZZL-MHD-sharp}} 
to deterministic MHD systems. 
\end{abstract}

\maketitle
\tableofcontents

\section{Introduction and main results}\label{Introduction and main results}

\subsection{Classical stochastic MHD} \label{Subsec-intro}
We are concerned with the classical three-dimensional stochastic magnetohydrodynamic equations (MHD)
 on the torus $\T^{3}:=\[-\pi,\pi\]^{3}$,
\begin{eqnarray}\label{equa-SMHD}
	\left\{\begin{array}{lc}
		\mathrm{d}u-\nu_{1}\Delta u \mathrm{d}t+\big((u\cdot \nabla)u-(B\cdot \nabla )B\big)\mathrm{d}t +\nabla P \mathrm{d}t = \text{d}W^{(1)},\\
		\mathrm{d}B-\nu_{2}\Delta B\mathrm{d}t+\big((u\cdot \nabla )B-(B\cdot \nabla )u\big)\mathrm{d}t= \text{d}W^{(2)},\\
		\div u=0,\quad\div B=0. 
	\end{array}\right.
\end{eqnarray} 
Here $u=(u_{1},u_{2},u_{3})^{\top}$ and
$B=(B_{1},B_{2},B_{3})^{\top}$ represent the 
velocity and  magnetic fields, respectively,
$P$ is the pressure of fluid,
and $ \nu_{1}$, $\nu_{2} > 0$ are the viscous and resistive coefficients, respectively.
The noises $W^{(i)}$, $i=1,2$, are independent $G_{i}G_{i}^{*}$-Wiener processes
on a probability space $\(\om, \F, (\F_t), \textbf{P}\)$
of the form
\begin{align}\label{Wiener}
   W^{(i)}=\sum\limits_{k\in \bbn}c_{k}^{(i)}e_{k}\beta_{k}^{(i)}, 
\end{align} 
where $G_{i}$ is a Hilbert-Schmidt operator from some Hilbert space $U$ to $H_{\sigma}^{4}$, 
$H_{\sigma}^{4}$ denotes the subspace of all mean-free and divergence-free functions in $H^{4}_x$,  
$\{e_{k}\}_{k\in \bbn} \subseteq H_{\sigma}^{4}$
is an orthonormal basis consisting of the eigenvectors of $G_{i}G_{i}^{*}$ with corresponding eigenvalues $(c_{k}^{(i)})^{2}$ satisfying
$ \sum_{k\in \bbn}(c_{k}^{(i)})^{2}<\infty$, 
and $\{\beta_{k}^{(i)}\}$ are standard independent Brownian motions.

In particular, 
without the magnetic field $B\equiv 0$,
\eqref{equa-SMHD} reduces to the classical incompressible Navier-Stokes equations (NSE).
In the absence of the viscosity and resistivity $\nu_{1}=\nu_{2}=0$,
\eqref{equa-SMHD} becomes exactly 
the ideal MHD equations,
which further reduces to the incompressible Euler equations when $B\equiv 0$.

MHD equations have wide-ranging applications in various fields, including astrophysics, geophysics,
plasma physics and engineering. 
It is also of great mathematical interest, as it couples Navier-Stokes equations of fluid dynamics and Maxwell equations of electromagnetism. 
There is an extensive literature on MHD systems. 
See, for instance, \cite{ST} for 
deterministic MHD systems.  
For stochastic MHD systems, 
we refer to \cite{Sritharan-Sundar,
Yamazaki-2D-STMHD-well,Yamazaki-3D-STMHD-well} 
for the well-posedness, 
\citep{Barbu-Daprato-SMHD}  
for ergodicity, 
and \cite{CM10} for large deviation principle. 
As in the context of Navier-Stokes equations,
the uniqueness of Leray-Hopf solutions to  stochastic MHD still remains a challenging open problem.

In the last decade,
there have been significant progresses towards the non-uniqueness of weak solutions to hydrodynamic models.
In the seminal paper \cite{BV-NS},  Buckmaster-Vicol
proved the non-uinqueness of weak solutions 
in  $C_{t}L_{x}^{2}$ to the 
3D NSE,
by exploiting intermittency
in convex integration constructions,
which date back to the pioneering work \citep{LS-Elur} by De Lellis and Sz\'{e}kelyhidi, Jr.
on Euler equations.
Afterwards,
Luo-Titi \cite{LT-hyperviscous NS}
and Buckmaster-Colombo-Vicol \cite{BCV}
proved that the Lions exponent $5/4$ is the sharp threshold
for the $C_tL^2_x$ well-posedness of the 
3D hyper-dissipative NSE.
Moreover,
Cheskidov and Luo proved
the sharp non-uniquness near the endpoints of
Lady\v{z}henskaya-Prodi-Serrin (LPS) criteria for NSE \cite{CL-NSE1,CL-NSE2}. 
The sharp non-uniqueness in the case of (hyper-)dissipative NSE and MHD, 
with viscosity even beyond the Lions exponent, 
has been 
recently studied in   \cite{LQZZ-NSE,ZZL-MHD,ZZL-MHD-sharp}.  
See also \cite{Dai21} for the 
non-uniqueness in the Leray-Hopf class for the Hall-MHD system,  
and \cite{MY24} for the 
non-uniqueness 
for MHD equations. 

In the stochastic setting, 
Hofmanov\'{a}, Zhu and Zhu \citep{HZZ-1} first 
proved the non-uniqueness in law of the 3-D stochastic NSE,
and developed a stochastic counterpart of convex integration scheme.
The probabilistically strong solutions 
was constructed later in $L^q_t L^2_x$ 
with  $1\leq q<\infty$ \cite{HZZ-3}. 
Chen, Dong and Zhu \citep{CDZ} proved the sharp non-uniqueness at one endpoint
of the LPS criteria.
See also \cite{HZZ23-arma} 
for the non-uniqueness in the white noise case, 
and \cite{HZZ-2} for the non-uniqueness of ergodic measures.

\medskip 
It remains open whether weak solutions to 
the classical stochastic MHD system (with $(-\Delta)$) is unique or not. 
Furthermore, 
even the existence of global-in-time probabilistically strong solutions is difficult, it 
was an open problem for the 3D stochastic NSE which has been recently proved 
in the spaces $L^q_t L^2_x$ 
with  $1\leq q<\infty$ in \cite{HZZ-3}.   
The existence 
of global-in-time probabilistically strong solutions to the 3D stochastic MHD
is still open.

The main distinction between NSE and MHD lies in that MHD systems contain  strong couplings between velocity and magnetic fields, 
and the nonlinearity is symmetric in the NS component, 
but anti-symmetric in the Maxwell component. 
Thus, 
the crucial intermittent velocity flows constructed in convex integration for NSE 
are not applicable to MHD systems. 
This particular geometrical structure of MHD systems 
restricts the choice of oscillation directions for intermittent flows, 
and thus poses a major obstacle to 
constructing velocity and magnetic flows with sufficiently strong intermittency to control the viscosity and resistivity 
$(-\Delta)$.   
For the hypo-dissipative MHD system 
with $(-\Delta)^\alpha$, $\alpha\in (0,1)$, 
the non-uniqueness
in law of martingale solutions was proved by Yamazaki 
 \citep{Yamazaki-SMHD}.

\medskip 
The aim of this paper is to address these two problems.   
For arbitrary prescribed initial data in $L^2_\sigma$, 
where $L_{\sigma}^{2}$ denotes the subspace of all mean-free and divergence-free functions in $L^{2}_x$,    
we construct infinitely many global-in-time probabilistically strong and analytically weak solutions
to stochastic MHD \eqref{equa-SMHD} 
in the supertical regime with respect to the Lady\v{z}enskaja-Prodi-Serrin (LPS) criterion.  
In particular, 
the non-uniqueness is derived for the 3D stochastic MHD equations, 
which is sharp at one endpoint 
of the LPS criterion.  

\begin{theorem}  \label{Thm-Nonuniq-SMHD}
For any divergence-free   initial data $u_{0}, B_{0}\in L_{\sigma}^{2}$ 
and any  
$(\gamma,p)\in  [1, 2) \times[1, \infty]$, 
there exist infinitely many global-in-time
probabilistically strong and analytically weak solutions  
in the space $L^\gamma_t L^p_x$ to \eqref{equa-SMHD}.   
In particular, 
the uniqueness fails in the space $L^\gamma_t L^p_x$ 
with $(\gamma,p)\in  [1, 2) \times[1, \infty]$. 
\end{theorem}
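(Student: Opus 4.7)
I would first reduce the stochastic equations to a random PDE by the standard shift trick. Let $z^{(i)}$ denote the stochastic convolutions solving $\mathrm{d}z^{(i)}-\nu_i\Delta z^{(i)}\,\mathrm{d}t=\mathrm{d}W^{(i)}$, which (by the assumption $G_i: U\to H^4_\sigma$ Hilbert–Schmidt) are smooth in space, continuous in time, and $(\mathcal{F}_t)$-adapted. Setting $v:=u-z^{(1)}$ and $H:=B-z^{(2)}$, one obtains a pathwise MHD-type system with additional drift terms depending on $z^{(1)},z^{(2)}$, and no stochastic integral. Because the noise is only mildly irregular, the data $(v_0,H_0)=(u_0,B_0)$ in $L^2_\sigma$ is preserved by this change of variables. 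Adaptedness will be maintained throughout by only using $(\mathcal{F}_t)$-measurable objects at each step.

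The core is a stochastic convex integration scheme adapted to the MHD geometry. I would build iterates $(v_q,H_q,\mathring R_q^u,\mathring R_q^B)$ solving a relaxed system, parametrised by frequency $\lambda_q$ and amplitude $\delta_q$, together with a suitable stopping time $\tau_L$ that controls $\|z^{(i)}\|_{C_tC^N}$ and hence keeps the random drifts under control on $[0,\tau_L]$. On that interval, the perturbation $(w_{q+1}^u,w_{q+1}^B)$ is a sum of intermittent building blocks localised in space and time, chosen so that each piece is $(\mathcal{F}_t)$-adapted. The outputs obey inductive estimates in $C_tL^2$, $C_tW^{1,p}$, and on the stresses $\mathring R_q$, so that the limit $(v,H)=\lim_{q\to\infty}(v_q,H_q)$ solves the random PDE, has the prescribed initial datum, and lies in $L^\gamma_tL^p_x$ in the supercritical LPS regime. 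A second argument, gluing the convex-integration solution with a smooth Leray-type solution of the perturbed system before $\tau_L$, extends the solution beyond the stopping time; iterating the gluing on an increasing sequence of stopping times $\tau_L\uparrow\infty$ produces the global-in-time probabilistically strong solution (this is the analogue for MHD of the scheme carried out for NSE in \cite{HZZ-3}). Infinitely many distinct solutions are obtained by varying a free parameter (e.g.\ the energy profile/size of the initial stress $\mathring R_0$) in the base case of the iteration, then showing that the resulting energies differ on a deterministic time interval with positive probability.

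The main obstacle, and the genuinely MHD-specific point, is the design of the intermittent velocity–magnetic building blocks. In NSE one uses intermittent jets/Mikado flows whose geometry is dictated only by the symmetric convective term; for MHD one must cancel the symmetric stress $\mathring R_q^u$ and the antisymmetric stress $\mathring R_q^B$ \emph{simultaneously}, using block pairs $(W_k^u,W_k^B)$ whose self-interactions $W_k^u\otimes W_k^u-W_k^B\otimes W_k^B$ and $W_k^u\otimes W_k^B-W_k^B\otimes W_k^u$ reproduce the prescribed stresses after averaging. This severely restricts the admissible oscillation directions (compared to NSE), which in turn limits the intermittency dimension and therefore the amount of dissipation $-\nu_i\Delta$ that the scheme can absorb. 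The hard step is thus to tune the block geometry and dimensional parameters so that the linear errors produced by $-\Delta v_{q+1}$, $-\Delta H_{q+1}$ decay to zero under the iteration, up to the LPS threshold $\gamma<2$, $p\le\infty$; this is where the recently developed MHD convex integration machinery of \cite{ZZL-MHD,ZZL-MHD-sharp} enters, and where the bulk of the technical estimates (for the inverse divergence, antidivergence operators, and nonlocal interaction errors) will be spent.

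Finally, to accommodate arbitrary $L^2_\sigma$ initial data (not only smooth ones), I would first approximate $(u_0,B_0)$ by smooth divergence-free data, solve the random PDE classically for a short random time interval and use those solutions to initialise the convex integration iteration, so that the resulting $(v_q,H_q)$ match $(u_0,B_0)$ at $t=0$ exactly while the non-trivial perturbations live away from $t=0$. Combined with the gluing argument, this yields global adapted $L^\gamma_tL^p_x$ solutions with the given initial datum, proving both existence of probabilistically strong solutions and non-uniqueness in the stated range.
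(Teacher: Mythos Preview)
Your proposal is broadly viable and correctly identifies the MHD-specific obstruction (the simultaneous symmetric/antisymmetric stress cancellation that constrains the admissible oscillation directions). However, your route differs from the paper's in several structural ways, and one of your stated estimates is inconsistent with the scheme you need.

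\textbf{Stopping times versus quantitative moments.} You propose the HZZ-3 scheme: run convex integration pathwise on $[0,\tau_L]$ with $\tau_L$ controlling $\|z^{(i)}\|_{C_tC^N}$, then glue with a Leray-type continuation and iterate $\tau_L\uparrow\infty$. The paper instead follows the HZZ-2 paradigm: no stopping times at all. The iteration is set up directly on $[0,T]$ with explicit $L^m_\omega$-moment bounds on $(\widetilde u_q,\widetilde B_q,\mathring R_q)$ that grow in a controlled way with $q$ (Proposition~\ref{Proposition Main iteration}, estimates \eqref{2.11}--\eqref{2.13}); the noise is handled by frequency-truncating the stochastic convolution at each step. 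This avoids the gluing step entirely and yields $L^r_\omega$-integrability of the limit for free. Your approach would work, but it is heavier: you must adapt the gluing/restart argument to MHD and verify that the restart from rough $L^2$ data is compatible with the scheme.

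\textbf{Initial data.} You plan to approximate $(u_0,B_0)$ by smooth data and solve classically for a short random time. The paper instead loads the initial data into the stochastic convolution via the damped semigroup $z_1(t)=e^{t(-\nu(-\Delta)^\alpha-I)}u_0+\int_0^t e^{(t-s)(-\nu(-\Delta)^\alpha-I)}\,\mathrm dW^{(1)}$, so that $(\widetilde u,\widetilde B)(0)=0$ exactly, and then uses a deterministic temporal cutoff $\Theta_{q+1}$ (vanishing on $[0,\varsigma_q/2]$) to keep all perturbations away from $t=0$. The resulting stress near the origin is estimated separately (Subsections~\ref{away from zero}--\ref{near zero}); this is where the smallness of $\varsigma_q$ does the work, not a short-time classical solution.

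\textbf{Non-uniqueness.} Rather than varying an energy profile, the paper fixes arbitrary smooth divergence-free targets $(v,H)\in C_0^\infty$, initializes the iteration at $(\widetilde u_0,\widetilde B_0)=(v,H)$, and proves $\|u-(v+z_1),B-(H+z_2)\|_{L^r_\omega L^\gamma_t W^{s,p}_x}<\epsilon$. Taking a sequence $(v_i,H_i)$ with pairwise $L^\gamma_tW^{s,p}_x$-distance at least $1$ then gives infinitely many distinct solutions.

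\textbf{A genuine inconsistency.} You write that the iterates obey inductive estimates in $C_tL^2$. With the temporal intermittency you need to absorb the full Laplacian $(-\Delta)$ (and which the machinery of \cite{ZZL-MHD-sharp} relies on), uniform $C_tL^2$ bounds on the perturbations are not available: the temporal concentration $g_{(k)}\sim\tau^{1/2}$ blows up pointwise in time. The paper's inductive decay estimates \eqref{2.14}--\eqref{2.16} are all in $L^\gamma_t$ (specifically $L^2_tL^2_x$, $L^1_tL^2_x$, $L^\gamma_tW^{s,p}_x$), not $C_t$. You should replace your $C_tL^2$/$C_tW^{1,p}$ bookkeeping by time-integrated norms; otherwise the linear error from $(-\Delta)$ will not close.
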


\begin{remark}
$(i)$ To the best of our knowledge, 
Theorem \ref{Thm-Nonuniq-SMHD} provides the first example 
of global-in-time probabilistically strong 
solutions to the 3D stochastic MHD equations.  

$(ii)$ The non-uniqueness in Theorem \ref{Thm-Nonuniq-SMHD} is sharp at one endpoint of the LSP criterion.  
Actually,
the deterministic MHD is invariant under the scaling
\begin{equation}
	u(t,x)\mapsto \lambda u(\lambda^{2}t, \lambda x),
    \quad B(t,x)\mapsto \lambda B(\lambda^{2}t, \lambda x),
    \quad P(t,x)\mapsto \lambda^{2} P(\lambda^{2}t, \lambda x),
\end{equation}
which leads to the scaling-invariant space 
$L^\gamma_t L^p_x$
when
\begin{equation} \label{LPS}
	\frac{2}{\gamma}+\frac{3}{p}= 1. 
\end{equation} 
In particular, 
$L^2_t L^\infty_x$ is one endpoint space of LPS criteria. 
On one hand, 
it is well-known that weak solutions to NSE in the (sub)critical space $L_{t}^{\gamma}L_{x}^{p}$ with $2/\gamma+3/p\leq 1$
are unique and in the Leray-Hopf class (see \citep{CL-NSE1,ESS, S}). 
This criterion also holds for MHD systems, 
see Theorem \ref{uniq-sharp} below.  
On the other hand, 
Theorem \ref{Thm-Nonuniq-SMHD} 
implies the non-uniqueness of probabilistically strong solutions to  stochastic  MHD
in the space $L^\gamma_t L^\infty$ for any $1\leq \gamma<2$. 
Thus, the non-uniqueness 
is sharp as $L^2_t L^\infty_x$ is the critical LPS space. 

$(iii)$ 
Because solutions to stochastic MHD equations 
conserve the mean, 
the main result also holds for divergence-free solutions with non-zero mean. 
In this paper we focus on the case 
where initial data are mean free 
for simplicity. 
\end{remark}

Differently from the NSE, 
our proof utilizes intermittent flows which respect the geometry of MHD equations 
and leads to non-uniqueness 
in mixed Lebesgue spaces 
for more general stochastic MHD models 
with viscosity and resisitivity even larger than the
Lions exponent $5/4$, 
in which case, however, 
it is well-known that 
solutions to NSE and MHD are well-posed in $C_tL^2_x$ due to the classical result by Lions \cite{Lions} 
and \cite{W-MHD}.  
Furthermore, 
the relationship, 
via vanishing noise limit, 
between solutions to stochastic and deterministic MHD systems is 
also derived.

\subsection{Hyper-dissipative stochastic MHD: beyond the Lions exponent}  

Let us consider a general model of stochastic MHD
with high viscosity and resistivity 
\begin{eqnarray}\label{1.1}
	\left\{\begin{array}{lc}
		\mathrm{d} u+\big(\nu_{1}(-\Delta)^{\a}u+(u\cdot \nabla)u-(B\cdot \nabla )B\big) \mathrm{d} t+\nabla P \mathrm{d} t =\mathrm{d} W^{(1)}(t),\\
		\mathrm{d}  B+\big(\nu_{2}(-\Delta)^{\a}B+(u\cdot \nabla )B-(B\cdot \nabla )u\big)\mathrm{d} t =\mathrm{d} W^{(2)}(t),\\
		\div u=0, \quad\div B=0,\\
		u(0)=u_{0},\quad B(0)=B_{0},
	\end{array}\right.
\end{eqnarray}
where $\nu_1,\nu_2>0$, 
$W^{(i)}$ is as in \eqref{Wiener},  
$i=1,2$, 
the exponent $\a\in [1,3/2)$, 
and
$(-\Delta)^{\a}$ is the fractional Laplacian defined via Fourier transform on the flat torus
\begin{align*}
	\mathscr{F}\((-\Delta)^{\a}u\)(\xi)=|\xi|^{2\a}\mathscr{F} (u)(\xi),\quad\xi\in \Z^{3}.
\end{align*}

The solutions to \eqref{1.1} is 
taken in the probabilistically strong and analytically weak sense. 

\begin{definition}  \label{Def-Sol}
Given $T\in (0,\infty)$.
We say that $(u,B)$
	is a probabilistically strong
	and analytically weak solution to the SMHD system (\ref{1.1}) with the initial data $u_{0}$, $B_{0}\in L_{\sigma}^{2}$, $\mathbf{P}$-a.s. if
 the following holds:
\begin{enumerate}
  \item[(i)] $u, B \in  L^{2}(0, T; L^{2})$ ;\\
  \item[(ii)]  for all $t\in [0, T]$, $u(t, \cdot)$ and $B(t, \cdot)$ are divergence-free and have zero spatial mean;\\
  \item[(iii)]  for any divergence-free test function $\phi\in C_{0}^{\infty}(\T^{3})$ and any $t\in [0, T]$, 
	\begin{align*}
	&\langle u(t),\phi\rangle = \langle u_{0} ,\phi\rangle  +\int_{0}^{t}\left(\langle  -\nu_{1}(-\Delta)^{\a}u(s), \phi\rangle -\langle (u(s)\cdot\nabla)u(s),\phi\rangle +\langle (B(s)\cdot\nabla)B(s),\phi \rangle\right)  \mathrm{d}s
		+ \langle W^{(1)}(t), \phi \rangle,\\
	&\langle B(t),\phi\rangle
	= \langle B_{0} ,\phi\rangle+\int_{0}^{t}\left(\langle -\nu_{2}(-\Delta)^{\a}B(s),\phi \rangle-\langle (u(s)\cdot\nabla)B(s),\phi \rangle+\langle (B(s)\cdot\nabla)u(s),\phi\rangle\right) \mathrm{d}s
	+ \langle W^{(2)}(t),\phi\rangle.
	\end{align*}
\end{enumerate}
\end{definition}

It is well-known that high dissipation usually helps to establish well-posedness theory of (stochastic) PDEs. 
For the 3D NSE, 
one classical result by Lions \citep{Lions} 
is that 
energy solutions are well-posed in 
$C_tL^2_x \cap L^2_t H^1_x$ 
in the high dissipate regime where the viscosity exponent is larger than $5/4$.
Similar well-posedness result also holds for hyper-viscous and resistive MHD \cite{W-MHD}, 
and for the 3D stochastic hyper-dissipative NSE 
\cite{BJLZ23}.  

However, 
Theorem \ref{Thm-Nonuniq-Hyper} below shows that, even in the high-dissipative regime beyond the Lions exponent $5/4$,
the uniqueness would still fail in certain mixed Lebesgue spaces.

\begin{theorem}   \label{Thm-Nonuniq-Hyper}
Let $\alpha \in [1,3/2)$ and 
consider the superctitical regime 
\begin{align*}
   \mathcal{S}:=\left\{(s,\gamma,p)\in [0, 3)\times[1, \infty]\times[1, \infty]:0\leq s< \frac{2\a}{\ga}+\frac{2\a-2}{p}+1-2\a\right\}.
\end{align*}
Given any initial data $ u_{0}$, $B_{0}\in L_{\sigma}^{2} $  and any deterministic, divergence-free and mean-free smooth vector fields $ \ui,~ \bi\in  C_{0}^{\infty}((0,T]\times\T^{3})$. Then, 
for any $(s,\gamma,p)\in\mathcal{S}$, 
$r>1$ and $\epsilon>0 $, 
there exists a solution $ (u, B)$ to system (\ref{1.1}) with the initial data $ (u_{0}, B_{0})$ 
such that
	\begin{equation}\label{1.4}
		u-z_{1}, B-z_{2}\in \Lo^{2r}_{\om} L_{t}^{2}L_{x}^{2}\cap\Lo^{r}_{\om} L_{t}^{1}L_{x}^{2}\cap\Lo^{r}_{\om}L_{t}^{\ga}W_{x}^{s,p},
	\end{equation}
	 and $ (u,B) $ is close to $ (\ui+z_{1}, \bi+z_{2}):$
	\begin{equation}\label{1.5}
		\|\big(u-(\ui+z_{1}), B-(\bi+z_{2})\big)\|_{ \Lo^{r}_{\om} L_{t}^{1}L_{x}^{2}}+	\|\big(u-(\ui+z_{1}),B-(\bi+z_{2})\big)\|_{\Lo^{r}_{\om}L_{t}^{\ga}W_{x}^{s,p}}<\epsilon,
	\end{equation}
where  $z_{i}$, $i=1,2$, are the stochastic convolutions
associated to $W^{(i)}$ as in (\ref{1.4.1}) below.	

In particular,
the uniqueness fails for probabilistically strong and analytically weak solutions
to hyper-viscous and resistive MHD (\ref{1.1}) when $\alpha \in [1,\frac 32)$.
\end{theorem}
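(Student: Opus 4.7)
The plan is to carry out a stochastic convex integration scheme adapted to the geometry of MHD and tuned to the hyper-dissipation $(-\Delta)^\alpha$ for $\alpha\in[1,3/2)$. As a first step I would pass from the SPDE to a random PDE by setting $\Bu:=u-z_{1}$ and $\widetilde{B}:=B-z_{2}$, where $z_{i}$ is the stochastic convolution associated with $W^{(i)}$ defined in (1.4.1) below. The pair $(\Bu,\widetilde B)$ then solves a random MHD-type system whose only randomness enters through the smoothed, zero-mean, divergence-free drifts $z_{1},z_{2}$. I would further reduce to a deterministic-type iteration by introducing stopping times $\tau_L$ of the form $\tau_L:=\inf\{t\geq 0:\|z_{i}(t)\|_{C^{N}_x}+\|z_{i}\|_{C^{1/2-\kappa}_tC^{N-1}_x}>L\}\wedge T$, so that on $[0,\tau_L]$ all $\omega$-dependent quantities obey deterministic bounds; the final $L^r_\omega$ estimates in \eqref{1.4}--\eqref{1.5} follow by Markov's inequality, choice of $L=L(\varepsilon,r)$ sufficiently large, and a standard gluing argument extending local-in-time constructions to $[0,T]$.

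Next I would set up the iterative proposition in the spirit of \cite{HZZ-1,CDZ,ZZL-MHD-sharp}: build approximate solutions $(u_q,B_q,\mathring R^u_q,\mathring R^B_q)$ satisfying a linear MHD--Reynolds system with traceless stress tensors, together with $(\F_t)$-adapted inductive estimates of the form $\|u_q\|_{C^0_{t,x}}+\|B_q\|_{C^0_{t,x}}\lesssim$ partial sums of $\delta_q^{1/2}$, plus $L^1_{t,x}$ control of the stresses by $\delta_{q+1}$, and prescribed initial data matching $(u_0,B_0)$ after mollification. The key step is the perturbation $(w^{(u)}_{q+1},w^{(B)}_{q+1})$: here I would use intermittent building blocks that respect the anti-symmetric Maxwell nonlinearity, i.e.\ flows of the type used in \cite{ZZL-MHD,ZZL-MHD-sharp} built on two orthogonal families so that $w^{(u)}\otimes w^{(u)}-w^{(B)}\otimes w^{(B)}$ and $w^{(u)}\otimes w^{(B)}-w^{(B)}\otimes w^{(u)}$ can simultaneously cancel the two Reynolds stresses at low frequency. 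The concentration/oscillation parameters $(\lambda_{q+1},\mu_{q+1},r_\perp,r_\parallel)$ must then be chosen so that (a) the new linear errors $\nu_i(-\Delta)^\alpha w^{(\cdot)}_{q+1}$ are absorbed into $\delta_{q+2}$ in $L^1_{t,x}$, which imposes a Sobolev-scaling constraint, and (b) the new oscillation error is of strictly better order. Optimizing these two constraints gives exactly the range $\alpha<3/2$ and the supercritical regime $\mathcal S$ in which the perturbation is bounded in $W^{s,p}_x$.

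To obtain the refined mixed-norm control in \eqref{1.4}--\eqref{1.5}, I would add to the inductive proposition an estimate $\|w^{(u)}_{q+1}\|_{L^\gamma_t W^{s,p}_x}+\|w^{(B)}_{q+1}\|_{L^\gamma_t W^{s,p}_x}\leq M\delta_{q+1}^{1/2}$ with $M$ independent of $q$, which is summable and therefore yields convergence in $L^\gamma_tW^{s,p}_x$ as well as $L^2_{t,x}\cap L^1_tL^2_x$, while the $\L^r_\omega$ estimates come from the stopping-time reduction above. Infinitely many distinct solutions are generated by targeting the prescribed smooth profile $(v,H)$ in the initial stage $q=0$, or equivalently by shifting the target by small smooth divergence-free vector fields; since each choice produces a different $L^2_{t,x}$-limit, the non-uniqueness assertion follows.

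The main obstacle, as emphasized in the introduction, is step two: constructing intermittent velocity and magnetic building blocks that simultaneously (i) have the algebraic structure dictated by the coupled, anti-symmetric MHD nonlinearity, (ii) are intermittent enough in the sense of $L^p$-mismatch to dominate the fractional Laplacian up to $\alpha<3/2$, and (iii) still lie in the prescribed $W^{s,p}_x$-class for $(s,\gamma,p)\in\mathcal S$. The algebraic rigidity of MHD forbids the Mikado-velocity/Mikado-density ansatz used for NSE in \cite{BCV}, and one must instead rely on flows of the type in \cite{ZZL-MHD,ZZL-MHD-sharp} whose concentration parameters have to be re-tuned for the stochastic setting so that the new errors generated by $z_1,z_2$ (the linear and oscillation errors involving the stochastic convolutions) are also absorbed at the next level. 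Verifying this delicate balance of parameters is the core of the argument; everything else is standard mollification, commutator, and inverse-divergence bookkeeping analogous to \cite{HZZ-1,HZZ-3,CDZ}.
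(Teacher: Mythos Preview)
Your overall architecture (random shift $u=\Bu+z_1$, $B=\Bb+z_2$; convex integration for the random PDE; MHD-adapted intermittent flows from \cite{ZZL-MHD,ZZL-MHD-sharp}; initial stage $q=0$ targeting the prescribed profile $(v,H)$; non-uniqueness from varying $(v,H)$) is correct and matches the paper. However, there is a genuine gap in your handling of the initial data, and a methodological difference worth noting.

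\medskip
\textbf{The gap.} Your stopping time $\tau_L=\inf\{t\geq 0:\|z_i(t)\|_{C^N_x}+\|z_i\|_{C^{1/2-\kappa}_tC^{N-1}_x}>L\}\wedge T$ is not well-defined for generic $u_0,B_0\in L^2_\sigma$: the stochastic convolution splits as $z_i=z_i^{\text{init}}+Z_i$ with $z_1^{\text{init}}(t)=e^{-t(\nu(-\Delta)^\alpha+I)}u_0$, and since $u_0$ is merely $L^2$, the quantity $\|z_1^{\text{init}}(t)\|_{C^N_x}$ blows up as $t\to 0^+$. Hence $\tau_L=0$ a.s.\ and the reduction to a deterministic iteration collapses. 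Even if you repair the stopping time to control only the genuinely stochastic part $Z_i$ (which does have good $C_t H^{4+\kappa}_x$ moments), you still face the deterministic singularity of $z_i^{\text{init}}$ near $t=0$. The paper addresses this by (i) introducing a temporal cutoff $\Theta_{q+1}$ supported away from $t=0$ so that $\Bu_q(0)=\Bb_q(0)=0$ is preserved, and (ii) estimating the Reynolds and magnetic stresses separately on $[0,\varsigma_q/2]$ and on $(\varsigma_q/2,T]$, using only $L^2_x$-control of $z_i$ and the shortness of the first interval in the former regime, and smoothing estimates like $\|z_1^{\text{init}}\|_{C_{(\varsigma_q/2,T]}L^\infty_x}\lesssim\varsigma_q^{-3/(4\alpha)}M$ in the latter. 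Your proposal does not mention this two-regime analysis, and without it the linear and commutator errors involving $z_i$ cannot be closed.

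\medskip
\textbf{Methodological difference.} The paper does \emph{not} use stopping times at all. Following \cite{HZZ-2}, it carries the randomness through the entire iteration by proving inductive estimates directly in $L^m_\omega$ for every $m\geq 1$, with explicit $q$-dependent growth of the form $(8mL^2\cdot 80^q)^{80^q}$ for the $C_{t,x}^N$ and $C_tL^1_x$ norms, while the $L^r_\omega L^1_tL^1_x$ norm of the stresses decays like $\delta_{q+1}$. This quantitative moment bookkeeping avoids both the stopping-time localization and any gluing/extension argument, and yields global-in-time probabilistically strong solutions in one pass. Your stopping-time route (in the style of \cite{HZZ-1,HZZ-3}) is in principle viable once the initial-data issue above is fixed, but it is a genuinely different bookkeeping scheme from what the paper does.
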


As for the classical MHD when $\alpha=1$,
$L^\gamma_t W^{s,p}_x$ is 
the scaling-invariant space for \eqref{1.1}
when
\begin{align}\label{LPS-2}
     \frac{2\alpha}{\gamma} + \frac{3}{p} = 2\alpha -1 + s.
\end{align} 
For the convenience of readers, 
{Figure 1} below shows the visual 
location of the non-unique $\mathcal{S}$ regime
when $s=0$
and the LPS critical line.
We note that $\mathcal{S}$ lies in the supercritical regime
of LPS criteria,
and the boundary of $\mathcal{S}$ contains one LPS endpoint $(\frac{2\alpha-1}{2\alpha}, 0)$. 
In view of Theorem \ref{Thm-Nonuniq-Hyper}, 
the uniqueness fails in the space
$L^{\gamma}_t L^\infty_x$
for any $1\leq \gamma <\frac{2\alpha}{2\alpha -1}$,
which is sharp with respect to the LPS scaling.

\begin{figure}[H]\label{fig}
	\centering
	\begin{tikzpicture}[scale=4,>=stealth,align=center]
		\small
		
		
		\draw[->] (0,0) --node[pos=1,left] {$\frac{1}{p}$} node[pos=0,below]{$0$} (0,1.35);
		\draw[->] (0,0) --node[pos=1,below] {$\frac{1}{\gamma}$} (1.35,0);
		
		
		\coordinate [label=below:$1$] (gamma1) at (1,0);
		\coordinate [label=left:$1$] (p1) at (0,1);
		\coordinate (11) at (1,1);

		\coordinate [label=left:$\frac{2\alpha -1}{3} $] (A) at (0,6/7); 
		\coordinate [label=below:$\frac{2\alpha-1}{2\alpha}$ \\ $\uparrow$ \\ \text{Endpoint space $L_t^{\frac{2\alpha}{2\alpha-1}}L_x^{\infty}$}] (B) at (6/7,0);
		\draw (A) --node[pos=0.5,below left]{$\;\nearrow$\\LPS\;} (B);
		\draw (p1) -- (11);
		\draw (11) -- (gamma1);
		\coordinate [label=left:$ \frac{2\alpha-1}{2\alpha-2} $] (B1) at (0,1.2);

		
		\draw[white, pattern =north west lines, pattern color = blue!50] (1/7,1) -- (11) -- (gamma1) -- (B) -- cycle;
		\draw (1/7,1) -- (11) -- (gamma1) -- (B) ;
		\draw[dashed, name path = l4] (B) --node[pos=0.6,above right] { \; $\mathcal{S}$\\ $\swarrow$ \;} (B1);
		
	\end{tikzpicture}
\caption{The case $\alpha \in [1,\frac{3}{2}), s=0$}
\label{fig:my_label}
\end{figure}
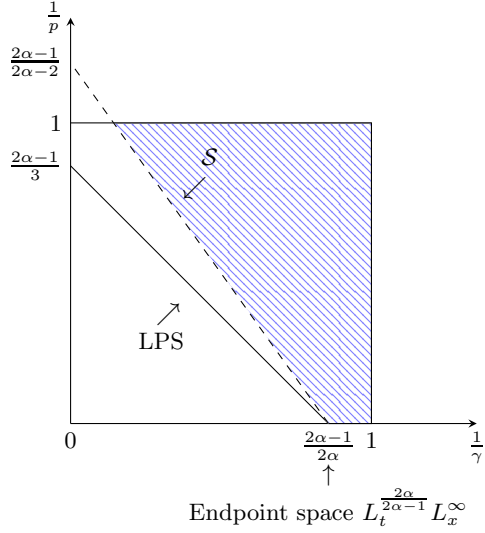

Moreover, in the (sub)critical LPS regime, 
we have the uniqueness result of solutions 
in Theorem \ref{uniq-sharp}  below.  
Let 
\begin{equation}\label{8.1}
	\begin{aligned}
		X_{(0,T)}^{s,\gamma,p}:=\left\{\begin{array}{ll}
			L^{\gamma}(0,T;W_{x}^{s,p}),\quad \text{if} ~ \frac{2\a}{\gamma}+\frac{3}{p}=2\a-1+s,~1\leq \gamma<\infty,~1\leq p\leq \infty,~s\geq 0, \\
			C([0,T];W_{x}^{s,p}),\quad \text{if} ~ \frac{3}{p}=2\a-1+s,~ \gamma=\infty,~1\leq p\leq \infty,~s\geq 0.
		\end{array} \right.
\end{aligned}\end{equation} 
We have 
\begin{theorem}\label{uniq-sharp} 
	Let $u$, $B\in X_{(0,T)}^{s,\gamma,p}$, $\mathbf{P}$-a.s., and  $(s,\gamma,p)$ satisfy  (\ref{LPS-2}) with $\a\geq 1$, $2\leq \gamma\leq \infty$, $1\leq p\leq \infty$, $s\geq 0$ and $0\leq \frac{1}{p}-\frac{s}{3}\leq \frac{1}{2}$. If $(u,B)$ is a solution to (\ref{1.1}) with $\nu_{1}=\nu_{2}$ in the sense of Definition \ref{Def-Sol}, then $(u,B)$ is unique and $u$, $B\in  C_{w}([0,T];L_{x}^{2})\cap L^{2}(0,T; \dot{H}_{x}^{\a})$, $\mathbf{P}$-a.s.
\end{theorem}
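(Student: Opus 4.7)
The strategy is twofold: first, upgrade any solution in $X_{(0,T)}^{s,\gamma,p}$ to the Leray--Hopf class $C_w([0,T];L^2_x)\cap L^2(0,T;\dot H^\alpha_x)$; second, run a weak--strong uniqueness argument in Els\"asser variables. Set $v:=u-z_1$ and $H:=B-z_2$, where $z_i$ is the stochastic convolution associated with $W^{(i)}$; since each $W^{(i)}$ is $H^4_\sigma$-valued with summable covariance, $z_i$ is $\mathbf{P}$-a.s.\ continuous in time with values in any reasonable Sobolev space, so \eqref{1.1} becomes pathwise a deterministic PDE for $(v,H)$ perturbed by smooth $(z_1,z_2)$-dependent forcing. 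The hypothesis $0\le 1/p-s/3\le 1/2$ yields Sobolev embedding $W^{s,p}_x\hookrightarrow L^q_x$ with $1/q=1/p-s/3$, under which \eqref{LPS-2} rewrites as the LPS criterion $\tfrac{2\alpha}{\gamma}+\tfrac{3}{q}=2\alpha-1$ with $q\in[2,\infty]$. \emph{Step 1} then proceeds by testing the regularized $(v,H)$ equations against $(v,H)$, bounding each trilinear term by H\"older against $\|u\|_{L^q_x}+\|B\|_{L^q_x}$ and interpolating the remaining factor via Gagliardo--Nirenberg between $L^2_x$ and $\dot H^\alpha_x$. The LPS scaling is precisely what permits Young's inequality to absorb the top-order piece into $\sum_{i}\nu_i\|\cdot\|_{\dot H^\alpha_x}^2$, leaving an $L^1_t$ coefficient on $\|v\|_{L^2}^2+\|H\|_{L^2}^2$; Gronwall and passage to the mollification limit give $u,B\in C_w([0,T];L^2_x)\cap L^2(0,T;\dot H^\alpha_x)$.

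For \emph{Step 2}, let $(u^{(i)},B^{(i)})$, $i=1,2$, be two solutions with the same initial data and noise. The assumption $\nu_1=\nu_2=:\nu$ allows the Els\"asser reformulation $z_\pm^{(i)}:=u^{(i)}\pm B^{(i)}$, which satisfies the symmetric coupled system
\begin{equation*}
\partial_t z_\pm^{(i)}+\nu(-\Delta)^\alpha z_\pm^{(i)}+(z_\mp^{(i)}\cdot\nabla)z_\pm^{(i)}+\nabla\pi_\pm^{(i)}=\dot W^{(1)}\pm\dot W^{(2)}.
\end{equation*}
The differences $\zeta_\pm:=z_\pm^{(1)}-z_\pm^{(2)}$ solve a pathwise deterministic Cauchy problem with zero initial data and no stochastic forcing. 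Using the Leray--Hopf regularity from Step 1, the energy identity
\begin{equation*}
\tfrac12\tfrac{d}{dt}\sum_{\pm}\|\zeta_\pm\|_{L^2_x}^2+\nu\sum_{\pm}\|\zeta_\pm\|_{\dot H^\alpha_x}^2=\sum_{\pm}\bigl\langle(\zeta_\mp\cdot\nabla)\zeta_\pm,\,z_\pm^{(2)}\bigr\rangle
\end{equation*}
is rigorously justified, with the transport terms $\langle(z_\mp^{(1)}\cdot\nabla)\zeta_\pm,\zeta_\pm\rangle$ vanishing by $\nabla\cdot z_\mp^{(1)}=0$ and integration by parts producing the trilinear form on the right-hand side. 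Estimating this right-hand side by the same H\"older/Gagliardo--Nirenberg scheme as in Step 1 and absorbing the top-order contribution into the dissipation yields
\begin{equation*}
\tfrac{d}{dt}\sum_{\pm}\|\zeta_\pm\|_{L^2}^2 \le C\bigl(\|z_+^{(2)}\|_{W^{s,p}_x}+\|z_-^{(2)}\|_{W^{s,p}_x}\bigr)^\gamma \sum_{\pm}\|\zeta_\pm\|_{L^2}^2.
\end{equation*}
For $\gamma<\infty$ the coefficient is in $L^1_t$ and Gronwall forces $\zeta_\pm\equiv 0$; for the endpoint $\gamma=\infty$, smallness on sufficiently short intervals (using the strong continuity $z_\pm^{(2)}\in C_tW^{s,p}_x$) combined with iteration across $[0,T]$ yields the same conclusion.

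The most delicate technical point is the rigorous derivation of the energy identity for $\zeta_\pm$ starting from the weak formulation in $X_{(0,T)}^{s,\gamma,p}$: in this low-regularity class, a DiPerna--Lions-type mollification plus commutator argument is required to rule out anomalous dissipation in the trilinear nonlinearity. The assumption $\nu_1=\nu_2$ is indispensable because it diagonalizes the dissipation in Els\"asser variables; when $\nu_1\ne \nu_2$, cross terms of the form $\nu_-(-\Delta)^\alpha z_\mp^{(i)}$ appear in each Els\"asser equation, producing uncontrollable contributions in the difference equation in the LPS-critical regime. The endpoint case $\gamma=\infty$ equally requires the continuity-based replacement of Gronwall described above, while the additive $H^4$-regular noise causes no difficulty, since subtracting the stochastic convolutions reduces the entire analysis to a pathwise deterministic problem that runs $\omega$-by-$\omega$.
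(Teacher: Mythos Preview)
Your two-step plan is sound in outline, and Step 2 (Els\"asser weak--strong uniqueness) is a perfectly valid alternative to the paper's route, which proves the same thing in the original variables by citing \cite[Lemma~B.4]{ZZL-MHD-sharp}. The substantive difference, and the place where your argument has a real gap, is Step 1.

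You propose to upgrade the weak solution $(u,B)\in X^{s,\gamma,p}_{(0,T)}$ to the Leray--Hopf class by mollifying and testing the $(v,H)$-equation against itself. The difficulty is circularity: to run the Serrin-type bound
\[
\bigl|\langle (u\cdot\nabla)v,\,v\rangle\bigr|\;\le\;\|u\|_{L^q_x}\,\|v\|_{L^2_x}^{1-\theta}\,\|v\|_{\dot H^\alpha_x}^{1+\theta}
\]
and absorb via Young, you already need $v\in L^2_t\dot H^\alpha_x$, which is exactly the conclusion. Working at the mollified level does not escape this: the commutator $\langle(v\otimes v)_\epsilon-v_\epsilon\otimes v_\epsilon,\nabla v_\epsilon\rangle$ pairs the unmollified $v$ against $\nabla v_\epsilon$, and controlling it uniformly in $\epsilon$ again requires a priori energy-class (or Besov $B^{1/3+}_{3,\infty}$-type) regularity that $L^\gamma_tL^q_x$ alone does not provide. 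Your final paragraph locates the ``most delicate technical point'' in the energy identity for $\zeta_\pm$, but that step is routine once both solutions are Leray--Hopf; the genuine obstacle is establishing Leray--Hopf membership for the \emph{given} weak solution in the first place.

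The paper sidesteps this circularity by an indirect construction. It freezes $(u,B)$ as coefficients in a \emph{linearized} system (equation \eqref{8.3}), builds a solution $(v,H)\in C_w L^2_x\cap L^2_t\dot H^\alpha_x$ to that linear problem by Galerkin approximation (Lemma \ref{existence}), and then observes that $(u,B)$ itself solves the same linear system; the difference $(u-v,B-H)$ satisfies a homogeneous linear transport-diffusion equation with LPS-class coefficients and zero data, which vanishes by \cite[(B.7)]{ZZL-MHD-sharp}. This identifies $(u,B)$ with the Galerkin limit and hence places it in the Leray--Hopf class without ever testing the nonlinear equation against a low-regularity solution. If you want to keep your direct approach, you would need to supply a genuine a priori argument (e.g.\ a duality or linearization device of this kind) rather than the mollification-plus-commutator sketch.
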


\subsection{Vanishing noise limit}
Our last result is concerned with the vanishing noise limit problem,
which relates both stochastic and deterministic solutions to MHD.

Let $\mathscr{D}_{MHD}^{\widetilde{\beta}}$, $\wt \beta>0$,
denote the set of all mean-free solutions
$(u,B)\in H_{t,x}^{\widetilde{\beta}}\times H_{t,x}^{\widetilde{\beta}}$
to the deterministic MHD model
	\begin{eqnarray}\label{1.3.1}
	\left\{\begin{array}{lc}
		\mathrm{d} u+\big(\nu_{1}(-\Delta)^{\a}u+(u\cdot \nabla)u-(B\cdot \nabla )B\big) \mathrm{d} t+\nabla P \mathrm{d} t =0,\\
		\mathrm{d}  B+\big(\nu_{2}(-\Delta)^{\a}B+(u\cdot \nabla )B-(B\cdot \nabla )u\big)\mathrm{d} t =0,\\
		\div u=0, \quad\div B=0,\\
		u(0)=u_{0},\quad B(0)=B_{0},
	\end{array}\right.
\end{eqnarray}
where $\alpha \in [1,3/2)$ and
$(u_{0}, B_{0})\in L_{\sigma}^{2}\times L_{\sigma}^{2}$. 
Let $\mathscr{A}^{r,\wt \beta}_{SMHD}$  denote the accumulation points,
where the limit is taken in the space $\Lo^{2r}_{\om}L_{t}^{2}L_{x}^{2}$,
of global solutions $(u^{(\epsilon_{n})}, B^{(\epsilon_{n})})$
with finite $r$-th moment to the stochastic MHD
(\ref{1.1}),
where $W^{(i)}$ is replaced by $\epsilon_{n} W^{(i)}$, $i=1,2$.

The following result shows that
the accumulation points of stochastic MHD contain all deterministic solutions
to MHD in the class $H_{t,x}^{\widetilde{\beta}}\times H_{t,x}^{\widetilde{\beta}}$ 
for some $\wt \beta>0$.
In particular, it contains the recently constructed solutions  
to (hyper-dissipative) MHD  in \cite{ZZL-MHD,ZZL-MHD-sharp}.

\begin{theorem}
[Vanishing noise limit] \label{Theorem Vanishing noise}.
	Let $\alpha\in [1,3/2)$, $r>1$ and $\wt \beta\in (0,1)$.
Then, we have
	\begin{equation}
		 \mathscr{D}_{MHD}^{\widetilde{\beta}}\subseteq  \mathscr{A}^{2r, \widetilde{\beta}}_{SMHD}.
    \end{equation}
	That is, for any mean-free weak solution $(u,B)\in H_{t,x}^{\widetilde{\beta}}\times H_{t,x}^{\widetilde{\beta}}$ to the deterministic MHD (\ref{1.3.1}), there exists
	a sequence of stochastic solutions $\left\{(u^{(\epsilon_{n})}, B^{(\epsilon_{n})})\right\}_{{n}}$ to the stochastic MHD (\ref{1.1}) with $\epsilon_{n}W^{(i)}$ replacing $W^{(i)}$, $i=1,2$,
	such that
	\begin{equation}\label{1.3.3}
		(u^{(\epsilon_{n})}, B^{(\epsilon_{n})})\rightarrow (u, B) \quad\text{strongly in } \ \
          \Lo^{2r}_{\om}L_{t}^{2}L_{x}^{2} \times \Lo^{2r}_{\om}L_{t}^{2}L_{x}^{2},
		\ \ as\ \epsilon_n \to 0.
	\end{equation}
\end{theorem}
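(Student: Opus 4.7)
The plan is to combine Theorem \ref{Thm-Nonuniq-Hyper} applied to the stochastic MHD system with the noise rescaled by $\epsilon_n$, together with a density argument for smooth approximation of the deterministic solution and the fact that the stochastic convolution vanishes as the noise intensity tends to zero.

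\textbf{Smooth approximation.} Since $(u,B) \in H^{\widetilde{\beta}}_{t,x} \times H^{\widetilde{\beta}}_{t,x}$, both components lie in $L^{2}_{t}L^{2}_{x}$. First I would construct divergence-free and mean-free smooth fields
$(v_n, H_n) \in C_0^\infty((0,T] \times \T^3) \times C_0^\infty((0,T] \times \T^3)$
by multiplying $(u,B)$ by a smooth time cutoff supported in $[1/n, T]$ and then applying a spatial mollifier at scale $1/n$ (which automatically preserves the divergence-free and mean-free conditions on the torus). Then $(v_n, H_n) \to (u, B)$ strongly in $L^2_t L^2_x \times L^2_t L^2_x$ as $n \to \infty$.

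\textbf{Applying the non-uniqueness construction.} For each $n$, apply Theorem \ref{Thm-Nonuniq-Hyper} to the rescaled noise $\epsilon_n W^{(i)}$, with initial data $(u_0, B_0)$, smooth target fields $(v_n, H_n)$, and small tolerance $1/n$. This yields a probabilistically strong and analytically weak solution $(u^{(\epsilon_n)}, B^{(\epsilon_n)})$ to \eqref{1.1} with noise $\epsilon_n W^{(i)}$ satisfying
\[
\|u^{(\epsilon_n)} - v_n - z_1^{(\epsilon_n)}\|_{L^{2r}_\omega L^2_t L^2_x} + \|B^{(\epsilon_n)} - H_n - z_2^{(\epsilon_n)}\|_{L^{2r}_\omega L^2_t L^2_x} < \frac{1}{n},
\]
where $z_i^{(\epsilon_n)}$ denotes the stochastic convolution associated with $\epsilon_n W^{(i)}$.

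\textbf{Vanishing convolution and conclusion.} The stochastic convolution scales linearly in the noise intensity, so
\[
\|z_i^{(\epsilon_n)}\|_{L^{2r}_\omega L^2_t L^2_x} = \epsilon_n\, \|z_i\|_{L^{2r}_\omega L^2_t L^2_x} \longrightarrow 0 \quad \text{as } \epsilon_n \to 0.
\]
Decomposing $u^{(\epsilon_n)} - u = (u^{(\epsilon_n)} - v_n - z_1^{(\epsilon_n)}) + (v_n - u) + z_1^{(\epsilon_n)}$ (and analogously for $B$) and using the triangle inequality, each of the three contributions tends to zero. Choosing $\epsilon_n \to 0$ sufficiently fast relative to $1/n$ gives the strong convergence \eqref{1.3.3}.

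The hard part will be obtaining closeness in the critical scaling-invariant space $L^{2r}_\omega L^2_t L^2_x$, which corresponds to $(s,\gamma,p) = (0,2,2)$ lying on the boundary of $\mathcal{S}$ and hence is not directly provided by \eqref{1.5}. To deal with this, one must track the convex integration iteration in the proof of Theorem \ref{Thm-Nonuniq-Hyper}: the successive perturbations $w_{q+1} = u_{q+1} - u_q$ should be summable in $L^{2r}_\omega L^2_t L^2_x$ (which is natural thanks to the geometric decay of $\delta_q$ built into the scheme), and the starting iterate $u_0$ should be arranged so as to be close to $v_n + z_1^{(\epsilon_n)}$ in this norm. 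No essentially new estimate is required beyond those underlying Theorem \ref{Thm-Nonuniq-Hyper}: the vanishing noise limit is then reduced to careful bookkeeping of the tolerance at the start of the iteration combined with the linear scaling of the noise.
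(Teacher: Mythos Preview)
There is a genuine gap in the argument, and it is not merely bookkeeping.

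First, a smaller but concrete error: the quantity $z_i^{(\epsilon_n)}$ as defined in \eqref{1.4.1} does \emph{not} tend to zero as $\epsilon_n\to 0$. It splits as $z_i^{(\epsilon_n)}=z_i^{u/B}+\epsilon_n Z_i$, where $z_1^u(t)=e^{t(-\nu(-\Delta)^\alpha-I)}u_0$ is the deterministic damped heat evolution of the initial datum; only the stochastic integral part $\epsilon_n Z_i$ scales with the noise. Hence in your decomposition $u^{(\epsilon_n)}-u=(u^{(\epsilon_n)}-v_n-z_1^{(\epsilon_n)})+(v_n-u)+z_1^{(\epsilon_n)}$ the last term converges to $z_1^u\not\equiv 0$, and the three pieces do not all vanish.

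Second, and more seriously, the $L^{2r}_\omega L^2_t L^2_x$ closeness cannot be obtained by running the iteration of Theorem~\ref{Thm-Nonuniq-Hyper} from level $q=0$ with an arbitrary smooth pair $(v_n,H_n)$. The increment bound \eqref{2.14} gives $\|\widetilde u-\widetilde u_0\|_{L^{2r}_\omega L^2_t L^2_x}\le M^*\sum_{q\ge 0}\delta_{q+1}^{1/2}$, and this sum is dominated by $\delta_1^{1/2}=\lambda_1^{\beta/2}$, which in the proof of Theorem~\ref{Thm-Nonuniq-Hyper} is forced to be \emph{large}: the parameter $a$ is chosen so that $\delta_1$ absorbs the initial Reynolds stress $\mathring R_0^u$, and for a generic smooth $(v_n,H_n)$ this stress (containing $\mathcal R^u(\partial_t v_n+\nu(-\Delta)^\alpha v_n)$ and the full nonlinear terms) has no reason to be small. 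So tracking the iteration does not help unless the \emph{starting} stress is already small.

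The paper resolves this by a different mechanism: it initializes the iteration at a \emph{late} level $q=n$ with $\widetilde u_n:=u_n-z^u_{1,n}$, where $u_n$ is a mollification of the given deterministic solution $u$ at scale $\lambda_n^{-1}$. Because $(u,B)$ actually solves \eqref{1.3.1}, the linear part $\partial_t u_n+\nu(-\Delta)^\alpha u_n$ equals the mollified nonlinearity up to a pressure, and the resulting Reynolds stress at level $n$ reduces to a mollification commutator of size $O(\lambda_n^{-2\widetilde\beta})$ (this is exactly where the hypothesis $(u,B)\in H^{\widetilde\beta}_{t,x}$ is used) plus noise contributions of size $O(\epsilon_n)$. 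Choosing $\epsilon_n=\lambda_n^{-1}$ then verifies \eqref{2.13} at level $n$, and the iteration from level $n$ onward yields $\|u^{(\epsilon_n)}-u_n\|_{L^{2r}_\omega L^2_t L^2_x}\lesssim \sum_{q\ge n}\delta_{q+1}^{1/2}\to 0$. Your outline never invokes the fact that $(u,B)$ is a solution (beyond membership in $H^{\widetilde\beta}_{t,x}$), and without it the smallness of the starting stress---hence the whole $L^2_tL^2_x$ convergence---fails.
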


\subsection{Comments on intermittent flows adapted to stochastic MHD}

As mentioned above, 
the specific geometry of MHD system 
requires a different construction of intermittent flows from those of NSE. 

More precisely, 
for the NSE, 
the intermittent Beltrami flows in \cite{BV-NS} and 
intermittent jets in \cite{BCV} have 3D intermittency, 
which can control the viscosity up to the Lions exponent. 

However, for MHD equations,  
the Maxwell component has anti-symmetric nonlinearity which is different from the symmetric one in the NS component.  
Thus, the NSE intermittent flows cannot be applied to MHD. 

Intermittent flows for MHD 
were first constructed by Beekie, Buckmaster and Vicol 
\cite{BBV-IMHD}
and have 1D spatial intermittency, 
which can control the fractional viscosity and resistivity $(-\Delta)^{\alpha}$ with $\alpha\in [0,1/2)$. 
Moreover, 
the intermittent flows in 
\cite{Yamazaki-3D-STMHD-well} 
have 2D spatial intermittency 
and can control $(-\Delta)^{\alpha}$ with $\alpha\in [0,1)$. 
For stronger intermittent flows for MHD system, we refer to \cite{MY24}. 
We also would like to mention the 
$L^\infty$ convex integration schemes 
recently developed for the ideal MHD in a series of works {\citep{FLS-IMHD2021-A, FLS-IMHD2021-ar, FLS-IMHD2018-ar}}. 

In \cite{ZZL-MHD,ZZL-MHD-sharp}, 
the viscosity and magnetic flows with 
 2D spatial intermittency and 1D temporal intermittency were constructed.  
The gained temporal intermittency is crucial to control the classical viscosity and resistivity 
$(-\Delta)$, 
and even can control the high dissipation 
$(-\Delta)^\alpha$ with $\alpha$ larger than the Lions exponent $5/4$.

The construction of temporal intermittency, on the other hand, gives rise to high temporal oscillations.
In order to balance such oscillations,
extra temporal correctors are used for both velocity and magnetic flows (see {(\ref{4.68})} below).
We also introduce 
temporal shifts in the temporal building blocks in 
\eqref{4.14} below,  
so that the asscociated temporal supports 
are disjoint. 
This permits to decouple different temporal interactions
and to avoid the complicate interaction estimates in 
the previous works \cite{ZZL-MHD,ZZL-MHD-sharp}. 

Because of the anti-symmetric nonlinearity
in the magnetic equation, 
the construction of magnetic amplitudes 
requires  a second Geometric Lemma \ref{Lemma Second Geometric} 
in small neighborhoods of the null matrix,  
which is different from the symmetric Geometric Lemma 
\ref{Lemma First Geometric} in the NSE case. 
This structural difference also leads to the construction of 
an additional matrix $\G$ (see \eqref{4.42} below) 
in the construction of velocity amplitudes, 
as well as different algebraic identities 
in Lemmas \ref{Lemma Magnetic amplitudes} and 
\ref{Lemma Velocity amplitudes}. 

Let us also mention that, 
handling prescribed initial data  
is quite delicate 
in convex integrations.  
Actually, 
in order to preserve the initial data of approximate 
relaxed solutions, 
temporal truncations of perturbations 
are used in the convex integration scheme. 
The input of noise makes the situation more complicated, 
and the associated Reynolds 
and magnetic stresses tend to explode 
in the super-norm 
near the initial time. 
In order to control such large errors, 
insipred by \cite{CDZ,HZZ-3}, 
we consider the Reynolds errors in two different 
temporal regimes: 
the regime near the initial time 
and the regime away from the initial time. 
The detailed analysis is contained 
in Subsection \ref{away from zero} and \ref{near zero} 
below.

\medskip 
The remainder of this 
paper is organized as follows. 
In Section \ref{Sec-Main-Iteration}, 
we present the main iterative estimates in the convex integration scheme, which is crucial to the construction of distinct probabilistically strong solutions. 
Then, Section \ref{Sec-Regul-Noise} treats the stochastic convolutions mainly in the hyper-dissipative case. 
Sections \ref{Sec-Vel-Mag} 
and \ref{Sec-Reynolds} constitute the main part of this paper, which contain the constructions of velocity and magnetic intermittent flows and of the Reynolds and magnetic stresses. 
The key iterative estimates are also verified there. 
In Section \ref{Sec-Proof-Main}, we prove the main results in Theorems \ref{Thm-Nonuniq-Hyper}, 
\ref{uniq-sharp}, and \ref{Theorem Vanishing noise}. 
Finally, the Appendix contains some standard tools in convex integration 
used in this paper.

\medskip
\paragraph{\bf Notations}
Let $\bbn_{0}:=\bbn\cup \{0\}$.
For $p \in[1,+\infty]$, $N\in \bbn_{0} \cup \{\infty\}$ and $s \in \mathbb{R}$. For simplicity, we use some shorthand notations as follows.
\begin{itemize}
\item $C_{c,\sigma}^{N}:=	C_{c,\sigma}^{N}(\[0, T\]\times \T^{3}),$ where the indices $c$ and $\sigma$ mean ``compact support in time'' and ``divergence-free'', respectively;\\
\item $L_x^p:=L_x^p(\mathbb{T}^3), \,\, H_x^s:=H_x^s(\mathbb{T}^3), \,\, W_x^{s, p}:=W_x^{s, p}(\mathbb{T}^3)$ represent usual Sobolev spaces;\\
\item $\fint_{\T^n} u \mathrm{~d} x :=\left|\T^n\right|^{-1} \int_{\T^n} u \mathrm{~d} x$;\\
\item $\langle\cdot,\cdot\rangle$ denotes the inner product in $L_{x}^{2}$ and also the duality between $H^{-\a}_{x}$ and $H^{\a}_{x}$;\\
\item $L^p(I; X)$ represents the space of integrable functions from $I$ to $X$, equipped with the usual $L^p$-norm (with the usual adaptation when $p=\infty$) for any Banach space $X$, and we write $L^p_{I}X :=L^p(I; X)$ for simplicity;\\
\item $C(I; X)$ stands for the space of continuous functions from $I$ to $X$
equipped with the supremum norm
and write $C_{I}X:= C(I; X)$ and
$C_{I,x}:=C_{I}C_{x}$;\\
\item When $I=[0, T]$ 
with $T\in (0, \infty)$, we simply
write $L^{p}_{t}:=L^{p}_{[0, T]}$, and $C_{t}:=C_{[0, T]}$;\\
\item We use the following notations of norms:
\begin{align*}
		\|u\|_{L_{t, x}^p}:=\|u\|_{L_{t}^pL_{ x}^p}, \quad
	\|u\|_{\dot{C}_{t, x}^N}:=\sum_{0 \leq m+|\zeta|= N}\|\partial_t^m \nabla^\zeta u\|_{C_{t, x}},\quad
	\|u\|_{C_{t, x}^N}:=\sum_{m=0}^{N}\|u\|_{\dot{C}_{t, x}^m},
\end{align*}
and
\begin{align*}
\|u\|_{W_{x}^{N, p}}:=\sum_{0 \leq |m| \leq N}\|\nabla^m u\|_{L_{x}^p},\quad
	\|u\|_{W_{t,x}^{N, p}}:=\sum_{0 \leq n+|\zeta| \leq N}\|\p_{t}^{n}\nabla^{\zeta} u\|_{L_{t}^pL_{x}^p},
\end{align*}
where $\zeta=\left(\zeta_1, \zeta_2, \zeta_3\right)$ is the multi-index and $\nabla^\zeta:=\partial_{x_1}^{\zeta_1} \partial_{x_2}^{\zeta_2} \partial_{x_3}^{\zeta_3}$;\\
\item We also use the product space $\mathcal{L}_{x}^{p}:=L_{x}^{p}\times L_{x}^{p}$. Similar notations apply to $\mathcal{L}_{t,x}^{p}$, 
$\mathcal{W}_{x}^{s,p}$, $\mathcal{H}_{t,x}^{s}$, $\mathcal{C}_{x}^{N}$, $\mathcal{C}_{t,x}^{N}$;\\
\item For any $T\in (0, \infty)$, $I\subset \[0, T \]$,
and $p$, $q\geq 1$,
let
\begin{equation}\label{1.6}
	\begin{aligned}
		\Lo^{p}_{\om}L^{q}_{I}X:=\left\{u: \om\times I\rightarrow X\mid \|u\|_{\Lo^{p}_{\om}L^{q}_{I}X}:=\(\mathbb{E}\|u\|_{L^{q}_{I}X}^{p}\)^{\frac{1}{p}}<\infty\right\},
	\end{aligned}
\end{equation}
and define $\Lo^{p}_{\om}C_{I}^{N}X$ and $\Lo^{p}_{\om} C_{I,x}^{N}$
in an analogous way;\\
\item For any vector fields $u$ and $v$, 
the corresponding second order tensor product is defined by
$$
u \otimes v:=(u_i v_j)_{1 \leq i, j \leq 3},
$$
and for any second-order tensor $A=(a_{i j})_{1 \leq i, j \leq 3}$ 
let
$$
\operatorname{div} A:=\(\sum_{j=1}^3 \partial_{x_j} a_{1 j}, \sum_{j=1}^3 \partial_{x_j} a_{2 j}, \sum_{j=1}^3 \partial_{x_j} a_{3 j}\)^{\top};
$$ 
\item We use the notation $a\lesssim b$ if there exists a constant $C>0$ such that $a\leq Cb$.
\end{itemize}

\section{Main iterative estimates} \label{Sec-Main-Iteration}

This section contains the main iterative estimates which is crucial in the proof of Theorem \ref{Thm-Nonuniq-Hyper}. 
For the convenience of simplicity, 
we consider the case where $\nu_{1}=\nu_{2}=\nu$ 
in \eqref{1.1}. 
The general case 
where $\nu_{1}\neq\nu_{2}$ 
in Theorems \ref{Thm-Nonuniq-SMHD}, \ref{Thm-Nonuniq-Hyper} and  \ref{Theorem Vanishing noise} can be proved analogously with only slight modifications. 

Let $z_{i}$, $i=1,2$, denote the stochastic convolutions of $W^{(i)}$:
\begin{equation}\label{1.4.1}
	\begin{aligned}
		&z_{1}(t)=e^{t\(-\nu(-\Delta)^{\a}-I\)}u_{0}+\int_{0}^{t}e^{(t-s)\(-\nu(-\Delta)^{\a}-I\)}\mathrm{d} W^{(1)}(s),\\
		&z_{2}(t)=e^{t\(-\nu(-\Delta)^{\a}-I\)}B_{0}+\int_{0}^{t}e^{(t-s)\(-\nu(-\Delta)^{\a}-I\)}\mathrm{d} W^{(2)}(s),\quad t\in [0, T].
	\end{aligned}
\end{equation}
Or, equivalently,
$z_{1}$ and $z_{2}$ solve the linear stochastic equations
\begin{eqnarray}\label{1.2}
	\left\{\begin{array}{lc}
		\mathrm{d} z_{1}+\(\nu(-\Delta)^{\a}+1\)z_{1} \mathrm{d} t=\mathrm{d} W^{(1)}(t),\\
		\mathrm{d}  z_{2}+\(\nu(-\Delta)^{\a}+1\)z_{2} \mathrm{d} t=\mathrm{d} W^{(2)}(t),\ \ t\in [0,T], \\
		\div\, z_{1}=0, \quad\div\, z_{2}=0, \\
		z_{1}(0)=u_{0}, \quad z_{2}(0)=B_{0}. 
	\end{array}\right.
\end{eqnarray}
For simplicity, set
\begin{align}
	&z_{1}^{u}(t):=e^{t\(-\nu(-\Delta)^{\a}-I\)}u_{0},\quad
	z_{2}^{B}(t):=e^{t\(-\nu(-\Delta)^{\a}-I\)}B_{0},\label{1.4.2}\\
	&Z_{i}(t):=\int_{0}^{t}e^{(t-s)\(-\nu(-\Delta)^{\a}-I\)}\mathrm{d} W^{(i)}(s),\quad i=1,2 \quad t\in [0, T], \label{1.4.3}
\end{align}
and thus
\begin{align*}
  & z_1(t) = z^u_1(t) + Z_1(t), \ \
    z_2(t) = z^B_2(t) + Z_2(t).
\end{align*}
We extend $z_{i}(t)$ to zero for $t<0$ and $t>T$, $i=1,2$. 

The frequency truncated versions of the stochastic convolutions are defined by 
\begin{equation}\label{2.3-1}
	Z_{i,q}:=\bbp_{\leq \la^{15}}~Z_{i},\ \ i=1,2,
\end{equation}
and define the approximate solutions to $z_1$ and $z_2$ by
\begin{align*}
   z_{1,q}:=z_{1}^{u}+Z_{1,q},\quad z_{2,q}:=z_{2}^{B}+Z_{2,q}. 
\end{align*}  
Note that by the Sobolev embedding $H_{x}^{2} \hookrightarrow L_{x}^{\infty}$  and (\ref{2.1}),
one has
 \begin{equation}\label{2.6}
	\|Z_{i,q}\|_{\Lo^{p}_{\om}C_{t}L_{x}^{\infty}}
	\leq
	\|Z_{i,q}\|_{\Lo^{p}_{\om}C_{t}H_{x}^{2}}
	\lesssim  (p-1)^{\frac 12}L,\quad i=1,2.
\end{equation}

Using the random shift
\begin{align}  \label{uB-z-shift}
	u = \Bu + z_{1},\ \ B = \Bb+ z_{2},
\end{align} 
we reformulate the original stochastic MHD system
\eqref{1.1} as follows 
\begin{eqnarray}   \label{1.3}
	\left\{\begin{array}{lc}
		\p_{t}\Bu+\nu(-\Delta)^{\a}\Bu-z_{1}+\big((\Bu+z_{1})\cdot \nabla \big)(\Bu+z_{1})-\big((\Bb+z_{2})\cdot \nabla \big)(\Bb+z_{2})+\nabla P  =0,\\
	\p_{t}\Bb+\nu(-\Delta)^{\a}\Bb-z_{2}+\big((\Bu+z_{1})\cdot \nabla \big)(\Bb+z_{2})-\big((\Bb+z_{2})\cdot \nabla \big)(\Bu+z_{1}) =0,\\
		\div \Bu=0,\quad\div \Bb=0 ,\\
		 \Bu(0)=0,\quad \Bb(0)=0,\quad t\in [0, T].
	\end{array}\right.
\end{eqnarray}
It is thus equivalent to construct non-unique solutions to 
the transformed equation \eqref{1.3}.

Our construction is based on the convex integration method.
Let $r>1$, $L\geq 1$ be as in Proposition \ref{Proposition stochastic} below,
$ c $ satisfy $ q80^{q} \leq c 85^{q}$, and $q\in\bbn_{0}$. Let $ a\in \bbn$, $b\in 2\bbn$, $\beta \in (0, 1]$, and let
\begin{equation}\label{2.2}
	\la := a^{b^{q}}, \quad \dq:=\lambda_{1}^{3\beta}\la^{-2\beta}, \quad
	\varsigma_{q}:=\la^{-30}.
\end{equation}
We take $a$, $b$ large enough and $\beta$, $\delta$ small such that
\begin{equation}\label{2.2-2}
	a\geq (85\cdot 8\cdot 80rL^{2})^{c},\quad
	b>\frac{16000}{\varepsilon}, \quad
	 \beta \leq \frac{5}{2b^2},
	 \quad\delta<\frac{1}{60},
\end{equation}
where  $\varepsilon \in \bbq_{+}$ is sufficiently small such that for the given $(s,\gamma,p) \in S$,
\begin{equation}\label{2.3}
	\varepsilon\leq \frac{1}{20}\min \left\{\frac{3}{2}-\a, \frac{2\a}{\gamma}+\frac{2\a-2}{p}-(2\a-1)-s \right\},
\ \  b(2-2\a-10\varepsilon),\ \  b\varepsilon \in \bbn.
\end{equation}

Now we consider the relaxed random MHD-Reynolds system: for each integer $ q\in \bbn $,
\begin{eqnarray}\label{2.5}
	\left\{\begin{array}{lc}
	\p_{t}\Bu_{q}+\nu(-\Delta)^{\a}\Bu_{q}-z_{1,q}+\div\big((\Bu_{q}+z_{1,q})\otimes(\Bu_{q}+z_{1,q})-(\Bb_{q}+z_{2,q})\otimes(\Bb_{q}+z_{2,q})\big)+\nabla P_{q}  =\div \ru^{u},\\
		\p_{t}\Bb_{q}+\nu(-\Delta)^{\a}\Bb_{q}-z_{2,q}+\div\big((\Bu_{q}+z_{1,q})\otimes(\Bb_{q}+z_{2,q})-(\Bb_{q}+z_{2,q})\otimes(\Bu_{q}+z_{1,q})\big) =\div \ru^{\Bb},\\
		\div \Bu_{q}=0,\quad \div \Bb_{q}=0 ,\\
			\Bu_{q}(0)=0,\quad \Bb_{q}(0)=0.
	\end{array}\right.
\end{eqnarray}
For $t<0$ and $t>T$, we let $\Bu_{q}$, $\Bb_{q}$, $\ru^{u}$, $\ru^{\Bb}$ and $z_{iq}$ be zero, $i=1,2$.

Given $(u_0, B_0) \in L^2_\sigma \times L^2_\sigma$,
let $M\geq 1$ be such that
\begin{equation}\label{in}
\|u_{0}\|_{L^{2}_{x}}+\|B_{0}\|_{L^{2}_{x}}\leq M.
\end{equation}

The following main iterative estimates are crucial in the proof of Theorem \ref{Thm-Nonuniq-Hyper}.
As in \citep{HZZ-2},
we control the moment of relaxed solutions in a quantitative way
in the convex integration scheme.

\begin{proposition}(Main iterative estimates) \label{Proposition Main iteration}
Let  $\a\in[1, 3/2) $, $(s,p,\ga)\in \mathcal{S}$, 
$r> 1$, $T\in(0, \infty)$. 
Let $M^*$ be large enough 
and $(\Bu_{q},\Bb_{q},\ru^{u},\ru^{B})$ be a  solution to
the relaxed random MHD-Reynolds system (\ref{2.5}) for some $ q\in\bbn_{0}  $  satisfying that for any $ 0\leq N\leq 4 $ and any $ m\geq 1 $,
\begin{align}
		&\|(\Bu_{q}, \Bb_{q})\|_{\Lo^{m}_{\om}\mathcal{C}_{t,x}^{N}}
		\leq
		\la^{{2\a(N+1)+2}}\(8(5N+22)mL^{2}80^{q-1}\)^{(5N+22)80^{q-1}},\label{2.11}\\
		&\|(\Ru^{u}_{q},\Ru^{B}_{q})\|_{\Lo^{m}_{\om} C_{t}\mathcal{L}_{x}^{1}}\leq \la^{4\a+5}(8 mL^{2}80^{q})^{80^{q}},\label{2.12}\\
		&\|(\Ru^{u}_{q},\Ru^{B}_{q})\|_{\Lo^{r}_{\om} L_{t}^{1}\mathcal{L}_{x}^{1}}\lesssim \dqq, \label{2.13}
\end{align}	
where the implicit constants are independent of $q, m$ and $r$.
Then, there exists an $(\ft)$-adapted process
$(\Bu_{q+1}, \Bb_{q+1},\Ru^{u}_{q+1},\Ru^{B}_{q+1}) $
which solves (\ref{2.5}) and obeys estimates (\ref{2.11})-(\ref{2.13}) at level $ q+1 $.
In addition, it holds that
\begin{align}
	&\|(\Bu_{q+1}-\Bu_{q},\Bb_{q+1}-\Bb_{q})\|_{\Lo^{2r}_{\om} L_{t}^{2}\mathcal{L}_{x}^{2}}
	\leq M^* \delta_{q+1}^{\frac{1}{2}},	\label{2.14}\\
	&\|(\Bu_{q+1}-\Bu_{q},\Bb_{q+1}-\Bb_{q})\|_{\Lo^{r}_{\om} L_{t}^{1}\mathcal{L}_{x}^{2}}
	\leq\dqqq^{1/2},\label{2.15}\\
	&\|(\Bu_{q+1}-\Bu_{q},\Bb_{q+1}-\Bb_{q})\|_{\Lo^{r}_{\om}L_{t}^{\ga}\mathcal{W}_{x}^{s,p}}
	\leq\dqqq^{1/2}.\label{2.16}
\end{align}	
\end{proposition}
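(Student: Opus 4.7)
The proof will implement a stochastic convex integration step adapted to the MHD geometry, following the strategy outlined in the comments above. Starting from the level-$q$ tuple $(\Bu_q,\Bb_q,\ru^{u},\ru^{B})$, the construction will proceed through four stages: mollification and setup, design of the principal perturbations via two distinct geometric lemmas, assembly of the new iterate with a temporal cutoff preserving the zero initial datum, and control of the new Reynolds and magnetic stresses in two separate temporal regimes.

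First, I would mollify $(\Bu_q,\Bb_q,\ru^{u},\ru^{B})$ in space and time at a scale $\ell\ll\la^{-1}$, together with the stochastic convolutions $Z_{i,q}$, to obtain smooth approximations $(\Bu_\ell,\Bb_\ell,\rul^{u},\rul^{B},z_{i,\ell})$. The commutator and mollification errors then enter the new stresses as controllable tail terms thanks to the $\mathcal{C}_{t,x}^{N}$ bounds (\ref{2.11}). Next, a temporal cutoff vanishing on $[0,\varsigma_{q}]$ is inserted so that the perturbation is switched off near $t=0$; this automatically enforces $\Bu_{q+1}(0)=\Bb_{q+1}(0)=0$. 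The Reynolds and magnetic stresses must then be controlled on this near-zero window by a separate argument, which is what forces the super-norm bounds (\ref{2.11})--(\ref{2.12}) to carry the iterative growth factor $(mL^{2}\,80^{q})^{80^{q}}$.

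The principal perturbations are built as
\[
w_{q+1}^{u,p}=\sum_{\xi} a_\xi^{u}(\omega,t,x)\,\mathbb{W}_\xi(x)\,g_\xi(t),\qquad w_{q+1}^{B,p}=\sum_{\xi} a_\xi^{B}(\omega,t,x)\,\mathbb{D}_\xi(x)\,h_\xi(t),
\]
where $\mathbb{W}_\xi,\mathbb{D}_\xi$ are spatial building blocks with 2D concentration at frequency $\laq$, and $g_\xi,h_\xi$ are temporally intermittent functions with \emph{disjoint supports} as in (\ref{4.14}). The magnetic amplitudes $a_\xi^{B}$ are selected through the second (antisymmetric) Geometric Lemma \ref{Lemma Second Geometric} so that the leading self-interaction reproduces $-\rul^{B}$. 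Because the magnetic blocks also contribute a residual symmetric term to the momentum equation, the velocity amplitudes $a_\xi^{u}$ are then chosen via the first (symmetric) Geometric Lemma \ref{Lemma First Geometric} so as to cancel $\rul^{u}+\G$, where $\G$ is the correction matrix alluded to in the introduction. Divergence-free correctors and temporal correctors $w^{t}$ are added so that the full perturbation is solenoidal and so that the high-frequency temporal oscillations generated by $\p_{t}g_\xi$ and $\p_{t}h_\xi$ are reabsorbed into a pressure gradient, as in (\ref{4.68}). Standard $L^{p}$ estimates for the intermittent blocks, tuned through the parameter choices (\ref{2.2})--(\ref{2.3}), then deliver the difference bounds (\ref{2.14})--(\ref{2.16}) directly by H\"older in $\omega$ and $t$ and the moment bounds (\ref{2.6}) on $Z_{i,q}$.

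Finally, the new stresses $\Ru_{q+1}^{u}$ and $\Ru_{q+1}^{B}$ are recast in divergence form via an antidivergence operator $\mathcal{R}$ and split into the standard families: a linear error (transport plus hyper-dissipation applied to $w_{q+1}$), a Nash/oscillation error collecting the principal interactions not consumed by the geometric lemmas, corrector errors, a noise error from $z_{i,q+1}-z_{i,q}$ and $z_{i,\ell}-z_{i,q}$, a mollification commutator, and a temporal-cutoff error. The disjointness in time of the building blocks kills all mixed temporal interactions and bypasses the intricate interaction estimates of \cite{ZZL-MHD,ZZL-MHD-sharp}. Each term can then be estimated in $\Lo^{m}_{\om}C_{t}\mathcal{L}^{1}_{x}$ and in $\Lo^{r}_{\om}L^{1}_{t}\mathcal{L}^{1}_{x}$ using the moment estimates for $Z_{i,q}$ from Section \ref{Sec-Regul-Noise} (via Proposition \ref{Proposition stochastic}) together with the inductive hypotheses (\ref{2.11})--(\ref{2.13}). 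The hardest step, which I flag as the main obstacle, is closing the super-norm estimate (\ref{2.12}) on the near-initial window $[0,\varsigma_{q}]$: there the perturbation is off, yet the stress must absorb the full quadratic interaction of $(\Bu_q+z_{1,q})\otimes(\Bu_q+z_{1,q})-(\Bb_q+z_{2,q})\otimes(\Bb_q+z_{2,q})$ against the noise, with polynomial-in-$m$ dependence compatible with the growth $(mL^{2}\,80^{q})^{80^{q}}\mapsto(mL^{2}\,80^{q+1})^{80^{q+1}}$. I would handle this via the two-regime split announced in the introduction, analysing the away-from-zero and near-zero regimes separately and patching them at the scale $\varsigma_{q}$.
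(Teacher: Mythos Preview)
Your outline matches the paper's approach almost exactly: mollification at scale $\ell=\la^{-60}$, intermittent shear flows with temporal concentration and disjoint temporal supports, magnetic amplitudes via the antisymmetric Geometric Lemma~\ref{Lemma Second Geometric}, velocity amplitudes via the symmetric Lemma~\ref{Lemma First Geometric} together with the auxiliary matrix $\G$, incompressibility and temporal correctors with the cutoff $\Theta_{q+1}$, and the decomposition of the new stresses into linear, oscillation, corrector and commutator errors.

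There is one misidentification worth correcting. The two-regime temporal split is used to verify the \emph{decay} estimate (\ref{2.13}) in $\Lo^{r}_{\om}L^{1}_{t}\mathcal{L}^{1}_{x}$, not the growth estimate (\ref{2.12}) in $\Lo^{m}_{\om}C_{t}\mathcal{L}^{1}_{x}$. The sup-in-time bound (\ref{2.12}) is obtained uniformly over $[0,T]$ by pathwise estimates on each stress component followed by taking moments (cf.\ (\ref{5.53-0})); the near-zero window is actually \emph{easier} there because the perturbation vanishes and the stress reduces to the mollified old stress plus commutators. The genuine difficulty near $t=0$ is for (\ref{2.13}): on $[0,\varsigma_{q}/2]$ the perturbation is off, so $\rul^{u},\rul^{B}$ are not cancelled and carry the large $C_{t}L^{1}_{x}$ norm from (\ref{2.12}). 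Smallness in $L^{1}_{t}L^{1}_{x}$ is then recovered by trading the short time interval $\varsigma_{q}=\la^{-30}$ against that growth bound at level $q$, yielding $\varsigma_{q}\,\la^{4\a+5}(8rL^{2}80^{q})^{80^{q}}\lesssim\dqqq$. This is precisely why the growth estimate (\ref{2.12}) must be propagated inductively even though it is not itself a decay statement.
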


\medskip
The proof of Proposition \ref{Proposition Main iteration}
occupies Sections \ref{Sec-Vel-Mag} and \ref{Sec-Reynolds} below.
As a preparation,
let us mollify the velocity and magnetic fields
in order to avoid the loss of derivatives in the convex integration scheme.

Let $\varrho \in C_c^{\infty}(\bbr^3 ; \mathbb{R}_{+})$, supp $\varrho\subset B_1(0)$, and $\vartheta \in C_c^{\infty}(\bbr ; \mathbb{R}_{+})$ with ${\rm supp}\, \vartheta \subset[0,1]$.
Define the mollifiers by
\begin{align}  \label{3.1}
	\varrho_{\ell} (\cdot):= \ell^{-3} \varrho(\cdot / \ell),\ \
	\vartheta_{\ell} (\cdot):= \ell^{-1} \vartheta(\cdot / \ell),
\end{align}
where 
 \begin{equation}\label{3.2}
 	\begin{aligned}
 		\ell:=\la^{-60}.
	\end{aligned}
\end{equation}
By  (\ref{2.2}) and (\ref{2.2-2}), we have
\begin{equation}\label{3.2-1}
	\begin{aligned}
	(8\cdot 80rL^{2}80^{q})^{80^{q}}\leq \la, \quad	\ell^{-260}<\lambda_{q+1}^{\varepsilon}.
	\end{aligned}
\end{equation}

Define the spacial-temporal mollifications of $ \Bu_{q}$, $\Bb_{q}$,  $z_{1,q}$, $z_{2,q}$, $\Ru^{u}_{q}$, $\Ru^{B}_{q}$  as follows:
\begin{align}
       & \Bu_{\ell}:=(\Bu_{q} *_x \varrho_{\ell}) *_t \vartheta_{\ell},
        \quad \Bb_{\ell}:=(\Bb_{q} *_x \varrho_{\ell}) *_t \vartheta_{\ell}, \label{3.4-1}\\ 
         &\Ru^{u}_{\ell}:=(\Ru^{u}_{q} *_x \varrho_{\ell}) *_t \vartheta_{\ell},\quad
		\Ru^{B}_{\ell}:=(\Ru^{B}_{q} *_x \varrho_{\ell}) *_t \vartheta_{\ell},  \label{3.4-2} \\ 
		& z_{1,\ell}:=z^{u}_{1,\ell}+Z_{1,\ell},\quad z_{2,\ell}:=z^{B}_{2,\ell}+Z_{2,\ell}, \label{3.4-00} \\  
           &Z_{i,\ell}:=(Z_{i,q} *_x \varrho_{\ell}) *_t \vartheta_{\ell},\ \  i=1,2, \label{3.4-02}
\end{align} 
with 
\begin{align}  \label{3.4-00-1}
          &z^{u}_{1,\ell}:=(z^{u}_{1} *_x \varrho_{\ell}) *_t \vartheta_{\ell},\quad z^{B}_{2,\ell}:=(z^{B}_{2} *_x \varrho_{\ell}) *_t \vartheta_{\ell}, 
\end{align}
Then by (\ref{2.5}), $ (\Bu_{\ell}, \Bb_{\ell},\Ru^{u}_{\ell},\Ru^{B}_{\ell}) $ satisfies
\begin{align}\label{3.5}
	\left\{\begin{array}{lc}
	\p_{t}\Bu_{\ell}+\nu(-\Delta)^{\a}\Bu_{\ell}-z_{1,\ell}+\div\big((\Bu_{\ell}+z_{1,\ell})\otimes(\Bu_{\ell}+z_{1,\ell})-(\Bb_{\ell}+z_{2,\ell})\otimes(\Bb_{\ell}+z_{2,\ell})\big)+\nabla P_{\ell}\\
	\quad=\div( \Ru^{u}_{\ell}+\Ru^{u}_{com1}),\\
		\p_{t}\Bb_{\ell}+\nu(-\Delta)^{\a}\Bb_{\ell}-z_{2,\ell}+\div\big((\Bu_{\ell}+z_{1,\ell})\otimes(\Bb_{\ell}+z_{2,\ell})-(\Bb_{\ell}+z_{2,\ell})\otimes(\Bu_{\ell}+z_{1,\ell})\big) \\
		\quad=\div (\Ru^{B}_{\ell}+\Ru^{B}_{com1}),\\
		\div \Bu_{\ell}=0,\quad\div \Bb_{\ell}=0 ,\\
		 \Bu_{\ell}(0)=0,\quad\Bb_{\ell}(0)=0.
	\end{array}\right.
\end{align}
where the traceless symmetric commutator stress $ \Ru^{u}_{com1} $ and the skew-symmetric commutator stress $ 	\Ru^{B}_{com1} $ are of form
\begin{equation}\label{3.7}
	\begin{aligned}
	\Ru^{u}_{com1}:=&(\Bu_{\ell}+z_{1,\ell})\mathring{\otimes}(\Bu_{\ell}+z_{1,\ell})-(\Bb_{\ell}+z_{2,\ell})\mathring{\otimes}(\Bb_{\ell}+z_{2,\ell})\\
	&-\big((\Bu_{q}+z_{1,q})\mathring{\otimes}(\Bu_{q}+z_{1,q})-(\Bb_{q}+z_{2,q})\mathring{\otimes}(\Bb_{q}+z_{2,q})\big)*_x \varrho_{\ell} *_t \vartheta_{\ell},
	\end{aligned}
\end{equation}
\begin{equation}\label{3.8}
	\begin{aligned}
		\Ru^{B}_{com1}:=&(\Bb_{\ell}+z_{2,\ell})\otimes(\Bu_{\ell}+z_{1,\ell})-(\Bu_{\ell}+z_{1,\ell})\otimes(\Bb_{\ell}+z_{2,\ell})\\
		&-\big((\Bb_{q}+z_{2,q})\otimes(\Bu_{q}+z_{1,q})-(\Bu_{q}+z_{1,q})\otimes(\Bb_{q}+z_{2,q})\big)*_x \varrho_{\ell} *_t \vartheta_{\ell},
	\end{aligned}
\end{equation}
and the pressure $	P_{\ell} $ is given by
\begin{equation}\label{3.6}
	\begin{aligned}
		P_{\ell}:=\left(P_{q} *_x \varrho_{\ell}\right) *_t \vartheta_{\ell}
   - \frac 13 (|\Bu_{\ell}+z_{1,\ell}|^{2}-|\Bb_{\ell}+z_{2,\ell}|^{2})
   +\frac 13\big((|\Bu_{q}+z_{1,q}|^{2}-|\Bb_{q}+z_{2,q}|^{2})*_x \varrho_{\ell}\big) *_t \vartheta_{\ell}.
	\end{aligned}
\end{equation}
For simplicity, we also use the notations
\begin{align}  \label{Def-Jq-wtJq}
 J_{q}:=\|(\Ru^{u}_{q},\Ru^{B}_{q})\|_{C_{t} \mathcal{L}_{x}^{1}} \quad
  \widetilde{J}_{q}:=\|(\Ru^{u}_{q},\Ru^{B}_{q})\|_{L_{t}^{1}\mathcal{L}_{x}^{1}} .	
\end{align}

\section{Regularity of stochastic convolutions}   \label{Sec-Regul-Noise}

This section mainly contains the estimates of the moments of stochastic convolutions.

\begin{proposition}\label{Proposition stochastic}
	Suppose that $ Tr(G_{i}G_{i}^{*})<\infty $, $ i=1,2$.
Then, for any $\delta\in (0, 1/2)$, $\kappa\in [0, \a)$, $p\geq 2$,
\begin{align}\label{2.1}
			\mathbb{E}\left[ \|Z_{i}\|^{p}_{C_{t}H_{x}^{\kappa+4}}
			+ \|Z_{i}\|^{p}_{C_{t}^{1/2-\delta} L_{x}^{2}}\right]
			\leq (p-1)^{p/2}L^{p},
\end{align}
	where $ L \geq 1 $ depends on $ Tr(G_{i}G_{i}^{*}) $, $ \delta $, $\kappa$, $\a$, and $\nu$ are independent of $ p $.
\end{proposition}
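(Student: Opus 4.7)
The plan is to combine the Da Prato--Kwapień--Zabczyk factorization method with the sharp Gaussian moment inequality. Since $Z_i$ is, as a stochastic integral against the Wiener process $W^{(i)}$, a centered Gaussian random element of each of the separable Banach spaces $C_tH_x^{\kappa+4}$ and $C_t^{1/2-\delta}L_x^2$, the Kahane--Khintchine inequality for Gaussian measures,
\begin{equation*}
\bigl(\mathbb{E}\|X\|_B^p\bigr)^{1/p}\leq \sqrt{p-1}\,\bigl(\mathbb{E}\|X\|_B^2\bigr)^{1/2},
\end{equation*}
reduces \eqref{2.1} to bounding second moments; this is what produces the sharp prefactor $(p-1)^{p/2}$ rather than the weaker $p^{p/2}$ that one would get from Burkholder--Davis--Gundy.

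For the spatial estimate, I fix $\alpha_0\in(0,\tfrac12-\tfrac{\kappa}{2\a})$, which is possible because $\kappa<\a$, and use the stochastic Fubini identity
\begin{equation*}
Z_i(t)=\frac{\sin(\pi\alpha_0)}{\pi}\int_0^t (t-s)^{\alpha_0-1}e^{(t-s)A}Y_{\alpha_0}(s)\,\mathrm{d}s,\quad Y_{\alpha_0}(s):=\int_0^s(s-r)^{-\alpha_0}e^{(s-r)A}\,\mathrm{d}W^{(i)}(r),
\end{equation*}
with $A:=-\nu(-\Delta)^{\a}-I$. Applying the It\^o isometry together with the semigroup bound $\|(-\Delta)^{\kappa/2}e^{tA}\|_{\mathscr{L}(H_x^4)}\lesssim t^{-\kappa/(2\a)}e^{-t}$ gives, uniformly in $s\in[0,T]$, that the second moment of $Y_{\alpha_0}(s)$ in $H_x^{\kappa+4}$ is controlled by $\operatorname{Tr}(G_iG_i^*)$ times a constant depending only on $\delta,\kappa,\a,\nu$ (the exponents combine as $2\alpha_0+\kappa/\a<1$, so the integral in $r$ converges). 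Choosing $q$ large with $q\alpha_0>1$, H\"older's inequality in the outer integral and Fubini in $\omega$ then yield $\mathbb{E}\|Z_i\|_{C_tH_x^{\kappa+4}}^2\lesssim L^2$.

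For the time H\"older estimate, I decompose, for $0\leq t_1<t_2\leq T$,
\begin{equation*}
Z_i(t_2)-Z_i(t_1)=(e^{(t_2-t_1)A}-I)Z_i(t_1)+\int_{t_1}^{t_2}e^{(t_2-s)A}\,\mathrm{d}W^{(i)}(s).
\end{equation*}
The first summand is bounded in $L_x^2$ by $|t_2-t_1|^{1/2-\delta}\|Z_i(t_1)\|_{H_x^{2\a(1/2-\delta)}}$ via the standard inequality $\|(e^{hA}-I)f\|_{L_x^2}\lesssim h^{\theta}\|f\|_{H_x^{2\a\theta}}$, so the spatial step just completed controls its $L^2(\Omega)$ norm by $L|t_2-t_1|^{1/2-\delta}$. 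The second summand satisfies, by the It\^o isometry, a second-moment bound $\lesssim(t_2-t_1)\operatorname{Tr}(G_iG_i^*)$. A further factorization in the time variable (equivalently, a Garsia--Rodemich--Rumsey argument adapted to Gaussian processes) then upgrades these pointwise-increment second moments to the uniform bound $\mathbb{E}\|Z_i\|_{C_t^{1/2-\delta}L_x^2}^2\lesssim L^2$.

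The delicate point, and the reason the lemma is stated with the precise $(p-1)^{p/2}$ prefactor, is the Gaussian refinement via Kahane--Khintchine: the iteration downstream treats $L$ as a constant \emph{independent} of $p$ and then takes $p$ large in a quantitative way (see the choice of $a,b$ in \eqref{2.2-2} involving $L^2$), so the BDG constant $p^{p/2}$ is too lossy. Exploiting the Gaussianity of $Z_i$ as a linear operation on the Wiener process is what closes the loop; the remaining estimates are standard consequences of the factorization method applied to the analytic semigroup generated by $\nu(-\Delta)^{\a}+I$.
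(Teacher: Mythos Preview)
Your approach is correct and takes a genuinely different route from the paper. The paper argues entirely through increments: it writes $Z_i(t_1)-Z_i(t_2)$ as two stochastic integrals, bounds the second moment of each in $L_x^2$ and in $H_x^{\kappa+4}$ via the It\^o isometry and the fractional semigroup estimate (Proposition~\ref{Proposition convolution}), uses the Hilbert-space Gaussian moment inequality on the \emph{increment} to pass to the $p$-th moment, and then applies Kolmogorov's continuity criterion; so the $(p-1)^{p/2}$ factor enters at the increment level and Kolmogorov runs \emph{after} the moment boost. Your strategy inverts this order---factorization first secures $\mathbb{E}\|Z_i\|_B^2$ for $B=C_tH_x^{\kappa+4}$, and Kahane--Khintchine is applied once to the full Banach-space norm at the end---which is more modular and leans on standard SPDE machinery, while the paper's route is more elementary and self-contained. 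One point to tighten: your temporal H\"older step is under-specified, since a second-moment increment bound $\lesssim|t_1-t_2|^{1-2\delta}$ does not feed into Kolmogorov or Garsia--Rodemich--Rumsey at $p=2$; you should either re-run the factorization with $\alpha_0\in(1/2-\delta,1/2)$ and target space $L_x^2$ (giving $Z_i\in C_t^{\alpha_0-1/q}L_x^2$ directly), or boost the increment to a fixed high $q$-moment via Gaussianity before Kolmogorov---the latter being exactly the paper's move for this piece. Either fix is routine.
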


We refer to \citep{HZZ-2} for the classical case where $\alpha =1$. 
The hyper-dissipative case where $\alpha>1$ can be proved in a similar manner.
For completeness,
we present the detailed proof below.

We shall use the following estimates of fractional heat-semigroups.
For the case $\alpha\in (0,1]$
we refer to  \citep{DV}.

\begin{proposition}\label{Proposition convolution}
		Fix $\a\in (0,\infty)$. We have that for all $1\leq m \leq n\leq \infty$, $\beta\geq 0$ and $t\geq 0$, 
	\begin{equation}\label{p1.5}
		\begin{aligned}
			\|(-\Delta)^{\frac{\beta}{2}}e^{-t(-\Delta)^{\a}}\|_{L^{m}(\T^{3})\rightarrow L^{n}(\T^{3})}\lesssim t^{-\frac{3}{2\a}\(\frac{1}{m}-\frac{1}{n}\)-\frac{\beta}{2\a}},
	\end{aligned}
\end{equation}
where the implicit constant only depends on $\a$, $\beta$, $m$, $n $.
	\end{proposition}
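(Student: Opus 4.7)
The plan is to realize $(-\Delta)^{\beta/2} e^{-t(-\Delta)^{\a}}$ as convolution on $\T^3$ with an explicit kernel, apply Young's inequality, and then reduce the kernel $L^r$ bound to a dimensionless whole-space estimate via a scaling argument. Concretely, I would denote the convolution kernel on $\T^3$ by
\[
K^{\a}_{t,\beta}(x) := \sum_{\xi \in \Z^3} |\xi|^\beta e^{-t |\xi|^{2\a}} e^{i x \cdot \xi}
\]
(with the $\xi = 0$ term vanishing when $\beta > 0$), so that $(-\Delta)^{\beta/2} e^{-t(-\Delta)^{\a}} f = K^{\a}_{t,\beta} * f$. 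Young's convolution inequality then reduces the target estimate to $\|K^{\a}_{t,\beta}\|_{L^r(\T^3)} \lesssim t^{-\frac{3}{2\a}(1-\frac{1}{r}) - \frac{\beta}{2\a}}$ with $\frac{1}{r} = 1 - \frac{1}{m} + \frac{1}{n}$.

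Next, I would compare the periodic kernel with its whole-space counterpart $K^{\a, \R^3}_{t,\beta}(x) := \int_{\R^3} |\xi|^\beta e^{-t|\xi|^{2\a}} e^{ix\cdot \xi}\, d\xi$. The change of variables $\xi = t^{-1/(2\a)} \eta$ yields the scaling
\[
K^{\a, \R^3}_{t,\beta}(x) = t^{-\frac{3+\beta}{2\a}} \Phi^{\a}_\beta\bigl(x/t^{1/(2\a)}\bigr), \qquad \Phi^{\a}_\beta(y) := \int_{\R^3} |\eta|^\beta e^{-|\eta|^{2\a}} e^{iy\cdot \eta}\, d\eta,
\]
so that $\|K^{\a, \R^3}_{t,\beta}\|_{L^r(\R^3)} = t^{-\frac{3}{2\a}(1-\frac{1}{r}) - \frac{\beta}{2\a}} \|\Phi^{\a}_\beta\|_{L^r(\R^3)}$, which is precisely the exponent in the proposition. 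Poisson summation then relates $K^{\a}_{t,\beta}$ to the periodization of $K^{\a, \R^3}_{t,\beta}$: for $t \leq 1$ the whole-space kernel is concentrated on the scale $t^{1/(2\a)} \ll 1$, so the translated copies are negligible and the periodic $L^r$ norm is controlled by the $\R^3$ one; for $t \gtrsim 1$ the Fourier series $\sum_\xi |\xi|^\beta e^{-t|\xi|^{2\a}}$ converges absolutely with an $O(1)$ bound that is absorbed into the implicit constant.

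The remaining analytic step, which I expect to be the principal obstacle, is showing $\|\Phi^{\a}_\beta\|_{L^r(\R^3)} < \infty$ uniformly in the relevant parameters. The symbol $|\eta|^\beta e^{-|\eta|^{2\a}}$ has super-exponential decay at infinity and limited smoothness only at the origin; splitting it into a smooth compactly supported piece near zero plus a Schwartz tail, the Fourier transform of the tail is Schwartz, while the Fourier transform of the near-origin piece decays like $(1+|y|)^{-3-\beta}$ by the classical computation of Fourier transforms of homogeneous distributions. This yields $\Phi^{\a}_\beta \in L^r(\R^3)$ for every $r \in [1,\infty]$. The new feature compared with the stable range $\a \in (0,1]$ treated in \cite{DV}, where one may invoke subordination and the underlying probabilistic structure, is that for $\a \in (1, 3/2)$ the fractional heat kernel is no longer sign-definite and subordination is unavailable; however, since the argument here is purely Fourier-analytic, it carries over unchanged once $\Phi^{\a}_\beta$ has been shown to be an integrable Schwartz-type function, which is the main bookkeeping task.
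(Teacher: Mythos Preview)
Your proposal follows essentially the same route as the paper: represent the operator as convolution with the periodic kernel, invoke Poisson summation to pass to the whole-space kernel, rescale to extract the power of $t$, and reduce to $\Phi^{\a}_\beta=\mathcal F^{-1}(|\eta|^{\beta}e^{-|\eta|^{2\a}})\in L^{r}(\R^{3})$ (for which the paper simply cites \cite[Lemma~2.2]{MYZ}), then conclude by Young's inequality. The only noticeable difference is that the paper uses the single inequality $\|H_{t}^{per}\|_{L^{r}(\T^{3})}\le \|H_{t}\|_{L^{r}(\R^{3})}$ uniformly in $t>0$ rather than your case split; note in passing that your ``$O(1)$ for $t\gtrsim 1$'' is not quite enough since the target bound decays in $t$, but the exponential decay of the lattice sum immediately upgrades this, so there is no real gap.
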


\begin{proof}
Let us first note that $(-\Delta)^{\frac{\beta}{2}}e^{t\(-(-\Delta)^{\a}\)}\phi=K_{t}*\phi$
for all $\phi\in L^{m}(\T^{3})$, where
\begin{align*}
	K_{t}(x)=\sum_{k\in \Z^{3}}|k|^{\beta}e^{-t|k|^{2\a}+ik\cdot x}.
\end{align*}
Let $H_{t}(x):=\F^{-1}(|k|^{\beta}e^{-t|k|^{2\a}})(x)$, $x\in \bbr^{3}$, and let $H^{per}_{t}(x)$ denote the periodic function generted by $H_{t}$
\begin{align*}
	H^{per}_{t}(x):=\sum_{k\in \Z^{3}}H_{t}(x+2\pi k).
\end{align*}
For $x\in \bbr^{3}$ fixed, by the Poisson summation formula, 
\begin{align*}
	H^{per}_{t}(x)=(\frac{1}{2\pi})^{3}\sum_{k\in \Z^{d}}\F H_{t}(k)e^{ik\cdot x}.
\end{align*}
It follows that  $H^{per}_{t}(x)=(\frac{1}{2\pi})^{3}	K_{t}(x)$, and so
\begin{align*}
\|K_{t}\|_{L^{r}(\T^{3})}=(2\pi)^{3}\|	H^{per}_{t}\|_{L^{r}(\T^{3})}\leq (2\pi)^{3}\|H_{t}\|_{L^{r}(\bbr^{3})},\quad 1\leq r \leq \infty.
\end{align*}
Note that  by the change of variables, and $\F^{-1}(|\eta|^{\beta}e^{-|\eta|^{2\a}}) \in L^{p}(\bbr^{3})$, $p\in [1, \infty]$ (see \citep[Lemma 2.2]{MYZ} ),
\begin{align*}
\|H_{t}\|_{L^{r}(\bbr^{3})}
	&= \|\F^{-1}(-|\xi|^{\beta}e^{t(-|\xi|^{2\a})})\|_{L^{r}(\bbr^{3})}\\
	&=t^{-\frac{\beta}{2\a}-\frac{3}{2\a}}\|\F^{-1}(-|\eta|^{\beta}e^{-|\eta|^{2\a}})(t^{-\frac{1}{2\a}}\cdot)\|_{L^{r}(\bbr^{3})}\\
	&\lesssim
	t^{-\frac{\beta}{2\a}-\frac{3}{2\a}+\frac{3}{2r\a}}\|\F^{-1}(-|\eta|^{\beta}e^{-|\eta|^{2\a}})\|_{L^{r}(\bbr^{3})}\\
	&\lesssim
	t^{-\frac{\beta}{2\a}-\frac{3}{2\a}+\frac{3}{2r\a}}.
\end{align*}
Thus, using  Young's inequality,  $1+\frac{1}{n}=\frac{1}{r}+\frac{1}{m}$, we come to 
\begin{align*}
	\|(-\Delta)^{\beta}e^{t\(-(-\Delta)^{\a}\)}\|_{L^{m}(\T^{3})\rightarrow L^{n}(\T^{3})}
	\leq \|K_{t}\|_{L^{r}(\T^{3})}
	&\lesssim
	t^{-\frac{\beta}{2\a}-\frac{3}{2\a}+\frac{3}{2r\a}}=t^{-\frac{3}{2\a}\(\frac{1}{m}-\frac{1}{n}\)-\frac{\beta}{2\a}}
	\end{align*}
and finish the proof.
\end{proof}

We are now ready to prove Proposition \ref{Proposition stochastic}.\\

{\bf Proof of Proposition \ref{Proposition stochastic}.}
We mainly prove (\ref{2.1}) for $i=1$, as the arguments in the case where  $i=2$ are similar.
For  $0\leq t_{2}\leq t_{1}\leq T$, $|t_{1}-t_{2}|\leq 1$, we have
\begin{align}\label{1.5.1}
	Z_{1}(t_{1})-Z_{1}(t_{2})
	=\int_{t_{2}}^{t_{1}}e^{(t_{1}-s)\(-\nu(-\Delta)^{\a}-I\)}\mathrm{d} W^{(1)}(s)+\int_{0}^{t_{2}}e^{(t_{1}-s)\(-\nu(-\Delta)^{\a}-I\)}-e^{(t_{2}-s)\(-\nu(-\Delta)^{\a}-I\)}\mathrm{d} W^{(1)}(s).
\end{align}

For  the first term on the R.H.S of (\ref{1.5.1}), by It\^{o}'s isometry and (\ref{p1.5}),  
\begin{flalign*}
	\mathbb{E}\|\int_{t_{2}}^{t_{1}}e^{(t_{1}-s)\(-\nu(-\Delta)^{\a}-I\)}\mathrm{d} W^{(1)}(s)\|_{L_{x}^{2}}^{2}
	&=\sum_{k\in \bbn} (c_{k}^{(1)})^{2}\int_{t_{2}}^{t_{1}}	\|e^{(t_{1}-s)\(-\nu(-\Delta)^{\a}-I\)}e_{k}\|_{L_{x}^{2}}^{2}\mathrm{d} s \\
	& \lesssim Tr(G_{1}G_{1}^{\ast})\(t_{1}-t_{2}\).
\end{flalign*}
Regarding the second  term of (\ref{1.5.1}),
we have
\begin{flalign*}
	&\quad\mathbb{E}\|\int_{0}^{t_{2}}e^{(t_{1}-s)\(-\nu(-\Delta)^{\a}-I\)}-e^{(t_{2}-s)\(-(-\Delta)^{\a}-I\)}\mathrm{d} W^{(1)}(s)\|_{L_{x}^{2}}^{2}\\
	&=\sum_{k\in \bbn} (c_{k}^{(1)})^{2}\int_{0}^{t_{2}}\|\int_{t_{2}-s}^{t_{1}-s}\p_{r}e^{r\(-\nu(-\Delta)^{\a}-I\)}e_{k}\mathrm{d} r\|_{L_{x}^{2}}^{2}\mathrm{d} s\\
	&\lesssim\sum_{k\in \bbn} (c_{k}^{(1)})^{2}\int_{0}^{t_{2}}e^{-2(t_{2}-s)}\left|\int_{t_{2}-s}^{t_{1}-s}\|\(-\nu(-\Delta)^{\a}-I\)e^{r\(-\nu(-\Delta)^{\a}\)}e_{k}\|_{L_{x}^{2}}\mathrm{d} r\right|^{2}\mathrm{d} s,
\end{flalign*}
which, via (\ref{p1.5}), can be bounded by
\begin{flalign*}
	\sum_{k\in \bbn} (c_{k}^{(1)})^{2}\int_{0}^{t_{2}}e^{-2(t_{2}-s)}\left|\int_{t_{2}-s}^{t_{1}-s}r^{-1}+1\mathrm{d} r\right|^{2}\mathrm{d} s.
\end{flalign*}

It follows that for $\gamma\in (0, 1/2)$, 
\begin{flalign*}
	&\quad\mathbb{E}\|\int_{0}^{t_{2}}e^{(t_{1}-s)\(-\nu(-\Delta)^{\a}-I\)}-e^{(t_{2}-s)\(-\nu(-\Delta)^{\a}-I\)}\mathrm{d} W^{(1)}(s)\|_{L_{x}^{2}}^{2}\\
	&\lesssim \sum_{k\in \bbn} (c_{k}^{(1)})^{2}\int_{0}^{t_{2}}e^{-2(t_{2}-s)}	
	\left|\int_{t_{2}-s}^{t_{1}-s}r^{-1}+1 \mathrm{d} r\right|^{2}\mathrm{d} s\\
	&\lesssim \sum_{k\in \bbn} (c_{k}^{(1)})^{2}\(\int_{0}^{t_{2}}e^{-2(t_{2}-s)}	\(t_{2}-s\)^{-2\gamma}
	\left|\int_{t_{2}-s}^{t_{1}-s}r^{-1+\gamma}\mathrm{d} r\right|^{2}\mathrm{d} s+(t_{1}-t_{2})^{2}\)\\	
	&\lesssim  Tr(G_{1}G_{1}^{\ast})(t_{1}-t_{2})^{2\gamma}.
\end{flalign*}

Thus,  we obtain
\begin{flalign*}
	\mathbb{E}\|Z_{1}(t)-Z_{1}(s)\|^{2}_{L_{x}^{2}}\lesssim Tr(G_{1}G_{1}^{\ast})(t_{1}-t_{2})+Tr(G_{1}G_{1}^{\ast})(t_{1}-t_{2})^{2\gamma}.
\end{flalign*}
The implicit constants above depend on $\a$, $\nu$ and $\gamma$,
but are independent of time. 
Using Gaussianity  and $|t_{1}-t_{2}|\leq 1$, we have
\begin{flalign*}
	\mathbb{E}\|Z_{1}(t_{1})-Z_{1}(t_{2})\|_{L_{x}^{2}}^{p}
    \leq& (p-1)^{p/2}\(\mathbb{E}\|Z_{1}(t_{1})-Z_{1}(t_{2})\|^{2}_{L_{x}^{2}}\)^{p/2}  \\
    \lesssim& (p-1)^{p/2}Tr(G_{1}G_{1}^{\ast})^{2/p}(t_{1}-t_{2})^{p\gamma}.
\end{flalign*}
By Kolmogrov's continuity criterion (\citep{Da Prato}), there exists a constant $L$ such that for any $\delta\in (0, 1/2)$, 
\begin{flalign*}
	\mathbb{E}\|Z_{i}\|^{p}_{C_{t}^{1/2-\delta} L_{x}^{2}}
	\leq \frac{1}{2}(p-1)^{p/2}L^{p}.
\end{flalign*}

Next, we consider the $H_{x}^{\kappa+4}$-norm.
By It\^{o}'s isometry and (\ref{p1.5}), we obtain that for $\kappa\in (0, \a)$,
\begin{flalign*}
	\mathbb{E}\|\int_{t_{2}}^{t_{1}}e^{(t_{1}-s)\(-\nu(-\Delta)^{\a}-I\)}\mathrm{d} W^{(1)}(s)\|_{\dot{H}_{x}^{\kappa+4}}^{2}
	&=\sum_{k\in \bbn} (c_{k}^{(1)})^{2}\int_{t_{2}}^{t_{1}} \|e^{(t_{1}-s)\(-\nu(-\Delta)^{\a}-I\)}e_{k}\|_{\dot{H}_{x}^{\kappa+4}}^{2}\mathrm{d} s \\
	&\lesssim Tr(G_{1}G_{1}^{\ast})(t_{1}-t_{2})^{1-\frac{\kappa}{\a}}.
\end{flalign*}
Moreover, by (\ref{p1.5}),  we have  for $\kappa'\in (\frac{\kappa}{2\a}, \frac{1}{2})$,
\begin{flalign*}
	&\quad\mathbb{E}\|\int_{0}^{t_{2}}e^{(t_{1}-s)\(-\nu(-\Delta)^{\a}-I\)}-e^{(t_{2}-s)\(-\nu(-\Delta)^{\a}-I\)}\mathrm{d} W^{(1)}(s)\|_{\dot{H}_{x}^{\kappa+4}}^{2}\\
	&=\sum_{k\in \bbn} (c_{k}^{(1)})^{2}\int_{0}^{t_{2}}\|\int_{t_{2}-s}^{t_{1}-s}\p_{r}e^{r\(-\nu(-\Delta)^{\a}-I\)}e_{k}\mathrm{d} r\|_{\dot{H}_{x}^{\kappa+4}}^{2}\mathrm{d} s\\
	&\lesssim \sum_{k\in \bbn} (c_{k}^{(1)})^{2}\int_{0}^{t_{2}}e^{-2(t_{2}-s)}\left|\int_{t_{2}-\mathrm{d} s}^{t_{1}-s}r^{-1-\frac{\kappa}{2\a}}+r^{-\frac{\kappa}{2\a}}\mathrm{d} r\right|^{2}\mathrm{d} s\\
	&\lesssim \sum_{k\in \bbn} (c_{k}^{(1)})^{2}\int_{0}^{t_{2}}e^{-2(t_{2}-s)}	\big((t_{2}-s)^{-2\kappa'}	\left|\int_{t_{2}-s}^{t_{1}-s}r^{-1-\frac{\kappa}{2\a}+\kappa'}\mathrm{d} r\right|^{2}+(t_{1}-t_{2})^{2(1-\frac{\kappa}{2\a})}\big)
	\mathrm{d} s\\
	&\lesssim  Tr(G_{1}G_{1}^{\ast})(t_{1}-t_{2})^{2(\kappa'-\frac{\kappa}{2\a})}.
\end{flalign*}
Hence, plugging the above estimates into (\ref{1.5.1}) we obtain
\begin{flalign*}
	\mathbb{E}\|Z_{1}(t)-Z_{1}(s)\|^{2}_{H_{x}^{\kappa+4}}\lesssim Tr(G_{1}G_{1}^{\ast})(t_{1}-t_{2})^{1-\frac{\kappa}{\a}}+Tr(G_{1}G_{1}^{\ast})(t_{1}-t_{2})^{2(\kappa'-\frac{\kappa}{2\a})},
\end{flalign*}
where all the constants depends only on $\a$, $\kappa'$, $\nu$ and $\kappa$ but are independent of time. Using Gaussianity and $|t_{1}-t_{2}|\leq 1$, we lead to
\begin{flalign*}
	\mathbb{E}\|Z_{1}(t_{1})-Z_{1}(t_{2})\|_{H_{x}^{\kappa+4}}^{p}
    \leq& (p-1)^{p/2}\big(\mathbb{E}\|Z_{1}(t_{1})-Z_{1}(t_{2})\|^{2}_{H_{x}^{\kappa+4}}\big)^{p/2} \\
    \lesssim& (p-1)^{p/2}Tr(G_{1}G_{1}^{\ast})^{2/p}(t_{1}-t_{2})^{p(\kappa'-\frac{\kappa}{2\a})}.
\end{flalign*}
Thus, applying Kolmogrov's continuity criterion we obtain \eqref{2.1} 
and finish the proof.  
\hfill$\square$

\section{Velocity and magnetic flows} \label{Sec-Vel-Mag}

In this section,
we construct new velocity and magnetic flows
and verify the corresponding iterative estimates in Proposition \ref{Proposition Main iteration}.

\subsection{Intermittent flows}

In the sequel we use  the  intermittent flows  introduced in \citep{ZZL-MHD-sharp} which are 
indexed by four parameters $r_{\perp}, \lambda, \tau$ and $\sigma$ :
\begin{equation}\label{4.1}
	\begin{aligned}
r_{\perp}:=\lambda_{q+1}^{2-2 \alpha-10 \varepsilon},\quad \lambda:=\lambda_{q+1}, \quad
\tau:=\lambda_{q+1}^{2 \alpha},\quad \sigma:=\lambda_{q+1}^{2 \varepsilon},
	\end{aligned}
\end{equation}
where $\varepsilon$ sastidfies (\ref{2.3}).

\medskip
\paragraph{\bf $\bullet $ {Spatial building blocks.}}
Let $\Phi: \bbr \rightarrow \bbr$ be a smooth cut-off function supported on $[-1,1]$ and normalize $\Phi$ such that $\phi:=-\frac{d^2}{d x^2} \Phi$ satisfies
\begin{equation}\label{4.2}
	\begin{aligned}
\frac{1}{2 \pi} \int_{\bbr} \phi^2(x) \mathrm{d} x=1 .
	\end{aligned}
\end{equation}
Define the  rescaled cut-off functions  by
\begin{flalign*}
\phi_{r_{\perp}}(x):=r_{\perp}^{-\frac{1}{2}} \phi\left(\frac{x}{r_{\perp}}\right), \quad \Phi_{r_{\perp}}(x):=r_{\perp}^{-\frac{1}{2}} \Phi\left(\frac{x}{r_{\perp}}\right) .
\end{flalign*}
 To avoid an abuse of notation, we periodize $\phi_{r_{\perp}}$ and $\Phi_{r_{\perp}}$ so that they are treated as periodic functions defined on $\T$.
Define the intermittent velocity shear flows  by
\begin{equation}\label{4.3}
	\begin{aligned}
W_{(k)}:=\phi_{r_{\perp}}(\lambda r_{\perp} N_{\Lambda} k \cdot x) k_1, \quad k \in \Lambda_u \cup \Lambda_B,
	\end{aligned}
\end{equation}
and the intermittent magnetic shear flows by
\begin{equation}\label{4.4}
	\begin{aligned}
D_{(k)}:=\phi_{r_{\perp}}(\lambda r_{\perp} N_{\Lambda} k \cdot x) k_2, \quad k \in \Lambda_B.
\end{aligned}
\end{equation}
Here, $N_{\Lambda}$ is given by (\ref{6.3}) below, $\(k, k_1, k_2\)\subseteq \bbr^{3}$ are the orthonormal bases as in Geometric Lemmas \ref{Lemma First Geometric} and \ref{Lemma Second Geometric}, and  $\Lambda_u, \Lambda_B$ are the wave vector sets. In particular, $\left\{W_{(k)}, D_{(k)}\right\}$ are $\(\mathbb{T} /(\lambda r_{\perp})\)^3$-periodic.

For the sake of simplicity, we set
\begin{equation}\label{4.5}
	\begin{aligned}
\phi_{(k)}(x):=\phi_{r_{\perp}}(\lambda r_{\perp} N_{\Lambda} k \cdot x), \quad \Phi_{(k)}(x):=\Phi_{r_{\perp}}(\lambda r_{\perp} N_{\Lambda} k \cdot x),
\end{aligned}
\end{equation}
and rewrite
\begin{align} \label{4.6}
	  W_{(k)}=\phi_{(k)} k_1,\ k \in \Lambda_u \cup \Lambda_B,  \quad
     {\rm and}  \quad  D_{(k)}=\phi_{(k)} k_2,\  k \in \Lambda_B.
\end{align}
Then, $W_{(k)}$ and $D_{(k)}$ are mean-zero on $\mathbb{T}^3$.

Define the corresponding potentials  by
	\begin{align}
	W_{(k)}^c & :=\frac{1}{\lambda^2 N_{\Lambda}^2} \Phi_{(k)} k_1, \quad k \in \Lambda_u \cup \Lambda_B, \label{4.8} \\
	D_{(k)}^c & :=\frac{1}{\lambda^2 N_{\Lambda}^2} \Phi_{(k)} k_2, \quad k \in \Lambda_B. \label{4.9}
\end{align}

The following lemma contains the analytic estimates of shear flows.
\begin{lemma}\label{Lemma spacial building blocks} (\citep{BBV-IMHD}, Estimates of intermittent shear flows). For any $p \in[1, +\infty]$ and $N \in \bbn$, one has
\begin{equation}\label{4.10}
	\begin{aligned}
\|\nabla^N \phi_{(k)}\|_{L_x^p}+\|\nabla^N \Phi_{(k)}\|_{L_x^p} \lesssim r_{\perp}^{\frac{1}{p}-\frac{1}{2}} \lambda^N .
\end{aligned}
\end{equation}
In particular,
	\begin{align}
	&\|\nabla^N W_{(k)}\|_{ L_x^p}+\lambda^2\|\nabla^N W_{(k)}^c\|_{ L_x^p} \lesssim r_{\perp}^{\frac{1}{p}-\frac{1}{2}} \lambda^N, \quad k \in \Lambda_u \cup \Lambda_B, \label{4.11}\\
   &\|\nabla^N D_{(k)}\|_{ L_x^p}+\lambda^2\|\nabla^N D_{(k)}^c\|_{ L_x^p} \lesssim r_{\perp}^{\frac{1}{p}-\frac{1}{2}} \lambda^N, \quad k \in \Lambda_B.\label{4.12}
\end{align}
The implicit constants above are independent of the parameters $r_{\perp}$ and $\lambda$.
\end{lemma}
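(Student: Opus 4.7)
The strategy is the classical one for intermittent shear flows: reduce to a 1D rescaling computation for $\phi$ and $\Phi$ on $\mathbb{R}$, then lift to $\mathbb{T}^3$ via the plane-wave variable $k\cdot x$, using that the periodization factor $\lambda r_\perp N_\Lambda$ is an integer (by the choice in \eqref{4.1} and \eqref{6.3}) so that the composition $\phi_{r_\perp}(\lambda r_\perp N_\Lambda k\cdot x)$ is genuinely $\mathbb{T}^3$-periodic. The particular estimates \eqref{4.11}, \eqref{4.12} then follow at once from \eqref{4.10} since $W_{(k)}, D_{(k)}$ and their potentials are scalar multiples of $\phi_{(k)}$ and $\Phi_{(k)}$ by unit vectors and by $\lambda^{-2}N_\Lambda^{-2}$.

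First I would establish the one-dimensional estimate
\begin{equation*}
\|\phi_{r_\perp}^{(N)}\|_{L^p(\mathbb{T})} + \|\Phi_{r_\perp}^{(N)}\|_{L^p(\mathbb{T})} \lesssim r_\perp^{\frac{1}{p}-\frac{1}{2}-N}.
\end{equation*}
This follows from the definition $\phi_{r_\perp}(y) = r_\perp^{-1/2}\phi(y/r_\perp)$: differentiation produces the factor $r_\perp^{-N}$, and the change of variables $z = y/r_\perp$, together with the fact that $\operatorname{supp}\phi \subset [-1,1] \subset [-\pi,\pi]$ (so no wrapping from periodization occurs provided $r_\perp < 1$, which holds by \eqref{4.1} and \eqref{2.3}), converts $dy$ to $r_\perp\,dz$, giving the $r_\perp^{1/p}$ factor. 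The same argument works for $\Phi_{r_\perp}$.

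Next I would lift this to $\mathbb{T}^3$. Since $\phi_{(k)}(x) = \phi_{r_\perp}(\lambda r_\perp N_\Lambda k\cdot x)$ depends only on the scalar $\xi := k\cdot x$, every derivative $\nabla$ acts as the directional derivative along $k$, producing a factor $\lambda r_\perp N_\Lambda$ together with $|k|$ which is bounded uniformly in $k \in \Lambda_u \cup \Lambda_B$. Hence
\begin{equation*}
|\nabla^N \phi_{(k)}(x)| \lesssim (\lambda r_\perp)^N \bigl|\phi_{r_\perp}^{(N)}(\lambda r_\perp N_\Lambda k\cdot x)\bigr|.
\end{equation*}
Taking $L^p(\mathbb{T}^3)$, I would integrate out the two directions perpendicular to $k$ (contributing a bounded constant depending only on the finite set $\Lambda_u \cup \Lambda_B$), and in the remaining direction use that the map $x \mapsto \lambda r_\perp N_\Lambda\,k\cdot x$ on $\mathbb{T}$ is measure-preserving up to its integer period, so that
\begin{equation*}
\bigl\|\phi_{r_\perp}^{(N)}(\lambda r_\perp N_\Lambda k\cdot\,\cdot)\bigr\|_{L^p(\mathbb{T}^3)} \lesssim \bigl\|\phi_{r_\perp}^{(N)}\bigr\|_{L^p(\mathbb{T})}.
\end{equation*}
Combining with the 1D estimate yields $\|\nabla^N\phi_{(k)}\|_{L^p_x}\lesssim \lambda^N r_\perp^{1/p-1/2}$, and the same argument applied to $\Phi_{r_\perp}$ gives the bound for $\nabla^N \Phi_{(k)}$, proving \eqref{4.10}.

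There is no genuine obstacle; the only subtle point is checking the integer compatibility $\lambda r_\perp N_\Lambda \in \mathbb{N}$ needed for the periodization, which is arranged by the parameter choices \eqref{2.2}, \eqref{2.3}, \eqref{4.1} together with the definition of $N_\Lambda$ in \eqref{6.3}. The estimates \eqref{4.11} and \eqref{4.12} are then immediate from \eqref{4.6}, \eqref{4.8}, \eqref{4.9} using $|k_1| = |k_2| = 1$ and reading off the $\lambda^{-2}$ prefactor in the definition of the potentials.
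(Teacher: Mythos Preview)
Your proposal is correct and follows the standard scaling-plus-periodization argument for intermittent shear flows. Note that the paper itself does not prove this lemma: it is quoted directly from \cite{BBV-IMHD} and used as a black box, so your sketch is in fact supplying what the paper omits. One minor remark: since $k\in\mathbb{S}^2$, you have $|k|=1$ exactly (not merely ``bounded uniformly''), which slightly cleans up the chain-rule step.
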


\paragraph{\bf $\bullet$ Temporal building blocks.}
In order to treat the hyper-viscous case
of large  $\alpha$, particularly,  beyond the Lions exponent $5 / 4$,
we shall use the temporal intermittency in building blocks.

Let $\{g_{k}\}_{k\in \Lambda_u\cup\Lambda_B}\subset C_c^{\infty}([0, T])$ be any cut-off function such that
\begin{equation}\label{4.14}
	\begin{aligned}
\fint_0^T g^{2}_{k}(t) \mathrm{d} t=1 ,\quad k\in \Lambda_u\cup\Lambda_B,
\end{aligned}
\end{equation}
and $g_{k}$, $g_{k'}$ have disjoint temporal supports if $k\neq k'$.
The existence of $\{g_{k}\}_{k\in \Lambda_u\cup\Lambda_B}$
can be guaranteed by choosing $g_{k}=g(t-\a_{k})$,
where $g\in C_c^{\infty}([0, T])$ with very small support,  
and $\{\a_{k}\}_{k\in \Lambda_u\cup\Lambda_B}$ are the temporal shifts 
such that the supports of  $\{g_{k}\}$  are disjoint.

Then, rescale the cut-off function $g_{k}$ by
\begin{equation}\label{4.15}
	\begin{aligned}
g_{k,\tau}(t):=\tau^{\frac{1}{2}} g_{k}(\tau t),\quad k\in \Lambda_u\cup\Lambda_B.
\end{aligned}
\end{equation}
We then  periodize  $g_{k,\tau}$ such that it is treated as a periodic function defined on $[0, T]$ (still denoted by $ g_{k,\tau}$). Moreover, let
\begin{equation}\label{4.16}
	\begin{aligned}
h_{k,\tau}(t):=\int_0^t\left(g_{k,\tau}^2(s)-1\right) \mathrm{d} s,\quad t\in [0, T],\quad k\in \Lambda_u\cup\Lambda_B,
\end{aligned}
\end{equation}
and
\begin{equation}\label{4.17}
	\begin{aligned}
g_{(k)}:=g_{k,\tau}(\sigma t), \quad h_{(k)}(t):=h_{k,\tau}(\sigma t),\quad   \sigma\in \bbn.
\end{aligned}
\end{equation}
One  has the following estimates of the temporal building blocks.

\begin{lemma} \label{Lemma temporal building blocks}(\citep{ZZL-MHD,CL-transport equation,CL-NSE1}, Estimates of temporal intermittency). For $\gamma \in[1,+\infty]$, $M \in \mathbb{N}$, we have
\begin{equation}\label{4.18}
	\begin{aligned}
\|\partial_t^M g_{(k)}\|_{L_t^\gamma} \lesssim \sigma^M \tau^{M+\frac{1}{2}-\frac{1}{\gamma}},\quad k\in \Lambda_u\cup\Lambda_B,
\end{aligned}
\end{equation}
where the implicit constants are independent of $\tau$ and $\sigma$. Moreover, it holds
\begin{equation}\label{4.19}
	\begin{aligned}
\|h_{(k)}\|_{L_t^{\infty}} \lesssim 1,\quad
\|h_{(k)}\|_{C_t^{N}}\lesssim (\sigma\tau)^{N},\quad  N\geq1,\quad k\in \Lambda_u\cup\Lambda_B.
\end{aligned}
\end{equation}
\end{lemma}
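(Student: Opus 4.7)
The plan is a direct computation based on the scaling structure of the building blocks. The estimates for $g_{(k)}$ and $h_{(k)}$ decouple into (a) the inner rescaling at frequency $\tau$ that produces intermittency, and (b) the outer rescaling at frequency $\sigma$ coming from $g_{(k)}(t)=g_{k,\tau}(\sigma t)$ and $h_{(k)}(t)=h_{k,\tau}(\sigma t)$. I will carry out (a) first on a single period, then use periodization to pass to $[0,T]$, and finally insert the $\sigma$ factors from the chain rule.

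First I would verify the estimate \eqref{4.18} on $g_{(k)}$. By the chain rule, $\partial_t^M g_{(k)}(t)=\sigma^M (g_{k,\tau})^{(M)}(\sigma t)$, and from $g_{k,\tau}(t)=\tau^{1/2}g_k(\tau t)$ one gets $(g_{k,\tau})^{(M)}(t)=\tau^{M+1/2}g_k^{(M)}(\tau t)$ on one $T$-period. Since $g_k$ has small compact support inside $[0,T]$, the set where $(g_{k,\tau})^{(M)}$ is nonzero has measure $\lesssim \tau^{-1}$, and on this set the function is bounded by $\lesssim \tau^{M+1/2}$. Hence
\begin{equation*}
\|(g_{k,\tau})^{(M)}\|_{L^\gamma([0,T])}\lesssim \tau^{M+\frac12}\cdot \tau^{-\frac{1}{\gamma}}=\tau^{M+\frac12-\frac{1}{\gamma}}.
\end{equation*}
Because $g_{k,\tau}$ is periodized to be $T$-periodic, a change of variables $s=\sigma t$ together with periodicity gives $\|g_{(k)}\|_{L^\gamma_t}^\gamma=\|g_{k,\tau}\|_{L^\gamma_t}^\gamma$ and analogously for all derivatives, which yields \eqref{4.18}.

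Next, for the $L^\infty$ bound in \eqref{4.19}, the key point is the mean-zero normalization \eqref{4.14}. Since $\fint_0^T g_{k,\tau}^2\,\mathrm{d}t=\fint_0^T g_k^2\,\mathrm{d}t=1$, we have $h_{k,\tau}(T)=0$, so $h_{k,\tau}$ extends $T$-periodically and in particular
\begin{equation*}
\|h_{k,\tau}\|_{L^\infty_t}\le \int_0^T g_{k,\tau}^2\,\mathrm{d}t+T\lesssim 1,
\end{equation*}
and the composition with $\sigma t$ does not alter the $L^\infty$ norm, giving $\|h_{(k)}\|_{L^\infty_t}\lesssim 1$. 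For the higher derivative estimate, the chain rule gives $\partial_t^N h_{(k)}(t)=\sigma^N h_{k,\tau}^{(N)}(\sigma t)$ for $N\ge 1$. Since $h_{k,\tau}'=g_{k,\tau}^2-1$ and $g_{k,\tau}^2(t)=\tau g_k^2(\tau t)$, one obtains $h_{k,\tau}^{(N)}(t)=\tau^N(g_k^2)^{(N-1)}(\tau t)$ for $N\ge 1$, whence $\|h_{k,\tau}^{(N)}\|_{L^\infty_t}\lesssim \tau^N$ with an implicit constant depending on $\|g_k^2\|_{C^{N-1}}$. Combining with the $\sigma^N$ factor from the outer rescaling gives $\|h_{(k)}\|_{C_t^N}\lesssim (\sigma\tau)^N$.

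There is no real obstacle here: the only subtlety is making the periodization step rigorous (since $g_{k,\tau}$ is first defined on $[0,T/\tau]$ via the scaling and then periodized to $[0,T]$, one needs that $\tau^{-1}<T$ which is guaranteed by $\tau=\lambda_{q+1}^{2\alpha}\gg 1$ for $a$ large enough), and in keeping track of the cancellation $g_{k,\tau}^2-1$ which ensures $T$-periodicity of $h_{k,\tau}$ and therefore its uniform boundedness independently of $\tau$.
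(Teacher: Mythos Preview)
Your proof is correct. The paper itself does not supply a proof of this lemma: it is stated as a known result with citations to \cite{ZZL-MHD,CL-transport equation,CL-NSE1}, so there is no in-paper argument to compare against. Your direct scaling computation is precisely the standard derivation found in those references. One minor cosmetic point: the formula $h_{k,\tau}^{(N)}(t)=\tau^N(g_k^2)^{(N-1)}(\tau t)$ is literally correct only for $N\ge 2$, since for $N=1$ the constant $-1$ in $h_{k,\tau}'=g_{k,\tau}^2-1$ survives; the bound $\|h_{k,\tau}'\|_{L^\infty}\lesssim \tau$ is of course unaffected.
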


\subsection{Amplitudes}
In order to decrease the effects of the old Reynolds and magnetic stresses,
we further introduce the  amplitudes of magnetic and  velocity perturbations.

\medskip
\paragraph{\bf $\bullet$ Magnetic amplitudes.}
Let $\chi:[0,+\infty) \rightarrow \mathbb{R}$ be a smooth cut-off function such that
\begin{equation}\label{4.20}
	\begin{aligned}
\chi(z)=\left\{\begin{array}{ll}
	1, & 0 \leq z \leq 1, \\
	z, & z \geq 2,
\end{array}\right.
\end{aligned}
\end{equation}
and
\begin{equation}\label{4.21}
	\begin{aligned}
\frac{1}{2} z \leq \chi(z) \leq 2 z\ \  \text { for }\ z \in(1,2) .
\end{aligned}
\end{equation}

Let
\begin{equation}\label{4.22}
	\begin{aligned}
\varrho_{B}(t, x):=2 \varepsilon_{B}^{-1} \dqq \chi\Big(\frac{|\Ru^{B}_{\ell}(t, x)|}{\dqq}\Big),
\end{aligned}
\end{equation}
where $\varepsilon_{B}$ is the small radius in Geometric Lemma \ref{Lemma Second Geometric}. Note that
${\Ru^{B}_{\ell}}/{\varrho_{B}} \in B_{\varepsilon_{B}}(0)$.

Define the amplitudes of the magnetic perturbations by
\begin{equation}\label{4.24}
	\begin{aligned}
a_{(k)}(t,x):=\varrho_{B}^{1/2}(t,x)\gamma_{(k)}\Big(\frac{-\Ru^{B}_{\ell}(t,x)}{\varrho_{B}(t,x)}\Big), \quad k\in \Lambda_B,
\end{aligned}
\end{equation}
where $\gamma_{(k)}$ is the smooth function in Geometric Lemma \ref{Lemma Second Geometric}.

\begin {lemma} [Magnetic amplitudes]   \label{Lemma Magnetic amplitudes}
It holds that
	\begin{align*} 
	\sum_{k \in \Lambda_B} a_{(k)}^2 g_{(k)}^2(D_{(k)} \otimes W_{(k)}-W_{(k)} \otimes D_{(k)})
	&= -\Ru^{B}_{\ell}+\sum_{k \in \Lambda_B} a_{(k)}^2 g_{(k)}^2 \bbp_{\neq 0}(D_{(k)} \otimes W_{(k)}-W_{(k)} \otimes D_{(k)}) \notag\\
	&\quad +\sum_{k \in \Lambda_B} a_{(k)}^2(g_{(k)}^2-1) \fint_{\mathbb{T}^3} D_{(k)} \otimes W_{(k)}-W_{(k)} \otimes D_{(k)} \mathrm{d} x,
\end{align*}
where $\bbp_{\neq 0}$ is the spatial projection onto nonzero Fourier modes.
Moreover, for $0 \leq N \leq 9$ and $k \in \Lambda_B$, 
\begin{equation}\label{4.26}
	\begin{aligned}
\|a_{(k)}\|_{C_{t, x}^N}
 \lesssim \ell^{-6N-7}(1+J_{q}^{N+2}),\quad
 \|a_{(k)}\|_{L_{t}^{2}L_{x}^{2}}
 \lesssim \delta_{q+1}^{\frac{1}{2}}+\|\Ru^{B}_{q}\|^{\frac{1}{2}}_{L_{t}^{1}L_{x}^{1}},
\quad \|a_{(k)}\|_{\Lo^{2r}_{\om}L_{t}^{2}L_{x}^{2}}
 \lesssim \delta_{q+1}^{\frac{1}{2}}, 
\end{aligned}
\end{equation}
where $J_q$ is given by \eqref{Def-Jq-wtJq},
and the implicit constants are deterministic and independent of $q$.
\end{lemma}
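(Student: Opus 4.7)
\medskip
\noindent\textbf{Proof plan.} The plan is to establish the three conclusions in turn: the algebraic identity, the $C^N_{t,x}$ bounds, and the $L^2$-type bounds (both pathwise and in $\omega$).

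\emph{Step 1: algebraic identity.} I would start by unpacking the definition \eqref{4.24}, which yields
$a_{(k)}^2 = \varrho_B\,\gamma_{(k)}^2(-\Ru^{B}_{\ell}/\varrho_B)$.
Since $D_{(k)}\otimes W_{(k)}-W_{(k)}\otimes D_{(k)} = \phi_{(k)}^2\,(k_2\otimes k_1-k_1\otimes k_2)$ by \eqref{4.6}, the sum becomes
$\sum_{k\in\Lambda_B}\varrho_B\,\gamma_{(k)}^2(-\Ru^{B}_{\ell}/\varrho_B)\,g_{(k)}^2\,\phi_{(k)}^2\,(k_2\otimes k_1-k_1\otimes k_2)$.
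The argument $-\Ru^{B}_{\ell}/\varrho_B$ lies in $B_{\varepsilon_B}(0)$ by the design of $\varrho_B$ in \eqref{4.20}--\eqref{4.22}, so the Second Geometric Lemma \ref{Lemma Second Geometric} applies and gives the pointwise identity $\sum_{k\in\Lambda_B}\varrho_B\,\gamma_{(k)}^2(-\Ru^{B}_{\ell}/\varrho_B)\,(k_2\otimes k_1-k_1\otimes k_2) = -\Ru^{B}_{\ell}$. The remaining step is the now-standard splitting $g_{(k)}^2\phi_{(k)}^2 = \fint g_{(k)}^2\fint \phi_{(k)}^2 + (g_{(k)}^2-1)\fint\phi_{(k)}^2 + g_{(k)}^2\,\mathbb{P}_{\neq 0}\phi_{(k)}^2$, using the normalization $\fint_0^T g_{(k)}^2 = 1$ from \eqref{4.14} and $\fint_{\mathbb{T}}\phi_{r_\perp}^2=1$ from \eqref{4.2}. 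Collecting the three pieces produces exactly the three terms in the claim.

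\emph{Step 2: $C^N_{t,x}$ bounds on $a_{(k)}$.} The amplitude is the composition $\varrho_B^{1/2}\,\gamma_{(k)}(-\Ru^B_\ell/\varrho_B)$, so Faà di Bruno plus the standard bounds $\|\gamma_{(k)}\|_{C^N(B_{\varepsilon_B})}\lesssim 1$ (from Geometric Lemma \ref{Lemma Second Geometric}) reduce the task to bounding $\varrho_B$ and $\Ru^B_\ell$ in $C^N_{t,x}$ together with lower bounds on $\varrho_B$. For the mollified stress I would use standard mollification estimates to obtain $\|\Ru^B_\ell\|_{C^N_{t,x}}\lesssim \ell^{-N-3}J_q$ (absorbing the $L^1\!\to\!L^\infty$ Bernstein-type loss into a power of $\ell^{-1}$). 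The cut-off $\chi$ keeps $\varrho_B\geq 2\varepsilon_B^{-1}\delta_{q+1}$ bounded below, so $\varrho_B^{-1}\lesssim \delta_{q+1}^{-1}$, which together with $\varrho_B\lesssim \delta_{q+1}+\|\Ru^B_\ell\|_{L^\infty}\lesssim \ell^{-3}J_q$ and a chain-rule expansion in $N$ derivatives (each derivative costs $\ell^{-6}$ when differentiating the composition through $\chi$ and $\gamma_{(k)}$) yields the stated $\ell^{-6N-7}(1+J_q^{N+2})$. The powers of $\ell^{-1}$ and $J_q$ are not sharp and should be verified by a bookkeeping argument, but the structure is dictated by the product of $N+2$ factors of (mollified) $\Ru^B_\ell$ that appear after $N$ differentiations of $\varrho_B^{1/2}\gamma_{(k)}$.

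\emph{Step 3: $L^2_tL^2_x$ and $L^{2r}_\omega L^2_t L^2_x$ bounds.} From \eqref{4.21}--\eqref{4.22} one has the pointwise inequality $\varrho_B\lesssim \delta_{q+1}+|\Ru^B_\ell|$, and $\|\gamma_{(k)}\|_{L^\infty}\lesssim 1$, so
\begin{equation*}
\|a_{(k)}\|_{L^2_tL^2_x}^2 \;\lesssim\; \int_0^T\!\!\int_{\mathbb{T}^3}\varrho_B\,\mathrm{d}x\,\mathrm{d}t \;\lesssim\; \delta_{q+1} + \|\Ru^B_\ell\|_{L^1_tL^1_x} \;\lesssim\; \delta_{q+1} + \|\Ru^B_q\|_{L^1_tL^1_x},
\end{equation*}
where the last inequality uses Young's convolution inequality on the mollifier. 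Taking square roots gives the second estimate in \eqref{4.26}. For the probabilistic bound, square and apply $L^r_\omega$: $\|a_{(k)}\|_{L^{2r}_\omega L^2_tL^2_x}^2 = \|a_{(k)}^2\|_{L^r_\omega L^1_tL^1_x} \lesssim \delta_{q+1} + \|\Ru^B_q\|_{L^r_\omega L^1_tL^1_x}$, and then invoke the iterative hypothesis \eqref{2.13} to control the last term by $\delta_{q+1}$.

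\emph{Main obstacle.} Steps 1 and 3 are essentially bookkeeping once the Geometric Lemma is available. The delicate point is Step 2: tracking the precise powers of $\ell^{-1}$ and $J_q$ through the composition $\varrho_B^{1/2}\gamma_{(k)}(-\Ru^B_\ell/\varrho_B)$ when differentiated $N$ times, because the cut-off $\chi$ makes $\varrho_B$ non-smooth as a function of $\Ru^B_\ell$ and one must be careful on the transition region $|\Ru^B_\ell|/\delta_{q+1}\in(1,2)$. I expect the cleanest bookkeeping to come from using $\varrho_B\geq 2\varepsilon_B^{-1}\delta_{q+1}$ together with the $C^N$ smoothness of $\chi$ and $\gamma_{(k)}$, so that each derivative of $a_{(k)}$ produces at most one factor of $\nabla^{m}\Ru^B_\ell/\varrho_B$ or $\nabla^{m}\varrho_B/\varrho_B$ with $m\leq N$; the $\ell^{-6N-7}$ factor then records both the mollifier losses and the $\delta_{q+1}^{-1}$ losses through the relation $\delta_{q+1}^{-1}\leq \ell^{-c}$ afforded by \eqref{2.2}--\eqref{3.2}.
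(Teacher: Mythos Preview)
Your proposal is correct and follows essentially the same approach as the paper: the algebraic identity is handled via the Second Geometric Lemma plus the mean/oscillation splitting (the paper simply cites \cite{ZZL-MHD-sharp} for this), the $C^N_{t,x}$ bounds are obtained by the composition estimate of Lemma~\ref{composition estimate} (your ``Fa\`a di Bruno'') applied to $\varrho_B$, $\varrho_B^{1/2}$, $\varrho_B^{-1}$ and $\gamma_{(k)}(-\Ru^B_\ell/\varrho_B)$ separately, and the $L^2$ bounds follow from the pointwise inequality $\varrho_B\lesssim \delta_{q+1}+|\Ru^B_\ell|$ together with \eqref{2.13}. The only minor discrepancy is that the paper uses the Sobolev embedding $W^{4,1}\hookrightarrow L^\infty$ to get $\|\Ru^B_\ell\|_{C^N_{t,x}}\lesssim \ell^{-N-4}J_q$ rather than your $\ell^{-N-3}$, but this is inconsequential for the stated bound.
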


\begin{proof} 
The algebraic identity  was proved in \citep{ZZL-MHD-sharp}.
Below we focus on the proof of
(\ref{4.26}). Let us start with the following estimates
\begin{align}
	&\|\varrho_{B}\|_{\dot{C}_{t,x}^{N}}	
	\lesssim
	\left\{\begin{array}{ll}
		\ell^{-4}(1+\J),\quad N= 0, \\
	\ell^{-5N-1}(1+\J^{N}), \quad 1\leq N\leq9,
	\end{array}\right.\label{4.28}\\
	&\|\varrho_{B}^{-1}\|_{\dot{C}_{t,x}^{N}}
\lesssim
\left\{\begin{array}{ll}
	\delta_{q+1}^{-1}, \quad N= 0 ,\\
	 \ell^{-6N-1}(1+\J^{N}), \quad 1\leq N\leq9,
\end{array}\right.\label{4.29}\\
	&\|\varrho_{B}^{\frac{1}{2}}\|_{\dot{C}_{t,x}^{N}}
\lesssim
\left\{\begin{array}{ll}
	\ell^{-2}(1+\J^{\frac{1}{2}}), \quad N= 0, \\
	\ell^{-6N-1}(1+\J^{N}), \quad 1\leq N\leq9,
\end{array}\right.\label{4.30}\\
&\big\|\gamma_{(k)}\big(\frac{-\Ru^{B}_{\ell}}{\varrho_{B}}\big)\big\|_{\dot{C}_{t,x}^{N}} \lesssim  \ell^{-6N-5}(1+	\J^{N+1}),\quad 0\leq N\leq9.\label{4.31}	
\end{align}

To this end, by (\ref{4.22}), we first note that
\begin{align}\label{4.32-1}	\|\varrho_{B}\|_{C_{t,x}}
		& \lesssim \dqq\|1+\dqq^{-1}\Ru^{B}_{\ell}\|_{C_{t,x}}
		\lesssim \dqq+\|\Ru^{B}_{\ell}\|_{C_{t,x}}.
\end{align}
Using the Sobolev embedding $ W^{4,1}(\T^{3}) \hookrightarrow L^{\infty}(\T^{3}) $ and mollification estimates we obtain
\begin{equation}\label{4.27}
	\begin{aligned}
		\|\Ru^{B}_{\ell}\|_{C^{N}_{t,x}}\lesssim\ell^{-N-4}\|\Ru^{B}_{q}\|_{C_{t} L_{x}^{1}}\lesssim \ell^{-N-4}\J.
\end{aligned}\end{equation}
Then, plugging (\ref{4.27}) into (\ref{4.32-1}) with $N=0$ we have
\begin{equation}\label{4.32}
	\begin{aligned}
		\|\varrho_{B}\|_{C_{t,x}}
	   \lesssim \dqq+\ell^{-4}\J
	   \lesssim \ell^{-4}\(1+\J\),
	\end{aligned}\end{equation}
and
\begin{equation}\label{4.33}
	\begin{aligned}
	\|\varrho_{B}^{\frac{1}{2}}\|_{C_{t,x}}
		&\lesssim\big\|\big[\dqq(1+\dqq^{-1}\Ru^{B}_{\ell})\big]^{\frac{1}{2}}\big\|_{C_{t,x}} \lesssim \dqq^{\frac{1}{2}}+\|\Ru^{B}_{\ell}\|^{\frac{1}{2}}_{C_{t,x}}
		\lesssim \ell^{-2}(1+\J^{\frac{1}{2}}).
\end{aligned}\end{equation}
It also follows from (\ref{4.20}) that $\varrho_{B}\geq  \varepsilon_{B}^{-1} \dqq $,  and  so $\|\varrho_{B}^{-1}\|_{C_{t,x}}\lesssim \dqq ^{-1}$. Hence, the first estimates in (\ref{4.28})-(\ref{4.30}) are verified.

Next, we prove the second estimates in (\ref{4.28})-(\ref{4.30}). 
For every $1\leq  N\leq 9$, 
by (\ref{6.9}) below,  
\begin{equation}\label{4.34}
	\begin{aligned}
\|\varrho_{B}\|_{\dot{C}_{t,x}^{N}}
		&\lesssim \dqq\left[\dqq^{-1} \ell^{-N-4}\|\Ru^{B}_{q}\|_{C_{t} L_{x}^{1}}+\dqq^{-N} \ell^{-4(N-1)}\|\Ru^{B}_{q}\|_{C_{t} L_{x}^{1}}^{N-1}\ell^{-N-4}\|\Ru^{B}_{q}\|_{C_{t} L_{x}^{1}} \right]\\
		&\lesssim  \ell^{-5N-1}(1+\J^{N}).
\end{aligned}\end{equation}
Then we apply (\ref{6.10}) in the Appendix to the function $ \zeta_{1}(x)=x^{\frac{1}{2}},x\in [\varepsilon_{B}^{-1} \dqq, \infty)$  and use (\ref{4.34}) to derive 
\begin{align}\label{4.35}
	\|\varrho_{B}^{\frac{1}{2}}\|_{\dot{C}_{t,x}^{N}}
		&\lesssim \|\zeta_{1}\|_{\dot{C}^{1}_{[\varepsilon_{B}^{-1} \dqq, \infty)}}\|\varrho_{B}\|_{\dot{C}_{t,x}^{N}}+\|D\zeta_{1}\|_{C^{N-1}_{[\varepsilon_{B}^{-1} \dqq, \infty)}}\|\varrho_{B}\|^{N}_{\dot{C}_{t,x}^{1}}\notag\\
		&\lesssim \dqq^{-\frac{1}{2}} \ell^{-5N}(1+\J^{N})
		+ \dqq^{-\frac{1}{2}-N}\ell^{-6N}(1+\J^{N})\notag\\
		&\lesssim
		 \ell^{-6N-1}(1+\J^{N}).
	\end{align}
Similarly, we apply (\ref{6.10}) below to the function $ \zeta_{2}(x)=x^{-1}, x\in [\varepsilon_{B}^{-1} \dqq, \infty)$  and use  (\ref{4.27}) to obtain
  \begin{align}\label{4.36}	\|\varrho_{B}^{-1}\|_{\dot{C}_{t,x}^{N}}
  		&\lesssim \|\zeta_{2}\|_{\dot{C}^{1}_{[\varepsilon_{B}^{-1} \dqq, \infty)}}\|\varrho_{B}\|_{\dot{C}_{t,x}^{N}}+\|D\zeta_{2}\|_{C^{N-1}_{[\varepsilon_{B}^{-1} \dqq, \infty)}}\|\varrho_{B}\|^{N}_{\dot{C}_{t,x}^{1}}\notag\\
  	&\lesssim \ell^{-6N-1}(1+\J^{N}).
  \end{align} 
The above estimates yield (\ref{4.28})-(\ref{4.30}), as claimed.

Regarding the  bound \eqref{4.31}, using (\ref{6.9}) again we have
\begin{align*}		\big\|\gamma_{(k)}\big(\frac{-\Ru^{B}_{\ell}}{\varrho_{B}}\big)\big\|_{\dot{C}_{t,x}^{N}}
		&\lesssim \|\gamma_{(k)}(\cdot)\|_{\dot{C}^{1}}\|\varrho_{B}^{-1}\Ru^{B}_{\ell}\|_{\dot{C}_{t,x}^{N}}+\|D\gamma_{(k)}(\cdot)\|_{C^{N-1}}\|\varrho_{B}^{-1}\Ru^{B}_{\ell}\|^{N-1}_{C_{t,x}}\|\varrho_{B}^{-1}\Ru^{B}_{\ell}\|_{\dot{C}_{t,x}^{N}}\notag\\
		&\lesssim \|\varrho_{B}^{-1}\Ru^{B}_{\ell}\|_{\dot{C}_{t,x}^{N}}.
\end{align*}
Then, by (\ref{4.22}), (\ref{4.27}) and  (\ref{4.36}),  
\begin{align}\label{4.38}	 \|\varrho_{B}^{-1}\Ru^{B}_{\ell}\|_{\dot{C}_{t,x}^{N}}
		 &\lesssim \sum_{j=1}^{N} \|\varrho_{B}^{-1}\|_{\dot{C}_{t,x}^{j}} \|\Ru^{B}_{\ell}\|_{\dot{C}_{t,x}^{N-j}}+\|\varrho_{B}^{-1}\|_{C_{t,x}} \|\Ru^{B}_{\ell}\|_{\dot{C}_{t,x}^{N}}\notag\\
		 &\lesssim \sum_{j=1}^{N} \ell^{-6j-1}(1+\J^{j})\ell^{-(N-j)-4}\J+\dqq^{-1}\ell^{-N-4}\J\notag\\
        &\lesssim \ell^{-6N-5}(1+\J^{N+1}),
\end{align}
which yields (\ref{4.31}), as claimed.

We are now ready to verify (\ref{4.26}). Note that,
\begin{align}\label{4.39}
\|a_{(k)}\|_{\dot{C}_{t,x}^{N}}
&\lesssim \sum_{j=1}^{N}\|\varrho_{B}^{1/2}\|_{\dot{C}_{t,x}^{j}}	\big\|\gamma_{(k)}\big(\frac{-\Ru^{B}_{\ell}}{\varrho_{B}}\big)\big\|_{\dot{C}_{t,x}^{N-j}}
+\|\varrho_{B}^{1/2}\|_{C_{t,x}}	\big\|\gamma_{(k)}\big(\frac{-\Ru^{B}_{\ell}}{\varrho_{B}}\big)\big\|_{\dot{C}_{t,x}^{N}}\notag\\
&\lesssim \sum_{j=1}^{N} \ell^{-6j-1}(1+\J^{j}) \ell^{-6(N-j)-5}(1+\J^{(N-j)+1})+\ell^{-2}(1+\J^{\frac{1}{2}}) \ell^{-6N-5}(1+\J^{N+1})\notag\\
&\lesssim \ell^{-6N-7}(1+\J^{N+2}).
\end{align}
This verifies the first inequality of (\ref{4.26}).

Regarding the second inequality of (\ref{4.26}), using (\ref{4.24}) and (\ref{6.4})  we obtain
\begin{equation*}
	\begin{aligned}
		\|a_{(k)}\|_{L_{t}^{2}L_{x}^{2}}
		\lesssim \|\varrho_{B}\|^{1/2}_{L_{t}^{1}L_{x}^{1}}
		\|\gamma_{(k)}\|_{C(B_{\varepsilon_{B}}(0))}
		\lesssim \delta_{q+1}^{\frac{1}{2}}+\|\Ru^{B}_{q}\|^{\frac{1}{2}}_{L_{t}^{1}L_{x}^{1}},
	\end{aligned}
\end{equation*}
which, via (\ref{2.13}), yields
\begin{equation*} 
	\begin{aligned}
		\|a_{(k)}\|_{\Lo^{2r}_{\om}L_{t}^{2}L_{x}^{2}}
		\lesssim \delta_{q+1}^{\frac{1}{2}}+\|\Ru^{B}_{q}\|^{\frac{1}{2}}_{\Lo^{r}_{\om}L_{t}^{1}L_{x}^{1}}
		\lesssim
		\delta_{q+1}^{\frac{1}{2}}.
	\end{aligned}
\end{equation*}
Therefore, the proof of Lemma \ref{Lemma Magnetic amplitudes} is completed.
\end{proof}

\medskip
\paragraph{\bf $\bullet$ Velocity amplitudes.}
Proceeding as in \citep{BBV-IMHD,BV-NS, ZZL-MHD-sharp}, we use an additional matrix
\begin{equation}\label{4.42}
	\begin{aligned}
\G:=\sum_{k \in \Lambda_{B}} a_{(k)}^2 \fint_{\T^3}\left( W_{(k)} \otimes W_{(k)}-D_{(k)} \otimes D_{(k)}\right) \mathrm{d} x,
\end{aligned}
\end{equation}
in order to cancel the old stress.
In view of estimate (\ref{4.26}), we have
\begin{equation}\label{4.49}
	\begin{aligned}
		\|\G\|_{C^{N}_{t,x}}
		\lesssim \sum_{k \in
			\Lambda_{B}}\|a_{(k)}^2\|_{C^{N}_{t,x}}
		\lesssim
	\ell^{-6N-14}(1+J_{q}^{N+4}),
\end{aligned}\end{equation}
and
\begin{equation}\label{4.49-2}
	\begin{aligned}
		\|\G\|_{\Lo^{r}_{\om}L_{t}^{1}L_{x}^{1}}
		\lesssim \sum_{k \in
			\Lambda_{B}}\|a_{(k)}\|^{2}_{\Lo^{2r}_{\om}L_{t}^{2}L_{x}^{2}}
		\lesssim
		\dqq.
\end{aligned}\end{equation}

Define the amplitudes of the velocity perturbations  by
\begin{equation}\label{4.45}
	\begin{aligned}
		a_{(k)}(t,x):=\varrho_{u}^{1/2}\gamma_{(k)}\big(\mathrm{Id} -\frac{\Ru^{u}_{\ell}+\G}{\varrho_{u}}\big), \quad k\in \Lambda_u,
	\end{aligned}
\end{equation}
where
\begin{equation}\label{4.43}
	\begin{aligned}
		\varrho_{u}(t, x):=2 \varepsilon_{u}^{-1} \dqq \chi\big(\frac{|\Ru^{u}_{\ell}(t, x)+\G|}{\dqq}\big),
	\end{aligned}
\end{equation}
with $\varepsilon_{u}$ being the small radius in Geometric Lemma \ref{Lemma First Geometric}. Note that
$\frac{\Ru^{u}_{\ell}+\G}{\varrho_{u}}\in B_{\varepsilon_{u}}(0)$.

\begin {lemma} [Velocity amplitudes]     \label{Lemma Velocity amplitudes}
It holds that
\begin{equation*} 
	\begin{aligned}
		 \sum_{k \in \Lambda_u} a_{(k)}^2 g_{(k)}^2(W_{(k)} \otimes W_{(k)})
		= & \varrho_{u}\mathrm{Id} -(\Ru^{u}_{\ell}+\G)+\sum_{k \in \Lambda_u} a_{(k)}^2 g_{(k)}^2 \bbp_{\neq 0}(W_{(k)} \otimes W_{(k)}) \\
		& +\sum_{k \in \Lambda_u} a_{(k)}^2(g_{(k)}^2-1) \fint_{\T^3} W_{(k)} \otimes D_{(k)}-W_{(k)} \otimes D_{(k)} \mathrm{d} x.
	\end{aligned}
\end{equation*}
 Moreover, for any $0 \leq N \leq 9$, $k \in \Lambda_u$,
\begin{equation}\label{4.47}
	\begin{aligned}
	\quad\|a_{(k)}\|_{C_{t, x}^N}
	\lesssim \ell^{-21N-22}(1+\J^{5N+6}),
	\ 
  \|a_{(k)}\|_{L_{t}^{2}L_{x}^{2}}
	 \lesssim
	\delta_{q+1}^{\frac{1}{2}}+\|\Ru^{u}_{\ell}\|_{L_{t}^{1}L_{x}^{1}}^{\frac{1}{2}}+\|\G\|_{L_{t}^{1}L_{x}^{1}}^{\frac{1}{2}},\ 
 \|a_{(k)}\|_{\Lo^{2r}_{\om}L_{t}^{2}L_{x}^{2}}
	\lesssim \delta_{q+1}^{\frac{1}{2}} ,
	\end{aligned}
\end{equation}
where the implicit constants are deterministic and independent of $q$.
\end{lemma}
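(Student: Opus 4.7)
The proof will parallel the structure of Lemma \ref{Lemma Magnetic amplitudes}, but two extra ingredients enter: the First Geometric Lemma \ref{Lemma First Geometric} (symmetric, around the identity matrix) replaces the second one, and the auxiliary matrix $\G$ appears everywhere that $\Ru^u_\ell$ appeared before.

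For the algebraic identity, I would first note that by the normalization \eqref{4.2} and the scaling of $\phi_{(k)}$, one has $\fint_{\T^3} W_{(k)}\otimes W_{(k)}\,\d x = k_1\otimes k_1$. Writing
\[
g_{(k)}^2 W_{(k)}\otimes W_{(k)} = \fint W_{(k)}\otimes W_{(k)}\,\d x + (g_{(k)}^2-1)\fint W_{(k)}\otimes W_{(k)}\,\d x + g_{(k)}^2 \bbp_{\neq 0}(W_{(k)}\otimes W_{(k)}),
\]
multiplying by $a_{(k)}^2 = \varrho_u \gamma_{(k)}^2\!\bigl(\Id - (\Ru^u_\ell+\G)/\varrho_u\bigr)$ and summing over $k\in\Lambda_u$, the leading term yields $\varrho_u\sum_{k}\gamma_{(k)}^2(\cdot)(k_1\otimes k_1) = \varrho_u\bigl(\Id - (\Ru^u_\ell+\G)/\varrho_u\bigr) = \varrho_u \Id - (\Ru^u_\ell+\G)$ by the First Geometric Lemma, since $(\Ru^u_\ell+\G)/\varrho_u \in B_{\varepsilon_u}(0)$ by construction of $\varrho_u$ in \eqref{4.43}. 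This gives the claimed identity (the second error term in the statement should read $\fint W_{(k)}\otimes W_{(k)}\,\d x$).

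For the $C^N_{t,x}$ bound, the plan is to replicate the chain of estimates \eqref{4.28}--\eqref{4.31} with $\Ru^B_\ell$ replaced by $\Ru^u_\ell + \G$. From the mollification bound analogous to \eqref{4.27}, $\|\Ru^u_\ell\|_{C^N_{t,x}}\lesssim \ell^{-N-4}J_q$, combined with \eqref{4.49}, we get $\|\Ru^u_\ell+\G\|_{C^N_{t,x}}\lesssim \ell^{-6N-14}(1+J_q^{N+4})$. Applying \eqref{6.9} and \eqref{6.10} to $\chi$, to $z\mapsto z^{1/2}$ on $[\varepsilon_u^{-1}\dqq,\infty)$, and to $z\mapsto z^{-1}$ on the same range yields successively bounds for $\|\varrho_u\|_{\dot C^N}$, $\|\varrho_u^{1/2}\|_{\dot C^N}$ and $\|\varrho_u^{-1}\|_{\dot C^N}$. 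Composing once more with $\gamma_{(k)}$ as in \eqref{4.38}--\eqref{4.39} and keeping careful track of the exponents gives the final bound $\|a_{(k)}\|_{C^N_{t,x}}\lesssim \ell^{-21N-22}(1+J_q^{5N+6})$.

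For the $L^2_{t,x}$ bound, since $\gamma_{(k)}$ is bounded on $B_{\varepsilon_u}(0)$ and $\varrho_u\lesssim \dqq + |\Ru^u_\ell+\G|$ by \eqref{4.20}--\eqref{4.21}, one has $\|a_{(k)}\|_{L^2_t L^2_x}\lesssim \|\varrho_u\|_{L^1_tL^1_x}^{1/2}\lesssim \dqq^{1/2} + \|\Ru^u_\ell\|_{L^1_tL^1_x}^{1/2} + \|\G\|_{L^1_tL^1_x}^{1/2}$. Taking $\Lo^{2r}_\omega$-moments, the mollification estimate $\|\Ru^u_\ell\|_{L^1_tL^1_x}\le \|\Ru^u_q\|_{L^1_tL^1_x}$ together with \eqref{2.13} and \eqref{4.49-2} both yield a factor $\dqq$, producing the desired bound $\|a_{(k)}\|_{\Lo^{2r}_\omega L^2_tL^2_x}\lesssim \dqq^{1/2}$.

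The main technical obstacle I anticipate is bookkeeping of powers in the derivative estimate. The matrix $\G$ already carries $J_q^{N+4}$ at order $N$, and each successive composition (chain rule through $\chi$, $(\cdot)^{1/2}$, $(\cdot)^{-1}$, and $\gamma_{(k)}$) multiplies the highest power by the number of derivatives landing on the inner expression, so one must carefully arrange the Faà di Bruno--type splitting to obtain exactly $J_q^{5N+6}$ rather than a larger polynomial. This is where the proof differs most from Lemma \ref{Lemma Magnetic amplitudes}, whose corresponding exponent was only $J_q^{N+2}$ because no $\G$ was present.
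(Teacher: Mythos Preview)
Your proposal is correct and follows essentially the same approach as the paper: the algebraic identity is derived from the First Geometric Lemma (the paper simply cites \cite{ZZL-MHD-sharp}), and the estimates \eqref{4.47} are obtained exactly as you describe, by replacing $\Ru^B_\ell$ with $\Ru^u_\ell+\G$ in the chain of bounds \eqref{4.28}--\eqref{4.31}, using \eqref{4.48}--\eqref{4.49} as inputs, and tracking the resulting powers of $\ell^{-1}$ and $J_q$. You also correctly spotted the typo in the second displayed error term.
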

\begin{proof}
The first identity was proved previously in \citep{ZZL-MHD-sharp}.
Let us focus on the proof of (\ref{4.47}).
Similaly to (\ref{4.28})-(\ref{4.31}), we first  prove that
\begin{align}
		&\|\varrho_{u}\|_{\dot{C}_{t,x}^{N}}\lesssim\left\{\begin{array}{ll}
		\ell^{-4}(1+\J),  \quad N= 0, \\
	\ell^{-20N-1}(1+J_{q}^{5N}),  1 \leq N \leq 9,
	\end{array}\right.\label{4.50}\\
	&\|\varrho_{u}^{-1}\|_{\dot{C}_{t,x}^{N}}\lesssim\left\{\begin{array}{ll}
		\dqq^{-1},  N= 0, \\
	 \ell^{-21N-1}(1+\J^{5N}), \quad 1 \leq N \leq 9,
	\end{array}\right.\label{4.51}\\
&\|\varrho_{u}^{\frac{1}{2}}\|_{\dot{C}_{t,x}^{N}}\lesssim\left\{\begin{array}{ll}
	\ell^{-7}(1+\J^{2}), \quad N= 0, \\
	 \ell^{-21N-1}(1+\J^{5N}), \quad 1 \leq N \leq 9,
\end{array}\right.\label{4.52}\\
&\big\|\gamma_{(k)}\big(\mathrm{Id} -\frac{\Ru^{u}_{\ell}+\G}{\varrho_{u}}\big)\big\|_{\dot{C}_{t,x}^{N}} \lesssim  \ell^{-15-21N}(1+\J^{4N+3}),\quad 0 \leq N \leq 9.\label{4.53}	
\end{align}
To this end, by (\ref{4.43}), we first note that
	\begin{align}
		\|\varrho_{u}\|_{C_{t,x}}
		&\lesssim \dqq\|1+\dqq^{-1}(\Ru^{u}_{\ell}+\G)\|_{C_{t,x}}\lesssim \dqq+	\|\Ru^{u}_{\ell}\|_{C_{t,x}}+\|\G\|_{C_{t,x}}.
\end{align}
Using the Sobolev embedding $ W^{4,1}(\T^{3}) \hookrightarrow L^{\infty}(\T^{3}) $ again and mollification estimates, we obtain
\begin{equation}\label{4.48}
	\begin{aligned}
		\|\Ru^{u}_{\ell}\|_{C^{N}_{t,x}}\lesssim\ell^{-N-4}\|\Ru^{u}_{q}\|_{\C L_{x}^{1}}
		\lesssim\ell^{-N-4}\J.
\end{aligned}\end{equation}
Then, by (\ref{4.49}) and (\ref{4.48}),  
\begin{equation}\label{4.54}
	\begin{aligned}
		\|\varrho_{u}\|_{C_{t,x}} \lesssim \dqq+\ell^{-4}J_{q}+\ell^{-14}(1+J_{q}^{4})
		\lesssim \ell^{-14}(1+J_{q}^{4}),
\end{aligned}\end{equation}
and
\begin{equation}\label{4.55}
	\begin{aligned}
		\|\varrho_{u}^{\frac{1}{2}}\|_{C_{t,x}}
		&\lesssim
		\big\|\big[\dqq\big(1+\dqq^{-1}(\Ru^{u}_{\ell}+\G)\big)\big]^{\frac{1}{2}}\big\|_{C_{t,x}}\\
		&\lesssim \dqq^{\frac{1}{2}}+	\|\Ru^{u}_{\ell}\big\|^{\frac{1}{2}}_{C_{t,x}}+\|\G\|^{\frac{1}{2}}_{C_{t,x}}\\
		&\lesssim
		\ell^{-7}(1+\J^{2}).
\end{aligned}\end{equation}
Moreover, from (\ref{4.20}), it follows that  $\|\varrho_{u}^{-1}\|_{C_{t,x}}\lesssim \dqq ^{-1}$.
Thus, the first estimates of (\ref{4.50})-(\ref{4.52}) are verified.

Next, for the second estimates of (\ref{4.50})-(\ref{4.52}), 
by (\ref{6.9}),  for every $1 \leq N \leq 9$,
\begin{align}\label{4.56}
		\|\varrho_{u}\|_{\dot{C}_{t,x}^{N}}
		&\lesssim \dqq\left[\|\chi(\dqq^{-1}\cdot)\|_{\dot{C}^{1}}\|\Ru^{u}_{\ell}+\G\|_{\dot{C}_{t,x}^{N}}+\|D\chi(\dqq^{-1}\cdot)\|_{C^{N-1}}\|\Ru^{u}_{\ell}+\G\|^{N-1}_{C_{t,x}}\|\Ru^{u}_{\ell}+\G\|_{\dot{C}_{t,x}^{N}}\right]\notag\\
		 &\lesssim
	 \ell^{-20N-1}(1+J_{q}^{5N}).
\end{align}
Moreover, arguing as in (\ref{4.35}) and (\ref{4.36}) we get 
\begin{align}\label{4.57}
		\|\varrho_{u}^{\frac{1}{2}}\|_{\dot{C}_{t,x}^{N}}
		&\lesssim \|\zeta_{3}\|_{\dot{C}^{1}_{ [\varepsilon_{u}^{-1} \dqq, \infty)}}\|\varrho_{u}\|_{\dot{C}_{t,x}^{N}}+\|D\zeta_{3}\|_{C^{N-1}_{ [\varepsilon_{u}^{-1} \dqq, \infty)}}\|\varrho_{u}\|^{N}_{\dot{C}_{t,x}^{1}}\notag\\
	&\lesssim \ell^{-21N-1}(1+\J^{5N}), 
\end{align}
 and
	\begin{align}\label{4.58}
		\|\varrho_{u}^{-1}\|_{\dot{C}_{t,x}^{N}}
		&\lesssim \|\zeta_{4}\|_{\dot{C}^{1}_{[\varepsilon_{u}^{-1} \dqq, \infty)}}\|\varrho_{u}\|_{\dot{C}_{t,x}^{N}}+\|D\zeta_{4}\|_{C^{N-1}_{[\varepsilon_{u}^{-1} \dqq, \infty)}}\|\varrho_{u}\|^{N}_{\dot{C}_{t,x}^{1}}\notag\\
		&\lesssim \ell^{-21N-1}(1+\J^{5N}).
\end{align}
Regarding estimate \eqref{4.53}, using (\ref{6.9}) we have
\begin{align}\label{4.59}
	\big\|\gamma_{(k)}\big(\mathrm{Id} -\frac{\Ru^{u}_{\ell}+\G}{\varrho_{u}}\big)\big\|_{\dot{C}_{t,x}^{N}}
		&\lesssim        \|\gamma_{(k)}(\mathrm{Id} -\cdot)\|_{\dot{C}^{1}}\big\|\frac{\Ru^{u}_{\ell}
		+\G}{\varrho_{u}}\big\|_{\dot{C}_{t,x}^{N}}\notag\\
	    &\quad+\|D\gamma_{(k)}(\mathrm{Id} -\cdot)\|_{C^{N-1}}\big\|\frac{\Ru^{u}_{\ell}+\G}{\varrho_{u}}\big\|^{N-1}_{C_{t,x}}\big\|\frac{\Ru^{u}_{\ell}+\G}{\varrho_{u}}\big\|_{\dot{C}_{t,x}^{N}}\notag\\
		&\lesssim \big\|\frac{\Ru^{u}_{\ell}+\G}{\varrho_{u}}\big\|_{\dot{C}_{t,x}^{N}}.
\end{align}
Then, by (\ref{4.49}), (\ref{4.48}) and  (\ref{4.58}), we have
\begin{equation}\label{4.60}
	\begin{aligned}
		\big\|\frac{\Ru^{u}_{\ell}+\G}{\varrho_{u}}\big\|_{\dot{C}_{t,x}^{N}}
		&\lesssim
		\sum_{j=1}^{N} \|\varrho_{u}^{-1}\|_{\dot{C}_{t,x}^{j}} \|\Ru^{u}_{\ell}+\G\|_{\dot{C}_{t,x}^{N-j}}+\|\varrho_{u}^{-1}\|_{C_{t,x}} \|\Ru^{u}_{\ell}+\G\|_{\dot{C}_{t,x}^{N}}\\
		&\lesssim
		\sum_{j=1}^{N} \ell^{-21j-1}(1+\J^{5j})\(\ell^{-(N-j)-4}\J+ \ell^{-6(N-j)-14}(1+\J^{(N-j)+4})\)\\
		&\quad+ \dqq^{-1}\(\ell^{-N-4}\J+ \ell^{-6N-14}(1+\J^{N+4})\)\\
		&\lesssim  \ell^{-15-21N}(1+\J^{5N+4}).
\end{aligned}\end{equation}

Therefore, we get
	\begin{align}\label{4.61}
		\|a_{(k)}\|_{\dot{C}_{t,x}^{N}}
		&\lesssim \sum_{j=1}^{N}\|\varrho_{u}^{1/2}\|_{\dot{C}_{t,x}^{j}}\big\|\gamma_{(k)}\big(\mathrm{Id} -\frac{\Ru^{u}_{\ell}+\G}{\varrho_{u}}\big)\big\|_{\dot{C}_{t,x}^{N-j}}
		+\|\varrho_{u}^{1/2}\|_{C_{t,x}}\big\|\gamma_{(k)}\big(\mathrm{Id} -\frac{\Ru^{u}_{\ell}+\G}{\varrho_{u}}\big)\big\|_{\dot{C}_{t,x}^{N}}\notag\\
		&\lesssim \sum_{j=1}^{N} \ell^{-21j-1}(1+\J^{5j}) \ell^{-21(N-j)-15}(1+\J^{5(N-j)+4})
		  +\ell^{-7}(1+\J^{2}) \ell^{-21N-15}(1+\J^{5N+4})\notag\\
		&\lesssim \ell^{-21N-22}(1+\J^{5N+6}),
\end{align}
and, by (\ref{4.45})  and (\ref{6.4}),
	\begin{align}\label{4.62}
		\|a_{(k)}\|_{L_{t}^{2}L_{x}^{2}}
		&\lesssim \|\varrho_{u}\|^{1/2}_{L_{t}^{1}L_{x}^{1}}
		\|\gamma_{(k)}\|_{C(B_{\varepsilon_{u}}(\mathrm{Id} ))} \notag \\
		&\lesssim \big\|\delta_{q+1}(1+\frac{|\Ru^{u}_{\ell}+\G|}{\delta_{q+1}})\big\|^{\frac{1}{2}}_{L_{t}^{1}L_{x}^{1}}
		 \lesssim
	\delta_{q+1}^{\frac{1}{2}}+\|\Ru^{u}_{\ell}\|_{L_{t}^{1}L_{x}^{1}}^{\frac{1}{2}}+\|\G\|_{L_{t}^{1}L_{x}^{1}}^{\frac{1}{2}}.
	\end{align}
which, via (\ref{2.13}) and (\ref{4.49-2}), yields  (\ref{4.47}). 
\end{proof}

\subsection{Velocity and magnetic perturbations}

The velocity and magnetic perturbations consist of
three components:
the principal part,
the incompressible part
and the temporal incompressible.

\medskip
\paragraph{\bf $\bullet$ Principal parts.}
The principal parts  of  velocity and magnetic perturbations 
are defined, respectively, by
\begin{equation}\label{4.64}
	\begin{aligned}
		w_{q+1}^{(p)}:=\sum_{k \in \Lambda_u \cup \Lambda_B} a_{(k)} g_{(k)} W_{(k)}, \quad
		d_{q+1}^{(p)}:=\sum_{k \in \Lambda_B} a_{(k)} g_{(k)} D_{(k)}.
\end{aligned}\end{equation}

\medskip
\paragraph{\bf $\bullet$ Incompressibility correctors.}
Because neither $w_{q+1}^{(p)}$ or $d_{q+1}^{(p)}$ is divergence-free, we need the corresponding incompressibility correctors defined by
\begin{equation}\label{4.65}
	\begin{aligned}
	w_{q+1}^{(c)} & :=\sum_{k \in \Lambda_u \cup \Lambda_B} g_{(k)}\left(\nabla a_{(k)} \times \curl W_{(k)}^c+\curl(\nabla a_{(k)} \times W_{(k)}^c)\right), \\
	d_{q+1}^{(c)} & :=\sum_{k \in \Lambda_B} g_{(k)}\left(\nabla a_{(k)} \times \curl D_{(k)}^c+\curl(\nabla a_{(k)} \times D_{(k)}^c)\right) .
\end{aligned}\end{equation}
It holds that
\begin{equation}\label{4.66}
	\begin{aligned}
	w_{q+1}^{(p)}+w_{q+1}^{(c)}&=\sum_{k \in \Lambda_u \cup \Lambda_B} \operatorname{curlcurl}(a_{(k)} g_{(k)} W_{(k)}^c), \quad
	d_{q+1}^{(p)}+d_{q+1}^{(c)}&=\sum_{k \in \Lambda_B} \operatorname{curlcurl}(a_{(k)} g_{(k)} D_{(k)}^c),
\end{aligned}\end{equation}
and so one has the divergence free results:
\begin{equation}\label{4.67}
	\begin{aligned}
\operatorname{div}(w_{q+1}^{(p)}+w_{q+1}^{(c)})=\operatorname{div}(d_{q+1}^{(p)}+d_{q+1}^{(c)})=0.
\end{aligned}\end{equation}

\medskip
\paragraph{\bf $\bullet$ Temporal correctors.}  Define the temporal correctors  by
\begin{equation}\label{4.68}
	\begin{aligned}
	w_{q+1}^{(o)}:= & -\sigma^{-1} \sum_{k \in \Lambda_u} \mathbb{P}_H \mathbb{P}_{\neq 0}\left(h_{(k)} \fint_{\mathbb{T}^3} W_{(k)} \otimes W_{(k)} \mathrm{d} x \nabla(a_{(k)}^2)\right) \\
	& -\sigma^{-1} \sum_{k \in \Lambda_B} \mathbb{P}_H \mathbb{P}_{\neq 0}\left(h_{(k)} \fint_{\mathbb{T}^3} W_{(k)} \otimes W_{(k)}-D_{(k)} \otimes D_{(k)} \mathrm{d} x \nabla(a_{(k)}^2)\right), \\
	d_{q+1}^{(o)}:= &-\sigma^{-1} \sum_{k \in \Lambda_B} \mathbb{P}_H \mathbb{P}_{\neq 0}\left(h_{(k)} \fint_{\mathbb{T}^3} D_{(k)} \otimes W_{(k)}-W_{(k)} \otimes D_{(k)} \mathrm{d} x \nabla(a_{(k)}^2)\right).
\end{aligned}\end{equation}
Here,  $ \mathbb{P}_H =\mathrm{Id} -\nabla\Delta^{-1}\div $ is the Helmholtz projection.

\medskip
\paragraph{\bf $\bullet$ Velocity and magnetic perturbations} Choose the cut-off function $\Theta_{q+1} \in C^{\infty}([0, T];[0,1])$ such that
\begin{equation}\label{4.69}
	\begin{aligned}
\Theta_{q+1}(t)=\left\{\begin{array}{ll}
	0,\quad t \leq \varsigma_{q}/ 2, \\
	1,\quad \varsigma_{q}\leq t \leq T,
\end{array} \quad \text { and } \quad\|\Theta_{\mathrm{q+1}}\|_{C^{n}_{t}} \lesssim \varsigma_{q}^{-n},\right.
\end{aligned}\end{equation}
where $\varsigma_{q}=\lambda_{q}^{-30}$ is as in (\ref{2.2}).
Then, set
\begin{equation}\label{4.70}
	\begin{aligned}
&\widetilde{w}_{q+1}^{(p)}:=\Theta_{q+1} w_{q+1}^{(p)}, \quad \widetilde{w}_{q+1}^{(c)}:=\Theta_{q+1} w_{q+1}^{(c)}, \quad \widetilde{w}_{q+1}^{(o)}:=\Theta_{q+1}^2 w_{q+1}^{(o)}, \\
&\widetilde{d}_{q+1}^{(p)}:=\Theta_{q+1} d_{q+1}^{(p)}, \quad \widetilde{d}_{q+1}^{(c)}:=\Theta_{q+1} d_{q+1}^{(c)}, \quad \widetilde{d}_{q+1}^{(o)}:=\Theta_{q+1}^2 d_{q+1}^{(o)}.
\end{aligned}\end{equation}
For simplicity, we write
\begin{align}
	&w^{(*_{1})+(*_{2})}_{q+1}:=w^{(*_{1})}_{q+1}+w^{(*_{2})}_{q+1},\quad 	d^{(*_{1})+(*_{2})}_{q+1}:=d^{(*_{1})}_{q+1}+d^{(*_{2})}_{q+1},\quad\text{where} \quad *_{1},~*_{2}\in \{p, c, o\}. \label{pco-1}
\end{align}
Similar notations also apply to $\omw^{(*_{1})+(*_{2})}_{q+1}$ and $\dw^{(*_{1})+(*_{2})}_{q+1}$.

Now, we define the velocity and magnetic  perturbations $w_{q+1}$ and  $d_{q+1}$, respectively, at level $q+1$  by
\begin{equation}\label{4.71}
	\begin{aligned}
w_{q+1}=\widetilde{w}_{q+1}^{(p)}+\widetilde{w}_{q+1}^{(c)}+\widetilde{w}_{q+1}^{(o)},\quad
d_{q+1}=\widetilde{d}_{q+1}^{(p)}+\widetilde{d}_{q+1}^{(c)}+\widetilde{d}_{q+1}^{(o)}.
\end{aligned}\end{equation}
Note that, $w_{q+1}$ and  $d_{q+1}$ are  mean-zero and divergence-free.

Then, we define the new velocity and magnetic fields, respectively, at level ${q+1}$  by
\begin{equation}\label{4.72}
	\begin{aligned}
\Bu_{q+1}:=\Bu_{\ell}+w_{q+1},\quad \Bb_{q+1}:=\Bb_{\ell}+d_{q+1}.
\end{aligned}\end{equation}

The estimates of the velocity and magnetic perturbations are summarized below.
\begin{lemma}(Estimates of perturbations)\label{Lemma Estimates of perturbations}.
	For any $\rho \in(1, +\infty), \gamma \in[1, +\infty]$ and every integer $0 \leq N \leq 7$, the following estimates hold :
	\begin{align}
			&\|(\nabla^N w_{q+1}^{(p)},\nabla^N d_{q+1}^{(p)})\|_{L_t^\gamma \mathcal{L}_x^\rho} \lesssim  (1+\J^{5N+6})\ell^{-22}\laq^{N} \rw^{\frac{1}{\rho}-\frac{1}{2}}
			\tau^{\frac{1}{2}-\frac{1}{\gamma}},\label{4.73-1} \\
			&\|(\nabla^N w_{q+1}^{(c)},\nabla^N d_{q+1}^{(c)})\|_{L_t^\gamma \mathcal{L}_x^\rho}
			\lesssim 	(1+\J^{5N+16})\ell^{-43}\laq^{N-1} \rw^{\frac{1}{\rho}-\frac{1}{2}}\tau^{\frac{1}{2}-\frac{1}{\gamma}},\label{4.73-2}\\
				&\|(\nabla^N w_{q+1}^{(o)},\nabla^N d_{q+1}^{(o)})\|_{L_t^\gamma \mathcal{L}_x^\rho} \lesssim (1+\J^{5N+17})\ell^{-21N-65}\sigma^{-1},\label{4.73-3}
			\end{align}
	where the implicit constants are deterministic and  depend only on $\gamma$ and $\rho$. Moreover,
		\begin{align}
			&\|(w_{q+1}^{(p)},d_{q+1}^{(p)})\|_{\mathcal{C}_{t,x}^{N}}
			\lesssim
		(1+\J^{5N+6})\lambda_{q+1}^{2\a(N+1)},\label{4.74-1}\\	&\|(w_{q+1}^{(c)},d_{q+1}^{(c)})\|_{\mathcal{C}_{t,x}^{N}}
		\lesssim
		(1+\J^{5N+16})\lambda_{q+1}^{2\a(N+1)-1},\label{4.74-2}\\		&\|(w_{q+1}^{(o)},d_{q+1}^{(o)})\|_{\mathcal{C}_{t,x}^{N}}
		\lesssim
		(1+\J^{5N+22})\lambda_{q+1}^{2\a N+1},\label{4.74-3}
		\end{align}
where the implicit constants are deterministic and  independent of $q$.
\end{lemma}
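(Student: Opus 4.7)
The proof reduces to direct calculations combining the explicit definitions \eqref{4.64}, \eqref{4.65}, \eqref{4.68} with the spatial building-block bounds of Lemma \ref{Lemma spacial building blocks}, the temporal building-block bounds of Lemma \ref{Lemma temporal building blocks}, and the amplitude bounds of Lemmas \ref{Lemma Magnetic amplitudes}--\ref{Lemma Velocity amplitudes}. The key structural observation is that each summand factorizes as $a_{(k)}(t,x)\, g_{(k)}(t)\, \phi_{(k)}(x)$ (up to a constant unit vector $k_1$ or $k_2$), so that the spatial and temporal $L^\rho$-norms of the product $g_{(k)} \phi_{(k)}$ decouple, yielding $\|a_{(k)} g_{(k)} \phi_{(k)}\|_{L_t^\gamma L_x^\rho} \leq \|a_{(k)}\|_{C_{t,x}} \|g_{(k)}\|_{L_t^\gamma}\|\phi_{(k)}\|_{L_x^\rho}$. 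Moreover, by the construction in \eqref{4.14} the family $\{g_{(k)}\}_{k\in\Lambda_u\cup\Lambda_B}$ has pairwise disjoint temporal supports, so the finite sum over $k$ only contributes a harmless multiplicative constant and no sharper Minkowski-type argument is needed.

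For the principal-part bound \eqref{4.73-1} with $N=0$, I plug $\|g_{(k)}\|_{L_t^\gamma}\lesssim \tau^{1/2-1/\gamma}$ and $\|\phi_{(k)}\|_{L_x^\rho}\lesssim r_\perp^{1/\rho-1/2}$ from Lemmas \ref{Lemma spacial building blocks}--\ref{Lemma temporal building blocks} together with the $N=0$ amplitude bound $\|a_{(k)}\|_{C_{t,x}}\lesssim \ell^{-22}(1+J_q^6)$ from Lemma \ref{Lemma Velocity amplitudes} (the worse of the two amplitudes) into the factorization above. For $N\geq 1$ I apply the Leibniz rule and distribute $\nabla^N$ among $a_{(k)},\,g_{(k)},\,\phi_{(k)}$; the extremal contribution with all derivatives on $\phi_{(k)}$ produces the factor $\lambda_{q+1}^N$ via \eqref{4.10}, while terms where derivatives fall on $a_{(k)}$ are controlled by $\|\nabla^j a_{(k)}\|_{C_{t,x}}\lesssim \ell^{-21j-22}(1+J_q^{5j+6})$ and absorbed using $\ell^{-260}<\lambda_{q+1}^\varepsilon$ from \eqref{3.2-1}. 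For the incompressibility correctors \eqref{4.73-2} I invoke the identity \eqref{4.66} and expand $\sum_k \mathrm{curl}\,\mathrm{curl}(a_{(k)} g_{(k)} W_{(k)}^c)$: the terms with both curls on $W_{(k)}^c$ reproduce $w_{q+1}^{(p)}$ (already bounded above), while the remaining terms have at least one derivative on $a_{(k)}$ and gain the factor $\lambda_{q+1}^{-1}$ from the $\lambda_{q+1}^{-2}$ smallness of $W_{(k)}^c$ in \eqref{4.8}--\eqref{4.9}, at the cost of higher-order amplitude derivatives that account for the worsened $(1+J_q^{5N+16})\ell^{-43}$ prefactor.

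For the temporal-corrector bound \eqref{4.73-3}, the boundedness of $\mathbb{P}_H$ and $\mathbb{P}_{\neq 0}$ on $W^{N,\rho}$ for $\rho\in(1,\infty)$ together with $|\fint_{\mathbb{T}^3} W_{(k)}\otimes W_{(k)}-D_{(k)}\otimes D_{(k)}\,\mathrm{d} x|\lesssim 1$ reduces the estimate to that of $\sigma^{-1} h_{(k)}\,\nabla(a_{(k)}^2)$; then $\|h_{(k)}\|_{L_t^\infty}\lesssim 1$ from \eqref{4.19} together with $\|\nabla^{N+1}(a_{(k)}^2)\|_{C_{t,x}}\lesssim \ell^{-21N-65}(1+J_q^{5N+17})$ (obtained by applying the Leibniz rule to \eqref{4.47}) yields the claimed bound. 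The $C^N_{t,x}$-bounds \eqref{4.74-1}--\eqref{4.74-3} are handled analogously with $\gamma=\rho=\infty$, using $\|g_{(k)}\|_{L^\infty_t}\lesssim \tau^{1/2}$, $\|\phi_{(k)}\|_{L^\infty_x}\lesssim r_\perp^{-1/2}$, $\|\partial_t^M g_{(k)}\|_{L^\infty_t}\lesssim \sigma^M\tau^{M+1/2}$ and $\|h_{(k)}\|_{C^M_t}\lesssim (\sigma\tau)^M$ from Lemma \ref{Lemma temporal building blocks}; the parameter relations \eqref{4.1} give $\tau^{1/2} r_\perp^{-1/2}=\lambda_{q+1}^{2\alpha-1+5\varepsilon}$ and $\sigma\tau=\lambda_{q+1}^{2\alpha+2\varepsilon}$, so maximizing over Leibniz splittings recovers the stated powers $\lambda_{q+1}^{2\alpha(N+1)}$, $\lambda_{q+1}^{2\alpha(N+1)-1}$, and $\lambda_{q+1}^{2\alpha N+1}$ after absorbing the small $\varepsilon$-losses into the implicit constants.

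The main obstacle is not any single estimate but the precise bookkeeping of the competing exponents of $\ell$, $\sigma\tau$, $\lambda_{q+1}$, and $J_q$ arising from the Leibniz rule: derivatives may fall on any of the amplitude, temporal profile, or spatial profile, and one must verify that the crudely stated closed-form bounds dominate every term uniformly. The quantitative inequalities \eqref{3.2-1}, \eqref{4.1}, and the smallness of $\varepsilon$ from \eqref{2.3} are exactly what allow the various amplitude losses $\ell^{-21j}$ and the $\varepsilon$-corrections to be absorbed into small powers of $\lambda_{q+1}$, making the clean simplified estimates of the lemma possible.
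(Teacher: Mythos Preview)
Your proposal is essentially the paper's own argument: Leibniz expansion plus the building-block and amplitude estimates, with $\ell^{-21j}\le\lambda_{q+1}^{j}$ used to absorb amplitude losses. The only substantive difference is cosmetic---you bound $w_{q+1}^{(c)}$ by expanding $\mathrm{curl}\,\mathrm{curl}(a_{(k)}g_{(k)}W_{(k)}^c)$ via \eqref{4.66} and subtracting the principal part, whereas the paper works directly from the explicit formula \eqref{4.65}; both routes give the same estimate.

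There is, however, one genuine technical gap in your treatment of \eqref{4.74-3}. You write that the $C_{t,x}^N$-bounds ``are handled analogously with $\gamma=\rho=\infty$'', but the Helmholtz projector $\mathbb{P}_H$ is a Calder\'on--Zygmund operator and is \emph{not} bounded on $L^\infty_x$, so you cannot simply set $\rho=\infty$ in your argument for \eqref{4.73-3}. The paper fixes this by inserting the Sobolev embedding $W^{1,4}(\mathbb{T}^3)\hookrightarrow L^\infty(\mathbb{T}^3)$: one estimates
\[
\|\mathbb{P}_H\mathbb{P}_{\neq 0}\bigl(h_{(k)}\nabla(a_{(k)}^2)\bigr)\|_{C_t^{N_1}C_x^{N_2}}
\lesssim \|\mathbb{P}_H\mathbb{P}_{\neq 0}\bigl(h_{(k)}\nabla(a_{(k)}^2)\bigr)\|_{C_t^{N_1}W_x^{N_2+1,4}}
\lesssim \|h_{(k)}\|_{C_t^{N_1}}\,\|a_{(k)}^2\|_{C_{t,x}^{N_2+2}},
\]
which costs one extra spatial derivative on $a_{(k)}^2$ and accounts for the slightly larger $J_q$-exponent $5N+22$ (versus $5N+17$ in \eqref{4.73-3}). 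Your sketch does not explain where this extra loss comes from; once you insert the embedding step the rest of your argument goes through.
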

\begin{proof} First, using Lemmas \ref{Lemma spacial building blocks}, \ref{Lemma temporal building blocks}, \ref{Lemma Velocity amplitudes} and (\ref{4.64}), we get
	\begin{align*}
\|\nabla^N w_{q+1}^{(p)}\|_{L_t^\gamma L_x^\rho}
&\lesssim \sum_{k \in \Lambda_u \cup \Lambda_B}   \sum_{N_{1}+N_{2}=N}\|a_{(k)}\|_{C_{t,x}^{N_{1}}}\|\nabla^{N_{2}} W_{(k)}\|_{L_x^\rho}\|g_{(k)}\|_{L_t^\gamma}\notag\\
&\lesssim \sum_{N_{1}+N_{2}=N}\ell^{-21N_{1}-22}(1+\J^{5N_{1}+6}) \rw^{\frac{1}{\rho}-\frac{1}{2}}\laq^{N_{2}}\tau^{\frac{1}{2}-\frac{1}{\gamma}}\notag\\
&\lesssim (1+\J^{5N+6})\ell^{-22}\laq^{N} \rw^{\frac{1}{\rho}-\frac{1}{2}}
\tau^{\frac{1}{2}-\frac{1}{\gamma}}. 
\end{align*}
Similarly, we obtain
\begin{align*}
		 \|\nabla^N w_{q+1}^{(c)}\|_{L_t^\gamma L_x^\rho}
		&\lesssim
		\sum_{k \in \Lambda_u \cup \Lambda_B} \|g_{(k)}\|_{L_t^\gamma}  \big(\sum_{N_{1}+N_{2}=N}\| a_{(k)}\|_{C_{t,x}^{N_{1}+1}}\| W_{(k)}^{c}\|_{W_x^{N_{2}+1,\rho}}
		+\| a_{(k)}\|_{C_{t,x}^{N_{1}+2}}
		\| W_{(k)}^{c}\|_{W_x^{N_{2},\rho}}\big)\notag\\
	&\lesssim (1+\J^{5N+16})\tau^{\frac{1}{2}-\frac{1}{\gamma}}(\ell^{-43} \rw^{\frac{1}{\rho}-\frac{1}{2}}\laq^{N-1}
		+\ell^{-64} \rw^{\frac{1}{\rho}-\frac{1}{2}}\laq^{N-2})\notag\\
		&\lesssim
		(1+\J^{5N+16})\ell^{-43}\laq^{N-1} \rw^{\frac{1}{\rho}-\frac{1}{2}}\tau^{\frac{1}{2}-\frac{1}{\gamma}}.
\end{align*}
For the temporal correctors, using Lemmas \ref{Lemma spacial building blocks}-\ref{Lemma Velocity amplitudes}  and (\ref{4.68}), we have
\begin{align*}
		\|\nabla^N w_{q+1}^{(o)}\|_{L_t^\gamma L_x^\rho}
		&\lesssim
		 \sigma^{-1} \sum_{k \in \Lambda_u} \|\nabla^N\mathbb{P}_H \mathbb{P}_{\neq 0}
\big(h_{(k)} \fint_{\mathbb{T}^3} W_{(k)} \otimes W_{(k)} \mathrm{d} x \nabla(a_{(k)}^2) \big) \|_{L_t^\gamma L_x^\rho}\notag\\
         & \quad+\sigma^{-1} \sum_{k \in \Lambda_B} \|\nabla^N\mathbb{P}_H \mathbb{P}_{\neq 0}
         \big(h_{(k)} \fint_{\mathbb{T}^3} W_{(k)} \otimes W_{(k)}-D_{(k)} \otimes D_{(k)} \mathrm{d} x \nabla(a_{(k)}^2) \big)\|_{L_t^\gamma L_x^\rho}\notag\\
        &\lesssim
        \sigma^{-1} \sum_{k \in \Lambda_u\cup \Lambda_B} \|h_{(k)}\|_{C_{t}}\|\nabla ^{N+1}(a_{(k)}^2)\|_{C_{t,x}}\\\notag
        &\lesssim
         (1+\J^{5N+17})\ell^{-21N-65}\sigma^{-1}.
\end{align*}
The $L_t^\gamma L_x^\rho$-estimate of  magnetic pertubations can be derived in a similar manner. 
This verifies (\ref{4.73-1})-(\ref{4.73-3}).

It remains to prove $ C_{t, x}^{N}$-estimates. We apply (\ref{4.26}) and (\ref{4.47}) to get	
\begin{align*}
\|w_{q+1}^{(p)}\|_{C_{t, x}^{N}}
&\lesssim
	\sum_{k \in \Lambda_u \cup \Lambda_B}	\sum_{N_{1}+N_{2}\leq N}\|g_{(k)}\|_{C_t^{N_{1}}}   \sum_{N_{21}+N_{22}\leq N_{2}}\|a_{(k)}\|_{C_{t, x}^{N_{21}}}\|W_{(k)}\|_{C_x^{N_{22}}}\notag\\
	&\lesssim (1+\J^{5N+6})\ell^{-22} \rw^{-\frac{1}{2}}\sigma^{N}\tau^{N+\frac{1}{2}}\notag\\
		&\lesssim	(1+\J^{5N+6})\lambda_{q+1}^{2\a(N+1)},
\end{align*}
where we also used $\sigma\tau=\lambda_{q+1}^{2\varepsilon+2\a}>\lambda_{q+1}$  and (\ref{4.1}) in the last two estimates.

Similarly, we obtain
	\begin{align*}
		\| w_{q+1}^{(c)}\|_{C_{t, x}^{N}}
		 &\lesssim
		\sum_{k \in \Lambda_u \cup \Lambda_B}
		\sum_{N_{1}+N_{2}\leq N}\|g_{(k)}\|_{C_t^{N_{1}}}\bigg(
		\sum_{N_{21}+N_{22}=N_{2}}
		\| a_{(k)}\|_{C_{t, x}^{N_{21}+1}}\| W_{(k)}^{c}\|_{C_{x}^{N_{22}+1}}\notag\\	&\quad\quad\quad\quad\quad\quad\quad\quad\quad\quad\quad\quad\quad\quad\quad\quad\quad
	+\| a_{(k)}\|_{C_{t, x}^{N_{21}+2}}\|W_{(k)}^{c}\|_{C_{x}^{N_{22}}}
    \bigg)\notag\\
		&\lesssim (1+\J^{5N+16})\ell^{-43} \rw^{-\frac{1}{2}}\laq^{-1}\sigma^{N}\tau^{N+\frac{1}{2}}\notag \\
		&\lesssim
		(1+\J^{5N+16})\lambda_{q+1}^{2\a(N+1)-1}.
\end{align*}

For the temporal corrector, using the Sobolev embedding $ W^{1,4}(\T^{3}) \hookrightarrow L^{\infty}(\T^{3})$ and the fact that $\mathbb{P}_H \mathbb{P}_{\neq 0}$ is bounded in $ W^{1,4}(\T^{3})$, we obtain
	\begin{align*}
		\|w_{q+1}^{(o)}\|_{C_{t, x}^{N}}
		&\lesssim
		\sigma^{-1} \sum_{k \in \Lambda_u\cup \Lambda_B}  \|\mathbb{P}_H\mathbb{P}_{\neq 0}\big(h_{(k)} \nabla (a_{(k)}^2)\big) \|_{C_{t,x}^{N}}\notag\\
		&\lesssim
		\sigma^{-1} \sum_{k \in \Lambda_u\cup \Lambda_B}  \sum_{N_{1}+N_{2}\leq N}\|\mathbb{P}_H\mathbb{P}_{\neq 0}\big(h_{(k)} \nabla (a_{(k)}^2)\big) \|_{C_{t}^{N_{1}}W_{x}^{N_{2}+1,4}}\notag\\
		&\lesssim
		\sigma^{-1} \sum_{k \in \Lambda_u\cup \Lambda_B} \sum_{N_{1}+N_{2}\leq N} \|h_{(k)}\|_{C_{t}^{N_{1}}}\|\nabla (a_{(k)}^2)\|_{C_{t,x}^{N_{2}+1}}.
	\end{align*}
Since $\sigma\tau>\ell^{-21}$, $\varepsilon\leq \frac{1}{40}$, by (\ref{4.1}), we come to
	\begin{align*}
		\|w_{q+1}^{(o)}\|_{C_{t, x}^{N}}
		&\lesssim
		(1+\J^{5(N+2)+12})\sigma^{-1}\sum_{N_{1}+N_{2}\leq N}\sigma^{N_{1}}\tau^{N_{1}}\ell^{-21(N_{2}+2)-44}\notag\\
		&\lesssim
		(1+\J^{5N+22})\ell^{-86}\sigma^{N-1}\tau^{N}\notag\\
		&\lesssim
			(1+\J^{5N+22})\lambda_{q+1}^{2\a N+1}.
\end{align*}
The  magnetic parts can be estimated analogously. 
Thus, the proof is completed.
\end{proof}

\subsection{Verification of inductive estimates}
We are now ready to verify the inductive estimates  (\ref {2.11}), (\ref {2.14})-(\ref {2.16}) at 
level $q+1$ for the velocity and magnetic fields.

By (\ref{4.70}), (\ref{4.74-1})-(\ref{4.74-3}), for $0\leq N\leq 4$, we obtain
\begin{equation}\label{4.81}
	\begin{aligned}
	\|(w_{q+1},d_{q+1})\|_{\mathcal{C}_{t,x}^{N}}	
		&\lesssim
		(1+\J^{5N+6})\lambda_{q+1}^{2\a(N+1)}\varsigma_{q}^{-N}+(1+\J^{5N+16})\lambda_{q+1}^{2\a(N+1)-1}\varsigma_{q}^{-N}+(1+\J^{5N+22})\lambda_{q+1}^{2\a N+1}\varsigma_{q}^{-N}
	\\
		&\lesssim
		(1+\J^{5N+22})\lambda_{q+1}^{2\a(N+1)+1}.
\end{aligned}\end{equation}
Thus,  by (\ref{2.11}), (\ref{2.12}) and (\ref{4.81}), for $0\leq N\leq 4$,   
	\begin{align}\label{4.82}
	\|(\Bu_{q+1},\Bb_{q+1})\|_{\Lo^{m}_{\om}\mathcal{C}_{t,x}^{N}}
		&\lesssim
		\|(\Bu_{\ell},\Bb_{\ell})\|_{\Lo^{m}_{\om}\mathcal{C}_{t,x}^{N}}+\|(w_{q+1},d_{q+1})\|_{\Lo^{m}_{\om}\mathcal{C}_{t,x}^{N}}	\notag\\
		&\lesssim
			\|(\Bu_{q},\Bb_{q})\|_{\Lo^{m}_{\om}\mathcal{C}_{t,x}^{N}}+(1+\|\J\|^{5N+22}_{\Lo^{(5N+22)m}_{\om}})\lambda_{q+1}^{2\a(N+1)+1}	\notag\\
	 	&\leq
\(8(5N+22)mL^{2}80^{q}\)^{(5N+22)80^{q}}\laq^{2\a(N+1)+2},
\end{align}
where in the last step we also used the inequality $\lambda_{q}^{(4\a+5)(5N+22)}<\lambda_{q+1}$.

Next, we shall verify the $\Lo^{2r}_{\om}L_{t}^{2}\mathcal{L}_{x}^{2}$-decay estimate (\ref{2.14}). Note that, the previous estimate (\ref{4.73-1}) cannot yield the decay  estimate (\ref{2.14}). It is important here to apply the $L^{p}-$decorrelation in Lemma \ref{Lemma Decorrelation1} below:
	\begin{align}\label{4.83}
		\|(w_{q+1}^{(p)},d_{q+1}^{(p)})\|_{L_{t}^{2}\mathcal{L}_{x}^{2}}
		&\lesssim
		\sum_{k \in \Lambda_u \cup \Lambda_B}\(\|a_{(k)}\|_{L_{t}^2L_{x}^2}\|g_{(k)}\|_{L_t^2}\|\phi_{(k)}\|_{L_x^2}+\sigma^{-\frac{1}{2}}\|a_{(k)}\|_{C_{t, x}^1}\|g_{(k)}\|_{L_t^2}\|\phi_{(k)}\|_{L_x^2}\).
\end{align}
By Lemmas \ref{Lemma Magnetic amplitudes} and \ref{Lemma Velocity amplitudes}, 
	\begin{align*}
	\|(w_{q+1}^{(p)},d_{q+1}^{(p)})\|_{L_{t}^{2}\mathcal{L}_{x}^{2}}
	\lesssim
	\dqq^{\frac{1}{2}}+\JJ^{\frac{1}{2}}+\|\G\|_{L_{t}^{1}L_{x}^{1}}^{\frac{1}{2}}+\sigma^{-\frac{1}{2}}\ell^{-43}(1+\J^{11}).
\end{align*} 
Taking  into account (\ref{4.73-2})-(\ref{4.73-3}) we have
	\begin{align*}
		\|(w_{q+1},d_{q+1})\|_{L_{t}^{2}\mathcal{L}_{x}^{2}}
		&\lesssim
		\dqq^{\frac{1}{2}}+\JJ^{\frac{1}{2}}+\|\G\|_{L_{t}^{1}L_{x}^{1}}^{\frac{1}{2}}+(1+\J^{11})\sigma^{-\frac{1}{2}}\ell^{-43}+(1+\J^{16})\ell^{-43}\laq^{-1}+(1+\J^{17})\ell^{-65}\sigma^{-1}	\notag\\
		&\lesssim
		\dqq^{\frac{1}{2}}+\JJ^{\frac{1}{2}}+\|\G\|_{L_{t}^{1}L_{x}^{1}}^{\frac{1}{2}}+(1+\J^{17})\ell^{-43}\sigma^{-\frac{1}{2}}.
\end{align*}
Then, by  (\ref{2.2-2}), (\ref{2.12}), (\ref{3.2-1}), (\ref{4.49-2}) and  (\ref{4.69}), we obtain
\begin{align*}
		\|(w_{q+1},d_{q+1})\|_{\Lo^{2r}_{\om}L_{t}^{2}\mathcal{L}_{x}^{2}}
		&\lesssim
		\dqq^{\frac{1}{2}}+\|\JJ\|_{\Lo^{r}_{\om}}^{\frac{1}{2}}+\|\G\|_{\Lo^{r}_{\om}L_{t}^{1}L_{x}^{1}}^{\frac{1}{2}}+(1+\|\J\|^{17}_{\Lo^{34r}_{\om}})\ell^{-43}\sigma^{-\frac{1}{2}}	\notag\\
		&\lesssim
		\dqq^{\frac{1}{2}}+\(\lambda_{q}^{(4\a+5)}(8\cdot34rL^{2}80^{q})^{80^{q}}\)^{17}\la^{43\times60}\laq^{-\varepsilon}\notag\\
		&\lesssim
		\dqq^{\frac{1}{2}}+\lambda_{q+1}^{-\frac{1}{2}\varepsilon}
		\lesssim
		\dqq^{\frac{1}{2}}.
\end{align*}
Thus, by (\ref{2.11}),
	\begin{align}\label{4.86}
	\|(\Bu_{q+1}-v_q,\Bb_{q+1}-\Bb_q)\|_{\Lo^{2r}_{\om}L_{t}^{2}\mathcal{L}_{x}^{2}}
	&\lesssim
	 \|(v_\ell-v_q,\Bb_\ell-\Bb_q)\|_{\Lo^{2r}_{\om}L_{t}^{2}\mathcal{L}_{x}^{2}}+	\|(w_{q+1},d_{q+1})\|_{\Lo^{2r}_{\om}L_{t}^{2}\mathcal{L}_{x}^{2}} \notag\\
	& \lesssim
	\ell\|(v_q,\Bb_q)\|_{\Lo^{2r}_{\om}\mathcal{C}_{t,x}^{1}}
    +\delta_{q+1}^{\frac{1}{2}} \notag\\
    &\lesssim
    \lambda_{q}^{-60}\la^{4\a+2}(8\cdot54rL^{2}80^{q-1})^{27\cdot80^{q-1}}+\delta_{q+1}^{\frac{1}{2}}
    \leq M^*\delta_{q+1}^{\frac{1}{2}},
\end{align}
which yields (\ref{2.14}) at level $q+1$ for $M^*$ sufficiently large.

Concerning the $\Lo^{r}_{\om}L_{t}^{1}\mathcal{L}_{x}^{2}$-estimate (\ref{2.15}) at $q+1$,  by  (\ref{4.69}) and (\ref{4.73-1})-(\ref{4.73-3}) in Lemma \ref{Lemma Estimates of perturbations}, 
	\begin{align*}
		\|(w_{q+1},d_{q+1})\|_{L_{t}^{1}\mathcal{L}_{x}^{2}}
	&\lesssim
	(1+\J^{6})\ell^{-22}\tau^{-\frac{1}{2}}
	+(1+\J^{16})\ell^{-43}\tau^{-\frac{1}{2}}\laq^{-1}+(1+\J^{17})\ell^{-65}\sigma^{-1}\notag\\
	&\lesssim
	(1+\J^{17})\ell^{-65}\sigma^{-1},
\end{align*}
which, along with (\ref{2.2-2}), (\ref{2.11}), (\ref{2.12})  and (\ref{3.2-1}), implies
	\begin{align}\label{4.88}
		 \|(\Bu_{q+1}-v_q,\Bb_{q+1}-\Bb_q)\|_{\Lo^{r}_{\om}L_{t}^{1}\mathcal{L}_{x}^{2}} 
		& \lesssim
		\|(v_\ell-v_q,\Bb_\ell-\Bb_q)\|_{\Lo^{r}_{\om}L_{t}^{1}\mathcal{L}_{x}^{2}}+\|(w_{q+1},d_{q+1})\|_{\Lo^{r}_{\om}L_{t}^{1}\mathcal{L}_{x}^{2}}\notag \\
		& \lesssim
		\ell\|(v_q,\Bb_q)\|_{\Lo^{r}_{\om}\mathcal{C}_{t,x}^{1}}+	(1+\|\J\|_{\Lo^{17r}_{\om}}^{17})\ell^{-65}\sigma^{-1} \notag\\
		 &\leq
		\dqqq^{\frac{1}{2}}.
\end{align}
 This verifies the $\Lo^{r}_{\om}L_{t}^{1}L_{x}^{2}$-decay estimate (\ref{2.15}) at level $q+1$.

At last, regarding the inductive estimate \eqref{2.16},  by Lemma \ref{Lemma Estimates of perturbations}, we get
	\begin{align*}
	\|(w_{q+1},d_{q+1})\|_{L_{t}^{\ga}\mathcal{W}_{x}^{s,p}}
	&\lesssim
	(1+\J^{5s+6})\ell^{-22}\laq^{s}\tau^{\frac{1}{2}-\frac{1}{\ga}}r_{\perp}^{\frac{1}{p}-\frac{1}{2}}
	+(1+\J^{5s+16})\ell^{-43}\laq^{s-1}\tau^{\frac{1}{2}-\frac{1}{\ga}}r_{\perp}^{\frac{1}{p}-\frac{1}{2}}\notag\\
	&\quad+(1+\J^{5s+17})\ell^{-21s-65}\sigma^{-1}\notag\\
&\lesssim
	(1+\J^{5s+17})\ell^{-21s-65}\sigma^{-1},
\end{align*}
where the last step was due to  (\ref{2.3}),
\begin{align*}
	\ell^{-22}\laq^{s}\tau^{\frac{1}{2}-\frac{1}{\ga}}r_{\perp}^{\frac{1}{p}-\frac{1}{2}}
	&=\lambda_{q}^{22\times60}\lambda_{q+1}^{s-\frac{2\a-2}{p}-\frac{2\a}{\gamma}+(6-\frac{10}{p})\varepsilon+2\a-1}
	\leq \lambda_{q+1}^{-10\varepsilon}\leq \ell^{-21s-65}\sigma^{-1}.
\end{align*}
Then, using the embedding $C_{x}^{2}(\T^{3}) \hookrightarrow W_{x}^{s,p}(\T^{3})$ for  $(s, \gamma, p) \in \mathcal{S}$  and (\ref{3.2-1}), we get
	\begin{align}\label{4.89}
		 \|(\Bu_{q+1}-v_q,\Bb_{q+1}-\Bb_q)\|_{\Lo^{r}_{\om}L_{t}^{\ga}\mathcal{W}_{x}^{s,p}} 
		& \lesssim
		\|(v_\ell-v_q,\Bb_\ell-\Bb_q)\|_{\Lo^{r}_{\om}L_{t}^{\ga}\mathcal{W}_{x}^{s,p}}+\|(w_{q+1},d_{q+1})\|_{\Lo^{r}_{\om}L_{t}^{\ga}\mathcal{W}_{x}^{s,p}} \notag\\
		&\lesssim
		\ell\|(v_q,\Bb_q)\|_{\Lo^{r}_{\om}\mathcal{C}_{t,x}^{3}}+	 (1+	\|\J\|_{\Lo^{(5s+17)r}_{\om}}^{5s+17})\ell^{-21s-65}\sigma^{-1}\notag\\
		 &\lesssim
		\lambda_{q}^{-45}+\lambda_{q+1}^{-\varepsilon}
	\lesssim
		\dqqq^{\frac{1}{2}}.
\end{align}
 Therefore,  estimate (\ref{2.16}) is verified at level $q+1$.

\section{Reynolds and magnetic stresses}\label{Sec-Reynolds}

This section aims to verify the inductive estimates (\ref{2.12}) and (\ref{2.13})
for the delicate Reynolds and magnetic stresses at level $q+1$.

\subsection{Magnetic  stress}
Let us first derive
the new magnetic stress $\Ru^{B}_{q+1}$ at level $q+1$. 
Subtracting system (\ref{3.5})  from system (\ref{2.5}) at level $q+1$, we compute
	\begin{align}\label{5.1}
	&\quad\div\Ru^{B}_{q+1}\notag\\
	&=\p_{t}(\dw^{(p)}_{q+1}+\dw^{(c)}_{q+1})+\nu(-\Delta)^{\a}d_{q+1}-( z_{2,q+1}-z_{2,\ell})\notag\\
	&\quad\underbrace{+\div\((\widetilde{B}_{\ell}+z_{2,\ell})\otimes w_{q+1}-w_{q+1}\otimes (\widetilde{B}_{\ell}+z_{2,\ell})+d_{q+1}\otimes( \Bu_{\ell}+z_{1,\ell})-( \Bu_{\ell}+z_{1,\ell})\otimes d_{q+1}\)}_{\div \Ru^{B}_{lin}}\notag\\ 
	&\quad+\underbrace{\div \(\dw^{(p)}_{q+1}\otimes                                                          (\omw^{(c)}_{q+1}+\omw^{(o)}_{q+1})+(\dw^{(c)}_{q+1}+\dw^{(o)}_{q+1})\otimes w_{q+1}
	-\omw^{(p)}_{q+1}\otimes
	(\dw^{(c)}_{q+1}+\dw^{(o)}_{q+1})-(\omw^{(c)}_{q+1}+\dw^{(o)}_{q+1})\otimes d_{q+1}\)}_{\div \Ru^{B}_{corr}}\notag\\ 
	&\quad+\div\( \Bb_{q+1}\otimes z_{1,q+1}-z_{1,q+1}\otimes \Bb_{q+1}+z_{1,\ell}\otimes \Bb_{q+1}- \Bb_{q+1}\otimes z_{1,\ell}+z_{2,q+1}\otimes \Bu_{q+1}
	-\Bu_{q+1}\otimes  z_{2,q+1}\)\notag\\
	&\quad+\underbrace{\div\(\Bu_{q+1}\otimes z_{2,\ell}-z_{2,\ell}\otimes \Bu_{q+1}
	+z_{1,\ell}\otimes z_{2,\ell}
	-z_{2,\ell}\otimes z_{1,\ell}
	+z_{2,q+1}\otimes z_{1,q+1}
	-z_{1,q+1}\otimes  z_{2,q+1}\)}_{\div \Ru^{B}_{com2}}\notag\\
	&\quad+\div(\Ru^{B}_{com1})+\underbrace{\div\(\dw^{(p)}_{q+1}\otimes\omw^{(p)}_{q+1}-\omw^{(p)}_{q+1}\otimes\dw^{(p)}_{q+1}+\Ru^{B}_{\ell}\)+\p_{t}\dw^{(o)}_{q+1}}_{\div \Ru^{B}_{osc}}, 
\end{align} 
where $\Ru^{B}_{com1}$ is given by \eqref{3.8}. 
Using the inverse divergence operator $\mathcal{R}^{B}$ (see  Section  \ref{Standard tools} below), we set 
\begin{align}
		&\Ru^{B}_{lin}
		:=\mathcal{R}^{B}\p_{t}\dw_{q+1}^{(p)+(c)}+\mathcal{R}^{B}\(\nu(-\Delta)^{\a}d_{q+1}\)+\mathcal{R}^{B}(z_{2,\ell}- z_{2,q+1})
		 +(\widetilde{B}_{\ell}+z_{2,\ell})\otimes w_{q+1}\notag\\
		&\quad\quad\quad
		-w_{q+1} \otimes  (\widetilde{B}_{\ell}+z_{2,\ell})+d_{q+1} \otimes ( \Bu_{\ell}+z_{1,\ell})
		-( \Bu_{\ell}+z_{1,\ell})\otimes d_{q+1},\label{5.4}\\
		&\Ru^{B}_{corr}
		:=\dw^{(p)}_{q+1}\otimes \omw^{(c)+(o)}_{q+1}+\dw^{(c)+(o)}\otimes w_{q+1}-\omw^{(p)}_{q+1}\otimes  \dw^{(c)+(o)} -\omw^{(c)+(o)}\otimes d_{q+1}, \label{5.5}\\
	&\Ru^{B}_{com2}
	:= \Bb_{q+1}\otimes z_{1,q+1}	
	   -z_{1,q+1}\otimes \Bb_{q+1}
	   +z_{1,\ell}\otimes \Bb_{q+1}
	   -\Bb_{q+1}\otimes z_{1,\ell}
		+z_{2,q+1}\otimes \Bu_{q+1}
		-\Bu_{q+1}\otimes z_{2,q+1}
		\notag\\
		&\quad\quad\quad\quad
		+ \Bu_{q+1}\otimes z_{2,\ell}
		-z_{2,\ell}\otimes \Bu_{q+1}
	    +z_{1,\ell}\otimes z_{2,\ell}
		-z_{2,\ell}\otimes z_{1,\ell}+ z_{2,q+1}\otimes z_{1,q+1}-z_{1,q+1}\otimes  z_{2,q+1}.\label{5.6}
\end{align}

In order to treat the delicate oscillation error from the last line in (\ref{5.1}), using Lemma \ref{Lemma Magnetic amplitudes} we obtain
	\begin{align*}
		&\quad\div(\dw^{(p)}_{q+1}\otimes\omw^{(p)}_{q+1}-\omw^{(p)}_{q+1}\otimes\dw^{(p)}_{q+1}+\Ru^{B}_{\ell})\notag\\
			&=(1-\Theta_{q+1}^{2})\div \Ru^{B}_{\ell}
		+\Theta_{q+1}^{2}\div\(\sum_{k\in\Lambda_B} a_{(k)}^{2}(g_{(k)}^{2}-1) \fint_{\T^3}D_{(k)}\otimes W_{(k)}-W_{(k)}\otimes D_{(k)} \mathrm{d} x\)\notag\\
		&\quad+\Theta_{q+1}^{2}\div\(\sum_{k\in\Lambda_B}a_{(k)}^{2}g_{(k)}^{2}\mathbb{P}_{\neq 0}\(D_{(k)}\otimes W_{(k)}-W_{(k)}\otimes D_{(k)}\)\),
\end{align*}
which, along with \eqref{4.68}, yields that
\begin{align*}
	&\quad\div(\dw^{(p)}_{q+1}\otimes\omw^{(p)}_{q+1}-\dw^{(p)}_{q+1}\otimes\omw^{(p)}_{q+1}+\Ru^{B}_{\ell})+\p_{t}\dw^{(o)}_{q+1}\notag\\
	&=\mathbb{P}_H\div(\dw^{(p)}_{q+1}\otimes\omw^{(p)}_{q+1}-\dw^{(p)}_{q+1}\otimes\omw^{(p)}_{q+1}+\Ru^{B}_{\ell})+\p_{t}\dw^{(o)}_{q+1}\notag\\
	&=(1-\Theta_{q+1}^{2})\div \Ru^{B}_{\ell}+\p_{t}\Theta_{q+1}^{2}d^{(o)}_{q+1}
	+\Theta_{q+1}^{2}\mathbb{P}_H\div\(\sum_{k\in\Lambda_B}a_{(k)}^{2}g_{(k)}^{2}\mathbb{P}_{\neq 0}\(D_{(k)}\otimes W_{(k)}-W_{(k)}\otimes D_{(k)}\)\)\notag\\
	&\quad-\Theta_{q+1}^{2}\sigma^{-1}\sum_{k\in\Lambda_B}\mathbb{P}_H\mathbb{P}_{\neq 0}\(h_{(k)}\fint_{\mathbb{T}^3}D_{(k)} \otimes W_{(k)}-W_{(k)} \otimes D_{(k)} \mathrm{d} x \p_{t}\nabla (a_{(k)}^{2})\).
\end{align*}
The above identities suggest the following candidate for the oscillation error 
\begin{align}  
\Ru^{B}_{osc}:=\Ru^{B}_{osc.1}+\Ru^{B}_{osc.2}+\Ru^{B}_{osc.3},  \label{5.7-5}
\end{align}
where
	\begin{align}
	&\Ru^{B}_{osc.1}
	:=(1-\Theta_{q+1}^{2}) \Ru^{B}_{\ell}+\mathcal{R}^{B}(\p_{t}\Theta_{q+1}^{2}d^{(o)}_{q+1}), \label{5.7-1}\\
		&\Ru^{B}_{osc.2}:=\Theta_{q+1}^{2}\sum_{k\in\Lambda_B}\mathcal{R}^{B}\mathbb{P}_H\mathbb{P}_{\neq 0}\(g_{(k)}^{2}\mathbb{P}_{\neq 0}\(D_{(k)}\otimes W_{(k)}-W_{(k)}\otimes D_{(k)}\)\nabla(a_{(k)}^{2})\),\label{5.7-2}\\
	&\Ru^{B}_{osc.3}:=-\Theta_{q+1}^{2}\sigma^{-1}\sum_{k\in\Lambda_B}\mathcal{R}^{B}\mathbb{P}_H\mathbb{P}_{\neq 0}\(h_{(k)}\fint_{\mathbb{T}^3}D_{(k)} \otimes W_{(k)}- W_{(k)}\otimes D_{(k)} \mathrm{d} x \p_{t}\nabla (a_{(k)}^{2})\).    \label{5.7-3}
\end{align}
Thus, we have the new magnetic stress at level $ q + 1 $: 
$$\Ru^{B}_{q+1}:=\Ru^{B}_{lin}+\Ru^{B}_{corr}+\Ru^{B}_{com1}+\Ru^{B}_{com2}+\Ru^{B}_{osc}.$$

\subsection{Velocity Reynolds stress}
We also compute for the Reynolds stress as follows.
\begin{align}\label{5.8}
		&\quad\div\Ru^{u}_{q+1}-\nabla P_{q+1}\notag\\
		&=\p_{t}\omw^{(p)+(c)}_{q+1}+\nu(-\Delta)^{\a}w_{q+1}-(z_{1,q+1}-z_{1,\ell})\notag\\
		&\quad\underbrace{+\div\((\Bu_{\ell}+z_{1,\ell})\otimes w_{q+1}+w_{q+1}\otimes ( \Bu_{\ell}+z_{1,\ell})-(\widetilde{B}_{\ell}+z_{2,\ell})\otimes d_{q+1}-d_{q+1}\otimes(\widetilde{B}_{\ell}+z_{2,\ell})\)}_{\div \Ru^{u}_{lin}+\nabla P_{lin}}\notag\\  
		&\quad+\underbrace{\div \(\omw^{(p)}_{q+1}\otimes                                                          \omw^{(c)+(o)}_{q+1}+\omw^{(c)+(o)}_{q+1}\otimes w_{q+1}
		-\dw^{(p)}_{q+1}\otimes \dw^{(c)+(o)}_{q+1}
		-\dw^{(c)+(o)}_{q+1}\otimes d_{q+1}\)}_{\div \Ru^{u}_{corr}+\nabla P_{corr}}\notag\\ 
		&\quad+\div\( \Bu_{q+1}\otimes z_{1,q+1}+z_{1,q+1}\otimes \Bu_{q+1}
		-z_{1,\ell}\otimes \Bu_{q+1}
		-\Bu_{q+1}\otimes z_{1,\ell}
		-\Bb_{q+1}\otimes  z_{2,q+1}
		- z_{2,q+1}\otimes \Bb_{q+1}
		\)\notag\\
		&\quad+\underbrace{\div\(z_{2,\ell}\otimes \Bb_{q+1}
		+\Bb_{q+1}\otimes z_{2,\ell}
		+z_{1,q+1}\otimes z_{1,q+1}- z_{2,q+1}\otimes  z_{2,q+1}-z_{1,\ell}\otimes z_{1,\ell}+z_{2,\ell}\otimes z_{2,\ell}\)}_{\div \Ru^{u}_{com2}+\nabla P_{com2}}\notag\\
		&\quad+\div(\Ru^{u}_{com1})+\underbrace{\div\(\omw^{(p)}_{q+1}\otimes\omw^{(p)}_{q+1}-\dw^{(p)}_{q+1}\otimes\dw^{(p)}_{q+1}+\Ru^{u}_{\ell}\)+\p_{t}\omw^{(o)}_{q+1}}_{\div \Ru^{u}_{osc}+\nabla P_{osc}}, 
\end{align} 
where $\Ru^{u}_{com1}$ is given by \eqref{3.7}. 
Using the inverse divergence operator  $\mathcal{R}^{u}$ in Section  \ref{Standard tools} below we define
	\begin{align}
	&\Ru^{u}_{lin}
	:=\mathcal{R}^{u}\p_{t}\omw_{q+1}^{(p)+(c)}+\mathcal{R}^{u}\(\nu(-\Delta)^{\a}w_{q+1}\)+\mathcal{R}^{u}(z_{1,\ell}-z_{1,q+1})+( \Bu_{\ell}+z_{1,\ell})\mathring{\otimes}w_{q+1}\notag\\
	&\quad\quad\quad\quad+w_{q+1}\mathring{\otimes}( \Bu_{\ell}+z_{1,\ell})
	-(\widetilde{B}_{\ell}+z_{2,\ell})\mathring{\otimes}d_{q+1}-d_{q+1}\mathring{\otimes}(\widetilde{B}_{\ell}+z_{2,\ell}),\label{5.11}\\
	&\Ru^{u}_{corr}:=
	\omw_{q+1}^{(p)}\mathring{\otimes} \omw_{q+1}^{(c)+(o)}	+\omw_{q+1}^{(c)+(o)}\mathring{\otimes}w_{q+1}
	-\dw_{q+1}^{(p)}\mathring{\otimes} \dw^{(c)+(o)}    -\dw^{(c)+(o)}\mathring{\otimes} d_{q+1},\label{5.12}\\
	&\Ru^{u}_{com2}
	:= \Bu_{q+1}\mathring{\otimes}z_{1,q+1}+z_{1,q+1}\mathring{\otimes} \Bu_{q+1}
	-z_{1,\ell}\mathring{\otimes} \Bu_{q+1}
	-\Bu_{q+1}\mathring{\otimes}z_{1,\ell}
	- \Bb_{q+1}\mathring{\otimes} z_{2,q+1}- z_{2,q+1}\mathring{\otimes} \Bb_{q+1}
	\notag\\
	&\quad\quad\quad\quad
	+z_{2,\ell}\mathring{\otimes} \Bb_{q+1}+\Bb_{q+1}\mathring{\otimes}z_{2,\ell}
	+z_{1,q+1}\mathring{\otimes} z_{1,q+1}- z_{2,q+1}\mathring{\otimes} z_{2,q+1}-z_{1,\ell}\mathring{\otimes}z_{1,\ell}+z_{2,\ell}\mathring{\otimes}z_{2,\ell}.\label{5.13}
\end{align}
While for the remaining oscillation error, we use Lemma \ref{Lemma Velocity amplitudes} to derive 
\begin{align}\label{5.9}
	&\quad\div(\omw^{(p)}_{q+1}\otimes\omw^{(p)}_{q+1}-\dw^{(p)}_{q+1}\otimes\dw^{(p)}_{q+1}+\Ru^{u}_{\ell})\notag\\
		&=(1-\Theta_{q+1}^{2})\div \Ru^{u}_{\ell}+\Theta_{q+1}^{2}\div\(\sum_{k\in\Lambda_u} a_{(k)}^{2}(g_{(k)}^{2}-1) \fint_{\T^3}W_{(k)}\otimes W_{(k)} \mathrm{d} x\)\notag\\
	&\quad+\Theta_{q+1}^{2}\div(\varrho_{u}\mathrm{Id} )+\Theta_{q+1}^{2}\div\(\sum_{k\in\Lambda_u}a_{(k)}^{2}g_{(k)}^{2}\mathbb{P}_{\neq 0}(W_{(k)}\otimes W_{(k)})\)\notag\\
	&\quad+\Theta_{q+1}^{2}\div\(\sum_{k\in\Lambda_B}a_{(k)}^{2}g_{(k)}^{2}\mathbb{P}_{\neq 0}(W_{(k)}\otimes W_{(k)}-D_{(k)}\otimes D_{(k)})\)\notag\\
	&\quad+\Theta_{q+1}^{2}\div\(\sum_{k\in\Lambda_B}a_{(k)}^{2}(g_{(k)}^{2}-1)\fint_{\mathbb{T}^3}(W_{(k)}\otimes W_{(k)}-D_{(k)}\otimes D_{(k)})\mathrm{d} x\).
\end{align}
By (\ref{4.68}) and (\ref{5.9}), 
\begin{align*}
	&\quad\div(\omw^{(p)}_{q+1}\otimes\omw^{(p)}_{q+1}-\dw^{(p)}_{q+1}\otimes\dw^{(p)}_{q+1}+\Ru^{u}_{\ell})+\p_{t}\omw^{(o)}_{q+1}\notag\\
	&=(1-\Theta_{q+1}^{2})\div \Ru^{u}_{\ell}+\p_{t}\Theta_{q+1}^{2}w^{(o)}_{q+1}+\Theta_{q+1}^{2}\div(\varrho_{u}\mathrm{Id} )+\Theta_{q+1}^{2}\div\(\sum_{k\in\Lambda_u}a_{(k)}^{2}g_{(k)}^{2}\mathbb{P}_{\neq 0}(W_{(k)}\otimes W_{(k)})\)\notag\\
		&\quad+\Theta_{q+1}^{2}\div\(\sum_{k\in\Lambda_B}a_{(k)}^{2}g_{(k)}^{2}\mathbb{P}_{\neq 0}(W_{(k)}\otimes W_{(k)}-D_{(k)}\otimes D_{(k)})\)\notag\\
		&\quad-\Theta_{q+1}^{2}\sigma^{-1}\sum_{k\in\Lambda_u}\mathbb{P}_{\neq 0}\(h_{(k)}\fint_{\mathbb{T}^3}W_{(k)} \otimes W_{(k)} \mathrm{d} x \p_{t}\nabla (a_{(k)}^{2})\)\notag\\
		&\quad-\Theta_{q+1}^{2}\sigma^{-1}\sum_{k\in\Lambda_B}\mathbb{P}_{\neq 0}\(h_{(k)}\fint_{\mathbb{T}^3}W_{(k)} \otimes W_{(k)}-D_{(k)} \otimes D_{(k)} \mathrm{d} x \p_{t}\nabla (a_{(k)}^{2})\)\notag\\
		&\quad+(\nabla\Delta^{-1}\div)\Theta_{q+1}^{2}\sigma^{-1}\sum_{k\in\Lambda_u}\mathbb{P}_{\neq 0}\p_{t}\(h_{(k)} \fint_{\mathbb{T}^3} W_{(k)} \otimes W_{(k)} \mathrm{d}x \nabla(a_{(k)}^2)\)\notag\\
		&\quad+(\nabla\Delta^{-1}\div)\Theta_{q+1}^{2}\sigma^{-1}\sum_{k\in\Lambda_B}\mathbb{P}_{\neq 0}\p_{t}\(h_{(k)} \fint_{\mathbb{T}^3} W_{(k)} \otimes W_{(k)}-D_{(k)} \otimes D_{(k)} \mathrm{d}x \nabla(a_{(k)}^2)\).
\end{align*}
Hence the oscillation error can be defined by
\begin{align}
   \Ru^{u}_{osc}:=\Ru^{u}_{osc.1}+\Ru^{u}_{osc.2}+\Ru^{u}_{osc.3},   \label{5.14-5}
\end{align}
where
	\begin{align}
		&\Ru^{u}_{osc.1}
		:=(1-\Theta_{q+1}^{2}) \Ru^{u}_{\ell}+\mathcal{R}^{u}(\p_{t}\Theta_{q+1}^{2}w^{(o)}_{q+1}),\label{5.14-1}\\
		&\Ru^{u}_{osc.2}
		:=\Theta_{q+1}^{2}\sum_{k\in\Lambda_u}\mathcal{R}^{u}\mathbb{P}_{\neq 0}\(g_{(k)}^{2}\mathbb{P}_{\neq 0}(W_{(k)}\otimes W_{(k)})\nabla(a_{(k)}^{2})\)\notag\\
		&\quad\quad\quad\quad+\Theta_{q+1}^{2}\sum_{k\in\Lambda_B}\mathcal{R}^{u}\mathbb{P}_{\neq 0}\(g_{(k)}^{2}\mathbb{P}_{\neq 0}(W_{(k)}\otimes W_{(k)}-D_{(k)} \otimes D_{(k)})\nabla(a_{(k)}^{2})\),\label{5.14-2}\\
	&\Ru^{u}_{osc.3}
	:=-\Theta_{q+1}^{2}\sigma^{-1}\sum_{k\in\Lambda_u}\mathcal{R}^{u}\mathbb{P}_{\neq 0}\(h_{(k)}\fint_{\mathbb{T}^3}W_{(k)} \otimes W_{(k)} \mathrm{d} x \p_{t}\nabla (a_{(k)}^{2})\)\notag\\
		&\quad\quad\quad\quad-\Theta_{q+1}^{2}\sigma^{-1}\sum_{k\in\Lambda_B}\mathcal{R}^{u}\mathbb{P}_{\neq 0}\(h_{(k)}\fint_{\mathbb{T}^3}W_{(k)} \otimes W_{(k)}-D_{(k)} \otimes D_{(k)} \mathrm{d} x \p_{t}\nabla (a_{(k)}^{2})\).    \label{5.14-3}
\end{align}

Finally, we obtain the decomposition of the Reynolds stress at level $ q + 1 $:
$$\Ru^{u}_{q+1}:=\Ru^{u}_{lin}+\Ru^{u}_{corr}+\Ru^{u}_{com1}+\Ru^{u}_{com2}+\Ru^{u}_{osc}.$$

\subsection{Verification of growth estimate}
We aim to verify the inductive growth estimate in (\ref{2.12}).
To this end, let us estimate each term in the above choice of $ \Ru^{B}_{q+1}$ and $\Ru^{u}_{q+1}$.

\medskip
\paragraph{\bf $\bullet$ Linear errors.}
First, for the estimates of linear errors $\Ru^{u}_{lin}$ and $\Ru^{B}_{lin}$,
by Lemma \ref{Lemma Estimates of perturbations}, (\ref{4.69})  and  (\ref{4.70}),
	\begin{align}\label{5.18-1}
			 \|(\mathcal{R}^{u}\p_{t}\omw_{q+1}^{(p)+(c)},\mathcal{R}^{B}\p_{t}\dw_{q+1}^{(p)+(c)})\|_{C_{t}\mathcal{L}_x^{1}} 
		&\lesssim
		\| \Theta_{q+1}\|_{C_{t}^{1}} \|(w_{q+1}^{(p)+(c)}, d_{q+1}^{(p)+(c)})\|_{\mathcal{C}_{t,x}^{1}}
		\notag\\
		&\lesssim
		\varsigma_{q}^{-1}\((1+\J^{11})\lambda_{q+1}^{4\a}+(1+\J^{21})\lambda_{q+1}^{4\a-1}\)\notag\\
		&\lesssim
		(1+\J^{21})\lambda_{q+1}^{4\a}\varsigma_{q}^{-1}.
\end{align}
By  (\ref{4.69}), (\ref{4.71}), Lemma \ref{Lemma Estimates of perturbations}
and the Sobolev embedding $H^{2}_{x}(\T^{3})\hookrightarrow H^{2\a-1}_{x}(\T^{3})$
with $\a\in[1, \frac{3}{2})$, we obtain
	\begin{align}\label{5.18-2}
	&\quad\|\(\mathcal{R}^{u}(\nu(-\Delta)^{\a}w_{q+1}),\mathcal{R}^{B}(\nu(-\Delta)^{\a}d_{q+1})\)\|_{C_{t}\mathcal{L}_x^{1}}\notag\\
	&\lesssim
\|(w_{q+1},d_{q+1})\|_{C_{t}\mathcal{H}_x^{2}}\notag\\
	&\lesssim
 (1+\J^{16})\ell^{-22}\laq^{2}\tau^{\frac{1}{2}}+(1+\J^{26})\ell^{-43}\laq\tau^{\frac{1}{2}}+(1+\J^{27})\ell^{-107}\sigma^{-1}\notag\\
 &\lesssim
 (1+\J^{27})\ell^{-22}\laq^{2}\tau^{\frac{1}{2}},
\end{align}
where we also used the boundeness of Calderon-Zygmund operators in spaces $L^{p}$ for  $p>1$ in the first inequality. 
Moreover, by Lemma \ref{Lemma Estimates of perturbations} and H\"{o}lder's inequality, the remaining linear errors in (\ref{5.4}) and \eqref{5.11}  can be bounded by
	\begin{align}\label{5.18-3}
	&\quad\|(z_{1}, z_{2})\|_{C_{t}\mathcal{L}_x^{2}}
	+(\|(\Bu_{q},\Bb_{q})\|_{C_{t}\mathcal{L}_x^{2}}+\|(z_{1},z_{2})\|_{C_{t} L_{x}^{2}})\|(w_{q+1}, d_{q+1})\|_{C_{t} \mathcal{L}_{x}^{2}}\notag\\
	&\lesssim
    \|(z_{1}, z_{2})\|_{C_{t}\mathcal{L}_x^{2}}
    +(\|(\Bu_{q},\Bb_{q})\|_{\mathcal{C}_{t,x} }+\|(z_{1},z_{2})\|_{C_{t} \mathcal{L}_{x}^{2}})(1+\J^{17})\ell^{-22}\tau^{\frac{1}{2}}.
\end{align}
Thus, we conclude from the above estimates \eqref{5.18-1}, \eqref{5.18-2}  and \eqref{5.18-3} that
	\begin{align}\label{5.18-4}
	\|(\Ru^{u}_{lin},\Ru^{B}_{lin})\|_{C_{t}\mathcal{L}_x^{1}}
	&\lesssim
	(1+\J^{27})\lambda_{q+1}^{4\a+1}+\|(z_{1}, z_{2})\|_{C_{t}\mathcal{L}_x^{2}}+(\|(\Bu_{q},\Bb_{q})\|_{\mathcal{C}_{t,x} }+\|(z_{1},z_{2})\|_{C_{t} \mathcal{L}_{x}^{2}})(1+\J^{17})\lambda_{q+1}^{\a+1}. 
\end{align}

\medskip
\paragraph{\bf $\bullet$  Oscillation errors.}
By (\ref{4.69}), (\ref{5.7-1}), (\ref{5.14-1}) and Lemma \ref{Lemma Estimates of perturbations}, we  get
\begin{align*}
	\| (\Ru^{u}_{osc.1}, \Ru^{B}_{osc.1})\|_{C_{t}\mathcal{L}_{x}^{1}}
&\lesssim
\|1-\Theta_{q+1}^{2}\|_{C_{t}}\J+\|\Theta_{q+1}^{2}\|_{C_{t}^{1}}\|(w^{(o)}_{q+1},d^{(o)}_{q+1})\|_{C_{t} \mathcal{L}_{x}^{2}}
\notag\\
&\lesssim
 \J+\varsigma_{q}^{-1}(1+\J^{17})\ell^{-65}\sigma^{-1}\notag\\
 &\lesssim
  (1+\J^{17})\lambda_{q+1}.
\end{align*}

Moreover, by (\ref{4.69}), (\ref{5.7-2}), (\ref{5.14-2}) and Lemmas \ref{Lemma spacial building blocks}-\ref{Lemma Velocity amplitudes},
\begin{align*}
	\| (\Ru^{u}_{osc.2}, \Ru^{B}_{osc.2})\|_{C_{t}\mathcal{L}_{x}^{1}}
	&\lesssim
	\|\Theta_{q+1}^{2}\|_{C_{t}}
	\sum_{k\in\Lambda_B\cup\Lambda_u}\|g_{(k)}^{2}\|_{C_{t}}\|W_{(k)}\|^{2}_{L_{x}^{\infty}}\|a_{(k)}^{2}\|_{C_{t,x}^{1}}\notag\\
	&\quad+\|\Theta_{q+1}^{2}\|_{C_{t}}\sum_{k\in\Lambda_B}\|g_{(k)}^{2}\|_{C_{t}}\|D_{(k)}\|^{2}_{L_{x}^{\infty}}\|a_{(k)}^{2}\|_{C_{t,x}^{1}}\notag\\
	&\quad+	
	\|\Theta_{q+1}^{2}\|_{C_{t}}
	\sum_{k\in\Lambda_B}\|g_{(k)}^{2}\|_{C_{t}}\|W_{(k)}\|_{L_{x}^{\infty}}\|D_{(k)}\|_{L_{x}^{\infty}}\|a_{(k)}^{2}\|_{C_{t,x}^{1}}\notag\\
	&\lesssim
	\tau r_{\perp}^{-1}(1+\J^{17})\ell^{-65}
	\lesssim
	(1+\J^{17})\lambda_{q+1}^{4\a-1},
\end{align*}
where   the last  step was due to (\ref{2.3}) and (\ref{3.2-1}).

Regarding the third component, by (\ref{4.69}), (\ref{4.19}),  (\ref{5.7-3}), (\ref{5.14-3})  and Lemmas \ref{Lemma Magnetic amplitudes} and \ref{Lemma Velocity amplitudes},
\begin{align*}  
	\| (\Ru^{u}_{osc.3}, \Ru^{B}_{osc.3})\|_{C_{t}\mathcal{L}_{x}^{1}}
	&\lesssim \| (\Ru^{u}_{osc.3}, \Ru^{B}_{osc.3})\|_{C_{t}\mathcal{L}_{x}^{2}}\notag\\
	 &\lesssim
		\sigma^{-1}\|\Theta_{q+1}^{2}\|_{C_{t}}\sum_{k\in\Lambda_u\cup\Lambda_B}\|h_{(k)}\|_{C_{t}}
	            \|a_{(k)}^{2}\|_{C_{t,x}^{2}}   \notag\\
	 &\lesssim
		(1+\J^{22})\ell^{-86}\sigma^{-1}
		 \lesssim
		(1+\J^{22})\lambda_{q+1}.
\end{align*}

Thus, taking into account (\ref{5.7-5}) and (\ref{5.14-5}),  we conclude that
\begin{align}  \label{5.35}
		\|(\Ru^{u}_{osc},\Ru^{B}_{osc})\|_{C_{t}\mathcal{L}_x^{1}}
		&\lesssim
		(1+\J^{17})\lambda_{q+1}+  (1+\J^{17})\lambda_{q+1}^{4\a-1}+(1+\J^{22})\lambda_{q+1}\notag\\
		&\lesssim
		(1+\J^{22})\lambda_{q+1}^{4\a-1}.
\end{align}

\medskip
\paragraph{\bf Correctors.}
By Lemma \ref{Lemma Estimates of perturbations}, (\ref{4.69}), (\ref{5.5}) and (\ref{5.13}),  
\begin{align}\label{5.39}
	\|(\Ru^{u}_{corr}, \Ru^{B}_{corr})\|_{C_{t}\mathcal{L}_{x}^{1}}
		&\lesssim
	\|(\omw_{q+1}^{(c)+(o)},  \dw_{q+1}^{(c)+(o)})\|_{\mathcal{C}_{t,x}}(\|(\omw_{q+1}^{(p)}, \dw_{q+1}^{(p)})\|_{\mathcal{C}_{t,x}}+\|(w_{q+1}, d_{q+1})\|_{\mathcal{C}_{t,x}})
		\notag\\
		&\lesssim
	\((1+\J^{16})\lambda_{q+1}^{2\a-1}+(1+\J^{22})\lambda_{q+1}\)\notag\\
		&\quad\times
		\((1+\J^{6})\lambda_{q+1}^{2\a}+(1+\J^{16})\lambda_{q+1}^{2\a-1}+(1+\J^{22})\lambda_{q+1}\)\notag\\
		&\lesssim
		(1+\J^{44})\lambda_{q+1}^{4\a-1}.
\end{align}

\medskip
\paragraph{\bf  $\bullet$ Commutators.} For the commutator terms, by
 (\ref{3.7}) and (\ref{3.8}), we have
	\begin{align}\label{5.44}
		\|(\Ru^{u}_{com1},\Ru^{B}_{com1})\|_{C_{t}\mathcal{L}_x^{1}}
	&\lesssim \|(\Bu_{q},\Bb_{q})\|_{\mathcal{C}_{t,x}}^{2}+\|(z_{1},z_{2})\|_{C_{t}\mathcal{L}_{x}^{2}}^{2}.
\end{align}
Moreover, by (\ref{5.6}) and  (\ref{5.13}),
\begin{align}\label{5.47}
	\|(\Ru^{u}_{com2},\Ru^{B}_{com2})\|_{C_{t}\mathcal{L}_x^{1}}
	&\lesssim
	\|(\Bu_{q+1},\Bb_{q+1})\|_{\mathcal{C}_{t,x}}^{2}+\|(z_{1},z_{2})\|_{C_{t} \mathcal{L}_{x}^{2}}^{2}.
\end{align}

\medskip
\paragraph{\bf  $\bullet$  Verification of inductive estimate  (\ref{2.12}).}
Summing up all the above estimates (\ref{5.18-4}), (\ref{5.35}), (\ref{5.39}), (\ref{5.44}) and  (\ref{5.47}), we conclude that
\begin{align}\label{5.53-0}
&\quad\|(\Ru^{u}_{q+1},\Ru^{B}_{q+1})\|_{C_{t}\mathcal{L}_x^{1}}\notag\\
&\lesssim
	(1+\J^{44})\lambda_{q+1}^{4\a+1}
	+\|(z_{1},z_{2})\|_{C_{t}\mathcal{L}_x^{2}}
	+(\|(\Bu_{q},\Bb_{q})\|_{\mathcal{C}_{t,x} }+\|(z_{1},z_{2})\|_{C_{t} \mathcal{L}_{x}^{2}})(1+\J^{17})\lambda_{q+1}^{\a+1}\notag\\           
	&\quad+	\|(\Bu_{q},\Bb_{q})\|_{\mathcal{C}_{t,x}}^{2}+\|(z_{1},z_{2})\|_{C_{t}\mathcal{L}_{x}^{2}}^{2}  
	+\|(\Bu_{q+1},\Bb_{q+1})\|_{\mathcal{C}_{t,x}}^{2}. 
\end{align}
Taking the $m$-th moment in (\ref{5.53-0}) and using (\ref{2.1}), (\ref{2.11}), (\ref{3.2-1}), (\ref{4.82}) and H\"{o}lder's inequality, we obtain
\begin{align*}\label{5.53}
	\|(\Ru^{u}_{q+1},\Ru^{B}_{q+1})\|_{\Lo^{m}_{\om}C_{t}\mathcal{L}_x^{1}}
		&\lesssim
		(1+\|\J\|^{44}_{\Lo^{44m}_{\om}})\lambda_{q+1}^{4\a+1}
		+\|(z_{1},z_{2})\|_{\Lo^{2m}_{\om}C_{t}\mathcal{L}_{x}^{2}}\notag\\
		&\quad
		+(\|(\Bu_{q},\Bb_{q})\|_{\Lo^{2m}_{\om}\mathcal{C}_{t,x}}+\|(z_{1},z_{2})\|_{\Lo^{2m}_{\om}C_{t} \mathcal{L}_{x}^{2}})(1+\|\J\|^{17}_{\Lo^{34m}_{\om}})\lambda_{q+1}^{\a+1}\notag\\
		&\quad+\|(\Bu_{q},\Bb_{q})\|_{\Lo^{2m}_{\om}\mathcal{C}_{t,x}}^{2}
			+\|(z_{1},z_{2})\|_{\Lo^{2m}(\om;C_{t}\mathcal{L}_{x}^{2})}^{2}  
		+\|(\Bu_{q+1},\Bb_{q+1})\|_{\Lo^{2m}(\om;\mathcal{C}_{t,x})}^{2} 
		\notag\\
		&\leq
		\lambda_{q+1}^{(4\a+5)}(8mL^{2}80^{q+1})^{80^{q+1}},
\end{align*}
which verifies the inductive estimate (\ref{2.12}) at level $q+1$.

\subsection{Verification of decay estimate: away from the initial time}\label{away from zero}
In order to prove the decay estimate \eqref{2.13} at level $q+1$,
let us first consider the difficult regime away from the initial time,
that is, $t\in ({\varsigma_{q}}/{2}, T]$.

 Choose
\begin{align*}
		\rho:=\frac{2 \alpha-2+10 \varepsilon}{2 \alpha-2+9 \varepsilon} \in(1,2),
\end{align*}
where $\varepsilon$ is given by (\ref{2.3}). 
Note that 
\begin{align*}
		(2-2 \alpha-10 \varepsilon)(\frac{1}{\rho}-\frac{1}{2})=1-\alpha-4 \varepsilon,
\end{align*}
and
\begin{equation}\label{5.17}
	\begin{aligned}
		r_{\perp}^{\frac{1}{\rho}-\frac{1}{2}}=\lambda_{q+1}^{1-\alpha-4 \varepsilon}.
\end{aligned}\end{equation}

\paragraph{\bf $\bullet$ Linear errors.}
Let us first consider the linear errors and estimate 
\begin{align*}
	&\quad\|(\mathcal{R}^{u}\p_{t}\omw_{q+1}^{(p)+(c)},\mathcal{R}^{B}\p_{t}\dw_{q+1}^{(p)+(c)})\|_{L_{(\frac{\varsigma_{q}}{2}, T]}^{1}\mathcal{L}_x^{\rho}} \notag\\
		&\leq
	\big\|\big(\mathcal{R}^{u}((\p_t \Theta_{q+1}) w_{q+1}^{(p)+(c)}),\mathcal{R}^{B}((\p_t \Theta_{q+1}) d_{q+1}^{(p)+(c)}) \big)\big\|_{L_{t}^{1}\mathcal{L}_x^{\rho}}
	+\big\|\big(\mathcal{R}^{u}( \Theta_{q+1} \partial_t w_{q+1}^{(p)+(c)}), \mathcal{R}^{B}( \Theta_{q+1} \partial_t d_{q+1}^{(p)+(c)})\big)\big\|_{L_{t}^{1}\mathcal{L}_x^{\rho}} \notag\\
	&=:K_{1}+K_{2}.
\end{align*}
Note that, by ({\ref{4.69}}), (\ref{4.73-1}) and (\ref{4.73-2}),
\begin{align*}
		\quad K_{1}&\lesssim
		\|\Theta_{q+1}\|_{C_{t}^{1}} \|(w_{q+1}^{(p)+(c)},d_{q+1}^{(p)+(c)})\|_{L_{t}^{1}\mathcal{L}_x^{\rho}}\notag\\
		&\lesssim
		\varsigma_{q}^{-1}(1+\J^{6})\ell^{-22}r_{\perp}^{\frac{1}{\rho}-\frac{1}{2}}\tau^{-\frac{1}{2}}
		+	\varsigma_{q}^{-1}(1+\J^{16})\ell^{-43}\laq^{-1}r_{\perp}^{\frac{1}{\rho}-\frac{1}{2}}\tau^{-\frac{1}{2}}\notag\\
		&\lesssim
		(1+\J^{16})\ell^{-22}\varsigma_{q}^{-1}r_{\perp}^{\frac{1}{\rho}-\frac{1}{2}}\tau^{-\frac{1}{2}}.
	\end{align*}
Moreover, by Lemmas \ref{Lemma spacial building blocks}-\ref{Lemma Velocity amplitudes}, (\ref{4.66}) and ({\ref{4.69}}),
\begin{align*}
    \quad K_{2}
	&\lesssim
	\sum_{k \in \Lambda_u \cup \Lambda_B}\|\mathcal{R}^{u}\curl\curl\p_{t}(a_{(k)}g_{(k)}W_{(k)}^{c})\|_{L_{t}^{1}L_x^{\rho}}
	+\sum_{k \in  \Lambda_B}\|\mathcal{R}^{B}\curl\curl\p_{t}(a_{(k)}g_{(k)}D_{(k)}^{c})\|_{L_{t}^{1}L_x^{\rho}}\notag\\
	&\lesssim
	\sum_{k \in \Lambda_u \cup \Lambda_B}(\|\p_{t}g_{(k)}\|_{L_{t}^{1}}\|a_{(k)}\|_{C_{t,x}^{1}}\|W_{(k)}^{c}\|_{W_{x}^{1,\rho}}+\|g_{(k)}\|_{L_{t}^{1}}\|a_{(k)}\|_{C_{t,x}^{2}}\|W_{(k)}^{c}\|_{W_{x}^{1,\rho}})\notag\\
	&\quad+
	\sum_{k \in   \Lambda_B}(\|\p_{t}g_{(k)}\|_{L_{t}^{1}}\|a_{(k)}\|_{C_{t,x}^{1}}\|D_{(k)}^{c}\|_{W_{x}^{1,\rho}}
	+\|g_{(k)}\|_{L_{t}^{1}}\|a_{(k)}\|_{C_{t,x}^{2}}\|D_{(k)}^{c}\|_{W_{x}^{1,\rho}})\notag\\
	&\lesssim (1+\J^{22})(\sigma\tau^{\frac{1}{2}}\ell^{-43}\laq^{-1}r_{\perp}^{\frac{1}{\rho}-\frac{1}{2}}+\tau^{-\frac{1}{2}}\ell^{-64}\laq^{-1}r_{\perp}^{\frac{1}{\rho}-\frac{1}{2}})\notag\\
	&\lesssim
	(1+\J^{22})\ell^{-43}\laq^{-1}r_{\perp}^{\frac{1}{\rho}-\frac{1}{2}}\sigma\tau^{\frac{1}{2}}.
\end{align*}
Thus, we obtain
\begin{align}\label{5.54}
	 \|(\mathcal{R}^{u}\p_{t}\omw_{q+1}^{(p)+(c)},\mathcal{R}^{B}\p_{t}\dw_{q+1}^{(p)+(c)})\|_{L_{(\frac{\varsigma_{q}}{2}, T]}^{1}\mathcal{L}_x^{\rho}}  
		&\lesssim
			(1+\J^{16})\ell^{-22}\varsigma_{q}^{-1}r_{\perp}^{\frac{1}{\rho}-\frac{1}{2}}\tau^{-\frac{1}{2}}
			+(1+\J^{22})\ell^{-43}\laq^{-1}r_{\perp}^{\frac{1}{\rho}-\frac{1}{2}}\sigma\tau^{\frac{1}{2}}\notag\\
			&\lesssim
			(1+\J^{22})\ell^{-43}\laq^{-1}r_{\perp}^{\frac{1}{\rho}-\frac{1}{2}}\sigma\tau^{\frac{1}{2}}.
\end{align}

\medskip
\paragraph{\it Control of hyper-viscosity: }
For the hyper-viscosity $(-\Delta)^{\a}$,
using interpolation and Lemma \ref{Lemma Estimates of perturbations}, we get
\begin{align*}
	 \|(\mathcal{R}^{u}\nu(-\Delta)^{\a}\omw_{q+1}^{(p)},\mathcal{R}^{B}\nu(-\Delta)^{\a}\dw_{q+1}^{(p)})\|_{L^{1}_{(\frac{\varsigma_{q}}{2}, T]}\mathcal{L}_x^{\rho}}  
	&\lesssim \|\Theta_{q+1}\|_{C_{t}}\|(w_{q+1}^{(p)},d_{q+1}^{(p)})\|^{\frac{4-2\a}{3}}_{L_{t}^{1}\mathcal{L}_x^{\rho}}\|(w_{q+1}^{(p)},d_{q+1}^{(p)})\|^{\frac{2\a-1}{3}}_{L_{t}^{1}\mathcal{W}_x^{3,\rho}}\notag\\
	&\lesssim (1+\J^{5(2\a-1)+6})\ell^{-22}\laq^{2\a-1}r_{\perp}^{\frac{1}{\rho}-\frac{1}{2}}\tau^{-\frac{1}{2}}.
\end{align*}
Similarly, we have
\begin{align*}
	\|(\mathcal{R}^{u}\nu(-\Delta)^{\a}\omw_{q+1}^{(c)},\mathcal{R}^{B}\nu(-\Delta)^{\a}\dw_{q+1}^{(c)})\|_{L^{1}_{(\frac{\varsigma_{q}}{2}, T]}\mathcal{L}_x^{\rho}}
	\lesssim (1+\J^{5(2\a-1)+16})\ell^{-43}\laq^{2\a-2}r_{\perp}^{\frac{1}{\rho}-\frac{1}{2}}\tau^{-\frac{1}{2}}, 
	\end{align*}
and
	\begin{align*}
	\|(\mathcal{R}^{u}\nu(-\Delta)^{\a}\omw_{q+1}^{(o)},\mathcal{R}^{B}\nu(-\Delta)^{\a}\dw_{q+1}^{(o)})\|_{L^{1}_{(\frac{\varsigma_{q}}{2}, T]}\mathcal{L}_x^{\rho}}
	\lesssim
	(1+\J^{5(2\a-1)+17})\ell^{-21(2\a-1)-65}\sigma^{-1}.
\end{align*}
Since $\ell^{-22}\laq^{2\a-1}r_{\perp}^{\frac{1}{\rho}-\frac{1}{2}}\tau^{-\frac{1}{2}}=\ell^{-22}\lambda_{q+1}^{-4\varepsilon}\leq \ell^{-107}\sigma^{-1}$, we thus get
\begin{align}\label{5.56}
	&\quad\|(\mathcal{R}^{u}\nu(-\Delta)^{\a}w_{q+1},\mathcal{R}^{B}\nu(-\Delta)^{\a}d_{q+1})\|_{L^{1}_{(\frac{\varsigma_{q}}{2}, T]}\mathcal{L}_x^{\rho}}  \notag\\
	&\lesssim (1+\J^{5(2\a-1)+6})\ell^{-22}\laq^{2\a-1}r_{\perp}^{\frac{1}{\rho}-\frac{1}{2}}\tau^{-\frac{1}{2}}+(1+\J^{5(2\a-1)+16})\ell^{-43}\laq^{2\a-2}r_{\perp}^{\frac{1}{\rho}-\frac{1}{2}}\tau^{-\frac{1}{2}}\notag\\
	&\quad+(1+\J^{5(2\a-1)+17})\ell^{-21(2\a-1)-65}\sigma^{-1}\notag\\
		&\lesssim
		(1+\J^{27})\ell^{-107}\sigma^{-1}.
\end{align}

\medskip
\paragraph{\it Control of noise term:}
In order to control the noise term, using  the standard mollification estimates and (\ref{3.4-00}), we derive
 \begin{align*}
 	&\quad\|\mathcal{R}^{u}(z_{1,\ell}-z_{1,q+1})\|_{L_{(\frac{\varsigma_{q}}{2}, T]}^{1}\mathcal{L}_x^{\rho}} +	\|\mathcal{R}^{B}(z_{2,\ell}-z_{2,q+1})\|_{L_{(\frac{\varsigma_{q}}{2}, T]}^{1}\mathcal{L}_x^{\rho}} \notag\\
 		&\lesssim
 \|(z_{1,\ell}^{u}-z_{1}^{u}, z_{2,\ell}^{B}-z_{2}^{B})\|_{L_{(\frac{\varsigma_{q}}{2}, T]}^{1}\mathcal{L}_x^{\rho}}+\|(Z_{1,\ell}-Z_{1,q}, Z_{2,\ell}-Z_{2,q})\|_{L_{t}^{1}\mathcal{L}_x^{\rho}} +	\|(Z_{1,q}-Z_{1,q+1},Z_{2,q}-Z_{2,q+1})\|_{L_{t}^{1}\mathcal{L}_x^{\rho}} \notag\\
 	&\lesssim
 \|(z_{1,\ell}^{u}-z_{1}^{u}*_{x}\varrho_{\ell}, z_{2,\ell}^{B}-z_{2}^{B}*_{x}\varrho_{\ell})\|_{L_{(\frac{\varsigma_{q}}{2}, T]}^{1}\mathcal{L}_x^{\rho}}+\|(z_{1}^{u}*_{x}\varrho_{\ell}-z_{1}^{u}, z_{2}^{B}*_{x}\varrho_{\ell}-z_{2}^{B})\|_{L_{(\frac{\varsigma_{q}}{2}, T]}^{1}\mathcal{L}_x^{\rho}}\notag\\
 &\quad+\|(Z_{1,\ell}-Z_{1,q}*_{x}\varrho_{\ell}, Z_{2,\ell}-Z_{2,q}*_{x}\varrho_{\ell})\|_{L_{t}^{1}\mathcal{L}_x^{\rho}}  +\|(Z_{1,q}*_{x}\varrho_{\ell}-Z_{1,q}, Z_{2,q}*_{x}\varrho_{\ell}-Z_{2,q})\|_{L_{t}^{1}\mathcal{L}_x^{\rho}}\notag\\
 &\quad+	\|(Z_{1,q}-Z_{1,q+1},Z_{2,q}-Z_{2,q+1})\|_{L_{t}^{1}\mathcal{L}_x^{\rho}} \notag\\
 	&\lesssim
 	\ell^{\frac{1}{2}}(\|(z_{1}^{u},z_{2}^{B})\|_{C^{\frac{1}{2}}_{(\frac{\varsigma_{q}}{2}-\ell, T]} \mathcal{L}_{x}^{2}}+\|(z_{1}^{u},z_{2}^{B})\|_{C_{(\frac{\varsigma_{q}}{2}, T]} \mathcal{H}_{x}^{\frac{1}{2}}})+
 	\ell^{\frac{1}{2}-\delta}\|(Z_{1},Z_{2})\|_{C^{\frac{1}{2}-\delta}_{t}\mathcal{L}_{x}^{2}}+\la^{-15(1-\delta)}\|(Z_{1},Z_{2})\|_{C_{t} \mathcal{H}_{x}^{1-\delta}}.
 \end{align*} 
 
 Next, let us first derive the estimates for $\|(z_{1}^{u},z_{2}^{B})\|_{C_{(\frac{\varsigma_{q}}{2}, T]} \mathcal{H}_{x}^{s_{1}}}$ and $	\|(z_{1}^{u},z_{2}^{B})\|_{C_{(\frac{\varsigma_{q}}{2}-\ell, T]}^{s_{2}}\mathcal{L}_{x}^{2}}$ when  $s_{1}, s_{2}\in (0,1)$. 
For this purpose, by the heat-semigroup estimate in Proposition \ref{Proposition convolution}, \eqref{1.4.2} and \eqref{in},   for  $s_{1}\in (0,1)$,
\begin{align}\label{5.22.01}
	\|(z_{1}^{u},z_{2}^{B})\|_{C_{(\frac{\varsigma_{q}}{2}, T]} \mathcal{H}_{x}^{s_{1}}}\lesssim \(1+(\frac{\varsigma_{q}}{2})^{-\frac{s_{1}}{2\a}}\)	\|(u_{0},B_{0})\|_{ \mathcal{L}_{x}^{2}}\lesssim (1+\varsigma_{q}^{-\frac{s_{1}}{2\a}})M.
\end{align}

Concerning $	\|(z_{1}^{u},z_{2}^{B})\|_{C_{(\frac{\varsigma_{q}}{2}-\ell, T]}^{s_{2}}\mathcal{L}_{x}^{2}}$,
for $t_{1}, t_{2}\in (\frac{\varsigma_{q}}{2}-\ell, T]$ and $s_{2}\in (0,1)$,  we have
\begin{align*}
	\|z_{1}^{u}(t_{1})-z_{1}^{u}(t_{2})\|_{L_{x}^{2}}
	&=\|e^{t_{2}(-\nu(-\Delta)^{\a}-I)}\int_{0}^{t_{1}-t_{2}}(-\nu(-\Delta)^{\a}-I)e^{r(-(-\Delta)^{\a}-I)}u_{0}\mathrm{d} r\|_{L_{x}^{2}}\\
	&\lesssim
	\|\int_{0}^{t_{1}-t_{2}}(-\Delta)^{\a}e^{r(-\nu(-\Delta)^{\a}-I)}e^{t_{2}(-\nu(-\Delta)^{\a}-I)}u_{0}\mathrm{d} r\|_{L_{x}^{2}}\\
	&\quad+
	\|\int_{0}^{t_{1}-t_{2}}e^{r(-\nu(-\Delta)^{\a}-I)}e^{t_{2}(-\nu(-\Delta)^{\a}-I)}u_{0}\mathrm{d} r\|_{L_{x}^{2}},
\end{align*}
which, via Proposition \ref{Proposition convolution},  can be bouned by, if
$\left|t_{1}-t_{2}\right|\leq1$,
\begin{align*}
	\|z_{1}^{u}(t_{1})-z_{1}^{u}(t_{2})\|_{L_{x}^{2}}
	&\lesssim
	\int_{0}^{t_{1}-t_{2}}r^{-(1-s_{2})}\|(-\Delta)^{ s_{2}\a}e^{t_{2}(-\nu(-\Delta)^{\a}-I)}u_{0}\|_{L_{x}^{2}}\mathrm{d} r+\left|t_{1}-t_{2}\right|\|u_{0}\|_{L_{x}^{2}} \\
	&\lesssim
	\left|t_{1}-t_{2}\right|^{s_{2}}t_{2}^{-s_{2}}\|u_{0}\|_{L_{x}^{2}}+\left|t_{1}-t_{2}\right|\|u_{0}\|_{L_{x}^{2}} \\
	&\lesssim
	\left|t_{1}-t_{2}\right|^{s_{2}}(1+t_{2}^{-s_{2}})M, 
\end{align*}
and if  $\left|t_{1}-t_{2}\right|\geq1$ 
\begin{align*}
	\|z_{1}^{u}(t_{1})-z_{1}^{u}(t_{2})\|_{L_{x}^{2}} \leq 2\|u_{0}\|_{L_{x}^{2}}
	\leq	2\left|t_{1}-t_{2}\right|^{s_{2}}M.
\end{align*}
These estimates hold  for $z_{2}^{B}$ as well.
Hence, we obtain that for $s_{2}\in (0,1)$, 
\begin{align}\label{5.22.04}
	\|(z_{1}^{u},z_{2}^{B})\|_{C_{(\frac{\varsigma_{q}}{2}-\ell, T]}^{s_{2}}\mathcal{L}_{x}^{2}}
	\lesssim \(1+(\frac{\varsigma_{q}}{2}-\ell)^{-s_{2}}\) M
	\lesssim (1+\varsigma_{q}^{-s_{2}}) M.
\end{align}
Thus, it follows from (\ref{5.22.01}) and (\ref{5.22.04}) that
\begin{align}\label{5.58}
		&\quad\|\mathcal{R}^{u}(z_{1,\ell}-z_{1,q+1})\|_{L_{(\frac{\varsigma_{q}}{2}, T]}^{1}\mathcal{L}_x^{\rho}} +	\|\mathcal{R}^{B}(z_{2,\ell}-z_{2,q+1})\|_{L_{(\frac{\varsigma_{q}}{2}, T]}^{1}\mathcal{L}_x^{\rho}} \notag\\
		&\lesssim
		\ell^{\frac{1}{2}}(1+\varsigma_{q}^{-\frac{1}{2}})
		 M+\la^{-15(1-\delta)}(\|(Z_{1},Z_{2})\|_{C^{\frac{1}{2}-\delta}_{t} \mathcal{L}_{x}^{2}}+\|(Z_{1},Z_{2})\|_{C_{t} \mathcal{H}_{x}^{1-\delta}}).
\end{align}

\medskip
\paragraph{\it Control of  remaining terms:}
for the remaining terms in (\ref{5.4}) and  (\ref{5.11}), estimating as in (\ref{5.56}), we have
\begin{align}\label{5.59-0}
		&\quad\|(\Bb_{\ell}+z_{2,\ell})\mathring{\otimes}w_{q+1}-w_{q+1}\mathring{\otimes}(\Bb_{\ell}+z_{2,\ell})
		-(\Bu_{\ell}+z_{1,\ell})\mathring{\otimes}d_{q+1}+d_{q+1}\mathring{\otimes}(\Bu_{\ell}+z_{1,\ell})\|_{L_{(\frac{\varsigma_{q}}{2}, T]}^{1}\mathcal{L}_x^{\rho}}  \notag\\
		&\quad+\|( \Bu_{\ell}+z_{1,\ell})\mathring{\otimes}w_{q+1}+w_{q+1}\mathring{\otimes}( \Bu_{\ell}+z_{1,\ell})
		-(\widetilde{B}_{\ell}+z_{2,\ell})\mathring{\otimes}d_{q+1}-d_{q+1}\mathring{\otimes}(\widetilde{B}_{\ell}+z_{2,\ell})\|_{L_{(\frac{\varsigma_{q}}{2}, T]}^{1}\mathcal{L}_x^{\rho}}  \notag\\
		&\lesssim
		\big(\|(\Bu_{q},\Bb_{q})\|_{C_{(\frac{\varsigma_{q}}{2}-\ell, T]} \mathcal{L}_{x}^{\infty}}+\|(z_{1,q},z_{2,q})\|_{C_{(\frac{\varsigma_{q}}{2}-\ell, T]} \mathcal{L}_{x}^{\infty}}\big)\|(w_{q+1},d_{q+1})\|_{L_{(\frac{\varsigma_{q}}{2}, T]}^{1}\mathcal{L}_x^{\rho}} \notag\\
		&\lesssim
		\big(\|(\Bu_{q},\Bb_{q})\|_{C_{t,x}}+\|(z_{1}^{u},z_{2}^{B})\|_{C_{(\frac{\varsigma_{q}}{2}-\ell, T]} \mathcal{L}_{x}^{\infty}}+\|(Z_{1,q},Z_{2,q})\|_{C_{t}\mathcal{L}_{x}^{\infty} }\big)(1+\J^{17})\ell^{-65}\sigma^{-1}.
\end{align}
Note that, by Proposition \ref{Proposition convolution}, (\ref{1.4.2}) and (\ref{in}),
\begin{align}\label{5.23.0}
	\|(z_{1}^{u},z_{2}^{B})\|_{C_{(\frac{\varsigma_{q}}{2}-\ell, T]} \mathcal{L}_{x}^{\infty}}\lesssim \varsigma_{q}^{-\frac{3}{4\a}}\|(u_{0},B_{0})\|_{ \mathcal{L}_{x}^{2}}
	\lesssim \varsigma_{q}^{-\frac{3}{4\a}}M,
\end{align}	
which yields that
\begin{align}\label{5.59}
\text{R.H.S.\ of\ (\ref{5.59-0})}
	\lesssim
	\big(\|(\Bu_{q},\Bb_{q})\|_{\mathcal{C}_{t,x}}+\varsigma_{q}^{-\frac{3}{4\a}}M+\|(Z_{1,q},Z_{2,q})\|_{C_{t}\mathcal{L}_{x}^{\infty} })(1+\J^{17}\big)\ell^{-65}\sigma^{-1}.
\end{align}

Now,  we conclude from  (\ref{5.54}), (\ref{5.56}), (\ref{5.58}) and  (\ref{5.59}) that
\begin{align*}
		\|(\Ru^{u}_{lin},\Ru^{B}_{lin})\|_{L_{(\frac{\varsigma_{q}}{2}, T]}^{1}\mathcal{L}_x^{1}}
		&\lesssim
	   	(1+\J^{22})\ell^{-43}r_{\perp}^{\frac{1}{\rho}-\frac{1}{2}}\laq^{-1}\sigma\tau^{\frac{1}{2}}
		+(1+\J^{27})\ell^{-107}\sigma^{-1} \\
		&\quad
		+\ell^{\frac{1}{2}}(1+\varsigma_{q}^{-\frac{1}{2}})
		 M+\la^{-15(1-\delta)}\big(\|(Z_{1},Z_{2})\|_{C^{\frac{1}{2}-\delta}_{t} \mathcal{L}_{x}^{2}}+\|(Z_{1},Z_{2})\|_{C_{t} \mathcal{H}_{x}^{1-\delta}}\big) \\
		&\quad+\big(\|(\Bu_{q},\Bb_{q})\|_{\mathcal{C}_{t,x}}+\varsigma_{q}^{-\frac{3}{4\a}}M+\|(Z_{1,q},Z_{2,q})\|_{C_{t} \mathcal{L}_{x}^{\infty}}\big)(1+\J^{17})\ell^{-65}\sigma^{-1},
\end{align*}
which, along with (\ref{2.2-2}), (\ref{2.6}), (\ref{2.11}), (\ref{2.12}) and  (\ref{3.2-1}), yields
\begin{align}\label{5.63}
		&\quad\|(\Ru^{u}_{lin},\Ru^{B}_{lin})\|_{\Lo^{r}_{\om} L_{(\frac{\varsigma_{q}}{2}, T]}^{1}\mathcal{L}_{x}^{1}}\notag\\
		&\lesssim
		 (1+\|\J\|^{22}_{\Lo^{22r}_{\om}})\ell^{-43}\laq^{-1}r_{\perp}^{\frac{1}{\rho}-\frac{1}{2}}\sigma\tau^{\frac{1}{2}}
	    +(1+\|\J\|^{27}_{\Lo^{27r}_{\om}})\ell^{-107}\sigma^{-1}+\ell^{\frac{1}{2}}(1+\varsigma_{q}^{-\frac{1}{2}}) M\notag\\
		&\quad+\la^{-15(1-\delta)}\big(\|(Z_{1},Z_{2})\|_{\Lo^{r}_{\om}C_{t}^{1/2-\delta}\mathcal{L}_{x}^{2} }+\|(Z_{1},Z_{2})\|_{\Lo^{r}_{\om}C_{t}\mathcal{H}_{x}^{1-\delta}}\big)\notag\\
		&\quad
		+\big(\|(\Bu_{q},\Bb_{q})\|_{\Lo^{2r}_{\om}\mathcal{C}_{t,x}}+\varsigma_{q}^{-\frac{3}{4\a}}M+\|\(Z_{1,q},Z_{2,q}\)\|_{\Lo^{2r}_{\om}C_{t}\mathcal{L}_{x}^{\infty}}\big)
	(1+\|\J\|^{17}_{\Lo^{34r}_{\om}})\ell^{-65}\sigma^{-1}\notag\\
		&\lesssim
		\lambda_{q}^{-14}+\lambda_{q+1}^{-\varepsilon}
		 \lesssim
		\dqqq.
\end{align}

\paragraph{\bf $\bullet$ Oscillation errors}
By (\ref{4.69}), (\ref{4.70}), (\ref{4.73-3}), (\ref{5.7-1}) and (\ref{5.14-1}),  we get
\begin{align*}
	\|(\Ru^{u}_{osc.1}, \Ru^{B}_{osc.1})\|_{L_{(\frac{\varsigma_{q}}{2}, T]}^{1}\mathcal{L}_{x}^{1}}
	&\lesssim
\|1-\Theta_{q+1}^{2}\|_{L_{t}^{1}}\J+\|\Theta_{q+1}^{2}\|_{C_{t}^{1}}\|(w^{(o)}_{q+1}, d^{(o)}_{q+1})\|_{L_{t}^{1} \mathcal{L}_{x}^{\rho}}\notag\\
	&\lesssim
	\varsigma_{q}\J+(1+\J^{17})\ell^{-65}\sigma^{-1}\varsigma_{q}^{-1}.
\end{align*}

Regarding the high-low spatial oscillation  error $(\Ru^{u}_{osc.2}, \Ru^{B}_{osc.2})$ in (\ref{5.7-2}) and (\ref{5.14-2}),
we apply the decoupling Lemma \ref{Lemma Decorrelation1} and Lemmas \ref{Lemma spacial building blocks}-\ref{Lemma Velocity amplitudes} to estimate
\begin{align*}
		&\quad\|(\Ru^{u}_{osc.2}, \Ru^{B}_{osc.2})\|_{L_{(\frac{\varsigma_{q}}{2}, T]}^{1} \mathcal{L}_{x}^{\rho}}\notag\\
		&\lesssim
		\sum_{k\in\Lambda_u}\|\Theta_{q+1}^{2}\|_{C_{t}}\|g_{(k)}^{2}\|_{L_t^{1}}
		\||\nabla|^{-1}\mathbb{P}_{\neq 0}\(\mathbb{P}_{\geq \frac{\laq r_{\perp}}{2} }(W_{(k)}\otimes W_{(k)})\)\nabla(a_{(k)}^{2})\|_{C_{t} L_{x}^{\rho}}\notag\\
		&\quad+\sum_{k\in\Lambda_B}\|\Theta_{q+1}^{2}\|_{C_{t}}\|g_{(k)}^{2}\|_{L_t^{1}}\|
|\nabla|^{-1}\mathbb{P}_{\neq 0}\(\mathbb{P}_{\geq \frac{\laq r_{\perp}}{2} }(W_{(k)}\otimes W_{(k)}-D_{(k)} \otimes D_{(k)})\)\nabla(a_{(k)}^{2})\|_{C_{t} L_{x}^{\rho}}\notag\\
		&\quad+\sum_{k\in\Lambda_B}\|\Theta_{q+1}^{2}\|_{C_{t}}\|g_{(k)}^{2}\|_{L_t^{1}}\||\nabla|^{-1}\mathbb{P}_{\neq 0}\(\mathbb{P}_{\geq \frac{\laq r_{\perp}}{2} }(D_{(k)}\otimes W_{(k)}-W_{(k)}\otimes D_{(k)})\nabla(a_{(k)}^{2})\)\|_{C_{t} L_{x}^{\rho}}\notag\\
		&\lesssim
		\sum_{k\in\Lambda_u\cup\Lambda_B}\|a_{(k)}^{2}\|_{C_{t,x}^{3}}(\laq r_{\perp})^{-1}\|\phi_{(k)}^{2}\|_{L_{x}^{\rho}}\notag\\
		&\lesssim
		(1+\J^{27})\ell^{-107}r_{\perp}^{\frac{1}{\rho}-2}\laq^{-1}.
\end{align*}
Moreover, by Lemmas \ref{Lemma temporal building blocks}-\ref{Lemma Velocity amplitudes}, (\ref{5.7-3}) and (\ref{5.14-3}),
\begin{align*}
		\|(\Ru^{u}_{osc.3}, \Ru^{B}_{osc.3})\|_{L_{(\frac{\varsigma_{q}}{2}, T]}^{1} \mathcal{L}_{x}^{\rho}}
		&\lesssim
		\sigma^{-1}\|\Theta_{q+1}^{2}\|_{C_{t}}\sum_{k\in\Lambda_u\cup\Lambda_B}\|h_{(k)}\|_{L_t^{1}}
		\|a_{(k)}^{2}\|_{C_{t,x}^{2}}\notag\\
		&\lesssim
		\sigma^{-1}\ell^{-86}(1+\J^{22}).
\end{align*}

Thus, it follows from (\ref{5.7-5}) and (\ref{5.14-5}) that
\begin{align*}
	\|(\Ru^{u}_{osc},\Ru^{B}_{osc})\|_{L_{(\frac{\varsigma_{q}}{2}, T]}^{1}\mathcal{L}_x^{1}}
	&\lesssim
	\varsigma_{q}\J+(1+\J^{17})\ell^{-65}\sigma^{-1}\varsigma_{q}^{-1}+(1+\J^{27})\ell^{-107}r_{\perp}^{\frac{1}{\rho}-2}\laq^{-1}+\sigma^{-1}\ell^{-86}(1+\J^{22}),
\end{align*}
which, via (\ref{2.2-2}), (\ref{2.12}) and (\ref{3.2-1}), yields that
\begin{align}\label{5.73}
	\|(\Ru^{u}_{osc},\Ru^{B}_{osc})\|_{\Lo^{r}_{\om}L_{(\frac{\varsigma_{q}}{2}, T]}^{1}\mathcal{L}_{x}^{1}}
	&\lesssim
\varsigma_{q}\|\J\|_{\Lo^{r}_{\om}}+(1+\|\J\|^{17}_{\Lo^{17r}_{\om}})\ell^{-65}\sigma^{-1}\varsigma_{q}^{-1}\notag\\
&\quad+(1+\|\J\|^{27}_{\Lo^{27r}_{\om}})\ell^{-107}r_{\perp}^{\frac{1}{\rho}-2}\laq^{-1}+\sigma^{-1}\ell^{-86}(1+\|\J\|^{22}_{\Lo^{22r}_{\om}})\notag\\
	&\lesssim
	\lambda_{q}^{-15}+\lambda_{q+1}^{-\varepsilon}
	\lesssim
	\dqqq.
\end{align}

\paragraph{\bf Corrector errors}
By (\ref{4.69}), (\ref{4.70}), (\ref{5.5}), (\ref{5.12}) and Lemma \ref{Lemma Estimates of perturbations},
\begin{align*}
		\|(\Ru^{u}_{corr},\Ru^{B}_{corr})\|_{L_{(\frac{\varsigma_{q}}{2}, T]}^{1}\mathcal{L}_x^{1}}
		&\lesssim
	\|(\omw_{q+1}^{(c)+(o)}, \dw_{q+1}^{(c)+(o)})\|_{L_t^{2}\mathcal{L}_x^{2}}	\big(\|(w_{q+1},d_{q+1})\|_{L_t^{2}\mathcal{L}_x^{2}}+\|(\omw_{q+1}^{(p)},\dw_{q+1}^{(p)})\|_{L_t^{2}\mathcal{L}_x^{2}}\big)
		\notag\\
		&\lesssim
		\((1+\J^{16})\ell^{-43}\laq^{-1}+(1+\J^{17})\ell^{-65}\sigma^{-1}\)\notag\\
		&\quad\quad\times
		\((1+\J^{6})\ell^{-22}+(1+\J^{16})\ell^{-43}\laq^{-1}+(1+\J^{17})\ell^{-65}\sigma^{-1}\)\notag\\
		&\lesssim
		(1+\J^{34})\ell^{-87}\sigma^{-1}.
\end{align*}
Taking into account  (\ref{2.2-2}), (\ref{2.12}) and (\ref{3.2-1}), we get
\begin{align}\label{5.77}
		\|(\Ru^{u}_{corr},\Ru^{B}_{corr})\|_{\Lo^{r}_{\om}L_{(\frac{\varsigma_{q}}{2}, T]}^{1}\mathcal{L}_{x}^{1}}
		&\lesssim
		(1+\|\J\|^{34}_{\Lo^{34r}_{\om}})\ell^{-87}\sigma^{-1}\notag\\
		&\lesssim
		\[\lambda_{q}^{(4\a+5)}(8\cdot 34rL^{2}80^{q})^{80^{q}}\]^{34}\la^{87\times 60}\laq^{-2\varepsilon}\notag\\
		&\lesssim
		\laq^{-\varepsilon}
		\lesssim
		\dqqq.
\end{align}

\paragraph{\bf Commutator errors}   
By the standard mollification estimates, (\ref{3.7}),  (\ref{3.8}), (\ref{5.22.01}) and (\ref{5.22.04}),
it holds that
\begin{align*}
		&\quad\|(\Ru^{u}_{com1},\Ru^{B}_{com1})\|_{L_{(\frac{\varsigma_{q}}{2}, T]}^{1}\mathcal{L}_x^{1}}\notag\\
		&\lesssim
			\big(\ell^{\frac{1}{2}-\delta}\|(\Bu_{q},\Bb_{q})\|_{\mathcal{C}_{t,x}^{1}}
			+\ell^{\frac{1}{2}-\delta}\|(z_{1},z_{2})\|_{C_{(\frac{\varsigma_{q}}{2}-\ell, T]}^{\frac{1}{2}-\delta}\mathcal{L}_{x}^{2}}
			+\ell^{1-\delta}\|(z_{1},z_{2})\|_{C_{(\frac{\varsigma_{q}}{2}, T]} \mathcal{H}_{x}^{1-\delta}}\big)\notag\\
		&\quad
		\times\big(\|(\Bu_{q},\Bb_{q})\|_{\mathcal{C}_{t,x}}+\|(z_{1},z_{2})\|_{C_{t} \mathcal{L}_{x}^{2}}\big)\notag\\
		&\lesssim
		\ell^{\frac{1}{2}-\delta}	\big(\|(\Bu_{q},\Bb_{q})\|_{\mathcal{C}_{t,x}^{1}}+	(1+\varsigma_{q}^{-(\frac{1}{2}-\delta)})M
		+\|(Z_{1},Z_{2})\|_{C_{t}^{\frac{1}{2}-\delta} \mathcal{L}_{x}^{2}}+\|(Z_{1},Z_{2})\|_{C_{t} \mathcal{H}_{x}^{1-\delta}}\big)\notag\\
		&\quad
		\times\big(\|(\Bu_{q},\Bb_{q})\|_{\mathcal{C}_{t,x}}+\|(z_{1},z_{2})\|_{C_{t} \mathcal{L}_{x}^{2}}\big).
\end{align*}
Thus,  by (\ref{2.2-2}),  (\ref{2.11}), (\ref{2.12}) and (\ref{3.2-1}), we get
\begin{align}\label{5.83}
 \|(\Ru^{u}_{com1},\Ru^{B}_{com1})\|_{\Lo^{r}_{\om}L_{(\frac{\varsigma_{q}}{2}, T]}^{1}\mathcal{L}_{x}^{1}}
	 \lesssim \lambda_{q}^{-29}(\lambda_{q}^{4\a+3}+\lambda_{q}^{15})\lambda_{q}^{2\a+3}
	\lesssim
	\lambda_{q}^{-8}
	\lesssim
	\dqqq.		
\end{align}

Concerning the $L_{(\frac{\varsigma_{q}}{2}, T]}^{1}L_x^{1}$-norm of $ \Ru^{u}_{com2} $ and $ \Ru^{B}_{com2} $,
by (\ref{4.72}), (\ref{5.6}), (\ref{5.13}), Lemma \ref{Lemma Estimates of perturbations} and mollification estimate,
\begin{align*}
		&\quad\|(\Ru^{u}_{com2},\Ru^{B}_{com2})\|_{L_{(\frac{\varsigma_{q}}{2}, T]}^{1}\mathcal{L}_x^{1}}\notag\\
		&\lesssim
		\big(\|(\Bu_{q+1},\Bb_{q+1})\|_{L_{(\frac{\varsigma_{q}}{2}, T]}^{1}\mathcal{L}_{x}^{2}}+\|(z_{1,\ell},z_{2,\ell})\|_{L_{(\frac{\varsigma_{q}}{2}, T]}^{1}\mathcal{L}_{x}^{2}}+\|(z_{1,q+1},z_{2,q+1})\|_{L_{(\frac{\varsigma_{q}}{2}, T]}^{1}\mathcal{L}_{x}^{2}}\big)\notag\\
		&\quad\times
		\big(\|(z_{1,\ell}^{u}- z_{1}^{u}, z_{2,\ell}^{B}-z_{2}^{B})\|_{C_{(\frac{\varsigma_{q}}{2}, T]} \mathcal{L}_{x}^{2}}
		+\|(Z_{1,\ell}- Z_{1,q+1}, Z_{2,\ell}- Z_{2,q+1})\|_{C_{(\frac{\varsigma_{q}}{2}, T]} \mathcal{L}_{x}^{2}}\big)\notag\\
		&\lesssim
		\big(\|(\Bu_{q},\Bb_{q})\|_{\mathcal{C}_{t,x}}+\|(w_{q+1},d_{q+1})\|_{L_{t}^{1}\mathcal{L}_{x}^{2}}+\|(z_{1},z_{2})\|_{C_{(\frac{\varsigma_{q}}{2}-\ell, T]}\mathcal{L}_{x}^{2}}\big)\notag\\
	&\quad\times \big(\ell^{\frac{1}{2}}(\|(z_{1}^{u},z_{2}^{B})\|_{C_{(\frac{\varsigma_{q}}{2}-\ell, T]}^{\frac{1}{2}} \mathcal{L}_{x}^{2}}
	+\|(z_{1}^{u},z_{2}^{B})\|_{C_{t} \mathcal{H}_{x}^{\frac{1}{2}}})+\ell^{\frac{1}{2}-\delta}\|(Z_{1},Z_{2})\|_{C_{t}^{\frac{1}{2}-\delta} L_{x}^{2}}+\la^{-15(1-\delta)}\|(Z_{1},Z_{2})\|_{C_{t} H_{x}^{1-\delta}}\big)\notag\\
		&\lesssim
		\big(\|(\Bu_{q},\Bb_{q})\|_{\mathcal{C}_{t,x}}+(1+\J^{17})\ell^{-65}\sigma^{-1}+\|(z_{1},z_{2})\|_{C_{(\frac{\varsigma_{q}}{2}-\ell, T]}L_{x}^{2}}\big)\notag\\
		&\quad\times \big(\ell^{\frac{1}{2}}(1+\varsigma_{q}^{-\frac{1}{2}})M+\la^{-15(1-\delta)}(\|(Z_{1},Z_{2})\|_{C_{t}^{\frac{1}{2}-\delta} L_{x}^{2}}+\|(Z_{1},Z_{2})\|_{C_{t} H_{x}^{1-\delta}})\big),
\end{align*}
which, via (\ref{2.2-2}),  (\ref{2.11}), (\ref{2.12}) and (\ref{3.2-1}), yields that
\begin{align}\label{5.85}
		&\quad\|(\Ru^{u}_{com2},\Ru^{B}_{com2})\|_{\Lo^{r}_{\om}L_{(\frac{\varsigma_{q}}{2}, T]}^{1}\mathcal{L}_{x}^{1}}\notag\\
		&\lesssim
		\big(\|(\Bu_{q},\Bb_{q})\|_{\Lo^{2r}_{\om}\mathcal{C}_{t,x}}+(1+\|\J\|_{\Lo^{34r}_{\om}}^{17})\ell^{-65}\sigma^{-1}+\|(z_{1},z_{2})\|_{\Lo^{2r}_{\om}C_{(\frac{\varsigma_{q}}{2}-\ell, T]} \mathcal{L}_{x}^{2}}\big)\notag\\
		&\quad\times
		\big(\ell^{\frac{1}{2}}(1+\varsigma_{q}^{-\frac{1}{2}})M+\la^{-15(1-\delta)}(\|(Z_{1},Z_{2})\|_{\Lo^{2r}_{\om}C_{t}^{\frac{1}{2}-\delta}\mathcal{L}_{x}^{2}}+\|(Z_{1},Z_{2})\|_{\Lo^{2r}_{\om}C_{t}\mathcal{H}_{x}^{1-\delta}})\big)\notag\\
		 &\lesssim
		\lambda_{q}^{-8}
		\lesssim
		\dqqq.
\end{align}

Therefore,  combining (\ref{5.63})-(\ref{5.85}),  we conclude that
\begin{equation}\label{5.87}
	\begin{aligned}
		\|(\Ru^{u}_{q+1},\Ru^{B}_{q+1})\|_{\Lo^{r}_{\om}L_{(\frac{\varsigma_{q}}{2}, T]}^{1}\mathcal{L}_{x}^{1}}\lesssim \dqqq.
\end{aligned}\end{equation}

\subsection{Verification of decay estimate: near the initial time}\label{near zero}

Let us continue to verify the decay estimate (\ref{2.13})
in the regime near the initial time where $t\in [0, {\varsigma_{q}}/{2}]$.

In this case,  $\Theta_{q+1}(t)=0$ and so $w_{q+1}=d_{q=1}=0$. Then, by (\ref{5.1}) and (\ref{5.8}), we obtain 
\begin{align}\label{5.53.01}
	\Ru^{u}_{q+1}&=\mathcal{R}^{u}\(z_{1,\ell}- z_{1,q+1}\)+\Ru^{u}_{\ell}+	(\Bu_{\ell}+z_{1,\ell})\mathring{\otimes}(\Bu_{\ell}+z_{1,\ell})-(\Bb_{\ell}+z_{2,\ell})\mathring{\otimes}(\Bb_{\ell}+z_{2,\ell})\notag\\
	&\quad-\left((\Bu_{q}+z_{1,q})\mathring{\otimes}(\Bu_{q}+z_{1,q})-(\Bb_{q}+z_{2,q})\mathring{\otimes}(\Bb_{q}+z_{2,q})\right)*_x \varrho_{\ell} *_t \vartheta_{\ell}\notag\\
	&\quad+\Bu_{\ell}\mathring{\otimes}z_{1,q+1}+z_{1,q+1}\mathring{\otimes} \Bu_{\ell}-z_{1,\ell}\mathring{\otimes} \Bu_{\ell}-\Bu_{\ell}\mathring{\otimes}z_{1,\ell}- \Bb_{\ell}\mathring{\otimes} z_{2,q+1}- z_{2,q+1}\mathring{\otimes} \Bb_{\ell}\notag
	\\
	&\quad+z_{2,\ell}\mathring{\otimes} \Bb_{\ell}+\Bb_{\ell}\mathring{\otimes}z_{2,\ell}+z_{1,q+1}\mathring{\otimes} z_{1,q+1}- z_{2,q+1}\mathring{\otimes} z_{2,q+1}-z_{1,\ell}\mathring{\otimes}z_{1,\ell}+z_{2,\ell}\mathring{\otimes}z_{2,\ell},
\end{align}
and
\begin{align}\label{5.53.02}
	\Ru^{B}_{q+1}&=\mathcal{R}^{B}\(z_{2,\ell}- z_{2,q+1}\)+\Ru^{B}_{\ell}
	+	(\Bb_{\ell}+z_{2,\ell})\otimes(\Bu_{\ell}+z_{1,\ell})-(\Bu_{\ell}+z_{1,\ell})\otimes(\Bb_{\ell}+z_{2,\ell})\notag\\
	&\quad-\left((\Bb_{q}+z_{2,q})\otimes(\Bu_{q}+z_{1,q})-(\Bu_{q}+z_{1,q})\otimes(\Bb_{q}+z_{2,q})\right)*_x \varrho_{\ell} *_t \vartheta_{\ell}\notag\\
	&\quad+\Bb_{\ell}\otimes z_{1,q+1}
	-z_{1,q+1}\otimes \Bb_{\ell}
	+z_{1,\ell}\otimes \Bb_{\ell}
	- \Bb_{\ell}\otimes z_{1,\ell}
	+ z_{2,q+1}\otimes \Bu_{\ell}-\Bu_{\ell}\otimes z_{2,q+1}+\Bu_{\ell}\otimes z_{2,\ell}
	\notag\\
	&\quad
	-z_{2,\ell}\otimes \Bu_{\ell}
	+z_{1,\ell}\otimes z_{2,\ell}
	-z_{2,\ell}\otimes z_{1,\ell}
	+ z_{2,q+1}\otimes z_{1,q+1}
	-z_{1,q+1}\otimes  z_{2,q+1}.
\end{align}

 Now, let us estimate the  $L_{[0,\frac{\varsigma_{q}}{2}]}^{1}L_{x}^{1}-$norm of $ \Ru^{u}_{q+1} $ and $ \Ru^{B}_{q+1} $. By (\ref{5.53.01}), (\ref{5.53.02}) and H\"{o}lder's inequality, 
\begin{align*}
&\quad\|(\Ru^{u}_{q+1},\Ru^{B}_{q+1})\|_{ L_{[0, \frac{\varsigma_{q}}{2}]}^{1}\mathcal{L}_{x}^{1}}\notag\\
&\lesssim \|(z_{1},z_{2})\|_{L_{[0, \frac{\varsigma_{q}}{2}]}^{1}\mathcal{L}_{x}^{2}}+\|(z_{1,\ell},z_{2,\ell})\|_{L_{[0, \frac{\varsigma_{q}}{2}]}^{1}\mathcal{L}_{x}^{2}}+\|(\Ru^{u}_{\ell},\Ru^{B}_{\ell})\|_{ L_{[0, \frac{\varsigma_{q}}{2}]}^{1}\mathcal{L}_{x}^{1}}\notag\\
&\quad+\big(\|(\Bu_{q}, \Bb_{q})\|_{C_{[0, \frac{\varsigma_{q}}{2}]}\mathcal{L}_{x}^{2}}+\|(z_{1}, z_{2})\|_{C_{[0, \frac{\varsigma_{q}}{2}]}\mathcal{L}_{x}^{2}}\big)
\big(\|(\Bu_{q}, \Bb_{q})\|_{L_{[0, \frac{\varsigma_{q}}{2}]}^{1}\mathcal{L}_{x}^{2}}+\|(z_{1}, z_{2})\|_{L_{[0, \frac{\varsigma_{q}}{2}]}^{1}\mathcal{L}_{x}^{2}}\big)\notag\\
&\quad+
\big(\|(\Bu_{q}, \Bb_{q})\|_{L_{[0, \frac{\varsigma_{q}}{2}]}^{1}\mathcal{L}_{x}^{2}}+\|(z_{1}, z_{2})\|_{L_{[0, \frac{\varsigma_{q}}{2}]}^{1}\mathcal{L}_{x}^{2}}\big)\|(z_{1}, z_{2})\|_{C_{[0, \frac{\varsigma_{q}}{2}]}\mathcal{L}_{x}^{2}}\notag\\
	&\lesssim
	\varsigma_{q}\big(\|(z_{1},z_{2})\|_{C_{[0, \frac{\varsigma_{q}}{2}]}\mathcal{L}_{x}^{2}}+\|(\Ru^{u}_{q},\Ru^{u}_{q})\|_{ \mathcal{C}_{[0, \frac{\varsigma_{q}}{2}]}\mathcal{L}_{x}^{1}}
	+\|(\Bu_{q}, \Bb_{q})\|_{\mathcal{C}_{[0, \frac{\varsigma_{q}}{2}],x}}^{2}+\|(z_{1},z_{2})\|_{C_{[0, \frac{\varsigma_{q}}{2}]}\mathcal{L}_{x}^{2}}^{2}\big).
\end{align*}
Thus, by (\ref{2.2-2}),  (\ref{2.11}), (\ref{2.12}) and (\ref{3.2-1}), we have
\begin{align*}
	&\quad\|(\Ru^{u}_{q+1},\Ru^{B}_{q+1})\|_{\Lo^{r}_{\om} L_{[0, \frac{\varsigma_{q}}{2}]}^{1}\mathcal{L}_{x}^{1}}\notag\\
	&\lesssim
	\varsigma_{q}\big(\|(z_{1}, z_{2})\|_{\Lo^{r}_{\om}C_{[0, \frac{\varsigma_{q}}{2}]}\mathcal{L}_{x}^{2}}+\|(\Ru^{u}_{q},\Ru^{B}_{q})\|_{ \Lo^{r}_{\om}C_{[0, \frac{\varsigma_{q}}{2}]}\mathcal{L}_{x}^{1}}
	+\|(\Bu_{q}, \Bb_{q})\|_{\Lo^{2r}_{\om}\mathcal{C}_{[0, \frac{\varsigma_{q}}{2}],x}}^{2}+\|(z_{1}, z_{2})\|_{\Lo^{2r}_{\om}C_{[0, \frac{\varsigma_{q}}{2}]}\mathcal{L}_{x}^{2}}^{2}\big)\notag\\
	&\lesssim
	\la^{-30}\big(M+(r-1)^{\frac{1}{2}}L+\lambda_{q}^{(4\a+5)}(8rL^{2}80^{q})^{80^{q}}+\la^{4\a+4}(8\cdot44rL^{2}80^{q-1})^{44\cdot80^{q-1}}+M^{2}+(2r-1)L^{2}\big)\notag\\
	&\lesssim
	\lambda_{q}^{-18}
	\lesssim
	\dqqq,
\end{align*}
which verifies  (\ref{2.13}) at level $q+1$. Therefore, the proof of Proposition \ref{Proposition Main iteration} is completed.\hfill$\square$

\section{Proof of main results}   \label{Sec-Proof-Main}

This section is devoted to the proof of the main results in
Theorems \ref{Thm-Nonuniq-Hyper}, 
\ref{uniq-sharp}  and \ref{Theorem Vanishing noise}.
Let us start with the main non-uniqueness result in Theorem \ref{Thm-Nonuniq-Hyper}.

\medskip
\paragraph{\bf Proof of Theorem \ref{Thm-Nonuniq-Hyper}.}
At the initial step $q=0$, we take 
\begin{align*}
	&\Bu_{0}=\ui,\quad \mathring{R}_{0}^{u}=\mathcal{R}^{u}\(\p_{t}\ui+\nu(-\Delta)^{\a}\ui-z_{1,0}\)+(\ui+z_{1,0})
\mathring{\otimes}(\ui+z_{1,0})-(\bi+z_{2,0})\mathring{\otimes}(\bi+z_{2,0}),     \\	
     &\Bb_{0}=\bi,\quad \mathring{R}_{0}^{B}=\mathcal{R}^{B}\(\p_{t}\bi+\nu(-\Delta)^{\a}\bi-z_{2,0}\)
     +(\bi+z_{2,0})\otimes(\ui+z_{1,0})-(\ui+z_{1,0})\otimes(\bi+z_{2,0}),  \\
     &P_{0}=-\frac{1}{3}(|\ui_{0}+z_{1,0}|^{2}-|\bi_{0}+z_{2,0}|^{2}), 
\end{align*}
where $v,H\in C_0^\infty$ are the 
given divergence-free and mean-free 
fields. 

Note that 
(\ref{2.11}) is satisfied at level $q=0$ 
for sufficiently large $a$.   
Next, let us verify the decay estimate  (\ref{2.13})  at level $q=0$.  
Actually, by (\ref{2.1}), we have
\begin{align*}
	&\quad\|(\mathring{R}_{0}^{u},\mathring{R}_{0}^{B})\|_{\Lo^{r} _{\om}L_{t}^{1}\mathcal{L}_{x}^{1}}\notag\\
	&\lesssim
	\|(\p_{t}\ui,\p_{t}\bi)\|_{C_{t}\mathcal{L}_{x}^{2}}+\|(\ui,\bi)\|_{\mathcal{C}_{t,x}^{2}}+\|(z_{1,0},z_{2,0})\|_{\Lo^{r}_{\om} C_{t}\mathcal{L}_{x}^{2}} +\|(z_{1,0},z_{2,0})\|^{2}_{\Lo^{2r}_{\om} C_{t}\mathcal{L}_{x}^{2}}+ \|(\ui,\bi)\|^{2} _{C_{t}\mathcal{L}_{x}^{2}}\notag\\
	&\lesssim
	L_{*}+M+(r-1)^{\frac{1}{2}}L+(M+(2r-1)^{\frac{1}{2}}L)^{2}
	\lesssim \delta_{1},
\end{align*}
where  $L_{*}:=1+ \|(\ui, \bi)\|_{\mathcal{C}_{t,x}^{8}}^{2}$ is a deterministic constant, 
and we choose $a$ sufficiently large so that $\delta_{1}=\lambda_{1}^{\beta}$ 
is sufficiently large in the last step. Thus,  (\ref{2.13}) is satisfied at level $q=0$.

Similarly,  we have $\|(\mathring{R}_{0}^{u}, \mathring{R}_{0}^{B})\|_{\Lo^{m}_{\om} C_{t}\mathcal{L}_{x}^{1}}\leq\lambda_{0}^{4\a+5}8mL^{2} $ by choosing $a$ possibly larger, 
and thus (\ref{2.12}) is verified at level $q=0$. 

Then, by virtue of Proposition  \ref{Proposition Main iteration}, 
we obtain a sequence of  relaxed solutions $(\Bu_{q}, \Bb_{q}, \ru^{u}, \ru^{B})$ to \eqref{2.5}, which satitsfies \eqref{2.11}-\eqref{2.16} for all $q\geq 0$.

Now we show that the relaxed solutions 
are $(\F_t)$-adapted. 
Actually, at the initial step, 
Since $\ui, \bi$ are deterministic, and $z_{1,0}$, $z_{2,0}$ are $(\F_t)$-adapted process,  
so are $(\Bu_{0}, \Bb_{0}, \Ru_{0}^{u}, \Ru_{0}^{B})$. 
Because $\vartheta_{\ell}$ is a one-sided temporal mollifier, the mollified Reynolds stress $(\Ru^{u}_{\ell_{0}},\Ru^{B}_{\ell_{0}})$ is 
also adapted, 
and so is the amplitude $a_{(k)}$ due to \eqref{4.24} and \eqref{4.45}. 
Then, by \eqref{4.69} and \eqref{4.71}, 
both the velocity and magnetic perturbations $w_{1}$ and $d_{1}$ are adapted. 
In veiw of \eqref{4.72}, \eqref{5.1} and \eqref{5.8},  $(\Bu_{1}, \Bb_{1}, \Ru_{1}^{u}, \Ru_{1}^{B})$ is 
adapted as well. 
Hence, arguing iteratively we infer that 
 $(\Bu_{q}, \Bb_{q}, \Ru_{q}^{u}, \Ru_{q}^{B})$ 
 are adapted for all $q \geq 0$.

In view of (\ref{2.14}), (\ref{2.15}) and (\ref{2.16}),  
we infer that 
there exist $ \Bu, \Bb$  such that
\begin{align}
	 (\Bu_{q}, \Bb_{q})\rightarrow (\Bu, \Bb)  \quad \text{in}\quad \Lo^{2r}_{\om} L_{t}^{2}\mathcal{L}_{x}^{2}\cap\Lo^{r}_{\om} L_{t}^{1}\mathcal{L}_{x}^{2}\cap\Lo^{r}_{\om} L_{t}^{\ga}\mathcal{W}_{x}^{s,p},\label{s1} 
\end{align}
 and
 \begin{align}\label{s2}
 (\ru^{u},  \ru^{B})\rightarrow (0, 0)   \quad\text{in} \quad \Lo^{r}_{\om}L_{t}^{1}\mathcal{L}_{x}^{1}.
\end{align}
In particular, 
there exists a subsequence $\{q_k\}$ 
such that $\textbf{P}$-a.s., 
\begin{align}
	&(\Bu_{q_{k}}, \Bb_{q_{k}})\rightarrow (\Bu, \Bb)  \quad \text{in}\quad  L_{t}^{2}\mathcal{L}_{x}^{2}\cap  L_{t}^{1}\mathcal{L}_{x}^{2}\cap  L_{t}^{\ga}\mathcal{W}_{x}^{s,p},\label{s11}\\ 
	&(\Ru^{u}_{q_{k}},  \Ru^{B}_{q_{k}})\rightarrow (0, 0)   \quad\text{in} \quad L_{t}^{1}\mathcal{L}_{x}^{1},\label{s22}
\end{align}
and
\begin{align}
    (z_{1,q_{k}}, z_{2,q_{k}})\rightarrow (z_{1}, z_{2})  \quad \text{in}\quad  L_{t}^{2}\mathcal{L}_{x}^{2}. \label{s1.1} 
\end{align} 
The above strong convergences suffice to verify that 
the limit $(\wt u, \wt B)$ satisfies  equation (\ref{1.3}) 
in the distributional sense, 
which in turn yields that 
$ (u,B):=(\Bu+z_{1},\Bb+z_{2}) $ is a solution to (\ref{1.1}) with  the initial data $(u_{0},B_{0})$. 
We also note that, by  (\ref{2.15}) and (\ref{2.16}),
	\begin{align}\label{2.19}
		&\quad\|(u-(\ui+z_{1}),B-(\bi+z_{2}))\|_{\Lo^{r}_{\om} L_{t}^{1}\mathcal{L}_{x}^{2}}+\|(u-(\ui+z_{1}),B-(\bi+z_{2}))\|_{\Lo^{r}_{\om}L_{t}^{\ga}\mathcal{W}_{x}^{s,p}}\notag\\
		&\leq\sum_{q=0}^{\infty}\|(\Bu_{q+1}-\Bu_{q},\Bb_{q+1}-\Bb_{q})\|_{\Lo^{r}_{\om} L_{t}^{1}\mathcal{L}_{x}^{2}}+\|(\Bu_{q+1}-\Bu_{q},\Bb_{q+1}-\Bb_{q})\|_{\Lo^{r}_{\om}L_{t}^{\ga}\mathcal{W}_{x}^{s,p}}\notag\\
		&\leq\sum_{q=0}^{\infty}4\dqqq^{1/2}<\epsilon, 
	\end{align} 
and so \eqref{1.5} follows. 

Finally, let us show the non-uniqueness of solutions which follows from the arbitrariness of smooth vector 
fields $(v,H)$. 
Actually, let us choose any divergence-free and mean-free sequences  $ \{(\ui_{i},\bi_{i})\} \subseteq C_{0}^{\infty} $ with $\ui_{i}(0)=\bi_{i}(0)=0$ such that
\begin{equation}\label{2.20}
	\begin{aligned}
	 \|(\ui_{i}-\ui_{j},\bi_{i}-\bi_{j})\|_{L_{t}^{\gamma}\mathcal{W}_{x}^{s,p}}\geq 1,\ \  i\neq j.
	\end{aligned}
\end{equation} 
Then, the above arguments give a sequence of solutions $ \{(u_{i},B_{i})\} $  to (\ref{1.1})  
such that 
	\begin{align*}
		\|(u_{i}-(\ui_{i}+z_{1}),B_{i}-(\bi_{i}+z_{2}))\|_{\Lo^{r}_{\om}L_{t}^{\ga}\mathcal{W}_{x}^{s,p}}<1/3.
	\end{align*}
It follows that 
\begin{align*}
		\|(u_{i}-u_{j}, B_{i}-B_{j})\|_{\Lo^{r}_{\om}L_{t}^{\ga}\mathcal{W}_{x}^{s,p}}
		&\geq \|(\ui_{i}-\ui_{j},\bi_{i}-\bi_{j})\|_{L_{t}^{\gamma}\mathcal{W}_{x}^{s,p}}-\|(u_{i}-(\ui_{i}+z_{1}),B_{i}-(\bi_{i}+z_{2}))\|_{\Lo^{r}_{\om}L_{t}^{\ga}\mathcal{W}_{x}^{s,p}}\notag\\
		&\quad-\|(u_{j}-(\ui_{j}+z_{1}),B_{j}-(\bi_{j}+z_{2}))\|_{\Lo^{r}_{\om}L_{t}^{\ga}\mathcal{W}_{x}^{s,p}}\notag\\
		&\geq 1-2/3>0, 
\end{align*} 
which implies the non-uniqueness of solutions to (\ref{1.1}).  
Therefore,  the proof is completed.
\hfill$\square$

\medskip  
Next, we aim to prove 
the uniqueness result in the (sub)critical LPS regime  
in Theorem \ref{uniq-sharp}. \\

Let us first consider the linearized stochastic  MHD equations:
\begin{eqnarray}\label{8.3}
	\left\{\begin{array}{lc}
		\mathrm{d} v+\big(\nu(-\Delta)^{\a}v+(u\cdot \nabla)v-(B\cdot \nabla )H\big) \mathrm{d} t+\nabla P \mathrm{d} t =\mathrm{d} W^{(1)}(t),\\
		\mathrm{d}H+\big(\nu(-\Delta)^{\a}H+(u\cdot \nabla )H-(B\cdot \nabla )v\big)\mathrm{d} t =\mathrm{d} W^{(2)}(t),\\
		\div v=0, \quad\div H=0,\\
		v(0)=v_{0},\quad H(0)=H_{0}.
	\end{array}\right.
\end{eqnarray}

\begin{lemma}\label{existence}
	Let $(u, B)$ be a probabilistically strong
	and analytically weak solution to (\ref{1.1}) with $\nu_{1}=\nu_{2}=\nu$, 
 $(u, B)\in X_{(0,T)}^{s,\gamma,p}\times X_{(0,T)}^{s,\gamma,p}$, $\mathbf{P}$-a.s., 
 and  $(s,\gamma,p)$ satisfy  (\ref{LPS-2}) with $\a\geq 1$, $2\leq \gamma\leq\infty$, $1\leq p\leq \infty$, $s\geq 0$ and $0\leq \frac{1}{p}-\frac{s}{3}\leq \frac{1}{2}$.  Then, for any $(v_{0}, H_{0})\in \mathcal{L}_{\sigma}^{2}$, there exists a unique probabilistically strong
	and analytically weak solution  $(v, H)$ to  system (\ref{8.3}), 
 and  $(v, H) \in  C_{w}([0, T];\mathcal{L}_{x}^{2})\cap   L^{2}(0, T;\mathcal{H}_{x}^{\a})$, $\mathbf{P}$-a.s.
\end{lemma}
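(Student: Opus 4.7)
\medskip
\textbf{Proof proposal for Lemma \ref{existence}.} The plan is to first remove the additive noise by a random shift and then solve the resulting pathwise random linear problem by a Galerkin scheme, the closing mechanism being a Leray--Hopf energy identity supported by the two cancellations produced by $\div u=\div B=0$.

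\textbf{Step 1 (random shift).} For $i=1,2$, let $\bar z_{i}$ be the stochastic convolution associated to $W^{(i)}$ with initial data $(v_{0},H_{0})$, so that $d\bar z_{i}+\nu(-\Delta)^{\a}\bar z_{i}\,dt=dW^{(i)}$, $\bar z_{1}(0)=v_{0}$, $\bar z_{2}(0)=H_{0}$. Exactly as in Proposition~\ref{Proposition stochastic}, $\bar z_{i}\in C_{t}H^{\kappa+4}_{x}$ pathwise for any $\kappa<\a$. Setting $\tilde v:=v-\bar z_{1}$ and $\tilde H:=H-\bar z_{2}$ converts \eqref{8.3} into the pathwise random linear system
\begin{align*}
\partial_{t}\tilde v+\nu(-\Delta)^{\a}\tilde v+(u\cdot\nabla)\tilde v-(B\cdot\nabla)\tilde H+\nabla\widetilde P &= (B\cdot\nabla)\bar z_{2}-(u\cdot\nabla)\bar z_{1}=:F_{1},\\
\partial_{t}\tilde H+\nu(-\Delta)^{\a}\tilde H+(u\cdot\nabla)\tilde H-(B\cdot\nabla)\tilde v &= (B\cdot\nabla)\bar z_{1}-(u\cdot\nabla)\bar z_{2}=:F_{2},
\end{align*}
with $(\tilde v,\tilde H)|_{t=0}=(0,0)$. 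It therefore suffices, for $\mathbf{P}$-a.e.\ $\omega$, to produce a unique Leray--Hopf solution $(\tilde v,\tilde H)\in C_{w}([0,T];\mathcal{L}^{2}_{x})\cap L^{2}(0,T;\mathcal{H}^{\a}_{x})$ to this deterministic-in-$\omega$ problem.

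\textbf{Step 2 (Galerkin energy identity).} Project the shifted system onto the first $N$ divergence-free Fourier modes and solve the resulting pathwise linear finite-dimensional ODE on $[0,T]$. Testing the two projected equations against $\tilde v_{N}$ and $\tilde H_{N}$, summing, and invoking the two identities
\[
\langle(u\cdot\nabla)\tilde v_{N},\tilde v_{N}\rangle=\langle(u\cdot\nabla)\tilde H_{N},\tilde H_{N}\rangle=0,\qquad \langle(B\cdot\nabla)\tilde H_{N},\tilde v_{N}\rangle+\langle(B\cdot\nabla)\tilde v_{N},\tilde H_{N}\rangle=0,
\]
which both follow from one integration by parts and $\div u=\div B=0$, yields
\[
\tfrac{1}{2}\tfrac{d}{dt}\bigl(\|\tilde v_{N}\|_{L^{2}}^{2}+\|\tilde H_{N}\|_{L^{2}}^{2}\bigr)+\nu\bigl(\|\tilde v_{N}\|_{\dot H^{\a}}^{2}+\|\tilde H_{N}\|_{\dot H^{\a}}^{2}\bigr)=\langle F_{1},\tilde v_{N}\rangle+\langle F_{2},\tilde H_{N}\rangle.
\]
Via the Sobolev embedding $W^{s,p}_{x}\hookrightarrow L^{\tilde p}_{x}$ with $\frac{1}{\tilde p}=\frac{1}{p}-\frac{s}{3}$ (permitted by $0\le\frac{1}{p}-\frac{s}{3}\le\frac{1}{2}$), the assumption $(u,B)\in X^{s,\gamma,p}_{(0,T)}$ upgrades to $(u,B)\in L^{\gamma}_{t}\mathcal{L}^{\tilde p}_{x}$ with $\frac{2\a}{\gamma}+\frac{3}{\tilde p}=2\a-1$, i.e.\ the classical LPS scaling for the $\a$-dissipative equation. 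Combining this with the $C_{t}H^{\kappa+4}_{x}$-bound on $\bar z_{i}$ and the interpolation $L^{\infty}_{t}\mathcal{L}^{2}_{x}\cap L^{2}_{t}\mathcal{H}^{\a}_{x}\hookrightarrow L^{\gamma'}_{t}\mathcal{L}^{\tilde p'}_{x}$ at the conjugate exponents, the right-hand side is absorbed into the dissipation up to a Gronwall-controllable part, producing a pathwise uniform-in-$N$ bound in $L^{\infty}_{t}\mathcal{L}^{2}_{x}\cap L^{2}_{t}\mathcal{H}^{\a}_{x}$.

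\textbf{Step 3 (passage to the limit, uniqueness, main obstacle).} The uniform energy bound together with a bound on $\partial_{t}(\tilde v_{N},\tilde H_{N})$ in $L^{1}_{t}\mathcal{H}^{-M}_{x}$ for some large $M$, read off directly from the equation, yields Aubin--Lions compactness; a subsequential limit $(\tilde v,\tilde H)\in C_{w}([0,T];\mathcal{L}^{2}_{x})\cap L^{2}(0,T;\mathcal{H}^{\a}_{x})$ solves the shifted system. Adaptedness is inherited from the Galerkin iterates, so that $(v,H):=(\tilde v+\bar z_{1},\tilde H+\bar z_{2})$ is the desired probabilistically strong solution of \eqref{8.3}. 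Linearity of \eqref{8.3} reduces uniqueness to the same energy identity applied to the difference of two solutions with zero data, which must then vanish. The main obstacle is the critical LPS endpoint $\gamma=\infty$, where $(u,B)\in C_{t}W^{s,p}_{x}$ and the exponents hit the LPS scaling line: at this endpoint Young's inequality leaves no slack for absorbing the $(u,B)$-dependent pairings into the dissipation. The standard remedy, which should apply here, is to exploit temporal continuity to subdivide $[0,T]$ into finitely many short subintervals on which the $C_{t}W^{s,p}_{x}$-oscillation of $(u,B)$ is arbitrarily small, close Gronwall locally, and iterate; this is the same smallness mechanism that powers the LPS--Serrin uniqueness theorem and drives both the existence energy estimate and the uniqueness step.
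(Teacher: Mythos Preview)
Your overall architecture---random shift, Galerkin, the two divergence-free cancellations, energy identity, uniqueness---is exactly what the paper does. The difference, and the place where your write-up goes astray, is the treatment of the forcing $\langle F_i,(\tilde v_N,\tilde H_N)\rangle$ in Step~2.

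You propose an LPS-scaling argument with absorption into the dissipation and flag the endpoint $\gamma=\infty$ as the ``main obstacle''. None of this is needed. Since $\bar z_i\in C_tH^{\kappa+4}_x$ one has $\nabla\bar z_i\in C_tL^\infty_x$, and since $0\le\frac{1}{p}-\frac{s}{3}\le\frac12$ gives $W^{s,p}_x\hookrightarrow L^2_x$, one has $u,B\in L^2_tL^2_x$. The paper simply writes
\[
\Bigl|\int_0^t\langle(u\cdot\nabla)\bar z_1,\tilde v_N\rangle\,ds\Bigr|
\lesssim\|\tilde v_N\|_{L^2_{[0,t]}L^2_x}\|u\|_{L^2_{[0,t]}L^2_x}\|\nabla\bar z_1\|_{C_tL^\infty_x}
\lesssim\|\tilde v_N\|_{L^2_{[0,t]}L^2_x}^2+\|u\|_{L^\gamma_tW^{s,p}_x}^2\|\bar z_1\|_{C_tH^3_x}^2,
\]
and closes by Gronwall on $\|(\tilde v_N,\tilde H_N)(t)\|_{\mathcal L^2_x}^2$ alone---no interpolation with $\dot{\mathcal H}^\alpha_x$, no absorption, and hence no endpoint difficulty at $\gamma=\infty$. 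Your claimed embedding $L^\infty_t\mathcal L^2_x\cap L^2_t\mathcal H^\alpha_x\hookrightarrow L^{\gamma'}_t\mathcal L^{\tilde p'}_x$ via energy-space interpolation is in fact false in general: that interpolation produces $L^q_tL^r_x$ only along $\frac{2\alpha}{q}+\frac{3}{r}=\frac32$ with $q,r\ge 2$, whereas $\gamma\ge 2$ and $\tilde p\ge 2$ force $\gamma',\tilde p'\le 2$. (The embedding does hold, but trivially by H\"older on the torus, and then there is nothing to absorb.) The genuine LPS absorption mechanism you invoke would matter for terms of the type $\langle(\tilde v_N\cdot\nabla)u,\tilde v_N\rangle$, which do not occur in \eqref{8.3} because $(u,B)$ always sit in the transport slot.

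Two smaller points. First, the paper mollifies $(u,B)$ to $(u_n,B_n)$ in the Galerkin step so that the finite-dimensional ODE has continuous coefficients. Second, the paper does not invoke Aubin--Lions: since the system is \emph{linear} in $(\tilde v,\tilde H)$, weak and weak-$\ast$ convergence suffice to pass to the limit in every term; uniqueness then upgrades subsequential to full-sequence convergence, and it is this last step that actually delivers adaptedness---a subsequence possibly depending on $\omega$ would not suffice, and your Step~3 glosses over this.
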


In order to prove Lemma \ref{existence}, we first set 
\begin{equation}\label{8.3.0}
	v:=\widetilde{v}+z_{1}, \quad H:=\widetilde{H}+z_{2},
\end{equation}
where $z_{i}(t)=\int_{0}^{t}e^{(t-s)\(-\nu(-\Delta)^{\a}\)}\mathrm{d}W^{(i)}(s)$, $i=1,2$. 
As in Proposition \ref{Proposition stochastic}, 
it holds that  
 for any  $\kappa\in [0, \a)$, $p\geq 2$,  
	\begin{align}\label{8-3-1}
		\mathbb{E}\|z_{i}\|^{p}_{C_{t}H_{x}^{\kappa+4}}
		\leq (p-1)^{p/2}L^{p},
	\end{align}
	where $ L (\geq 1) $ depends on $ Tr(G_{i}G_{i}^{*}) $,  $T$, $\kappa$, $\a$, $\nu$ and is independent of $ p $.

Then, using (\ref{8.3.0}) we reformulate the linearlized problem (\ref{8.3}) as follows
\begin{eqnarray}\label{8.3.2}
	\left\{\begin{array}{lc}
		\p_{t}\widetilde{v}+\nu(-\Delta)^{\a}\widetilde{v}+(u\cdot \nabla)(\widetilde{v}+z_{1})-(B\cdot \nabla )(\widetilde{H}+z_{2})+\nabla P  =0 ,\\
		\p_{t}\widetilde{H}+\nu(-\Delta)^{\a}\widetilde{H}+(u\cdot \nabla )(\widetilde{H}+z_{2})-(B\cdot \nabla )(\widetilde{v}+z_{1}) =0,\\
		\div\widetilde{v}=0, \quad\div \widetilde{H}=0,\\
		\widetilde{v}(0)=v_{0},\quad \widetilde{H}(0)=H_{0}.
	\end{array}\right.
\end{eqnarray}
Hence, it suffices to prove the existence and uniqueness of solutions to problem (\ref{8.3.2}).

  Take a smooth orthonormal basis $\{ a_{k}\}_{k\geq 1}$ in $L^{2}_\sigma$ with  $a_{k}$ being the  eigenvectors of Stokes operator $-\P\Delta$ with the corresponding eigenvalues $\lambda_{k}$, $k\geq 1$, where $\P$ is the Leray projection (see \citep[Charpter 2]{Robinson}). Let $\mathbb{H}_{n}:=\text{span} \{a_{1}, a_{2}, \cdots, a_{n}\}$ and  define $\mathcal{P}_{n}: L_{x}^{2}\rightarrow \mathbb{H}_{n}$,
\begin{equation}\label{8.2-1}
\mathcal{P}_{n}u:=\sum_{i=1}^{n}u_{i} a_{i},\quad\text{where }\quad  u_{i}=\langle u, a_{i}\rangle.
\end{equation}
Note that,  $\mathcal{P}_{n}$ is also well-defined as an operator from $H_{x}^{-s}$  to $\mathbb{H}_{n}$. 
One has that (\citep[Charpter 4]{Robinson})
\begin{enumerate}[(i)]
	\item For any $u$, $v\in L_{x}^{2}$,
	\begin{align}\label{8.3-0}
		\langle\mathcal{P}_{n}u, v\rangle
		=\langle u, \mathcal{P}_{n}v\rangle.
		\end{align}
	\item For any  $u\in H_{x}^{s}$, $s> 0$,
	\begin{align}\label{8.3-1}
	\mathcal{P}_{n}u\rightarrow u \quad\text{in} \quad  H_{x}^{s},
	\end{align}
and 	\begin{align}\label{8.3-2}
	\|\mathcal{P}_{n}u\|_{H_{x}^{s}}\leq C\|u\|_{H_{x}^{s}}.		
\end{align}
\end{enumerate}

\paragraph{\bf Proof of Lemma \ref{existence}} We fix $\omega\in\Omega$ in the sequel.
Consider the Galerkin approximation of \eqref{8.3.2} 
\begin{eqnarray}\label{8.2-9}
	\left\{\begin{array}{lc}
		\p_{t}\widetilde{v}_{n}+\nu(-\Delta)^{\a}\widetilde{v}_{n}+\mathcal{P}_{n}\big((u_{n}\cdot \nabla)(\widetilde{v}_{n}+z_{1})-(B_{n}\cdot \nabla )(\widetilde{H}_{n}+z_{2})\big)=0,\\
		\p_{t}\widetilde{H}_{n}+\nu( -\Delta)^{\a}\widetilde{H}_{n}+\mathcal{P}_{n}\big((u_{n}\cdot \nabla )(\widetilde{H}_{n}+z_{2})-(B_{n}\cdot \nabla )(\widetilde{v}_{n}+z_{2})\big) =0,\\
		\div \widetilde{v}_{n}=0, \quad\div \widetilde{H}_{n}=0,\\
		\widetilde{v}_{n}(0)=\mathcal{P}_{n}\widetilde{v}_{0},\quad \widetilde{H}_{n}(0)=\mathcal{P}_{n}\widetilde{H}_{0},
	\end{array}\right.
\end{eqnarray}
where 
	\begin{align*}
	&\widetilde{v}_{n}(t,x)=\sum_{i=1}^{n}\widetilde{v}_{n,i}(t)a_{i}(x),\quad
	\widetilde{H}_{n}(t,x)=\sum_{i=1}^{n}\widetilde{H}_{n,i}(t)a_{i}(x),
\end{align*}
and  $(u_{n},B_{n}):=((u*_{x} \varrho_{n^{-1}}) *_{t} \vartheta_{n^{-1}},(B*_{x} \varrho_{n^{-1}}) *_{t} \vartheta_{n^{-1}})$, with $\varrho_{n^{-1}}$ and  $\vartheta_{n^{-1}}$ as in \eqref{3.1}.
Since $\vartheta_{n^{-1}}$ is a one-sided temporal mollifier,  $(u_{n},B_{n})$ is $(\F_{t})$-adapted 
and is continuous in time. 
It is not difficult to see that  system  \eqref{8.2-9} has a unique $(\F_{t})$-adapted solution on $[0,T]$. 

Next, taking the inner product of \eqref{8.2-9}, and   
using the algebraic cancellations 
\begin{align*}
	&\langle\mathcal{P}_{n}\big((u_{n}\cdot \nabla)\widetilde{v}_{n}\big),\widetilde{v}_{n}\rangle=0,\quad \langle\mathcal{P}_{n}\big((u_{n}\cdot \nabla)\widetilde{H}_{n}\big),\widetilde{H}_{n}\rangle=0,\\
	&\langle\mathcal{P}_{n}\big((B_{n}\cdot \nabla)\widetilde{H}_{n}\big),\widetilde{v}_{n}\rangle+\langle\mathcal{P}_{n}\big((B_{n}\cdot \nabla)\widetilde{v}_{n}\big),\widetilde{H}_{n}\rangle=0,
\end{align*}
we derive 
\begin{align}\label{8-24}
	&\quad\frac{1}{2}\|\widetilde{v}_{n}(t)\|_{L_{x}^{2}}^{2}+\frac{1}{2}\|\widetilde{H}_{n}(t)\|_{L_{x}^{2}}^{2}+\nu\int_{0}^{t}\|\widetilde{v}_{n}(s)\|_{\dot{H}_{x}^{\a}}^{2}+\|\widetilde{H}_{n}(s)\|_{\dot{H}_{x}^{\a}}^{2}\mathrm{d}s\notag\\
	&\leq\frac{1}{2}\|\widetilde{v}(0)\|_{L_{x}^{2}}^{2}+\frac{1}{2}\|\widetilde{H}(0)\|_{L_{x}^{2}}^{2}+|\int_{0}^{t}\langle(u_{n}(s)\cdot \nabla)z_{1}(s)-(B_{n}(s)\cdot \nabla )z_{2}(s), \widetilde{v}_{n}(s)\rangle\mathrm{d}s|\notag\\
	&\quad+|\int_{0}^{t}\langle(u_{n}(s)\cdot \nabla)z_{2}(s)-(B_{n}(s)\cdot \nabla )z_{1}(s), \widetilde{H}_{n}(s)\rangle\mathrm{d}s|.
\end{align}
For $p<+\infty$,  since  $ 0\leq \frac{1}{p}-\frac{s}{3}\leq \frac{1}{2}$ and  $2\leq \gamma\leq\infty$, using H\"{o}lder's inequality, Young's inequality and the  embeddings (see \citep{Benyi}) 
\begin{align}  \label{embed} 
   W_{x}^{s,p}\hookrightarrow L_{x}^{2},\ \ H_{x}^{2}\hookrightarrow L_{x}^{\infty},
\end{align}
we obtain
\begin{align*}
|\int_{0}^{t}\langle(u_{n}(s)\cdot \nabla)z_{1}(s), \widetilde{v}_{n}(s)\rangle\mathrm{d}s|
&\lesssim\|\widetilde{v}_{n}\|_{L_{[0,t]}^{2}L_{x}^{2}}\|u_{n}\|_{L_{[0,t]}^{2}L_{x}^{2}}\|\nabla z_{1}\|_{C_{[0,t]}L_{x}^{\infty}}\notag\\
&\lesssim \|\widetilde{v}_{n}\|_{L_{[0,t]}^{2}L_{x}^{2}}\|u\|_{L_{[0,t]}^{2}W_{x}^{s,p}}\|z_{1}\|_{C_{[0,t]}H_{x}^{3}}\notag\\
&\lesssim \|\widetilde{v}_{n}\|_{L_{[0,t]}^{2}L_{x}^{2}}^{2}+\|u\|_{L_{[0,t]}^{\gamma}W_{x}^{s,p}}^{2}\|z_{1}\|_{C_{[0,t]}H_{x}^{3}}^{2},
\end{align*} 
where the implicit constant is deterministic and independent of $n$. 
For $p=+\infty$,   we have  $u\in L_{t}^{2}L_{x}^{\infty}$, 
and so, by H\"{o}lder's inequality and  Young's inequality, 
\begin{align*}
	|\int_{0}^{t}\langle(u_{n}(s)\cdot \nabla)z_{1}(s), \widetilde{v}_{n}(s)\rangle\mathrm{d}s|
	&\lesssim \|\widetilde{v}_{n}\|_{L_{[0,t]}^{2}L_{x}^{2}}^{2}+\|u\|_{L_{[0,t]}^{2}L_{x}^{\infty}}^{2}\|z_{1}\|_{C_{[0,t]}H_{x}^{1}}^{2}.
\end{align*}

Similar estimates also hold for the remaining terms involving $z_{i}$, $i=1,2$. 
Hence, we obtain 
\begin{align}\label{8-24-2}
	&\quad\|(\widetilde{v}_{n}(t),\widetilde{H}_{n}(t))\|_{\mathcal{L}_{x}^{2}}^{2}+\nu\int_{0}^{t}\|(\widetilde{v}_{n}(s),\widetilde{H}_{n}(s))\|_{\dot{\mathcal{H}}_{x}^{\a}}^{2}\mathrm{d}s\notag\\
	&\lesssim\|(\widetilde{v}(0),\widetilde{H}(0))\|_{\mathcal{L}_{x}^{2}}^{2}+\|(\widetilde{v}_{n},\widetilde{H}_{n})\|_{L_{[0,t]}^{2}\mathcal{L}_{x}^{2}}^{2}+\|(u,B)\|_{L_{[0,t]}^{\gamma}\mathcal{W}_{x}^{s,p}}^{2}\|(z_{1},z_{2})\|_{C_{[0,t]}\mathcal{H}_{x}^{3}}^{2}. 
\end{align}
Then, applying Gronwall's inequality, we get 
the uniform energy bound 
\begin{align}\label{8.2-12.0}
	&\quad\sup_{t\in[0.T]}\|(\widetilde{v}_{n}(t),\widetilde{H}_{n}(t))\|_{\mathcal{L}_{x}^{2}}^{2} 
  +\int_{0}^{T}\|(\widetilde{v}_{n}(s), \widetilde{H}_{n}(s))\|_{\dot{\mathcal{H}}_{x}^{\a}}^{2}\mathrm{d}s\notag\\
	&\lesssim \|(\widetilde{v}(0), \widetilde{H}(0))\|_{\mathcal{L}_{x}^{2}}^{2}
	+\|(u,B)\|_{L_{t}^{\gamma} \mathcal{W}_{x}^{s,p}}^{2}\|(z_{1},z_{2})\|_{C_{t}\mathcal{H}_{x}^{3}}^{2}, 
\end{align}
where the implicit constant is deterministic and independent of $n$.

Thus, there exists a subsequence (still denoted by $\{n\}$) which may depend on $\omega$, 
such that
\begin{align}
	&(\widetilde{v}_{n},\widetilde{H}_{n})\rightharpoonup  (\widetilde{v},\widetilde{H}) \quad\text{weakly  in} \quad L^{2}_{t}\mathcal{L}^{2}_{x}\cap L^{2}_{t}\mathcal{H}^{\a}_{x} ,\label{8.2-13}\\
	&(\widetilde{v}_{n}, \widetilde{H}_{n}) 
 \rightharpoonup (\widetilde{v},\widetilde{H}) \quad\text{weakly-$\ast$ in} \quad L^{\infty}_{t}\mathcal{L}^{2}_{x}.\label{8.2-14}
\end{align}
In particular, the following energy bound holds 
\begin{align}\label{8-20-2}
	&\quad\sup_{t\in[0.T]}\|(\widetilde{v}(t),\widetilde{H}(t))\|_{\mathcal{L}_{x}^{2}}^{2} 
 +\int_{0}^{T}\|(\widetilde{v}(s), \widetilde{H}(s))\|_{\dot{\mathcal{H}}_{x}^{\a}}^{2}\mathrm{d}s\notag\\
	&\lesssim \|(\widetilde{v}(0), \widetilde{H}(0))\|_{\mathcal{L}_{x}^{2}}^{2}
	+\|(u,B)\|_{L_{t}^{\gamma} \mathcal{W}_{x}^{s,p}}^{2}\|(z_{1},z_{2})\|_{C_{t}\mathcal{H}_{x}^{3}}^{2}.
\end{align}

Below we verify that the limit $(\widetilde{v},\widetilde{H})$ satisfies system \eqref{8.3.2}. 
To this end, 
take any  divergence-free test function $\Phi(t,x)=\psi(t)\phi(x)$, where $\phi\in C_{0}^{\infty}(\T^{3})$ is divergence-free and $\psi\in C_{0}^{\infty}(0,T)$. 
Using \eqref{8.2-9}
and  Fubini's Theorem, we have 
\begin{align}\label{8-28}
	\int_{0}^{T}\langle \widetilde{v}_{n}(t)-\mathcal{P}_{n}\widetilde{v}_{0}, \Phi(t)\rangle\mathrm{d}t	&=-\int_{0}^{T}	\langle \nu(-\Delta)^{\a}\widetilde{v}_{n}(s), \int_{s}^{T}\Phi(t)\mathrm{d}t\rangle\mathrm{d}s\notag\\
	&\quad-\int_{0}^{T}\langle(u_{n}(s)\cdot \nabla)(\widetilde{v}_{n}(s)+z_{1}(s))-(B_{n}(s)\cdot \nabla )(\widetilde{H}_{n}(s)+z_{2}(s)), \int_{s}^{T}\mathcal{P}_{n}\Phi(t)\mathrm{d}t\rangle\mathrm{d}s.
\end{align}
By  \eqref{8.2-13}, 
\begin{align*}
	&\quad|\int_{0}^{T}\langle \nu(-\Delta)^{\a}\widetilde{v}_{n}(s), \int_{s}^{T}\Phi(t)\mathrm{d}t\rangle\mathrm{d}s-\int_{0}^{T}	\langle \nu(-\Delta)^{\a}\widetilde{v}(s), \int_{s}^{T}\Phi(t)\mathrm{d}t\rangle\mathrm{d}s|\notag\\
	&=	|\int_{0}^{T}\langle \nu(-\Delta)^{\frac{\a}{2}}(\widetilde{v}_{n}(s)-\widetilde{v}(s)), (-\Delta)^{\frac{\a}{2}}\int_{s}^{T}\Phi(t)\mathrm{d}t\rangle\mathrm{d}s|\rightarrow 0.
\end{align*}
Regarding the nonlinearities on the right-hand side of \eqref{8-28}, since  $\div\, u_{n}=0$, we have
\begin{align*}
	&\quad|\int_{0}^{T}\langle(u_{n}(s)\cdot \nabla)\widetilde{v}_{n}(s), \int_{s}^{T}\mathcal{P}_{n}\Phi(t)\mathrm{d}t\rangle\mathrm{d}s-\int_{0}^{T}\langle(u(s)\cdot \nabla)\widetilde{v}(s), \int_{s}^{T}\Phi(t)\mathrm{d}t\rangle\mathrm{d}s|\notag\\
	&\leq |\int_{0}^{T}\langle\widetilde{v}_{n}(s)-\widetilde{v}(s), (u_{n}(s)\cdot \nabla)\int_{s}^{T}\mathcal{P}_{n}\Phi(t)\mathrm{d}t\rangle\mathrm{d}s|\notag\\
	&\quad+|\int_{0}^{T}\langle(u_{n}(s)\cdot \nabla)\widetilde{v}(s), \int_{s}^{T}\mathcal{P}_{n}\Phi(t)\mathrm{d}t\rangle-\langle(u(s)\cdot \nabla)\widetilde{v}(s),\int_{s}^{T}\Phi(t)\mathrm{d}t\rangle\mathrm{d}s|=:I_{1}+I_{2}.
\end{align*} 
We first consider the case where $p<+\infty$, by the embeddings in \eqref{embed}, we have
\begin{align}\label{8-31}
\|(u_{n}\cdot \nabla)\int_{t}^{T}\mathcal{P}_{n}\Phi(s)\mathrm{d}s\|_{L_{t}^{2}L_{x}^{2}}
\lesssim \|u_{n}\|_{L_{t}^{2}L_{x}^{2}}\|\nabla\mathcal{P}_{n}\Phi\|_{C_{t}L_{x}^{\infty}}
\lesssim \|u\|_{L_{t}^{\gamma}W_{x}^{s,p}}\|\Phi\|_{C_{t,x}^{3}}< \infty,
\end{align}
and 
\begin{align}\label{8-31-1}
&\quad\|(u_{n}\cdot \nabla)\int_{t}^{T}\mathcal{P}_{n}\Phi(s)\mathrm{d}s-(u\cdot \nabla)\int_{t}^{T}\Phi(s)\mathrm{d}s\|_{L_{t}^{2}L_{x}^{2}}\notag\\
&\lesssim \|u_{n}-u\|_{L_{t}^{2}L_{x}^{2}}\|\nabla\mathcal{P}_{n}\Phi\|_{C_{t}L_{x}^{\infty}}+\|u\|_{L_{t}^{2}L_{x}^{2}}\|\nabla(\mathcal{P}_{n}\Phi-\Phi)\|_{C_{t}L_{x}^{\infty}}\notag\\
&\lesssim \|u_{n}-u\|_{L_{t}^{2}L_{x}^{2}}\|\mathcal{P}_{n}\Phi\|_{C_{t}H_{x}^{3}}+\|u\|_{L_{t}^{\gamma}W_{x}^{s,p}}\|\mathcal{P}_{n}\Phi-\Phi\|_{C_{t}H_{x}^{3}}
\rightarrow 0. 
\end{align} 
It follows from \eqref{8.2-13}, \eqref{8-31} and \eqref{8-31-1} that for $p<+\infty$
\begin{align}\label{8-32}
 I_{1}=|\int_{0}^{T}\langle\widetilde{v}_{n}(s)-\widetilde{v}(s), (u_{n}(s)\cdot \nabla)\int_{s}^{T}\mathcal{P}_{n}\Phi(t)\mathrm{d}t\rangle\mathrm{d}s|\rightarrow 0.
\end{align}
Moreover, by  H\"{o}lder's inequality 
and \eqref{embed}, 
\begin{align}\label{8-33.0}
 I_{2}&\lesssim
 \|\widetilde{v}\|_{L_{t}^{2}\dot{H}_{x}^{1}}(\|u_{n}\|_{L_{t}^{2}L_{x}^{2}}\|\mathcal{P}_{n}\Phi-\Phi\|_{C_{t}L_{x}^{\infty}}+\|u_{n}-u\|_{L_{t}^{2}L_{x}^{2}}\| \Phi\|_{C_{t,x}})\notag\\
	&\lesssim	\|\widetilde{v}\|_{L_{t}^{2}L_{x}^{2}}^{1-\frac{1}{\a}}\|\widetilde{v}\|_{L_{t}^{2}\dot{H}_{x}^{\a}}^{\frac{1}{\a}}(\|u\|_{L_{t}^{\gamma}W_{x}^{s,p}}\|\mathcal{P}_{n}\Phi-\Phi\|_{C_{t}H_{x}^{2}}+\|u_{n}-u\|_{L_{t}^{2}L_{x}^{2}}\|\Phi\|_{C_{t,x}})\rightarrow 0.
\end{align}
Considering the case where   $p=+\infty$, we get \begin{align}\label{8-31.2}
	\|(u_{n}\cdot \nabla)\int_{t}^{T}\mathcal{P}_{n}\Phi(s)\mathrm{d}s\|_{L_{t}^{2}L_{x}^{2}}
	\lesssim \|u_{n}\|_{L_{t}^{2}L_{x}^{\infty}}\|\nabla\mathcal{P}_{n}\Phi\|_{C_{t}L_{x}^{2}}
	\lesssim \|u\|_{L_{t}^{2}L_{x}^{\infty}}\|\Phi\|_{C_{t,x}^{1}}< \infty,
\end{align}
and 
\begin{align}\label{8-31-2}
	&\quad\|(u_{n}\cdot \nabla)\int_{t}^{T}\mathcal{P}_{n}\Phi(s)\mathrm{d}s-(u\cdot \nabla)\int_{t}^{T}\Phi(s)\mathrm{d}s\|_{L_{t}^{2}L_{x}^{2}}\notag\\
	&\lesssim \|u_{n}-u\|_{L_{t}^{2}L_{x}^{2}}\|\nabla\mathcal{P}_{n}\Phi\|_{C_{t}L_{x}^{\infty}}+\|u\|_{L_{t}^{2}L_{x}^{\infty}}\|\nabla(\mathcal{P}_{n}\Phi-\Phi)\|_{C_{t}L_{x}^{2}}\notag\\
	&\lesssim \|u_{n}-u\|_{L_{t}^{2}L_{x}^{2}}\|\Phi\|_{C_{t}H_{x}^{3}}+\|u\|_{L_{t}^{2}L_{x}^{\infty}}\|\mathcal{P}_{n}\Phi-\Phi\|_{C_{t}H_{x}^{1}}
	\rightarrow 0, 
\end{align} 
which, along with, \eqref{8.2-13} yields that 
\begin{align}\label{8-32-1}
   I_{1}=|\int_{0}^{T}\langle\widetilde{v}_{n}(s)-\widetilde{v}(s), (u_{n}(s)\cdot \nabla)\int_{s}^{T}\mathcal{P}_{n}\Phi(t)\mathrm{d}t\rangle\mathrm{d}s|\rightarrow 0.
\end{align}
Moreover, by H\"{o}lder's inequality 
and \eqref{embed}, we also have 
\begin{align}\label{8-33}
I_{2}&\lesssim
\|\widetilde{v}\|_{L_{t}^{2}\dot{H}_{x}^{1}}(\|u_{n}\|_{L_{t}^{2}L_{x}^{2}}\|\mathcal{P}_{n}\Phi-\Phi\|_{C_{t}L_{x}^{\infty}}+\|u_{n}-u\|_{L_{t}^{2}L_{x}^{2}}\| \Phi\|_{C_{t,x}})\notag\\
&\lesssim
\|\widetilde{v}\|_{L_{t}^{2}L_{x}^{2}}^{1-\frac{1}{\a}}\|\widetilde{v}\|_{L_{t}^{2}\dot{H}_{x}^{\a}}^{\frac{1}{\a}}(\|u\|_{L_{t}^{2}L_{x}^{\infty}}\|\mathcal{P}_{n}\Phi-\Phi\|_{C_{t}H_{x}^{2}}+\|u_{n}-u\|_{L_{t}^{2}L_{x}^{2}}\|\Phi\|_{C_{t,x}})\rightarrow 0.
\end{align}
Thus, we conclude from \eqref{8-32}, \eqref{8-33.0}, \eqref{8-32-1} and \eqref{8-33} that
\begin{align}\label{8-34}
	|\int_{0}^{T}\langle(u_{n}(s)\cdot \nabla)\widetilde{v}_{n}(s), \int_{s}^{T}\mathcal{P}_{n}\Phi(t)\mathrm{d}t\rangle\mathrm{d}s-\int_{0}^{T}\langle(u(s)\cdot \nabla)\widetilde{v}(s), \int_{s}^{T}\Phi(t)\mathrm{d}t\rangle\mathrm{d}s|\rightarrow 0.
\end{align}
One can also take the limit for the remainning  terms in \eqref{8-28} and thus get
\begin{align*}
	\int_{0}^{T}\langle \widetilde{v}(t)-\widetilde{v}_{0}, \Phi(t)\rangle\mathrm{d}t	&=-\int_{0}^{T}	\langle \nu(-\Delta)^{\a}\widetilde{v}(s), \int_{s}^{T}\Phi(t)\mathrm{d}t\rangle\mathrm{d}s\notag\\
	&\quad-\int_{0}^{T}\langle(u(s)\cdot \nabla)(\widetilde{v}(s)+z_{1}(s))-(B(s)\cdot \nabla )(\widetilde{H}(s)+z_{2}(s)), \int_{s}^{T}\Phi(t)\mathrm{d}t\rangle\mathrm{d}s,
\end{align*}
which, via  Fubini's Theorem, yields
\begin{align*}
\int_{0}^{T}\langle \widetilde{v}(t)-\widetilde{v}_{0}, \Phi(t)\rangle\mathrm{d}t	&=-\int_{0}^{T}	\langle \int_{0}^{t}\nu(-\Delta)^{\a}\widetilde{v}(s)\mathrm{d}s, \Phi(t)\rangle\mathrm{d}t\notag\\
&\quad-\int_{0}^{T}\langle\int_{0}^{t}(u(s)\cdot \nabla)(\widetilde{v}(s)+z_{1}(s))-(B(s)\cdot \nabla )(\widetilde{H}(s)+z_{2}(s))\mathrm{d}s, \Phi(t)\rangle\mathrm{d}t.
\end{align*}
Taking into account $\Phi(t,x)=\psi(t)\phi(x)$, 
we come to 
\begin{align*}
\int_{0}^{T}\langle \widetilde{v}(t)-\widetilde{v}_{0}, \phi\rangle\psi(t)\mathrm{d}t
	&=-\int_{0}^{T}	\langle \int_{0}^{t}\nu(-\Delta)^{\a}\widetilde{v}(s)\mathrm{d}s, \phi\rangle\psi(t)\mathrm{d}t\notag\\
	&\quad-\int_{0}^{T}\langle\int_{0}^{t}(u(s)\cdot \nabla)(\widetilde{v}(s)+z_{1}(s))-(B(s)\cdot \nabla )(\widetilde{H}(s)+z_{2}(s))\mathrm{d}s, \phi\rangle\psi(t)\mathrm{d}t.
\end{align*}
Since $\psi$ is arbitrary,  we get for $t\in[0,T]$,
\begin{align*}
	&\quad\langle \widetilde{v}(t)-\widetilde{v}_{0}, \phi\rangle	\notag\\
	&=-\langle \int_{0}^{t}\nu(-\Delta)^{\a}\widetilde{v}(s)\mathrm{d}s, \phi\rangle
	-\langle\int_{0}^{t}(u(s)\cdot \nabla)(\widetilde{v}(s)+z_{1}(s))-(B(s)\cdot \nabla )(\widetilde{H}(s)+z_{2}(s))\mathrm{d}s, \phi\rangle.
\end{align*} 

Similarly, it also holds \begin{align*}
	&\quad\langle \widetilde{H}(t)-\widetilde{H}_{0}, \phi\rangle	\notag\\
	&=-\langle \int_{0}^{t}\nu(-\Delta)^{\a}\widetilde{H}(s)\mathrm{d}s, \phi\rangle
	-\langle\int_{0}^{t}(u(s)\cdot \nabla)(\widetilde{H}(s)+z_{2}(s))-(B(s)\cdot \nabla )(\widetilde{v}(s)+z_{1}(s))\mathrm{d}s, \phi\rangle.
\end{align*}
Thus, $(\widetilde{v},\widetilde{H})$ 
solves \eqref{8.3.2}, and $(v,H)$ 
is a solution to \eqref{8.3}.

Next, we will prove the uniqueness of {solutions} to system \eqref{8.3}.
Let $(v,H)$ and $(v',H')$ be two solutions of system \eqref{8.3} with the same initial data. Let  $w=v-v'$, $D=H-H'$. Then,
\begin{eqnarray}\label{8-45}
	\left\{\begin{array}{lc}
		\p_{t}w+\nu(-\Delta)^{\a}w+(u\cdot \nabla)w-(B\cdot \nabla )D+\nabla\widetilde{P} =0 ,\\
		\p_{t}D+\nu(-\Delta)^{\a}D+(u\cdot \nabla )D-(B\cdot \nabla )w =0,\\
		\div w=0, \quad\div D=0,\\
		w(0)=0,\quad D(0)=0.
	\end{array}\right.
\end{eqnarray}
As in \eqref{8-20-2}, the energy bound holds 
\begin{align*}
	\|(w(t),D(t))\|_{\mathcal{L}_{x}^{2}}^{2} 
+\int_{0}^{t}\|(w(s),D(s))\|_{\dot{\mathcal{H}}_{x}^{\a}}^{2}\mathrm{d}s\lesssim \|(w(0),D(0))\|_{\mathcal{L}_{x}^{2}}^{2} 
=0,
\end{align*}
which yields that $(w,D)=(0,0)$, 
and thus $(v,H) = (v',H')$. 

By virtue of the uniqueness, 
we infer that the limits \eqref{8.2-13} and \eqref{8.2-14} 
actually hold along the whole sequence $\{n\}$. As $\{(\widetilde{v}_{n},\widetilde{H}_{n})\}$ is $(\F_t)$-adapted, so are the limits  $(\widetilde{v},\widetilde{H})$ and $(v, H)$. The proof is completed.
 \hfill$\square$\\

\paragraph{\bf Proof of Theorem \ref{uniq-sharp}} 
Let $(v,H)$ be the solution of (\ref{8.3}) with  the initial data $(u_{0}, B_{0})$. By  Lemma \ref{existence},  $(v, H) \in  C_{w}([0, T];\mathcal{L}_{x}^{2})\cap  L^{2}(0, T;\mathcal{H}_{x}^{\a})$, $\textbf{P}$-a.s.

We have that  $(u, B)=(v,H)$, $\textbf{P}$-a.s. Actually, setting $(w, D):=(u-v, B-H)$, from   (\ref{1.1}) and (\ref{8.3}) we derive 
\begin{eqnarray}\label{8.5}
	\left\{\begin{array}{lc}
		\p_{t} w+\nu(-\Delta)^{\a}w+(u\cdot \nabla)w-(B\cdot \nabla )D +\nabla \widetilde{P}  =0,\\
	\p_{t}D+\nu(-\Delta)^{\a}D+(u\cdot \nabla )D-(B\cdot \nabla )w=0,\\
		\div\, w=0, \quad\div D=0,\\
		w(0)=0,\quad D(0)=0.
	\end{array}\right.
\end{eqnarray}
System \eqref{8.5} can be treated  as a linearized deterministic MHD system. According to  \citep[(B.7)]{ZZL-MHD-sharp}, it holds that $(u,B)=(v,H)$. Thus, $(u, B)$ lies in the Leray-Hopf class $(u, B)\in   C_{w}([0,T];\mathcal{L}_{x}^{2})\cap L^{2}(0,T; \dot{\mathcal{H}}_{x}^{\a})$, $\textbf{P}-a.s.$.

Now, let $(u_{1}, B_{1})$ and $(u_{2}, B_{2})$  be two solutions to (\ref{1.1}) with the same initial data $(u_{0}, B_{0})$, and $(u_{1}, B_{1})$, $(u_{2}, B_{2})\in X_{(0,T)}^{s,\gamma,p}\times X_{(0,T)}^{s,\gamma,p}\cap  C_{w}([0,T];\mathcal{L}_{x}^{2})\cap L^{2}(0,T; \dot{\mathcal{H}}_{x}^{\a})$, $\textbf{P}$-a.s.. 

Let $(w^{(u)},D^{(B)}):=(u_{1}-u_{2},B_{1}-B_{2})$, one has that
\begin{eqnarray*}
	\left\{\begin{array}{lc}
		\p_{t}  w^{(u)}+\nu(-\Delta)^{\a}w^{(u)}+(w^{(u)}\cdot \nabla)u_{1}+\big((u_{1}-w^{(u)})\cdot \nabla\big)w^{(u)}-(D^{(B)}\cdot \nabla )B_{1}+\big((B_{1}-D^{(B)})\cdot \nabla\big)D^{(B)}\\ 
		\quad+\nabla \widetilde{P}=0,\\
		\p_{t}D^{(B)}+\nu(-\Delta)^{\a}D^{(B)}+(u_{1}\cdot \nabla)D^{(B)}+(w^{(u)}\cdot \nabla )(B_{1}-D^{(B)})-(B_{1}\cdot \nabla )w^{(u)}-(D^{(B)}\cdot\nabla)(u_{1}-w^{(u)}) \\
		\quad =0,\\
		\div w^{(u)}=0, \quad\div D^{(B)}=0,\\
		w^{(u)}(0)=0,\quad D^{(B)}(0)=0.
	\end{array}\right.
\end{eqnarray*}
Then, according to Lemma B.4 in \citep{ZZL-MHD-sharp},  it follows that $( w^{(u)},D^{(B)})=(0,0)$, and thus the uniqueness $(u_{1}, B_{1}) \equiv (u_{2}, B_{2})$ holds $\textbf{P}$-a.s. Therefore, the proof of Theorem \ref{uniq-sharp} is completed.\hfill$\square$\\

\medskip
Finally, we prove the noise vanishing limit in Theorem \ref{Theorem Vanishing noise}.
\paragraph{\bf Proof of Theorem \ref{Theorem Vanishing noise}}
Let $z_{1}^{(\epsilon_{n})}$, $z_{2}^{(\epsilon_{n})}$ solve the linear stochastic system \eqref{1.2} 
with  $W^{(i)}$ replaced by $\epsilon_{n} W^{(i)}$,
$i=1,2$. Then, we have 
\begin{align}\label{1.3.5.11}
z_{1}^{(\epsilon_{n})}:=z_{1}^{u}+Z_{1}^{(\epsilon_{n})},\quad z_{2}^{(\epsilon_{n})}:=z_{2}^{B}+Z_{2}^{(\epsilon_{n})},\quad t\geq 0,
\end{align}
where
\begin{align}
 &z_{1}^{u}(t):=e^{t\(-\nu(-\Delta)^{\a}-I\)}u_{0},\quad z_{2}^{B}(t)=:e^{t\(-\nu(-\Delta)^{\a}-I\)}B_{0},\label{1.3.5.12}\\ &Z_{i}^{(\epsilon_{n})}(t):=\epsilon_{n}\int_{0}^{t}e^{(t-\sigma)\(-\nu(-\Delta)^{\a}-I\)}\mathrm{d} W^{(i)}(\sigma), \quad i=1,2.\label{1.3.5.13}
 \end{align}

Set the space-time mollified versions 
\begin{align*}
	&u_{n}:=(u*_{x}\varrho_{\lambda_{n}^{-1}})*_{t} \vartheta_{\lambda_{n}^{-1}},\quad
	B_{n}:=(B*_{x}\varrho_{\lambda_{n}^{-1}})*_{t} \vartheta_{\lambda_{n}^{-1}},\quad z_{i,n}^{(\epsilon_{n})}:=(z_{i}^{(\epsilon_{n})}*_{x}\varrho_{\lambda_{n}^{-1}})*_{t}\vartheta_{\lambda_{n}^{-1}}\\
	&z_{1,n}^{u}:=(z_{1}^{u}*_{x}\varrho_{\lambda_{n}^{-1}})*_{t} \vartheta_{\lambda_{n}^{-1}},\quad
	z_{2,n}^{B}:=(z_{2}^{B}*_{x}\varrho_{\lambda_{n}^{-1}})*_{t} \vartheta_{\lambda_{n}^{-1}},\quad
	Z_{i,n}^{(\epsilon_{n})}:=(Z_{i}^{(\epsilon_{n})}*_{x}\varrho_{\lambda_{n}^{-1}})*_{t} \vartheta_{\lambda_{n}^{-1}},
\end{align*}
and the frequency truncated versions 
	\begin{align*}
	\widehat{z}_{1,n}^{(\epsilon_{n})}:=z_{1,n}^{u}+\bbp_{\leq \lambda_{n}^{15}}Z_{1,n}^{(\epsilon_{n})},\quad	\widehat{z}_{2,n}^{(\epsilon_{n})}:=z_{2,n}^{B}+\bbp_{\leq \lambda_{n}^{15}}Z_{2,n}^{(\epsilon_{n})}.
\end{align*} 
Let
\begin{align}\label{1.3.5.17}
\Bu_{n}:=u_{n}-z_{1,n}^{u}, \quad \Bb_{n}:=B_{n}-z_{2,n}^{B}.
\end{align}

It follows from (\ref{1.3.1}) that
\begin{eqnarray}\label{1.3.55}
	\left\{\begin{array}{lc}
		\p_{t}\Bu_{n}+\nu(-\Delta)^{\a}\Bu_{n}-\widehat{z}_{1,n}^{(\epsilon_{n})}+\div\big((\Bu_{n}+\widehat{z}_{1,n}^{(\epsilon_{n})})\mathring{\otimes} (\Bu_{n}+\widehat{z}_{1,n}^{(\epsilon_{n})})-(\Bb_{n}+z_{2,n,n}^{(\epsilon_{n})})\mathring{\otimes}(\Bb_{n}+\widehat{z}_{2,n}^{(\epsilon_{n})})\big)+\nabla P_{n}
		\\ \quad=\div \run^{u},\\
		\p_{t}\Bb_{n}+\nu(-\Delta)^{\a}\Bb_{n}-\widehat{z}_{2,n}^{(\epsilon_{n})}+\div\big((\Bu_{n}+\widehat{z}_{1,n}^{(\epsilon_{n})})\otimes(\Bb_{n}+\widehat{z}_{2,n}^{(\epsilon_{n})})-(\Bb_{n}+\widehat{z}_{2,n}^{(\epsilon_{n})})\otimes(\Bu_{n}+\widehat{z}_{1,n}^{(\epsilon_{n})})\big)\\ \quad=\div \run^{B},\\
		\div \Bu_{n}=0,\quad \div \Bb_{n}=0, \quad \Bu_{n}(0)=0,\quad\Bb_{n}(0)=0,
	\end{array}\right.
\end{eqnarray}
where the stresses $\run^{u}$ and $\run^{B}$ are given by
\begin{align}
	&\run^{u}:=(\Bu_{n}+\widehat{z}_{1,n}^{(\epsilon_{n})})\mathring{\otimes} (\Bu_{n}+\widehat{z}_{1,n}^{(\epsilon_{n})})-(\Bb_{n}+\widehat{z}_{2,n}^{(\epsilon_{n})})\mathring{\otimes}(\Bb_{n}+\widehat{z}_{2,n}^{(\epsilon_{n})})-\big((u\mathring{\otimes} u- B\mathring{\otimes} B)*_{x}\varrho_{\lambda_{n}^{-1}}\big)*_{t}\vartheta_{\lambda_{n}^{-1}}\notag\\
	&\quad\quad\quad-\mathcal{R}^{u}(\bbp_{\leq \lambda_{n}^{15}}Z_{1,n}^{(\epsilon_{n})}),
	\label{1.3.56}\\
		&\run^{B}:= (\Bb_{n}+\widehat{z}_{2,n}^{(\epsilon_{n})}) \otimes  (\Bu_{n}+\widehat{z}_{1,n}^{(\epsilon_{n})})-(\Bu_{n}+\widehat{z}_{1,n}^{(\epsilon_{n})}) \otimes (\Bb_{n}+\widehat{z}_{2,n}^{(\epsilon_{n})})-\big((B \otimes u- u \otimes B)*_{x}\varrho_{\lambda_{n}^{-1}}\big)*_{t}\vartheta_{\lambda_{n}^{-1}}\notag\\
	&\quad\quad\quad-\mathcal{R}^{B}(\bbp_{\leq \lambda_{n}^{15}}Z_{2,n}^{(\epsilon_{n})}),\label{1.3.57}
\end{align}
and
\begin{equation}\label{1.3.58}
	P_{n}:= P*_{x}\varrho_{\lambda_{n}^{-1}}*_{t}\vartheta_{\lambda_{n}^{-1}}-|\Bu_{n}+\widehat{z}_{1,n}^{(\epsilon_{n})}|^{2}+|\Bb_{n}+\widehat{z}_{2,n}^{(\epsilon_{n})}|^{2}+\big((|u|^{2}-|B|^{2})*_{x}\varrho_{\lambda_{n}^{-1}}\big)*_{t}\vartheta_{\lambda_{n}^{-1}}.
\end{equation}

\textbf{Claim:} For $a$ sufficiently large, $(\Bu_{n}, \Bb_{n}, \run^{u}, \run^{B})$ satisfies 
the iterative estimates (\ref{2.11})-(\ref{2.13}) at level $q=n  (\geq1)$.

For this purpose, let us start with the 
most delicate decay estimate (\ref{2.13}).
Let
 \begin{align}\label{1.3.5.22}
 \epsilon_{n}:=\lambda_{n}^{-1},\quad \widetilde{M}:=\|u\|_{H_{t,x}^{\widetilde{\beta}}}+\|B\|_{H_{t,x}^{\widetilde{\beta}}}.
 \end{align}
Using the Minkowski inequality and the Slobodetskii-type norm of Sobolev spaces,
we have (see \citep[(6.35)]{ZZL-MHD})
\begin{equation}\label{1.3.59}
	\|u-u_{n}\|_{L_{t}^{2}L_{x}^{2}}\lesssim \lambda_{n}^{-\widetilde{\beta}}\|u\|_{H_{t,x}^{\widetilde{\beta}}}\lesssim \lambda_{n}^{-\widetilde{\beta}}\widetilde{M}.
\end{equation}
By   (\ref{1.3.5.22}) and   H\"{o}lder's inequality,  
\begin{align}\label{1.3.60}
	&\quad\|(\Bu_{n}+\widehat{z}_{1,n}^{(\epsilon_{n})})\mathring{\otimes}(\Bu_{n}+\widehat{z}_{1,n}^{(\epsilon_{n})})-u_{n}\mathring{\otimes}u_{n}\|_{L_{t}^{1}L_{x}^{1}}\notag\\
	&\lesssim\|(\widehat{z}_{1,n}^{(\epsilon_{n})}-z_{1,n}^{u})\mathring{\otimes}(u_{n}-z_{1,n}^{u}+\widehat{z}_{1,n}^{(\epsilon_{n})})\|_{L_{t}^{1}L_{x}^{1}}
	+\|u_{n}\mathring{\otimes}(\widehat{z}_{1,n}^{(\epsilon_{n})}-z_{1,n}^{u})\|_{L_{t}^{1}L_{x}^{1}}\notag\\
	&\lesssim
	\|\widehat{z}_{1,n}^{(\epsilon_{n})}-z_{1,n}^{u}\|_{L_{t}^{2}L_{x}^{2}}^{2}+\|u_{n}\|_{L_{t}^{2}L_{x}^{2}}\|\widehat{z}_{1,n}^{(\epsilon_{n})}-z_{1,n}^{u}\|_{L_{t}^{2}L_{x}^{2}}\notag\\
	&\lesssim
	\|\bbp_{\leq \lambda_{n}^{15}}Z_{1,n}^{(\epsilon_{n})}\|_{L_{t}^{2}L_{x}^{2}}^{2}+\widetilde{M}\|\bbp_{\leq \lambda_{n}^{15}}Z_{1,n}^{(\epsilon_{n})}\|_{L_{t}^{2}L_{x}^{2}}.
\end{align}
Similarly, we have
\begin{align}\label{1.3.61}
	\|(\Bb_{n}+\widehat{z}_{2,n}^{(\epsilon_{n})})\mathring{\otimes}(\Bb_{n}+\widehat{z}_{2,n}^{(\epsilon_{n})})-B_{n}\mathring{\otimes}B_{n}\|_{L_{t}^{1}L_{x}^{1}}
	\lesssim
	\|\bbp_{\leq \lambda_{n}^{15}}Z_{2,n}^{(\epsilon_{n})}\|_{L_{t}^{2}L_{x}^{2}}^{2}+\widetilde{M}\|\bbp_{\leq \lambda_{n}^{15}}Z_{2,n}^{(\epsilon_{n})}\|_{L_{t}^{2}L_{x}^{2}}.
\end{align} 
Moreover, by the mollification estimate {(see \citep[(6.36)]{ZZL-MHD})}, we get
\begin{align}\label{1.3.62}
		&\quad\|u_{n}\mathring{\otimes} u_{n}-\big((u \mathring {\otimes} u)*_{x}\varrho_{\lambda_{n}^{-1}}\big)*_{t}\vartheta_{\lambda_{n}^{-1}}\|_{L_{t}^{1}L_{x}^{1}}
	\lesssim	
	\lambda_{n}^{-2\widetilde{\beta}}\|u\|^{2}_{H_{t,x}^{\widetilde{\beta}}}\lesssim \lambda_{n}^{-2\widetilde{\beta}}\widetilde{M}^{2},
\end{align}
and
\begin{align}\label{1.3.63-0}
	\quad\|B_{n}\mathring{\otimes} B_{n}-\big((B \mathring {\otimes} B)*_{x}\varrho_{\lambda_{n}^{-1}}\big)*_{t}\vartheta_{\lambda_{n}^{-1}}\|_{L_{t}^{1}L_{x}^{1}}
	\lesssim \lambda_{n}^{-2\widetilde{\beta}}\widetilde{M}^{2}.
\end{align}
Therefore, using (\ref{1.3.56})  and Proposition \ref{Proposition stochastic} and combining (\ref{1.3.60})-(\ref{1.3.63-0}) together, we obtain
\begin{align}\label{1.3.63}
	\|\run^{u}\|_{\Lo^{r}_\om L_{t}^{1}L_{x}^{1}}
	&\lesssim	 \sum_{i=1}^{2}\big(\|\bbp_{\leq \lambda_{n}^{15}}Z_{i,n}^{(\epsilon_{n})}\|^{2}_{\Lo^{2r}_{\om}L_{t}^{2}L_{x}^{2}}+\widetilde{M}\|\bbp_{\leq \lambda_{n}^{15}}Z_{i,n}^{(\epsilon_{n})}\|_{\Lo^{r}_{\om}L_{t}^{2}L_{x}^{2}}\big)\notag\\
	&\quad
	+\|\mathcal{R}^{u}(\bbp_{\leq \lambda_{n}^{15}}Z_{1,n}^{(\epsilon_{n})})\|_{\Lo^{r}_{\om}L_{t}^{2}L_{x}^{2}}
	+\lambda_{n}^{-2\widetilde{\beta}}\widetilde{M}^{2}\notag\\
	&\lesssim \epsilon_{n}^{2}(2r-1)L^{2}+\epsilon_{n}(r-1)^{\frac{1}{2}}L+\lambda_{n}^{-2\widetilde{\beta}}\notag\\
	&\lesssim \lambda_{n}^{-1}+\lambda_{n}^{-2\widetilde{\beta}},
\end{align}
where we also used (\ref{1.3.59}) in the last step.

Analogous arguments yield that
\begin{align}\label{1.3.64}
	\|\run^{B}\|_{\Lo^{r}_\om L_{t}^{1}L_{x}^{1}}
     \lesssim \lambda_{n}^{-1}+\lambda_{n}^{-2\widetilde{\beta}}.
\end{align}
Thus,
estimate  (\ref{2.13}) is verified at level $q=n$.

Moreover, using the Sobolev embedding $H_{t,x}^{3}\hookrightarrow L_{t,x}^{\infty}$, we have that for $0\leq N\leq 4$,
\begin{align*}
	\|(\Bu_{n},\Bb_{n})\|_{\Lo^{m}_{\om}\mathcal{C}_{t,x}^{N}}
	&\lesssim
	\|(u_{n},B_{n})\|_{\Lo^{m}_{\om}\mathcal{H}_{t,x}^{3+N}}+	\|(z_{1,n}^{u},z_{2,n}^{B})\|_{\Lo^{m}_{\om}\mathcal{H}_{t,x}^{3+N}}\notag\\
&\lesssim
\lambda_{n}^{3+N}\|(u,B)\|_{\Lo^{m}_{\om}L_{t}^{2}\mathcal{L}_{x}^{2}}+\lambda_{n}^{3+N}\|(z_{1}^{u},z_{2}^{B})\|_{\Lo^{m}_{\om}L_{t}^{2}\mathcal{L}_{x}^{2}}\notag\\
&\lesssim
\lambda_{n}^{3+N}(\widetilde{M}+M)
\lesssim
\lambda_{n}^{3+N},
\end{align*} 
where $M$ is as in \eqref{in}. 
This yields (\ref{2.11}) at level $q=n$.

Regarding estimate (\ref{2.12}),  by  (\ref{1.3.5.17}), (\ref{1.3.5.22}) and H\"{o}lder's inequality,
\begin{align}\label{1.3.66}
	&\quad\|(\Bu_{n}+\widehat{z}_{1,n}^{(\epsilon_{n})})\mathring{\otimes}(\Bu_{n}+\widehat{z}_{1,n}^{(\epsilon_{n})})-u_{n}\mathring{\otimes}u_{n}\|_{C_{t}L_{x}^{1}}\notag\\
	&\lesssim\|(\widehat{z}_{1,n}^{(\epsilon_{n})}-z_{1,n}^{u})\mathring{\otimes}(u_{n}-z_{1,n}^{u}+\widehat{z}_{1,n}^{(\epsilon_{n})})\|_{C_{t}L_{x}^{1}}
	+\|u_{n}\mathring{\otimes}(\widehat{z}_{1,n}^{(\epsilon_{n})}-z_{1,n}^{u})\|_{C_{t}L_{x}^{1}}\notag\\
	&\lesssim
	\|\widehat{z}_{1,n}^{(\epsilon_{n})}-z_{1,n}^{u}\|_{C_{t}L_{x}^{2}}^{2}+\|u_{n}\|_{C_{t}L_{x}^{2}}\|\widehat{z}_{1,n}^{(\epsilon_{n})}-z_{1,n}^{u}\|_{C_{t}L_{x}^{2}}\notag\\
	&\lesssim
	\|\bbp_{\leq \lambda_{n}^{15}}Z_{1,n}^{(\epsilon_{n})}\|_{C_{t}L_{x}^{2}}^{2}+\widetilde{M}\|\bbp_{\leq \lambda_{n}^{15}}Z_{1,n}^{(\epsilon_{n})}\|_{L_{t}^{2}L_{x}^{2}}.
\end{align}
Similarly,  
\begin{equation}\label{1.3.67}
	\|(\Bb_{n}+\widehat{z}_{2,n}^{(\epsilon_{n})})\mathring{\otimes}(\Bb_{n}+\widehat{z}_{2,n}^{(\epsilon_{n})})-B_{n}\mathring{\otimes}B_{n}\|_{C_{t}L_{x}^{1}}
	\lesssim
	\|\bbp_{\leq \lambda_{n}^{15}}Z_{2,n}^{(\epsilon_{n})}\|_{L_{t}^{2}L_{x}^{2}}^{2}+\widetilde{M}\|\bbp_{\leq \lambda_{n}^{15}}Z_{2,n}^{(\epsilon_{n})}\|_{L_{t}^{2}L_{x}^{2}}.
\end{equation}
An application of the  Sobolev embedding
 $W_{t,x}^{5,1} \hookrightarrow L_{t,x}^{\infty}$  gives
 \begin{align}\label{1.3.68}
 	&\quad\|u_{n}\mathring{\otimes} u_{n}-\big((u \mathring {\otimes} u)*_{x}\varrho_{\lambda_{n}^{-1}}\big)*_{t}\vartheta_{\lambda_{n}^{-1}}\|_{C_{t}L_{x}^{1}}\notag\\
 	&\lesssim	
 	\|u_{n}\mathring{\otimes}u_{n}\|_{W_{t,x}^{5,1}}+\|\big((u \mathring {\otimes} u)*_{x}\varrho_{\lambda_{n}^{-1}}\big)*_{t}\vartheta_{\lambda_{n}^{-1}}\|_{W_{t,x}^{5,1}}\notag\\
 	&\lesssim	
 	\sum_{0\leq N_{1}+N_{2}\leq 5}
 	\|\p_{t}^{N_{1}}\nabla^{N_{2}}(u_{n}\mathring{\otimes}u_{n})\|_{L_{t}^{1}L_{x}^{1}}+\|\p_{t}^{N_{1}}\nabla^{N_{2}}\big((u \mathring {\otimes} u)*_{x}\varrho_{\lambda_{n}^{-1}}\big)*_{t}\vartheta_{\lambda_{n}^{-1}})\|_{L_{t}^{1}L_{x}^{1}}\notag\\
 	&\lesssim	
 	\lambda_{n}^{5}\|u_{n}\|^{2}_{L_{t}^{2}L_{x}^{2}}+\lambda_{n}^{5}\|u\|^{2}_{L_{t}^{2}L_{x}^{2}}\lesssim	
 	\lambda_{n}^{5}\widetilde{M}^{2}, 
 \end{align}
and 
\begin{align}\label{1.3.69}
\|B_{n}\mathring{\otimes} B_{n}-\big((B \mathring {\otimes} B)*_{x}\varrho_{\lambda_{n}^{-1}}\big)*_{t}\vartheta_{\lambda_{n}^{-1}}\|_{C_{t}L_{x}^{1}}
\lesssim	
\lambda_{n}^{5}\widetilde{M}^{2}.
\end{align}
Thus, combining Proposition \ref{Proposition stochastic}, (\ref{1.3.56})  and (\ref{1.3.66})-(\ref{1.3.69}) together, we obtain
\begin{align}\label{1.3.70-1}
	\|\run^{u}\|_{\Lo^{m}_{\om}C_{t}L_{x}^{1}}
&\lesssim	
\sum_{i=1}^{2}\big(\|\bbp_{\leq \lambda_{n}^{15}}Z_{i,n}^{(\epsilon_{n})}\|^{2}_{\Lo^{2m}_{\om}C_{t}L_{x}^{2}}+\widetilde{M}\|\bbp_{\leq \lambda_{n}^{15}}Z_{i,n}^{(\epsilon_{n})}\|_{\Lo^{2m}_{\om}C_{t}L_{x}^{2}}\big)\notag\\
&\quad
+\|\mathcal{R}^{u}(\bbp_{\leq \lambda_{n}^{15}}Z_{1,n}^{(\epsilon_{n})})\|_{\Lo^{2m}_{\om}C_{t}L_{x}^{2}}
+\lambda_{n}^{5}\widetilde{M}^{2}\notag\\
&\lesssim
\epsilon_{n}^{2}(2m-1)L^{2}+\epsilon_{n}(2m-1)^{\frac{1}{2}}L+\lambda_{n}^{5} \notag \\  
&\lesssim \lambda_{n}^{5}(2m-1)L^{2} .
\end{align}
Similar estimate also holds for $\run^{B}$.
Thus,  we prove estimate (\ref{2.12}) at level $q=n$, as claimed.

\medskip
Now, by virtue of Proposition \ref{Proposition Main iteration}, 
we obtain a sequence of approximate solutions $(\Bu_{n,q},\Bb_{n,q},\runq^{u},\runq^{B})_{q\geq n}$, 
which satisfy (\ref{2.11})-(\ref{2.13}),
$\{(\Bu_{n,q}, \Bb_{n,q})\}_{q\geq n}$ are  Cuachy sequences in $\Lo^{2r}_{\om}L_{t}^{2}\mathcal{L}_{x}^{2}$, 
and $\{(\runq^{u},\runq^{B})\}$ converges strongly to $(0, 0)$ in $\Lo^{r}_{\om}L_{t}^{1}\mathcal{L}_{x}^{1}$. 
Thus,
passing to the limit as 
$q\rightarrow \infty$ we obtain a solution $(u^{(\epsilon_{n})}, B^{(\epsilon_{n})})\in \Lo^{2r}_{\om}L_{t}^{2}\mathcal{L}_{x}^{2}$ to (\ref{1.1}) with $\epsilon_{n}W^{(i)}$ replacing $W^{(i)}$, $i=1,2$.

Moreover, by (\ref{1.3.5.17}),  
\begin{align}\label{1.3.73.01}
	\|u^{(\epsilon_{n})}-u_{n}\|_{\Lo^{2r}_{\om}L_{t}^{2}L_{x}^{2}}
	&\leq \|u^{(\epsilon_{n})}-(\Bu_{n}+z_{1}^{(\epsilon_{n})})\|_{\Lo^{2r}_{\om}L_{t}^{2}L_{x}^{2}}+\|(\Bu_{n}+z_{1}^{(\epsilon_{n})})-u_{n}\|_{\Lo^{2r}_{\om}L_{t}^{2}L_{x}^{2}}\notag\\
	&\leq\sum_{q\geq n}\|\Bu_{q+1}-\Bu_{q}\|_{\Lo^{2r}_{\om}L_{t}^{2}L_{x}^{2}}+\|z_{1,n}^{u}-z_{1}^{u}\|_{\Lo^{2r}_{\om}L_{t}^{2}L_{x}^{2}}+\|Z_{1}^{(\epsilon_{n})}\|_{\Lo^{2r}_{\om}L_{t}^{2}L_{x}^{2}},
\end{align}
where $z_{1}^{u}$ and $Z_{1}^{(\epsilon_{n})}$ are given by (\ref{1.3.5.12}) and (\ref{1.3.5.13}), 
respectively. 

In order to control  $\|z_{1,n}^{u}-z_{1}^{u}\|_{\Lo^{2r}_{\om}L_{t}^{2}L_{x}^{2}}$, we consider the temporal regimes $[0,\lambda_{n}^{-1/2}]$ and   $(\lambda_{n}^{-1/2}, T]$ separately. 
In the first case where $t\in(\lambda_{n}^{-1/2}, T]$, by the mollification estimates, (\ref{5.22.01}) and (\ref{5.22.04}), we have
 \begin{align*}
 	\|z_{1,n}^{u}-z_{1}^{u}\|_{L_{(\lambda_{n}^{-1/2}, T]}^{2}L_{x}^{2}}\lesssim \lambda_{n}^{-\frac{1}{2}}(\|z_{1}^{u}\|_{C_{(\lambda_{n}^{-1/2}-\lambda_{n}^{-1}, T]}^{\frac{1}{2}}L_{x}^{2}}+\|z_{1}^{u}\|_{C_{(\lambda_{n}^{-1/2}, T]}H_{x}^{\frac{1}{2}}})\lesssim \lambda_{n}^{-\frac{1}{2}} (1+\lambda_{n}^{\frac{1}{4}})M,
 	\end{align*}
 where $M$ is as in (\ref{in}). While in the  case where $t\in[0,\lambda_{n}^{-1/2}]$, 
  \begin{align*}
 	\|z_{1,n}^{u}-z_{1}^{u}\|_{L_{[0,\lambda_{n}^{-1/2}]}^{2}L_{x}^{2}}\leq \|z_{1,n}^{u}\|_{L_{[0,\lambda_{n}^{-1/2}]}^{2}L_{x}^{2}}+\|z_{1}^{u}\|_{L_{[0,\lambda_{n}^{-1/2}]}^{2}L_{x}^{2}}\lesssim \lambda_{n}^{-1/4}\|z_{1}^{u}\|_{C_{[0,\lambda_{n}^{-1/2}]}L_{x}^{2}}\lesssim \lambda_{n}^{-\frac{1}{4}}M.
 \end{align*}
Hence, we obtain
  \begin{align}\label{y3}
	\|z_{1,n}^{u}-z_{1}^{u}\|_{L_{t}^{2}L_{x}^{2}}\lesssim \lambda_{n}^{-\frac{1}{2}} (1+\lambda_{n}^{\frac{1}{4}})M+\lambda_{n}^{-\frac{1}{4}}M\lesssim \lambda_{n}^{-\frac{1}{4}}M.
\end{align}

In view of   Proposition \ref{Proposition stochastic}, (\ref{1.3.73.01}) and (\ref{y3}), we arrive at
\begin{align}\label{1.3.73.1}
	\|u^{(\epsilon_{n})}-u_{n}\|_{\Lo^{2r}_{\om}L_{t}^{2}L_{x}^{2}}
	&\lesssim 	\sum_{q\geq n} M^{*}\dqq^{\frac{1}{2}}+\lambda_{n}^{-\frac{1}{4}}M+\lambda_{n}^{-1}(2r-1)^{\frac{1}{2}}L 	\lesssim \sum_{q\geq n} \dqq^{\frac{1}{2}}+\lambda_{n}^{-\frac{1}{4}}, 
\end{align}  
which, along with   (\ref{2.2-2}) and (\ref{1.3.59}), 
yields that for $n> 10$,
\begin{align}\label{1.3.76-1}
	\|u^{(\epsilon_{n})}-u\|_{\Lo^{2r}_{\om}L_{t}^{2}L_{x}^{2}}
	&\leq
	\|u^{(\epsilon_{n})}-u_{n}\|_{\Lo^{2r}_{\om}L_{t}^{2}L_{x}^{2}} 	+\|u-u_{n}\|_{\Lo^{2r}_{\om}L_{t}^{2}L_{x}^{2}}\notag\\
	&\lesssim \sum_{q\geq n}\dqq^{\frac{1}{2}}+\lambda_{n}^{-\frac{1}{4}}+\lambda_{n}^{-\widetilde{\beta}}\widetilde{M}\notag\\
		&\lesssim \lambda_{1}^{\frac{3\beta}{2}-b^{n}\beta}+\sum_{q\geq 11}\dqq^{\frac{1}{2}}+\lambda_{n}^{-\frac{1}{4}}+\lambda_{n}^{-\widetilde{\beta}}
	 \leq \frac{1}{n},
\end{align}
where we choose $a$ sufficiently large.

Arguing in an analogous manner, 
we also have 
\begin{align}\label{1.3.76}
	\|B^{(\epsilon_{n})}-B\|_{\Lo^{2r}_{\om}L_{t}^{2}L_{x}^{2}}
	\leq \frac{1}{n}.
\end{align}

Therefore, the strong convergence \eqref{1.3.3} follows
and the  proof is completed.\hfill$\square$

\section{Appedix: Standard tools in convex integration}\label{Standard tools}

\begin{lemma} (\textbf{First Geometric Lemma}, \citep[Lemma 4.2]{BBV-IMHD})\label{Lemma First Geometric}
	 There exists a set $\Lambda_u \subset \mathbb{S}^2 \cap \mathbb{Q}^3$ that consists of vectors $k$ with associated orthonormal bases $\left(k, k_1, k_2\right), \varepsilon_u>0$, and smooth positive functions $\gamma_{(k)}$ : $B_{\varepsilon_u}(\mathrm{Id}) \rightarrow \mathbb{R}$, where $B_{\varepsilon_u}(\mathrm{Id})$ is the ball of radius $\varepsilon_u$ centered at the identity in the space of $3 \times 3$ symmetric matrices, such that for $S \in B_{\varepsilon_u}(\mathrm{Id})$ we have the following identity:
\begin{equation}\label{6.1}
	\begin{aligned}
S=\sum_{k \in \Lambda_u} \gamma_{(k)}^2(S) k_1 \otimes k_1.
	\end{aligned}
 \end{equation}
We may choose $\Lambda_u$ such that $\Lambda_B \cap \Lambda_u=\emptyset$.
\end{lemma}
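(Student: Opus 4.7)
The plan is to construct $\Lambda_u$ in two stages and then invoke (a linear version of) the implicit function theorem to upgrade a single algebraic identity at $\mathrm{Id}$ to a smooth family of identities in a neighborhood. First, I would reduce the problem to producing a finite set of rational unit vectors $\{k_1^{(i)}\}_{i=1}^N \subset \mathbb{S}^2 \cap \mathbb{Q}^3$ with $N > \dim\mathrm{Sym}(3) = 6$ and strictly positive scalars $\lambda_i > 0$ such that
\begin{equation*}
    \mathrm{Id} = \sum_{i=1}^N \lambda_i\, k_1^{(i)} \otimes k_1^{(i)},
\end{equation*}
with the extra requirements that the matrices $k_1^{(i)} \otimes k_1^{(i)}$ linearly span $\mathrm{Sym}(3)$, that for each $i$ there exists a rational unit $k^{(i)} \in \mathbb{S}^2 \cap \mathbb{Q}^3$ perpendicular to $k_1^{(i)}$, and that $\{k^{(i)}\} \cap \Lambda_B = \emptyset$. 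Granted this, the linear map $T : \mathbb{R}^N \to \mathrm{Sym}(3)$, $T(\mu) := \sum_i \mu_i\, k_1^{(i)} \otimes k_1^{(i)}$, is surjective; choosing any smooth right inverse $\Phi : \mathrm{Sym}(3) \to \mathbb{R}^N$ with $\Phi(\mathrm{Id}) = \lambda$ (e.g.\ $\lambda + T^{\dagger}(S-\mathrm{Id})$ with $T^\dagger$ the Moore--Penrose pseudo-inverse) yields smooth coefficients $\mu_i = \Phi_i(S)$. Since $\mu_i(\mathrm{Id}) = \lambda_i > 0$, there is $\varepsilon_u > 0$ so that $\mu_i(S) \geq \lambda_i / 2$ on $B_{\varepsilon_u}(\mathrm{Id})$, and setting $\gamma_{(k^{(i)})}(S) := \sqrt{\mu_i(S)}$ produces the desired smooth positive functions.

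Next, I would construct the positive rational representation. The starting observation is that $\mathrm{Id}$ lies in the interior of the convex cone $\mathcal{C} := \{\sum_i a_i\, v_i \otimes v_i : a_i \geq 0,\ v_i \in \mathbb{S}^2\} \subset \mathrm{Sym}(3)$; indeed, any orthonormal basis gives $\mathrm{Id} = v_1 \otimes v_1 + v_2 \otimes v_2 + v_3 \otimes v_3$, and by averaging over rotated bases one obtains representations with arbitrarily many directions and strictly positive coefficients. To rationalize the $k_1^{(i)}$, I would use density of $\mathbb{S}^2 \cap \mathbb{Q}^3$ in $\mathbb{S}^2$ (via e.g.\ rational parametrizations of spheres) and the openness of the conditions ``the $k_1^{(i)} \otimes k_1^{(i)}$ span $\mathrm{Sym}(3)$'' and ``$\mathrm{Id}$ is a strictly positive combination of them'' in the joint variable $(k_1^{(i)}, \lambda_i)$. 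For each rational $k_1^{(i)}$ the orthogonal complement $(k_1^{(i)})^\perp$ is a rational subspace, so it contains rational unit vectors (take any rational $w$ not parallel to $k_1^{(i)}$, form the Gram--Schmidt projection onto $(k_1^{(i)})^\perp$, which stays rational, and normalize in $\mathbb{Q}$ using a Pythagorean identity), giving rational $k^{(i)}$ and hence a rational orthonormal basis $(k^{(i)}, k_1^{(i)}, k^{(i)} \times k_1^{(i)})$. Disjointness from $\Lambda_B$ is then arranged by using the freedom in choosing $w$ within the infinite supply of rational candidates.

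The main obstacle, as I see it, is juggling four constraints simultaneously: rationality of both $k^{(i)}$ and $k_1^{(i)}$ (required so that the shear flows in \eqref{4.3} are $(\mathbb{T}/(\lambda r_\perp))^3$-periodic via the normalizing integer $N_\Lambda$), strict positivity of the $\lambda_i$, the spanning property of $\{k_1^{(i)} \otimes k_1^{(i)}\}$, and disjointness from the prescribed set $\Lambda_B$. Each alone is easy, but the spanning and positivity conditions are preserved only under small perturbations of the directions, whereas rationalization and disjointness are achieved by small but discrete adjustments; the proof is reduced to verifying that one can perform the discrete adjustments within the open region where the continuous conditions hold. This is exactly the standard argument from \citep{BBV-IMHD}, which I would follow, and the final identity \eqref{6.1} then reads $S = \sum_{k \in \Lambda_u} \gamma_{(k)}^2(S)\, k_1 \otimes k_1$ on $B_{\varepsilon_u}(\mathrm{Id})$.
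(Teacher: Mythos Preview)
The paper does not give its own proof of this lemma: it is stated in the appendix as a standard tool and attributed directly to \cite[Lemma~4.2]{BBV-IMHD}, with no argument supplied. Your sketch is a faithful outline of the standard proof (surjectivity of the map $\mu\mapsto\sum_i\mu_i\,k_1^{(i)}\otimes k_1^{(i)}$ onto $\mathrm{Sym}(3)$, a linear right inverse to propagate the identity at $\mathrm{Id}$ to a neighborhood, then rationalization and disjointness by density/perturbation), so there is nothing to compare against and your approach is consistent with the cited source.
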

\begin{lemma}
(\textbf{Second Geometric Lemma}, \citep[Lemma 4.1]{BBV-IMHD})) \label{Lemma Second Geometric}
There exists a set $\Lambda_B \subset \mathbb{S}^2 \cap \mathbb{Q}^3$ that consists of vectors $k$ with associated orthonormal bases $\left(k, k_1, k_2\right), \varepsilon_{B}>0$, and smooth positive functions $\gamma_{(k)}: B_{\varepsilon_{B}}(0) \rightarrow \mathbb{R}$, where $B_{\varepsilon_{B}}(0)$ is the ball of radius $\varepsilon_{B}$ centered at 0 in the space of $3 \times 3$ skew-symmetric matrices, such that for $A \in B_{\varepsilon_{B}}(0)$ we have the following identity:
\begin{equation}\label{6.2}
	\begin{aligned}
A=\sum_{k \in \Lambda_B} \gamma_{(k)}^2(A)\left(k_2 \otimes k_1-k_1 \otimes k_2\right) .
\end{aligned}
\end{equation}
\end{lemma}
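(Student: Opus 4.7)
The plan is to view the identity \eqref{6.2} through the Hodge isomorphism $\star:\mathfrak{so}(3)\to\mathbb{R}^3$, under which every skew-symmetric $A$ becomes a vector $a\in\mathbb{R}^3$ and the building block $M_k:=k_2\otimes k_1 - k_1\otimes k_2$ becomes a fixed scalar multiple of $k$ itself, since $M_k v = (k_1\cdot v)k_2 - (k_2\cdot v)k_1 = \pm\,k\times v$ for a right-handed orthonormal frame $(k,k_1,k_2)$. The claim therefore reduces to producing, for each $a$ in a small ball of $\mathbb{R}^3$, a decomposition $a=\sum_{k\in\Lambda_B}\gamma_{(k)}^2(A)\,k$ with smooth strictly positive coefficients $\gamma_{(k)}^2$.

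I would next choose a finite set $\Lambda_B\subset\mathbb{S}^2\cap\mathbb{Q}^3$, disjoint from $\Lambda_u$, whose elements \emph{positively span} $\mathbb{R}^3$, meaning that the origin lies in the interior of their conical hull. The cleanest way is to include symmetric pairs: flipping $k\mapsto -k$ with the adapted frame $(-k, k_1, -k_2)$ yields $M_{-k}=-M_k$, so a pair $\{k,-k\}$ contributes opposite vectors under $\star$ and cancels when its two coefficients are equal. Including, for instance, four rational symmetric pairs that together span $\mathbb{R}^3$ gives the required finite set, and disjointness from $\Lambda_u$ is arranged by a small rational perturbation since $\Lambda_u$ is finite.

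With $\Lambda_B$ fixed, positive spanning produces a reference vector $c^0=(c_k^0)_{k\in\Lambda_B}$ with $c_k^0>0$ for all $k$ and $\sum_{k}c_k^0\,k=0$. The linear map $\Psi:\mathbb{R}^{|\Lambda_B|}\to\mathbb{R}^3$, $\Psi(c)=\sum_{k}c_k\,k$, is surjective, so fixing any linear right inverse $\Psi^\dagger$ yields affine functions $c_k(A):=c_k^0+\bigl(\Psi^\dagger(\star A)\bigr)_k$ with $c_k(0)=c_k^0>0$ and $\Psi\bigl(c(A)\bigr)=\star A$. By continuity there exists $\varepsilon_B>0$ such that $c_k(A)>0$ for every $k\in\Lambda_B$ and every $A\in B_{\varepsilon_B}(0)$, and I would set $\gamma_{(k)}(A):=\sqrt{c_k(A)}$. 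This function is smooth and strictly positive on the ball, and \eqref{6.2} holds by construction.

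The only subtle point is the uniform positivity of the $c_k(A)$, but this is automatic from strict positivity at $A=0$ together with continuity; no implicit function theorem or nonlinear solve is required, because the decomposition is \emph{linear} in the unknown coefficients once the directions $k$ are fixed. The combinatorial constraint $\Lambda_B\cap\Lambda_u=\emptyset$ and the rationality requirement $\Lambda_B\subset\mathbb{Q}^3$ are bookkeeping issues handled by the explicit choice of rational symmetric pairs and pose no analytic obstacle.
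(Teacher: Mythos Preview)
Your argument is correct. The key observation---that the Hodge isomorphism $\mathfrak{so}(3)\cong\mathbb{R}^3$ sends $k_2\otimes k_1-k_1\otimes k_2$ to $k$ (for a right-handed frame $(k,k_1,k_2)$)---reduces the skew-symmetric decomposition to a positive spanning problem in $\mathbb{R}^3$, and your affine-plus-right-inverse construction with a strictly positive reference point $c^0$ handles that cleanly. The smoothness and positivity of $\gamma_{(k)}=\sqrt{c_k(\cdot)}$ on a small ball follow exactly as you say.

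There is nothing to compare against in the present paper: Lemma~\ref{Lemma Second Geometric} is quoted without proof from \cite[Lemma~4.1]{BBV-IMHD} as a standard tool in the appendix. The original proof in \cite{BBV-IMHD} proceeds along essentially the same lines you outline---exploiting the three-dimensionality of $\mathfrak{so}(3)$ and an explicit finite rational set on the sphere whose positive span is all of $\mathbb{R}^3$---so your route is the expected one rather than a genuinely different approach.
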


Note that there exists $N_{\Lambda} \in \mathbb{N}$ such that
\begin{equation}\label{6.3}
	\begin{aligned}
\left\{N_{\Lambda} k, N_{\Lambda} k_1, N_{\Lambda} k_2\right\} \subset N_{\Lambda} \mathbb{S}^2 \cap \mathbb{Z}^3
\end{aligned}
\end{equation} 
Let  $M_*$  denote the geometric constant such that
\begin{equation}\label{6.4}
	\begin{aligned}
\sum_{k \in \Lambda_u}\|\gamma_{(k)}\|_{C^4(B_{\varepsilon_u}(\mathrm{Id}))}+\sum_{k \in \Lambda_B}\|\gamma_{(k)}\|_{C^4(B_{\varepsilon_{B}}(0))} \leq M_* .
\end{aligned}
\end{equation}

The inverse-divergence operator $\mathcal{R}^u$ and $\mathcal{R}^{B}$,   are defined by
	\begin{align}
&(\mathcal{R}^u v)^{k l}:=\partial_k \Delta^{-1} v^l+\partial_l \Delta^{-1} v^k-\frac{1}{2}(\delta_{k l}+\partial_k \partial_l \Delta^{-1}) \operatorname{div} \Delta^{-1} v, \label{6.5}\\
	&(\mathcal{R}^{B} f)_{i j}:=\varepsilon_{i j k}(-\Delta)^{-1}(\operatorname{curl} f)_k,\label{6.6}
\end{align}
where $v$ is mean-free, i.e., $\int_{\mathbb{T}^3} v d x=0$, $f$ is divergence-free, $\varepsilon_{i j k}$ is the Levi-Civita tensor, $i, j, k, l \in\{1,2,3\}$. Note that, the inverse-divergence operator $\mathcal{R}^{u}$ maps mean-free functions to symmetric and trace-free matrices, while the operator $\mathcal{R}^{B}$ returns skew-symmetric matrices. Moreover, it holds that
\begin{flalign*}
\operatorname{div} \mathcal{R}^u(v)=v, \quad \operatorname{div} \mathcal{R}^{B}(f)=f .
\end{flalign*}
Both $|\nabla| \mathcal{R}^u$ and $|\nabla| \mathcal{R}^{B}$ are Calderon-Zygmund operators, and thus, they are bounded in the spaces $L^p$, $1<p<+\infty$. (See \citep{BBV-IMHD}).

We have the following refined H\"{o}lder's inequlity.
\begin{lemma}(\citep[ Lemma 2.4]{CL-transport equation}; see also \citep[Lemma A.3]{ZZL-MHD-sharp} ) \label{Lemma Decorrelation1}
 Let $\theta \in \mathbb{N}$ and $f, g: \mathbb{T}^d \rightarrow \mathbb{R}$ be smooth functions. Then for every $p \in[1,+\infty]$,
\begin{equation}\label{6.7}
	\begin{aligned}
\left|\|f g(\theta \cdot)\|_{L^p(\mathbb{T}^d)}-\|f\|_{L^p(\mathbb{T}^d)}\|g\|_{L^p(\mathbb{T}^d)}\right| \lesssim \theta^{-\frac{1}{p}}\|f\|_{C^1(\mathbb{T}^d)}\|g\|_{L^p(\mathbb{T}^d)}.
\end{aligned}
\end{equation}
\end{lemma}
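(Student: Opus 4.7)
The plan is to exploit the fast oscillation of $g(\theta\cdot)$ against the slow variation of $f$ by partitioning $\mathbb{T}^d$ into $\theta^d$ congruent cubes $\{Q_i\}_{i}$ of side length $\theta^{-1}$. The key observation is that on each cube $Q_i$, the rescaled function $g(\theta\cdot)$ traverses exactly one full period of $g$, so by change of variables
\[
\int_{Q_i} |g(\theta x)|^p\,dx = \theta^{-d}\int_{\mathbb{T}^d}|g(y)|^p\,dy = |Q_i|\,\|g\|_{L^p(\mathbb{T}^d)}^p,
\]
independently of $i$. Meanwhile, on each cube $f$ is essentially constant: picking any $x_i \in Q_i$, the mean value theorem yields $|f(x)-f(x_i)| \le \sqrt{d}\,\|\nabla f\|_{L^\infty}\theta^{-1}$ for $x\in Q_i$. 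I will first treat $1\le p<\infty$, and note that the case $p=\infty$ is trivial since then $\theta^{-1/p}=1$ and the assertion follows from $\|fg(\theta\cdot)\|_{L^\infty}\le \|f\|_{L^\infty}\|g\|_{L^\infty}$ together with a crude triangle inequality.

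The core computation proceeds in two steps. First, using the pointwise estimate
\[
\bigl||f(x)|^p - |f(x_i)|^p\bigr| \le p\,\|f\|_{L^\infty}^{p-1}\,|f(x)-f(x_i)| \lesssim \|f\|_{C^1}^p\,\theta^{-1}, \qquad x\in Q_i,
\]
(absorbing $\|f\|_{L^\infty}\le\|f\|_{C^1}$) and summing over $i$, one obtains
\[
\Bigl|\|fg(\theta\cdot)\|_{L^p}^p - \sum_i |f(x_i)|^p \int_{Q_i}|g(\theta x)|^p\,dx\Bigr| \lesssim \|f\|_{C^1}^p\,\theta^{-1}\,\|g\|_{L^p}^p.
\]
Using the displayed periodicity identity, the inner sum equals $\|g\|_{L^p}^p\,\sum_i |f(x_i)|^p |Q_i|$, which is a Riemann sum for $\int_{\mathbb{T}^d}|f|^p = \|f\|_{L^p}^p$. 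Applying the mean value theorem once more to $x\mapsto |f(x)|^p$ shows
\[
\Bigl|\sum_i |f(x_i)|^p|Q_i| - \|f\|_{L^p}^p\Bigr| \lesssim \|f\|_{C^1}^p\,\theta^{-1}.
\]
Combining both estimates yields
\[
\bigl|\|fg(\theta\cdot)\|_{L^p}^p - \|f\|_{L^p}^p\|g\|_{L^p}^p\bigr| \lesssim \|f\|_{C^1}^p\,\|g\|_{L^p}^p\,\theta^{-1}.
\]

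Finally, I take $p$-th roots. Using the elementary inequality $|a^{1/p}-b^{1/p}|\le |a-b|^{1/p}$ valid for $a,b\ge 0$ and $p\ge 1$ (since $1/p\in(0,1]$), the previous display gives
\[
\bigl|\|fg(\theta\cdot)\|_{L^p} - \|f\|_{L^p}\|g\|_{L^p}\bigr| \lesssim \theta^{-1/p}\,\|f\|_{C^1}\,\|g\|_{L^p},
\]
which is the desired bound. The main (mild) obstacle is the book-keeping in the Riemann sum argument: one has to verify that both sources of error, namely replacing $f$ by $f(x_i)$ on each cube and the Riemann sum approximation of $\|f\|_{L^p}^p$, are each of order $\theta^{-1}\|f\|_{C^1}^p\|g\|_{L^p}^p$, so that after extracting the $p$-th root exactly the factor $\theta^{-1/p}$ is produced; the ability to absorb $\|f\|_{L^\infty}^{1-1/p}\|f\|_{C^1}^{1/p}$ into $\|f\|_{C^1}$ is what makes the final constant clean.
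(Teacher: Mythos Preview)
Your proof is correct and follows precisely the standard argument: partition $\mathbb{T}^d$ into the $\theta^d$ cubes on which $g(\theta\cdot)$ runs through one full period, freeze $f$ on each cube at the cost of a $C^1$ error of size $\theta^{-1}$, recognize the Riemann sum for $\|f\|_{L^p}^p$, and extract the $p$-th root via the subadditivity inequality $|a^{1/p}-b^{1/p}|\le |a-b|^{1/p}$. The paper does not give its own proof of this lemma but simply cites \cite{CL-transport equation} and \cite{ZZL-MHD-sharp}, whose arguments are exactly the one you have written, so there is nothing to compare.
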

\begin{lemma} (\citep[Lemma 6]{LT-hyperviscous NS})\label{Lemma commutator estimate1}
 Let $a \in C^2(\mathbb{T}^3)$. For all $1<p<+\infty$ we have
\begin{equation}\label{6.8}
	\begin{aligned}
	\||\nabla|^{-1} \mathbb{P}_{\neq 0}(a \mathbb{P}_{\geq k} f)\|_{L^p(\mathbb{T}^3)} \lesssim k^{-1}\|\nabla^2 a\|_{L^{\infty}(\mathbb{T}^3)}\|f\|_{L^p(\mathbb{T}^3)},
\end{aligned}
\end{equation}
	holds for any smooth function $f \in L^p(\mathbb{T}^3)$.
\end{lemma}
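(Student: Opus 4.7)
The plan is to decompose the slowly-varying multiplier $a$ by frequency and exploit the frequency localization of $\mathbb{P}_{\geq k}f$. Choose a smooth Littlewood--Paley partition $1=\chi_<(\xi/k)+\chi_\geq(\xi/k)$ with $\supp\chi_<\subset\{|\xi|<1/2\}$ and $\supp\chi_\geq\subset\{|\xi|\geq 1/4\}$, and split $a=a_<+a_\geq$, where $a_<:=\chi_<(D/k)a$ and $a_\geq:=\chi_\geq(D/k)a$.

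For the low-frequency piece $a_<$, a direct Fourier-convolution computation shows that $a_<\cdot\mathbb{P}_{\geq k}f$ has spectral support in the band $\{|n|\geq k/2\}$: any mode $n$ arising from $\widehat{a_<}*\widehat{\mathbb{P}_{\geq k}f}$ satisfies $|n|\geq|m|-|n-m|\geq k-k/2=k/2$, using $|m|\geq k$ and $|n-m|<k/2$. Restricted to this band, $|\nabla|^{-1}$ is a Mikhlin--H\"ormander multiplier with operator norm $\lesssim k^{-1}$ on $L^p$ for $1<p<\infty$, so
\begin{equation*}
\||\nabla|^{-1}\mathbb{P}_{\neq 0}(a_<\mathbb{P}_{\geq k}f)\|_{L^p}\lesssim k^{-1}\|a_<\|_{L^\infty}\|f\|_{L^p}\lesssim k^{-1}\|a\|_{L^\infty}\|f\|_{L^p}.
\end{equation*}

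For the high-frequency piece $a_\geq$, write $a_\geq=-(-\Delta)^{-1}\chi_\geq(D/k)(-\Delta a)$. The symbol $|\xi|^{-2}\chi_\geq(\xi/k)$ is the Fourier transform of a convolution kernel whose $L^1(\mathbb{T}^3)$-mass scales as $k^{-2}$ (by rescaling against a fixed Schwartz kernel), yielding the Bernstein-type bound $\|a_\geq\|_{L^\infty}\lesssim k^{-2}\|\nabla^2 a\|_{L^\infty}$. Since $|\nabla|^{-1}\mathbb{P}_{\neq 0}$ on $\mathbb{T}^3$ is convolution with a kernel locally of type $|x|^{-2}$, which is integrable in three dimensions and hence bounded on every $L^p$, we get
\begin{equation*}
\||\nabla|^{-1}\mathbb{P}_{\neq 0}(a_\geq\mathbb{P}_{\geq k}f)\|_{L^p}\lesssim \|a_\geq\|_{L^\infty}\|f\|_{L^p}\lesssim k^{-2}\|\nabla^2 a\|_{L^\infty}\|f\|_{L^p}.
\end{equation*}

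Combining the two contributions yields a bound of the form $k^{-1}\|a\|_{L^\infty}\|f\|_{L^p}+k^{-2}\|\nabla^2 a\|_{L^\infty}\|f\|_{L^p}$; the stated form follows after absorbing $\|a\|_{L^\infty}$ into $\|\nabla^2 a\|_{L^\infty}$ via Poincar\'e's inequality on $\mathbb{T}^3$ for mean-free $a$, while the constant mean $\bar a$ contributes only the harmless term $\bar a\,|\nabla|^{-1}\mathbb{P}_{\geq k}f$ of size $\lesssim k^{-1}|\bar a|\|f\|_{L^p}$. The main technical obstacle I anticipate is the careful replacement of sharp frequency projectors by smooth Littlewood--Paley cutoffs while preserving the spectral-support argument above; once that is done, the key $k^{-1}$ gain comes entirely from the Mikhlin bound for $|\nabla|^{-1}$ on the high-frequency band, and the additional $k^{-2}$ gain for $a_\geq$ reflects the two integrations by parts implicit in the Bernstein estimate.
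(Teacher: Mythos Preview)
The paper does not give its own proof of this lemma; it is stated in the appendix as a standard tool with a citation to Luo--Titi. Your approach---splitting $a$ into low- and high-frequency parts at scale $k$, exploiting the spectral support of $a_<\,\mathbb{P}_{\geq k}f$ to gain $k^{-1}$ from $|\nabla|^{-1}$, and using a Bernstein bound for $a_\geq$---is precisely the standard argument and matches what one finds in the cited source.

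One genuine wrinkle in your write-up: the final absorption step does not quite close. Poincar\'e on $\mathbb{T}^3$ does give $\|a-\bar a\|_{L^\infty}\lesssim\|\nabla a\|_{L^\infty}\lesssim\|\nabla^2 a\|_{L^\infty}$ (the mean of $\nabla a$ vanishes), but the constant mode $\bar a$ still contributes $k^{-1}|\bar a|\,\|f\|_{L^p}$, and $|\bar a|$ is not controlled by $\|\nabla^2 a\|_{L^\infty}$. Indeed, inequality \eqref{6.8} as literally written fails for nonzero constant $a$. What your argument actually delivers is
\[
\||\nabla|^{-1}\mathbb{P}_{\neq 0}(a\,\mathbb{P}_{\geq k}f)\|_{L^p}\lesssim k^{-1}\bigl(\|a\|_{L^\infty}+k^{-1}\|\nabla^2 a\|_{L^\infty}\bigr)\|f\|_{L^p}\lesssim k^{-1}\|a\|_{C^2}\|f\|_{L^p},
\]
which is the form in which the result is actually applied in the paper's oscillation-error bounds (there the full $C^3_{t,x}$-norm of the amplitude is invoked). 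So your proof is correct; the stated right-hand side of the lemma is slightly imprecise, not your argument.
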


We also recall from \citep{BLIS-Elurflows} the H\"{o}lder estimate, which is useful to prove Lemmas \ref{Lemma Magnetic amplitudes} and  \ref{Lemma Velocity amplitudes}.
\begin{lemma} (\citep[Proposition C.1]{BLIS-Elurflows})\label{composition estimate}
	 Let $\Psi: \Omega \longrightarrow \mathbb{R}$ and $f: \mathbb{R}^n \longrightarrow \Omega$ be two smooth functions, with $\Omega \subset \mathbb{R}^m$. Then, for every $N \in \mathbb{N}$, there is a constant $C=C(n, m, N)>0$ such that 
  \begin{align}
	&\|\Psi \circ f\|_{\dot{C}^N } \leq C\(\|\Psi\|_{\dot{C}^1}\|f\|_{\dot{C}^N }+\|\mathrm{D} \Psi\|_{C^{N-1} }\|f\|_{C^{0}}^{N-1}\|f\|_{\dot{C}^N }\),\label{6.9}\\	
	&\|\Psi \circ f\|_{\dot{C}^N }\leq C\(\|\Psi\|_{\dot{C}^1}\|f\|_{\dot{C}^N }+\|\mathrm{D} \Psi\|_{C^{N-1} }\|f\|_{\dot{C}^1 }^N\).\label{6.10}
\end{align}

\end{lemma}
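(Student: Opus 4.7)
The plan is to start from the multivariate Faà di Bruno formula, which writes the $N$-th total derivative of $\Psi\circ f$ as a finite combinatorial sum
$$D^N(\Psi\circ f) \;=\; \sum_{k=1}^{N}\,\sum_{\substack{\alpha_1+\cdots+\alpha_k=N\\ \alpha_i\geq 1}} c_{k,\alpha}\,(D^k\Psi)(f)\cdot\bigl(D^{\alpha_1}f\otimes\cdots\otimes D^{\alpha_k}f\bigr),$$
with combinatorial coefficients $c_{k,\alpha}$ depending only on $n,m,N$. The $k=1$ summand is $(D\Psi)(f)\cdot D^N f$, whose sup-norm is immediately $\leq \|\Psi\|_{\dot C^1}\|f\|_{\dot C^N}$; this produces the first term on the right of both \eqref{6.9} and \eqref{6.10}. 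Everything reduces to estimating, for each $k\in\{2,\ldots,N\}$ and each admissible partition $\alpha$, the product $\|D^k\Psi\|_{C^0}\,\prod_{i=1}^{k}\|f\|_{\dot C^{\alpha_i}}$, noting that the prefactor is automatically dominated by $\|D\Psi\|_{C^{N-1}}$.

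For \eqref{6.9} I would apply Gagliardo–Nirenberg interpolation in the Hölder scale,
$$\|f\|_{\dot C^{\alpha_i}} \;\leq\; C\,\|f\|_{C^0}^{\,1-\alpha_i/N}\,\|f\|_{\dot C^N}^{\,\alpha_i/N}, \qquad 1\leq \alpha_i\leq N,$$
which is standard via convolution with a scaled mollifier and difference-quotient splitting. Multiplying over $i=1,\ldots,k$ and using $\sum_i\alpha_i=N$ collapses the product to $C\,\|f\|_{C^0}^{\,k-1}\,\|f\|_{\dot C^N}$. Since for $2\leq k\leq N$ the exponent $k-1$ lies between $1$ and $N-1$, all such terms are absorbed into the single uniform bound $\|f\|_{C^0}^{N-1}\|f\|_{\dot C^N}$ (with the constant adjusted to handle the intermediate powers). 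Summing over the finitely many $(k,\alpha)$ and bounding $\|D^k\Psi\|_{C^0}\leq \|D\Psi\|_{C^{N-1}}$ yields \eqref{6.9}.

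For \eqref{6.10} the Faà di Bruno decomposition is identical, but I would interpolate instead between $\dot C^1$ and $\dot C^N$:
$$\|f\|_{\dot C^{\alpha_i}}\;\leq\; C\,\|f\|_{\dot C^1}^{\,(N-\alpha_i)/(N-1)}\,\|f\|_{\dot C^N}^{\,(\alpha_i-1)/(N-1)},\qquad 1\leq \alpha_i\leq N.$$
Taking the product over $i=1,\ldots,k$ and using $\sum_i\alpha_i=N$ produces exponents $N(k-1)/(N-1)$ on $\|f\|_{\dot C^1}$ and $(N-k)/(N-1)$ on $\|f\|_{\dot C^N}$. For $k=N$ this is exactly $\|f\|_{\dot C^1}^{N}$, matching the target; for $k\in\{2,\ldots,N-1\}$ the factor of $\|f\|_{\dot C^N}$ appears with exponent strictly less than one, and a single application of Young's inequality $a^{\theta}b^{1-\theta}\leq \theta a+(1-\theta)b$ splits the mixed product into a controlled multiple of $\|f\|_{\dot C^1}^{N}$ plus a controlled multiple of $\|f\|_{\dot C^N}$, the latter being absorbed into the first term $\|\Psi\|_{\dot C^1}\|f\|_{\dot C^N}$ already present in \eqref{6.10}.

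The main obstacle is the combinatorial bookkeeping: although each partition is handled by one of the two interpolation inequalities, one must verify that the resulting exponents line up cleanly with the single ``output'' power $\|f\|_{C^0}^{N-1}$ in \eqref{6.9} and with the clean $\|f\|_{\dot C^1}^{N}$-versus-$\|f\|_{\dot C^N}$ split in \eqref{6.10}, uniformly in all $(k,\alpha)$. Care is also required for the borderline case $\alpha_i=N$ (which only arises when $k=1$ and is already separated), and one must check that Gagliardo–Nirenberg genuinely holds in the Hölder (as opposed to Sobolev) setting with constants depending only on $n,N$, which is standard but worth spelling out.
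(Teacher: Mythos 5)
The skeleton you chose (Faà di Bruno plus Hölder-scale interpolation of $f$) is indeed the standard route to this classical estimate — the paper itself gives no proof but cites \citep[Prop. C.1]{BLIS-Elurflows}, whose argument is of exactly this type — but both of your ``absorption'' steps are genuinely wrong, and they fail for the same reason: after crudely bounding $\|D^k\Psi\|_{C^0}\leq \|D\Psi\|_{C^{N-1}}$ you have thrown away the smallness that must compensate the intermediate powers of $f$. For \eqref{6.9}, your claim that $\|D\Psi\|_{C^{N-1}}\|f\|_{C^0}^{k-1}\|f\|_{\dot C^N}$ with $2\leq k\leq N-1$ can be absorbed into $\|f\|_{C^0}^{N-1}\|f\|_{\dot C^N}$ requires $\|f\|_{C^0}\geq 1$; it is false in general. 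Concretely, take $n=m=1$, $N=3$, $f(x)=a\sin(\lambda x)$ and $\Psi(y)=-a^2\cos(y/a)$ with $a\ll1$: then $\|\Psi\|_{\dot C^1}\approx a$, $\|D\Psi\|_{C^{2}}\approx a^{-1}$, $\|f\|_{C^0}=a$, and the right-hand side of \eqref{6.9} is of order $a^{2}\lambda^{3}$, while your intermediate bound for the $k=2$ term, $\|D\Psi\|_{C^{2}}\|f\|_{C^0}\|f\|_{\dot C^3}\approx a\lambda^{3}$, exceeds it by the unbounded factor $a^{-1}$ (the genuine term $\|D^2\Psi\|_{C^0}\|f\|_{\dot C^1}\|f\|_{\dot C^2}\approx a^{2}\lambda^{3}$ is fine — it is your chain of estimates that is too lossy). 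For \eqref{6.10} the same defect appears after the unweighted Young inequality: it leaves a term $\|D\Psi\|_{C^{N-1}}\|f\|_{\dot C^N}$, which cannot be ``absorbed into $\|\Psi\|_{\dot C^1}\|f\|_{\dot C^N}$'' since $\|D\Psi\|_{C^{N-1}}$ may be arbitrarily larger than $\|\Psi\|_{\dot C^1}$ (same example).

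The missing idea is to interpolate the $\Psi$-factor as well, not only $f$: for $2\leq k\leq N$, apply the same Hölder-scale interpolation to $D\Psi$ to get $\|D^k\Psi\|_{C^0}\leq C\,\|D\Psi\|_{C^0}^{1-\theta}\|D\Psi\|_{C^{N-1}}^{\theta}$ with $\theta=\frac{k-1}{N-1}$. Combined with your interpolation of $\prod_i\|f\|_{\dot C^{\alpha_i}}$ (which gives $C\|f\|_{C^0}^{k-1}\|f\|_{\dot C^N}$ for \eqref{6.9} and $C\|f\|_{\dot C^1}^{N(k-1)/(N-1)}\|f\|_{\dot C^N}^{(N-k)/(N-1)}$ for \eqref{6.10}), each Faà di Bruno term becomes exactly a geometric mean
$\bigl(\|\Psi\|_{\dot C^1}\|f\|_{\dot C^N}\bigr)^{1-\theta}\bigl(\|D\Psi\|_{C^{N-1}}\|f\|_{C^0}^{N-1}\|f\|_{\dot C^N}\bigr)^{\theta}$, respectively $\bigl(\|\Psi\|_{\dot C^1}\|f\|_{\dot C^N}\bigr)^{1-\theta}\bigl(\|D\Psi\|_{C^{N-1}}\|f\|_{\dot C^1}^{N}\bigr)^{\theta}$, and a single application of Young's inequality then lands precisely on the two terms of the right-hand sides, with no absorption needed; the exponents match because $\sum_i\alpha_i=N$. (One should also note that the interpolation inequality for $D\Psi$ on a general $\Omega\subset\mathbb{R}^m$ needs $\Omega$ to be, e.g., convex or extendable — in the application of this paper $\Omega$ is a ball, so this is harmless, but it deserves a remark.) Your $k=1$ term and the interpolation inequalities for $f$ themselves are fine as stated.
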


\medskip 
\noindent{\bf Acknowledgment.}
The authors would like to thank Zirong Zeng for helpful discussions. Yachun Li thanks the support by NSFC (No. 112161141004, 2371221, and 11831011). Deng Zhang thanks the supports by NSFC (No. 12271352, 12322108, 12161141004). Yachun Li is also grateful for the supports by Fundamental Research Funds for the Central Universities. Yachun Li and Deng Zhang are also grateful for the supports by Shanghai Frontiers Science Center of Modern Analysis.

 \end{document}